
\documentclass[a4paper,10pt]{amsart}
\setlength{\textheight}{23cm}\setlength{\textwidth}{16cm}\setlength{\oddsidemargin}{0cm}\setlength{\evensidemargin}{0cm}\setlength{\topmargin}{0cm}


\usepackage{amssymb}
\usepackage{bbm}
\usepackage{mathrsfs}
\usepackage[all]{xy}


\normalsize

\newlength{\aufzleft}
\newenvironment{aufz}{\begin{list}{}{\setlength{\listparindent}{0pt}\setlength{\itemsep}{\topsep}\setlength{\labelwidth}{3.2ex}\setlength{\aufzleft}{\labelsep}\addtolength{\aufzleft}{\labelwidth}\setlength{\leftmargin}{\aufzleft}}}{\end{list}}
\newenvironment{equi}{\begin{list}{}{\setlength{\listparindent}{0pt}\setlength{\itemsep}{\topsep}\setlength{\labelwidth}{4.1ex}\setlength{\aufzleft}{\labelsep}\addtolength{\aufzleft}{\labelwidth}\setlength{\leftmargin}{\aufzleft}}}{\end{list}}


\newtheoremstyle{bracket}{1ex}{2ex}{\rm}{}{\bfseries}{}{0.8em}{\thmnumber{(#2)}}
\newtheoremstyle{thm}{1ex}{2ex}{\itshape}{}{\bfseries}{}{0.9em}{\thmnumber{(#2)}\thmname{ #1}\thmnote{ (#3)}}
\newtheoremstyle{example}{1ex}{2ex}{\rm}{}{\bfseries}{}{0.8em}{\thmnumber{(#2)}\thmname{ #1}}
\newtheoremstyle{bracketsm}{1ex}{2ex}{\rm\small}{}{\bfseries}{}{0.5em}{\thmnumber{(#2)} }

\theoremstyle{bracket}
\newtheorem{no}{}[subsection]

\theoremstyle{bracketsm}
\newtheorem{nosm}[no]{}

\theoremstyle{thm}
\newtheorem{prop}[no]{Proposition}
\newtheorem{lemma}[no]{Lemma}
\newtheorem{cor}[no]{Corollary}
\newtheorem{thm}[no]{Theorem}

\theoremstyle{example}


\DeclareMathOperator{\rk}{rk}
\DeclareMathOperator{\cone}{cone}
\DeclareMathOperator{\face}{face}
\DeclareMathOperator{\pic}{Pic}
\DeclareMathOperator{\ke}{Ker}
\DeclareMathOperator{\cok}{Coker}
\DeclareMathOperator{\im}{Im}
\DeclareMathOperator{\degmon}{degm}
\newcommand{\R}{\mathbbm{R}}
\newcommand{\Z}{\mathbbm{Z}}
\newcommand{\N}{\mathbbm{N}}
\newcommand{\sig}{\Sigma}
\newcommand{\catmod}{{\sf Mod}}
\newcommand{\Q}{\mathbbm{Q}}
\newcommand{\hm}[3]{{\rm Hom}_{#1}(#2,#3)}
\newcommand{\alp}{\widehat{\alpha}}
\newcommand{\del}{\widehat{\delta}}
\newcommand{\fleq}{\preccurlyeq}
\newcommand{\flneq}{\prec}
\DeclareMathOperator{\card}{Card}
\newcommand{\Id}{{\rm Id}}
\newcommand{\eps}{\varepsilon}
\DeclareMathOperator{\conv}{conv}

\DeclareMathOperator{\diff}{Diff}
\newcommand{\mon}{{\sf Mon}}
\newcommand{\sch}{{\sf Sch}}
\DeclareMathOperator{\spec}{Spec}
\newcommand{\alg}{{\sf Alg}}
\newcommand{\ann}{{\sf Ann}}
\newcommand{\hmf}[2]{{\sf Hom}(#1,#2)}
\newcommand{\grhm}[4]{{}^{#1}{\rm Hom}_{#2}(#3,#4)}
\newcommand{\grann}{{\sf GrAnn}}
\newcommand{\gralg}{{\sf GrAlg}}
\newcommand{\res}{\!\upharpoonright}
\newcommand{\T}{\mathbbm{T}}
\newcommand{\zs}{\widehat{Z}_{\sigma}}
\newcommand{\zt}{\widehat{Z}_{\tau}}
\newcommand{\grmod}{{\sf GrMod}}
\newcommand{\qcmod}{{\sf QcMod}}
\newcommand{\sat}{{\rm sat}}
\renewcommand{\o}{\mathscr{O}}
\renewcommand{\S}{\mathscr{S}}
\newcommand{\grgam}[2]{{}^{#1}\Gamma_{#2}}
\newcommand{\A}{\mathscr{A}}

\newcommand{\F}{\mathscr{F}}

\newcommand{\pgrmod}{{\sf PGrMod}}
\newcommand{\etaq}{\overline{\eta}}
\newcommand{\dfgl}{\mathrel{\mathop:}=}
\newcommand{\G}{\mathscr{G}}
\newcommand{\prmod}{{\sf PMod}}
\newcommand{\J}{\mathbbm{J}}
\DeclareMathOperator{\Sat}{Sat}
\newcommand{\ilim}{\varinjlim}
\DeclareMathOperator{\proj}{Proj}
\newcommand{\gride}[2]{{}^{#1}D_{#2}}
\newcommand{\gridt}[3]{{}^{#1}D^{#2}_{#3}}
\newcommand{\grloc}[3]{{}^{#1}H^{#2}_{#3}}
\newcommand{\C}{{\sf C}}
\newcommand{\D}{{\sf D}}

\newcommand{\ia}{\mathfrak{a}}
\renewcommand{\L}{\mathbbm{L}}
\newcommand{\grext}[5]{{}^{#1}{\rm Ext}^{#2}_{#3}(#4,#5)}
\newcommand{\U}{\mathbbm{U}}
\newcommand{\Cc}{\mathbbm{C}}

\newdir{ (}{!/5pt/@{ }*@^{(}}
\newdir{ >}{{}*!/-10pt/\dir{>}}

\newcommand{\sq}{\hskip2pt\raisebox{.225ex}{\rule{.8ex}{.8ex}\hskip2pt}}

\newcommand{\snf}{\renewcommand{\thefootnote}{*}\footnotetext{The author was supported by the Swiss National Science Foundation.}}


\begin{document}

\title{Quasicoherent sheaves on toric schemes\protect\snf}
\author{Fred Rohrer}
\address{Universit\"at T\"ubingen, Fachbereich Mathematik, Auf der Morgenstelle 10, 72076 T\"u\-bingen, Germany}
\email{rohrer@mail.mathematik.uni-tuebingen.de}
\subjclass[2010]{Primary 14M25; Secondary 13A02, 13D45, 52B05}
\keywords{Fan, Picard group of a fan, graded ring, graded module, graded local cohomology, Cox ring, Cox scheme, toric scheme, quasicoherent sheaf, toric Serre-Grothendieck correspondence.}

\begin{abstract}
Let $X$ be the toric scheme over a ring $R$ associated with a fan $\sig$. It is shown that there are a group $B$, a $B$-graded $R$-algebra $S$ and a graded ideal $I\subseteq S$ such that there is an essentially surjective, exact functor $\widetilde{\bullet}$ from the category of $B$-graded $S$-modules to the category of quasicoherent $\o_X$-modules that vanishes on $I$-torsion modules and that induces for every $B$-graded $S$-module $F$ a surjection $\Xi_F$ from the set of $I$-saturated graded sub-$S$-modules of $F$ onto the set of quasicoherent sub-$\o_X$-modules of $\widetilde{F}$. If $\sig$ is simplicial, the above data can be chosen such that $\widetilde{\bullet}$ vanishes precisely on $I$-torsion modules and that $\Xi_F$ is bijective for every $F$. In case $R$ is noetherian, a toric version of the Serre-Grothendieck correspondence is proven, relating sheaf cohomology on $X$ with $B$-graded local cohomology with support in $I$.
\end{abstract}

\maketitle


\section*{Introduction}

Let $R$ be a commutative ring, let $S$ be a positively $\Z$-graded $R$-algebra that is generated by finitely many elements of degree $1$, and let $S_+=\bigoplus_{n>0}S_n$ be its irrelevant ideal. We consider the projective $R$-scheme $X=\proj(S)\rightarrow\spec(R)$. It is a fundamental and classical result that there is an essentially surjective, exact functor $\widetilde{\bullet}$ from the category of $\Z$-graded $S$-modules to the category of quasicoherent $\o_X$-modules that vanishes precisely on $S_+$-torsion modules, and that induces for every $\Z$-graded $S$-module $F$ a bijection between the set of $S_+$-saturated graded sub-$S$-modules of $F$ and the set of quasicoherent sub-$\o_X$-modules of $\widetilde{F}$ (\cite[II.2]{ega}). This allows a reasonable translation between projective geometry and $\Z$-graded commutative algebra. Even more, the well-known Serre-Grothendieck correspondence extends this in some sense to yield a translation between sheaf cohomology on $X$ and $\Z$-graded local cohomology with support in $S_+$, provided $R$ is noetherian (\cite[20.3]{bs}). The aim of this article is to present analogous results for toric schemes (which are not necessarily projective), a natural generalisation of toric varieties.

\smallskip

Recall that a \textit{toric variety} (over the field $\Cc$ of complex numbers) is a normal, irreducible, separated $\Cc$-scheme of finite type containing an open torus whose multiplication extends to an algebraic action on the whole variety. If $V$ is an $\R$-vector space of finite dimension, $N$ is a $\Z$-structure on $V$, $M$ is its dual $\Z$-structure on $V^*$, and $\sig$ is an $N$-rational fan in $V$, then these data define a toric variety as follows: If $\sigma$ is a cone in $\sig$, then the set $\sigma^{\vee}\cap M$ of $M$-rational points of its dual is a submonoid of finite type of $M$. Its algebra $\Cc[\sigma^{\vee}\cap M]$ corresponds to a variety $X_{\sigma}(\Cc)$, and the facial relations between the cones in $\sig$ allow to glue these varieties and obtain a toric variety $X_{\sig}(\Cc)$. In fact, one can show that \textit{every} toric variety can be constructed in this way. Besides tori, the class of toric varieties contains affine spaces, projective spaces, weighted projective spaces, or Hirzebruch surfaces, but also singular varieties (e.g., the surface in $\Cc^3$ defined by the equation $x_1x_2=x_3^2$, or the hypersurface in $\Cc^4$ defined by the equation $x_1x_2=x_3x_4$), and non-projective proper varieties (cf.~\cite[Appendice]{demazure}); we refer the reader to \cite{ewald} or \cite{fulton} for a lot more examples of toric varieties.

The above construction of a toric variety from a fan can be performed over an arbitrary ring $R$ instead of $\Cc$, yielding what we call a \textit{toric scheme.} This construction is compatible with base change, and in this sense every toric scheme is obtained by base change from a ``universal'' toric scheme over $\Z$; this is essentially the viewpoint taken by Demazure in \cite{demazure}, where toric varieties make their first appearance in the literature. Even when one is only interested in toric varieties over $\Cc$, performing base change and hence considering toric schemes instead of toric varieties seems necessary in order to attack fundamental geometric questions (and is, from a scheme-theoretical point of view, anyway a very natural thing to do). For example, in order to investigate whether the Hilbert scheme of a toric variety $X_{\sig}(\Cc)$ exists one has to study quasicoherent sheaves on the base change $X_{\sig}(R)$ for \textit{every} $\Cc$-algebra $R$, and similarly for related representability questions. Somewhat less fancy, using base change and hence toric schemes one can reduce the study of toric varieties to the study of toric schemes defined by fans that are not contained in a hyperplane of their ambient space.

Most of the toric literature treats varieties over $\Cc$ (or algebraically closed fields), and only rarely over arbitrary fields. In arithmetic geometry, toric schemes over more general bases (mostly valuation rings) appear more often (cf.~\cite{kkms}, \cite{maillot}, \cite{gil}, \cite{ling}, \cite{gubler}). The author's article \cite{geometry} contains a systematic study of basic properties of toric schemes over arbitrary bases.

\smallskip

Now, let $X=X_{\sig}(R)$ be the toric scheme over $R$ associated with a rational fan $\sig$; for simplicity, we suppose that $\sig$ is not contained in a hyperplane of its ambient space. If $R=\Cc$ there is a partial analogue to the classical result recalled in the first paragraph, due to Cox and Musta\c{t}\v{a} (\cite{cox}, \cite{mustata}). Namely, there are a commutative group $B$, a $B$-graded $\Cc$-algebra $S$, and a graded ideal $I\subseteq S$, all defined in terms of $\sig$, such that there is an essentially surjective, exact functor $\widetilde{\bullet}$ from the category of $B$-graded $S$-modules to the category of quasicoherent $\o_X$-modules (\cite[1.1]{mustata}). If $\sig$ is simplicial we have more: the above data can be chosen such that $\widetilde{\bullet}$ vanishes precisely on those $B$-graded $S$-modules of finite type that are $I$-torsion modules (\cite[3.11]{cox}), and that it induces a bijection between the set of $I$-saturated graded ideals of $S$ and the set of quasicoherent ideals of $\o_X=\widetilde{S}$ (\cite[3.12]{cox}). Finally, there is also a correspondence between sheaf cohomology on $X$ and $B$-graded local cohomology with support in $I$ (\cite[1.3]{mustata}).

Comparing the projective and the toric situation they seem to be so similar as to provoke the question whether the toric results hold over arbitrary base rings and can moreover -- at least in the simplicial case -- be extended to yield bijections between $I$-saturated graded sub-$S$-modules of a graded $S$-module $F$ and quasicoherent sub-$\o_X$-modules of $\widetilde{F}$. Unfortunately, Cox's and Musta\c{t}\v{a}'s proofs use the language of Weil divisors, which is not appropriate over arbitrary base rings. But since the corresponding proofs in the projective case are elementary it is tempting to look for similar proofs in the toric case. In this article we will see that both these hopes -- general results and elementary proofs -- can indeed be fulfilled. Namely, we show that there are a commutative group $B$, a $B$-graded $R$-algebra $S$, and a graded ideal $I\subseteq S$, all defined in terms of $\sig$, such that there is an essentially surjective, exact functor $\widetilde{\bullet}$ from the category of $B$-graded $S$-modules to the category of quasicoherent $\o_X$-modules that vanishes on $I$-torsion modules and that induces for every $B$-graded $S$-module $F$ a surjection $\Xi_F$ from the set of $I$-saturated graded sub-$S$-modules of $F$ onto the set of quasicoherent sub-$\o_X$-modules of $\widetilde{F}$. If $\sig$ is simplicial we have more: the above data can be chosen such that $\widetilde{\bullet}$ vanishes precisely on $I$-torsion modules, that $\Xi_F$ is bijective for every $F$, and that $\widetilde{\bullet}$ preserves the properties of being of finite type, of finite presentation, pseudocoherent, or coherent. Finally, for arbitrary $\sig$, but supposing $R$ to be noetherian, there is a toric version of the Serre-Grothendieck correspondence relating sheaf cohomology on $X$ with $B$-graded local cohomology with support in $I$.

\smallskip

While these results do not seem to be available in the literature, they allegedly can be derived from the general theory of quotients under actions of group schemes. However, our goal is to provide an \textit{elementary and rigorous exposition} of this theory that is accessible without much prerequisites. Therefore, we assume only basic knowledge in algebra (\cite{a}, \cite{ac}), algebraic geometry (\cite[I]{ega}), and -- only in Sections \ref{sub2.3} and \ref{sub4.5} -- homological algebra (\cite{bs}, \cite{tohoku}). Some basic facts from polyhedral geometry are recalled in Section \ref{sub1.1}; for proofs we refer the reader to \cite{bg}. For lack of suitable reference we provide in Section \ref{sec2} the necessary preliminaries on graded commutative algebra.

\smallskip

The structure of the article is as follows: In Section \ref{sec1} we treat the combinatorics of fans and especially the notion of Picard group of a fan. Section \ref{sec2} provides basics on graded commutative algebra, including graded local cohomology. In Section \ref{sec3} we introduce Cox schemes and toric schemes and study their relation. Finally, Section \ref{sec4} contains the main results: the correspondence between graded and quasicoherent modules, and the toric Serre-Grothendieck correspondence. Each section starts with a more detailed overview of its content.

\medskip

\noindent\textit{Notations and conventions:}
In general we use the terminology of Bourbaki's \textit{\'El\'ements de math\'ematique} and Grothendieck's \textit{\'El\'ements de g\'eom\'etrie alg\'ebrique.} Concerning $\delta$-functors we use the terminology of \cite{tohoku}. Monoids and groups are understood to be additively written and commutative, rings are understood to be commutative, and algebras are understood to be commutative, unital and associative. For a ring $R$, an $R$-module $M$ and a subset $E\subseteq M$ we denote by $\langle E\rangle_R$ the sub-$R$-module of $M$ generated by $E$; if $M$ is free then we denote by $\rk_R(M)$ its rank. For a monoid $M$ and a subset $E\subseteq M$ we denote by $\langle E\rangle_{\N}$ the submonoid of $M$ generated by $E$. For a category $\C$ and an object $S$ of $\C$ we denote by $\C_{/S}$ and $\C^{/S}$ the categories of objects of $\C$ over and under $S$, respectively. For a further category $\D$ we denote by $\hmf{\C}{\D}$ the category of functors from $\C$ to $\D$. We denote by $\mon$, $\ann$ and $\sch$ the categories of monoids, rings and schemes, respectively. For a ring $R$ we denote by $\alg(R)$ and $\catmod(R)$ the categories of $R$-algebras and $R$-modules, respectively, and we write $\sch_{/R}$ instead of $\sch_{/\spec(R)}$.


\section{The combinatorics of fans}\label{sec1}

\noindent\textit{Throughout this section let $R\subseteq\R$ be a principal subring, let $K$ denote its field of fractions considered as a subfield of $\R$, and let $V$ be an $\R$-vector space with $n\dfgl\rk_{\R}(V)\in\N$, canonically identified with its bidual $V^{**}$ and considered as an $R$-module by means of scalar restriction.}\smallskip

In \ref{sub1.1} we recall terminology and basic facts about rational structures, polycones, and fans; for proofs we refer the reader to \cite{bg}. We consider rationality with respect to an arbitrary principal subring $R\subseteq\R$, so that we can treat the two cases in which we are mainly interested, $R=\Z$ and $R=\Q$, at once. The key notion of this section, introduced in \ref{sub1.2}, is the Picard group $\pic(\sig)$ of a fan $\sig$. The terminology is of course inspired by toric geometry, for one can show that $\pic(\sig)\cong\pic(X_{\sig}(\Cc))$ (\cite[VII.2.15]{ewald}). In \ref{sub1.3} we relate $\pic(\sig)$ to other groups encoding combinatorial properties of $\sig$, and we embed it in particular into a certain group $A_{\sig_1}$ depending only on the set of $1$-dimensional polycones in $\sig$ (and isomorphic to the divisor class group of $X_{\sig}(\Cc)$ (\cite[3.4]{fulton})). In \ref{sub1.4} we show that $\sig$ is simplicial or regular if and only if $A_{\sig_1}/\pic(\sig)$ is finite or zero, respectively. Our proof is purely combinatorial and moreover reveals combinatorial conditions for simpliciality or regularity of single polycones in the fan.

\subsection{Rational structures, polycones, and fans}\label{sub1.1}

\begin{no}\label{1.10}
An \textit{$R$-structure on $V$} is a sub-$R$-module $W\subseteq V$ such that the canonical morphism of $\R$-vector spaces $\R\otimes_RW\rightarrow V$ with $a\otimes x\mapsto ax$ is an isomorphism, or -- equivalently -- that $\langle W\rangle_{\R}=V$ and\footnote{Note that $W$ is free since $R$ is principal (\cite[VII.3 Th\'eor\`eme 1 Corollaire 2]{a}).} $\rk_R(W)=n$; if $R$ is a field this coincides with the notion of $R$-structure in \cite[II.8.1]{a}. For a sub-$R$-module $W\subseteq V$ we denote the image of the canonical monomorphism of $K$-vector spaces $K\otimes_RW\rightarrow V$ with $a\otimes x\mapsto ax$ by $W_K$. So, $W$ is an $R$-structure on $V$ if and only if $W_K$ is a $K$-structure on $V$, and then $W$ is a lattice of $W_K$ in the sense of \cite[VII.4.1]{ac}.

Let $W$ be an $R$-structure on $V$. A sub-$\R$-vector space $V'\subseteq V$ is called \textit{$W$-rational} if it has a basis contained in $W$; then, $W\cap V'$ is an $R$-structure on $V'$ (\cite[II.8.2]{a}). A closed halfspace of $V$ defined by a $W$-rational linear hyperplane in $V$ is called a \textit{closed $W$-halfspace (of $V$)}. The dual $W^*$ of $W$ can and will be canonically identified with an $R$-structure on the dual $V^*$ of $V$; then, the identification of $V$ with $V^{**}$ identifies $W$ with its bidual (\cite[II.8.4]{a}, \cite[VII.4.1 Proposition 3]{ac}).
\end{no}

\begin{no}\label{1.25}
Let $W$ be an $R$-structure on $V$. For $A\subseteq V$ we denote by $A^{\perp}$ the orthogonal $\{u\in V^*\mid u(A)\subseteq 0\}$ and by $A^{\vee}$ the dual $\{u\in V^*\mid u(A)\subseteq\R_{\geq 0}\}$ of $A$, and we set $A^{\vee}_{W^*}\dfgl A^{\vee}\cap W^*$ and $A^{\perp}_{W^*}\dfgl A^{\perp}\cap W^*$; in case $A=\{x\}$ we write $x^{\perp}$ and $x^{\vee}$. We denote by $\conv(A)$ the convex hull of $A$ and by $\cone(A)$ the conic hull of $A$, i.e., the set of $\R$-linear combinations of $A$ with coefficients in $\R_{\geq 0}$.

A subset $\sigma\subseteq V$ is called a \textit{$W$-polycone (in $V$)} if it is the intersection of finitely many closed $W$-halfspaces in $V$, or -- equivalently -- if there is a finite subset $A\subseteq W$ with $\sigma=\cone(A)$. A subset $\sigma\subseteq V$ is a $W$-polycone if and only if it is a $W_K$-polycone; then, its dual $\sigma^{\vee}$ is a $W^*$-polycone and $\sigma^{\vee\vee}=\sigma$.

Let $\sigma$ be a $W$-polycone. A subset $A\subseteq W$ with $\sigma=\cone(A)$ is called a \textit{$W$-generating set of $\sigma$.} The $W$-polycone $\sigma$ is called \textit{simplicial} if it has a $V$-generating set that is free over $\R$, or -- equivalently -- a $W$-generating set that is free over $R$; it is called \textit{$W$-regular} if it has a $W$-generating set contained in an $R$-basis of $W$. A $W$-regular $W$-polycone is simplicial and a simplicial $W$-polycone is $W_K$-regular, so the properties of $\sigma$ to be $W_K$-regular, simplicial or $V$-regular are equivalent. The $W$-polycone $\sigma$ is called \textit{sharp} if it does not contain a line, and \textit{full (in $V$)} if $\langle\sigma\rangle_{\R}=V$; moreover, $\dim(\sigma)\dfgl\rk_{\R}(\langle\sigma\rangle_{\R})$ is called \textit{the dimension of $\sigma$.} A \textit{face of $\sigma$} is a subset $\tau\subseteq\sigma$ such that there exists $u\in\sigma^{\vee}$ (or -- equivalently -- $u\in\sigma^{\vee}_{W^*}$) with $\tau=\sigma\cap u^{\perp}$. The set $\face(\sigma)$ of faces of $\sigma$ consists of finitely many $W$-polycones. The relation ``$\tau$ is a face of $\sigma$'' is an ordering on the set of $W$-polycones, denoted by $\tau\fleq\sigma$.
\end{no}

\begin{no}\label{1.35}
Let $W$ be an $R$-structure on $V$ and let $\rho$ be a $1$-dimensional sharp $W$-polycone. The relation $x\in\conv(\{0,y\})$ is a total ordering on $\rho$. A $\subseteq$-minimal $W$-generating set of $\rho$ has a unique element. If this element is minimal in $W\cap\rho\setminus 0$ with respect to the ordering induced from the above ordering on $\rho$ then it is uniquely determined by $\rho$, called \textit{the $W$-minimal $W$-generator of $\rho$,} and denoted by $\rho_W$. In general, $W$-minimal $W$-generators do not exist, but if $R=\Z$ they do.\footnote{In fact, $\rho$ has a $W$-minimal $W$-generator if and only if $R=\Z$.}
\end{no}

\begin{no}\label{1.40}
Let $W$ be an $R$-structure on $V$. A \textit{$W$-fan (in $V$)} is a finite set $\sig$ of sharp $W$-polycones such that $\sigma\cap\tau\in\face(\sigma)\subseteq\sig$ for all $\sigma,\tau\in\sig$. If $\sig$ is a $W$-fan then the sub-$\R$-vector space $\langle\sig\rangle\dfgl\langle\bigcup\sig\rangle_{\R}\subseteq V$ is $W$-rational and $\cone(\sig)\dfgl\cone(\bigcup\sig)$ is a $W$-polycone. We call $\dim(\sig)\dfgl\rk_{\R}(\langle\sig\rangle)$ \textit{the dimension of $\sig$.} By means of the relation $\tau\fleq\sigma$ we consider $\sig$ as an ordered set, and then it is a lower semilattice\footnote{i.e., every nonempty finite subset has an infimum}. We denote for $k\in\Z$ by $\sig_k$ the set of $k$-dimensional $W$-polycones in $\sig$, and by $\sig_{\max}$ the set of $\fleq$-maximal elements of $\sig$. A $W$-fan $\sig$ is called \textit{relatively full-dimensional} if $\sig_{\dim(\sig)}\neq\emptyset$, \textit{relatively skeletal complete} if $\cone(\sig)=\langle\sig\rangle$, \textit{complete (in $V$)} if $\bigcup\sig=V$, \textit{full (in $V$)} if $\langle\sig\rangle=V$, and \textit{full-dimensional (in $V$)} if $\sig_n\neq\emptyset$; it is called \textit{$W$-regular} or \textit{simplicial} if every $\sigma\in\sig$ is $W$-regular or simplicial, respectively.

If $\sigma$ is a sharp $W$-polycone then $\face(\sigma)$ is a $W$-fan, called \textit{the facial fan of $\sigma$;} for $k\in\Z$ we write by abuse of language $\sigma_k$ instead of $\face(\sigma)_k$. A $W$-fan is called \textit{affine} if it is empty or the facial fan of a sharp $W$-polycone, i.e., if $\card(\sig_{\max})\leq 1$.
\end{no}

\begin{no}\label{1.45}
Let $W$ be an $R$-structure on $V$, let $\sig$ be a $W$-fan, let $V'\dfgl\langle\sig\rangle$, and let $W'\dfgl W\cap V'$ be the $R$-structure on $V'$ induced by $W$. We denote by $\sig'$ the set $\sig$ considered as a full $W'$-fan, and we call it \textit{the full fan associated with $\sig$.} If we consider $\sigma\in\sig$ as an element of $\sig'$, i.e., as a $W'$-polycone in $V'$, then we denote it by $\sigma'$.
\end{no}


\subsection{Picard modules}\label{sub1.2}

\noindent\textit{Let $W$ be an $R$-structure on $V$ and let $\sig$ be a $W$-fan. Morphisms, sequences and diagrams are understood to be morphisms, sequences and diagrams of $R$-modules.}

\begin{no}\label{1.70}
The diagonal morphism $\Delta_{W,\sig}\colon W^*\rightarrow(W^*)^{\sig}$ induces by restriction and coastriction a morphism $\Delta'_{W,\sig}\colon \langle\sig\rangle^{\perp}_{W^*}\rightarrow\prod_{\sigma\in\sig}\sigma^{\perp}_{W^*}$. Taking cokernels yields a commutative diagram with exact rows $$\xymatrix@R15pt@C40pt{0\ar[r]&\langle\sig\rangle^{\perp}_{W^*}\ar@{ (->}[r]\ar[d]_{\Delta'_{W,\sig}}&W^*\ar[r]^(.46){f'_{W,\sig}}\ar[d]^{\Delta_{W,\sig}}&K_{W,\sig}\ar[r]\ar@{->}[d]^{\Delta''_{W,\sig}}&0\\0\ar[r]&\prod_{\sigma\in\sig}\sigma^{\perp}_{W^*}\ar@{ (->}[r]&(W^*)^{\sig}\ar[r]^{h_{W,\sig}}&P_{W,\sig}\ar[r]&0.}$$ Its left quadrangle is cartesian. Hence, $\Delta''_{W,\sig}$ is a monomorphism by means of which we consider $K_{W,\sig}$ as a sub-$R$-module of $P_{W,\sig}$. In particular, we can choose the above cokernels such that the set underlying $P_{W,\sig}$ is the set of families $(m_{\sigma}+\sigma^{\vee})_{\sigma\in\sig}$ of subsets of $V^*$ with $(m_{\sigma})_{\sigma\in\sig}\in(W^*)^{\sig}$, that $h_{W,\sig}$ is given by $(m_{\sigma})_{\sigma\in\sig}\mapsto(m_{\sigma}+\sigma^{\vee})_{\sigma\in\sig}$, and that $K_{W,\sig}=\im(h_{W,\sig}\circ\Delta_{W,\sig})\subseteq P_{W,\sig}$. Clearly, $f'_{W,\sig}$ is an isomorphism if and only if $\sig$ is full, and then we denote its inverse by $f_{W,\sig}\colon K_{W,\sig}\rightarrow W^*$.

If $(m_{\sigma})_{\sigma\in\sig},(m'_{\sigma})_{\sigma\in\sig}\in(W^*)^{\sig}$ with $h_{W,\sig}((m_{\sigma})_{\sigma\in\sig})=h_{W,\sig}((m'_{\sigma})_{\sigma\in\sig})$, then for $\tau\fleq\sigma\in\sig$ the relations $m_{\sigma}-m_{\tau}\in\tau^{\perp}$ and $m'_{\sigma}-m'_{\tau}\in\tau^{\perp}$ are equivalent. This allows to define a \textit{virtual polytope over $\sig$ (with respect to $W$)} as an element $(m_{\sigma}+\sigma^{\vee})_{\sigma\in\sig}\in P_{W,\sig}$ such that for every $\sigma\in\sig$ and every $\tau\fleq\sigma$ (or -- equivalently -- every $\tau\in\sigma_1$) we have $m_{\sigma}-m_{\tau}\in\tau^{\perp}$. The set $\overline{P}_{W,\sig}$ of virtual polytopes over $\sig$ is a sub-$R$-module of $P_{W,\sig}$ containing $K_{W,\sig}$; it is of finite type since $R$ is principal. We denote by $g_{W,\sig}\colon K_{W,\sig}\hookrightarrow\overline{P}_{W,\sig}$ the canonical injection and by $e_{W,\sig}\colon \overline{P}_{W,\sig}\rightarrow\pic_W(\sig)$ its cokernel, obtaining an exact sequence $$0\longrightarrow K_{W,\sig}\xrightarrow{g_{W,\sig}}\overline{P}_{W,\sig}\xrightarrow{e_{W,\sig}}\pic_W(\sig)\longrightarrow 0.$$ The $R$-module $\pic_W(\sig)$ is called \textit{the Picard module of $\sig$ (with respect to $W$);} it is of finite type. 
\end{no}

\begin{no}\label{1.80}
Let $V'\dfgl\langle\sig\rangle$ and $W'\dfgl W\cap V'$, and let $\sig'$ denote the full fan associated with $\sig$ (\ref{1.45}). The dual of the canonical injection $W'\hookrightarrow W$ is an epimorphism $\alpha\colon W^*\rightarrow(W')^*$ with kernel $\langle\sig\rangle^{\perp}_{W^*}$ that induces by corestriction and astriction an isomorphism $\alpha'\colon K_{W,\sig}\rightarrow K_{W',\sig'}$ such that the diagram $$\xymatrix@R15pt@C40pt{W^*\ar[r]^(.46){f'_{W,\sig}}\ar[d]_{\alpha}&K_{W,\sig}\ar[d]^{\cong}_{\alpha'}\\(W')^*\ar[r]^(.49){f'_{W',\sig'}}&K_{W',\sig'}}$$ commutes. It also induces an epimorphism $\beta\colon (W^*)^{\sig}\rightarrow((W')^*)^{\sig}$ with kernel $(\langle\sig\rangle^{\perp}_{W^*})^{\sig}$, inducing by restriction and coastriction an epimorphism $\prod_{\sigma\in\sig}\sigma^{\perp}_{W^*}\rightarrow\prod_{\sigma'\in\sig'}\sigma'^{\perp}_{(W')^*}$ with the same kernel. Hence, $\beta$ induces by corestriction and astriction an isomorphism $\beta'\colon P_{W,\sig}\rightarrow P_{W',\sig'}$ such that the diagram $$\xymatrix@R15pt@C40pt{(W^*)^{\sig}\ar[r]^{h_{W,\sig}}\ar[d]_{\beta}&P_{W,\sig}\ar[d]^{\cong}_{\beta'}\\((W')^*)^{\sig}\ar[r]^(.52){h_{W',\sig'}}&P_{W',\sig'}}$$ commutes. This induces by restriction and coastriction an isomorphism $\gamma\colon\overline{P}_{W,\sig}\rightarrow\overline{P}_{W',\sig'}$ such that the diagram $$\xymatrix@R15pt@C40pt{K_{W,\sig}\ar@{ (->}[r]\ar[d]^{\cong}_{\alpha'}&\overline{P}_{W,\sig}\ar@{ (->}[r]\ar[d]^{\cong}_{\gamma}&P_{W,\sig}\ar[d]^{\cong}_{\beta'}\\K_{W',\sig'}\ar@{ (->}[r]&\overline{P}_{W',\sig'}\ar@{ (->}[r]&P_{W',\sig'}}$$ commutes, and that induces by corestriction and astriction an isomorphism $\delta\colon\pic_W(\sig)\rightarrow\pic_{W'}(\sig')$ such that the diagram $$\xymatrix@R15pt@C40pt{\overline{P}_{W,\sig}\ar[r]^(.44){e_{W,\sig}}\ar[d]^{\cong}_{\gamma}&\pic_W(\sig)\ar[d]^{\cong}_{\delta}\\\overline{P}_{W',\sig'}\ar[r]^(.45){e_{W',\sig'}}&\pic_{W'}(\sig')}$$ commutes.
\end{no}

\begin{prop}\label{1.90}
If $\sig$ is relatively full-dimensional then $\pic_W(\sig)$ is free.
\end{prop}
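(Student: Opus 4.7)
The plan is to reduce to the case where $\sig$ is full in $V$, then use the relative full-dimensionality to split the defining short exact sequence of $\pic_W(\sig)$, and finally observe that the ambient module is finitely generated and torsion-free over the principal ring $R$.

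First, by \ref{1.80} I would pass to the associated full fan $\sig'$ on $V'\dfgl\langle\sig\rangle$ (with $W'\dfgl W\cap V'$), using the isomorphism $\pic_W(\sig)\cong\pic_{W'}(\sig')$. Under this reduction ``relatively full-dimensional'' becomes ``full-dimensional'', so one may assume $\sig$ is full in $V$ and that there exists $\sigma_0\in\sig$ with $\dim(\sigma_0)=n$. Then $\sigma_0^{\perp}=0$, hence $\sigma_0^{\perp}_{W^*}=0$, and $f'_{W,\sig}\colon W^*\to K_{W,\sig}$ is an isomorphism.

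Second, the projection $\pi_{\sigma_0}\colon(W^*)^{\sig}\to W^*$ onto the $\sigma_0$-component annihilates $\prod_{\sigma\in\sig}\sigma^{\perp}_{W^*}$ (since $\sigma_0^{\perp}_{W^*}=0$) and hence descends to a morphism $p\colon P_{W,\sig}\to W^*$, which restricts to $q\colon\overline{P}_{W,\sig}\to W^*$. The tautological identity $\pi_{\sigma_0}\circ\Delta_{W,\sig}=\Id_{W^*}$ gives $p\circ h_{W,\sig}\circ\Delta_{W,\sig}=\Id_{W^*}$, so $q$ is a retraction for the composite $W^*\xrightarrow{f'_{W,\sig}}K_{W,\sig}\xrightarrow{g_{W,\sig}}\overline{P}_{W,\sig}$. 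Consequently the short exact sequence $0\to K_{W,\sig}\to\overline{P}_{W,\sig}\to\pic_W(\sig)\to 0$ splits, and $\pic_W(\sig)$ is a direct summand of $\overline{P}_{W,\sig}$.

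Third, I would observe that $P_{W,\sig}=\prod_{\sigma\in\sig}W^*/\sigma^{\perp}_{W^*}$ is $R$-torsion-free: since $\sigma^{\perp}_{W^*}=\sigma^{\perp}\cap W^*$ is saturated in $W^*$, each factor embeds into the $\R$-vector space $V^*/\sigma^{\perp}$. Therefore $\overline{P}_{W,\sig}\subseteq P_{W,\sig}$ is torsion-free, and by \ref{1.70} it is of finite type; since $R$ is a principal ring this forces freeness. As a direct summand of the free module $\overline{P}_{W,\sig}$, the Picard module $\pic_W(\sig)$ is then itself free. The only genuine step is the construction of the retraction; the role of the relative full-dimensionality hypothesis is precisely to furnish the full-dimensional cone $\sigma_0$ that makes this retraction available.
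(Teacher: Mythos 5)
Your proof is correct, and it differs in packaging rather than in essence from the paper's. Both reduce to the full case via \ref{1.80} and both exploit the full-dimensional cone $\omega\in\sig_n$ through the fact that $\omega^{\perp}=0$, so that the $\omega$-component of an element of $\overline{P}_{W,\sig}$ is a well-defined element of $W^*$. The paper works element-by-element: given a would-be torsion element, it replaces $m$ by the $\omega$-component $rm_{\omega}$ and checks directly that $(m_{\sigma}+\sigma^{\vee})_{\sigma}\in K_{W,\sig}$, concluding that $\pic_W(\sig)$ is torsion-free and hence, being of finite type over the principal ring $R$, free. You upgrade that same normalization into an explicit retraction $q\colon\overline{P}_{W,\sig}\rightarrow W^*$ of $g_{W,\sig}\circ f'_{W,\sig}$, which splits the defining exact sequence, so $\pic_W(\sig)$ becomes a direct summand of $\overline{P}_{W,\sig}$; you then correctly verify that $\overline{P}_{W,\sig}$ is torsion-free (since each $W^*/\sigma^{\perp}_{W^*}$ is, $\sigma^{\perp}$ being a subspace) and of finite type, hence free, and a summand of a finite-rank free module over a principal ring is free. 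Your version establishes the slightly stronger structural fact $\overline{P}_{W,\sig}\cong K_{W,\sig}\oplus\pic_W(\sig)$ at the cost of a bit more bookkeeping, whereas the paper's argument is terser and obtains exactly the torsion-freeness it needs.
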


\begin{proof}
We can suppose that $\sig$ is full (\ref{1.80}), so that there exists $\omega\in\sig_n$. Let $(m_{\sigma})_{\sigma\in\sig}\in(W^*)^{\sig}$ and $r\in R\setminus 0$ such that there exists $m\in W^*$ with $(rm_{\sigma}+\sigma^{\vee})_{\sigma\in\sig}=(m+\sigma^{\vee})_{\sigma\in\sig}$. We have $rm_{\omega}-m\in\omega^{\perp}=0$, hence $(rm_{\sigma}+\sigma^{\vee})_{\sigma\in\sig}=(rm_{\omega}+\sigma^{\vee})_{\sigma\in\sig}$, implying $r(m_{\sigma}-m_{\omega})\in\sigma^{\perp}$ and therefore $m_{\sigma}-m_{\omega}\in\sigma^{\perp}$ for every $\sigma\in\sig$. It follows $(m_{\sigma}+\sigma^{\vee})_{\sigma\in\sig}=(m_{\omega}+\sigma^{\vee})_{\sigma\in\sig}\in K_{W,\sig}$. Thus, $\pic_W(\sig)$ is torsionfree, and as $R$ is principal we get the claim (\ref{1.70}, \cite[VII.4.4 Th\'eor\`eme 2 Corollaire 2]{a}).
\end{proof}

\begin{nosm}\label{1.100}
The $R$-module $\pic_W(\sig)$ is not necessarily free, even if $\sig$ is $W$-regular. A counterexample is given by the $\Z^2$-regular $\Z^2$-fan $\sig$ in $\R^2$ with maximal polycones $\cone(1,0)$ and $\cone(1,2)$, for $\pic_{\Z^2}(\sig)\cong\Z/2\Z$.

If $\pic_W(\sig)$ is free then $\sig$ is not necessarily relatively full-dimensional, even if $\sig$ is $W$-regular and relatively skeletal complete. A counterexample is given by the $\Z^2$-regular, relatively skeletal complete $\Z^2$-fan $\sig$ in $\R^2$ with maximal polycones $\cone(1,0)$, $\cone(0,1)$ and $\cone(-1,-1)$, for $\pic_{\Z^2}(\sig)\cong\Z$.\footnote{cf.~\ref{1.395}}
\end{nosm}

\begin{prop}\label{1.120}
For $\omega\in\sig$ and $\alpha\in\pic_W(\sig)$ there exists a unique $(m_{\sigma}+\sigma^{\vee})_{\sigma\in\sig}\in\overline{P}_{W,\sig}$ with $e_{W,\sig}((m_{\sigma}+\sigma^{\vee})_{\sigma\in\sig})=\alpha$ such that $m_{\sigma}=0$ for every $\sigma\fleq\omega$.
\end{prop}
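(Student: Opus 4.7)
The argument naturally splits into existence and uniqueness. For existence, I would start from any $(n_{\sigma}+\sigma^{\vee})_{\sigma\in\sig}\in\overline{P}_{W,\sig}$ with $e_{W,\sig}((n_{\sigma}+\sigma^{\vee})_{\sigma})=\alpha$ (available since $e_{W,\sig}$ is surjective as the cokernel of $g_{W,\sig}$) and translate by the element $h_{W,\sig}(\Delta_{W,\sig}(n_{\omega}))\in K_{W,\sig}$, which leaves the Picard class unchanged. Explicitly, put $m_{\sigma}\dfgl n_{\sigma}-n_{\omega}$ for every $\sigma\in\sig$. The resulting tuple is again a virtual polytope, since for $\tau\fleq\sigma$ one computes $m_{\sigma}-m_{\tau}=n_{\sigma}-n_{\tau}\in\tau^{\perp}$, and it still represents $\alpha$. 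For $\sigma\fleq\omega$, the virtual polytope condition on $(n_{\sigma})_{\sigma}$ applied to the pair $\sigma\fleq\omega$ yields $n_{\omega}-n_{\sigma}\in\sigma^{\perp}$, hence $m_{\sigma}\in\sigma^{\perp}$ and thus $m_{\sigma}+\sigma^{\vee}=\sigma^{\vee}$, so one may take $m_{\sigma}=0$ in those coordinates.

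For uniqueness, suppose $(m_{\sigma}+\sigma^{\vee})_{\sigma}$ and $(m'_{\sigma}+\sigma^{\vee})_{\sigma}$ both satisfy the stated conditions. Their difference belongs to $K_{W,\sig}=\im(h_{W,\sig}\circ\Delta_{W,\sig})$, so there is $m\in W^{*}$ with $m_{\sigma}-m'_{\sigma}-m\in\sigma^{\perp}$ for every $\sigma\in\sig$. Evaluated at $\sigma=\omega$, where $m_{\omega}=m'_{\omega}=0$, this forces $m\in\omega^{\perp}$. To conclude that $m_{\sigma}-m'_{\sigma}\in\sigma^{\perp}$ for arbitrary $\sigma$, I would invoke that $\sig$ is a lower semilattice (\ref{1.40}), so that $\tau\dfgl\sigma\cap\omega$ lies in $\sig$ and is a common face of both $\sigma$ and $\omega$. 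The normalisation $m_{\tau}=m'_{\tau}=0$ together with the virtual polytope conditions $m_{\sigma}-m_{\tau}\in\tau^{\perp}$ and $m'_{\sigma}-m'_{\tau}\in\tau^{\perp}$ gives $m_{\sigma},m'_{\sigma}\in\tau^{\perp}$, hence $m_{\sigma}-m'_{\sigma}\in\tau^{\perp}$; combining this with the congruence $m_{\sigma}-m'_{\sigma}\equiv m\pmod{\sigma^{\perp}}$ and the inclusion $m\in\omega^{\perp}\subseteq\tau^{\perp}$ (which holds because $\tau\fleq\omega$) will close the loop and pin down the $\sigma$-coordinate.

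The existence is essentially a translation trick, centring the representative so that each face of $\omega$ inherits the zero coset; the virtual polytope condition on the original representative automatically does the rest. The main obstacle is the uniqueness step, where one must convert the single constraint $m\in\omega^{\perp}$ into the family of constraints $m_{\sigma}-m'_{\sigma}\in\sigma^{\perp}$ indexed over all $\sigma\in\sig$; the combinatorial bridge is the intersection cone $\tau=\sigma\cap\omega\in\sig$, at which both normalised tuples already vanish, linking the constraint at $\omega$ to the desired statement at $\sigma$ via the virtual polytope relations along $\tau\fleq\sigma$.
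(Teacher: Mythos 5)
Your existence argument is correct and is essentially the paper's: translate the chosen preimage by $h_{W,\sig}(\Delta_{W,\sig}(n_\omega))\in K_{W,\sig}$ so that $m_\omega=0$, use the relations $n_\omega-n_\sigma\in\sigma^\perp$ (for $\sigma\fleq\omega$) to see that all face coordinates land in $\sigma^\perp$, and then normalise the representatives on $\face(\omega)$ to $0$. Your version is if anything cleaner than the paper's, which as printed sets $m_\sigma:=m'_\omega$ for $\sigma\fleq\omega$ rather than $m_\sigma:=m'_\sigma-m'_\omega$, apparently omitting exactly the subtraction you make explicit.

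The uniqueness step, however, does not close, and the obstacle you flagged is genuine. After producing $m\in\omega^\perp_{W^*}$ with $m_\sigma-m'_\sigma\equiv m\pmod{\sigma^\perp}$ for all $\sigma$, you would need $m\in\sigma^\perp$ to conclude; but every relation you extract through the face $\tau=\sigma\cap\omega$ holds only modulo $\tau^\perp\supseteq\sigma^\perp$, so you merely recover $m\in\tau^\perp$, which you already knew, and nothing pins $m$ down modulo the possibly strictly smaller $\sigma^\perp$. No further argument can supply $m\in\sigma^\perp$, because uniqueness as stated is in fact false whenever $\langle\omega\rangle\neq\langle\sig\rangle$: the admissible elements of $\overline{P}_{W,\sig}$ form a coset of $h_{W,\sig}(\Delta_{W,\sig}(\omega^\perp_{W^*}))$, and that subgroup is trivial iff $\omega^\perp_{W^*}\subseteq\langle\sig\rangle^\perp_{W^*}$. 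Concretely, take $N=W=\Z^2$, $\sig=\{0,\cone(1,0),\cone(0,1)\}$, $\omega=\cone(1,0)$, $\alpha=0$; then for each $a\in\Z$ the element $h_{W,\sig}(\Delta_{W,\sig}((0,a)))$ satisfies the stated conditions, and these are pairwise distinct in $\overline{P}_{W,\sig}$. The paper itself only invokes the existence half of \ref{1.120} (in \ref{1.210} and \ref{3.330}), so your proof covers what is actually used; the uniqueness assertion would need to be restricted to $\omega\in\sig_{\dim(\sig)}$.
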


\begin{proof}
Let $p=(m'_{\sigma}+\sigma^{\vee})_{\sigma\in\sig}\in e_{W,\sig}^{-1}(\alpha)$. Set $m_{\sigma}\dfgl m'_{\omega}$ for $\sigma\fleq\omega$ and $m_{\sigma}\dfgl m'_{\sigma}$ for $\sigma\in\sig\setminus\face(\omega)$. It is readily checked that $(m_{\sigma}+\sigma^{\vee})_{\sigma\in\sig}$ has the desired properties.
\end{proof}


\subsection{The diagram of a fan}\label{sub1.3}

\noindent\textit{Let $N$ be a $\Z$-structure on $V$, let $M\dfgl N^*$, let $W$ denote the image of the canonical morphism of $R$-modules $R\otimes_{\Z}N\rightarrow V$ with $a\otimes x\mapsto ax$ (which is an $R$-structure on $V$), and let $\sig$ be an $N$-fan. If not specified otherwise, morphisms, sequences and diagrams are understood to be morphisms, sequences and diagrams of $R$-modules.}

\begin{no}\label{1.140}
The family $(\rho_N)_{\rho\in\sig_1}$ of $N$-minimal $N$-generators of the $1$-dimensional polycones in $\sig$ (\ref{1.35}), considered as a family in $\hm{R}{W^*}{R}$, corresponds to a morphism\footnote{Although the modules and morphisms constructed in the following depend not only on $W$ but on $N$ and $R$, we do not indicate this in the notation for the sake of readability.} $$c_{W,\sig_1}\colon W^*\rightarrow R^{\sig_1},\;m\mapsto(\rho_N(m))_{\rho\in\sig_1}.$$ Denoting its cokernel by $a_{W,\sig_1}\colon R^{\sig_1}\rightarrow A_{W,\sig_1}$ we obtain an exact sequence $$W^*\xrightarrow{c_{W,\sig_1}}R^{\sig_1}\xrightarrow{a_{W,\sig_1}}A_{W,\sig_1}\longrightarrow 0.$$ As $\ke(c_{W,\sig_1})=\langle\sig\rangle^{\perp}_{W^*}$ we see that $c_{W,\sig_1}$ is a monomorphism if and only if $\sig$ is full.

We denote the canonical basis of the free $R$-module $R^{\sig_1}$ by $(\delta_{\rho})_{\rho\in\sig_1}$. For $\rho\in\sig_1$ we set $\alpha_{\rho}\dfgl a_{W,\sig_1}(\delta_{\rho})\in A_{W,\sig_1}$. For $\sigma\in\sig$ we set $$\del_{\sigma}\dfgl\sum_{\rho\in\sig_1\setminus\sigma_1}\delta_{\rho}\in R^{\sig_1}$$ and $$\alp_{\sigma}\dfgl a_{W,\sig_1}(\del_{\sigma})=\sum_{\rho\in\sig_1\setminus\sigma_1}\alpha_{\rho}\in A_{W,\sig_1},$$ and moreover $A^{\sigma}_{W,\sig}\dfgl\langle\alpha_{\rho}\mid\rho\in\sig_1\setminus\sigma_1\rangle_R\subseteq A_{W,\sig_1}$. Then, $A_{W,\sig_1}=\langle\alp_{\sigma}\mid\sigma\in\sig\rangle_R$, and as for $\tau\fleq\sigma\in\sig$ we have $A^{\sigma}_{W,\sig}\subseteq A^{\tau}_{W,\sig}$ it follows $$\bigcap_{\sigma\in\sig}A^{\sigma}_{W,\sig}=\bigcap_{\sigma\in\sig_{\max}}A^{\sigma}_{W,\sig}\subseteq A_{W,\sig_1}.$$

Composition of the canonical projection $(W^*)^{\sig}\rightarrow(W^*)^{\sig_1}$ with $\prod_{\rho\in\sig_1}\rho_N\colon(W^*)^{\sig_1}\rightarrow R^{\sig_1}$ yields a morphism $d'_{W,\sig}\colon(W^*)^{\sig}\rightarrow R^{\sig_1}$ with $d'_{W,\sig}\circ\Delta_{W,\sig}=c_{W,\sig_1}$, inducing by astriction a morphism $d''_{W,\sig}\colon P_{W,\sig}\rightarrow R^{\sig_1}$ with $d''_{W,\sig}\circ h_{W,\sig}=d'_{W,\sig}$. Restriction yields a morphism $d_{W,\sig}\colon\overline{P}_{W,\sig}\rightarrow R^{\sig_1}$ with $d_{W,\sig}\circ g_{W,\sig}\circ h_{W,\sig}\circ\Delta_{W,\sig}=c_{W,\sig_1}$, and this induces by corestriction and astriction a morphism $b_{W,\sig}\colon\pic_W(\sig)\rightarrow A_{W,\sig_1}$ such that we have a commutative diagram with exact rows $$\xymatrix@R15pt@C40pt{0\ar[r]&K_{W,\sig}\ar[r]^(.49){g_{W,\sig}}&\overline{P}_{W,\sig}\ar[r]^(.45){e_{W,\sig}}\ar[d]^{d_{W,\sig}}&\pic_W(\sig)\ar[r]\ar[d]^{b_{W,\sig}}&0\\&W^*\ar[r]^(.49){c_{W,\sig_1}}\ar[u]^{f'_{W,\sig}}&R^{\sig_1}\ar[r]^(.47){a_{W,\sig_1}}&A_{W,\sig_1}\ar[r]&0.}$$ This diagram is denoted by $\mathbbm{D}_{W,\sig}$ and called \textit{the diagram of $\sig$ (with respect to $N$ and $R$).} If $\sig$ is full then we get from it a commutative diagram with exact rows $$\xymatrix@R15pt@C40pt{0\ar[r]&K_{W,\sig}\ar[d]_{f_{W,\sig}}\ar[r]^(.49){g_{W,\sig}}&\overline{P}_{W,\sig}\ar[r]^(.45){e_{W,\sig}}\ar[d]^{d_{W,\sig}}&\pic_W(\sig)\ar[r]\ar[d]^{b_{W,\sig}}&0\\0\ar[r]&W^*\ar[r]^(.49){c_{W,\sig_1}}&R^{\sig_1}\ar[r]^(.47){a_{W,\sig_1}}&A_{W,\sig_1}\ar[r]&0.}$$ It is clear that $\mathbbm{D}_{W,\sig}\otimes_RK=\mathbbm{D}_{W_K,\sig}$.

In case $R=\Z$ (and hence $W=N$) we omit $W$ in all the above notations and write for example $\mathbbm{D}_{\sig}$, $\pic(\sig)$ and $A_{\sig_1}$ instead of $\mathbbm{D}_{N,\sig}$, $\pic_N(\sig)$ and $A_{N,\sig_1}$.
\end{no}

\begin{no}\label{1.160}
Let $V'\dfgl\langle\sig\rangle$, $N'\dfgl N\cap V'$ and $W'\dfgl W\cap V'$, and let $\sig'$ denote the full fan associated with $\sig$ (\ref{1.45}). By \ref{1.80}, the diagram $$\xymatrix@R10pt@C30pt{&K_{W,\sig}\ar[ld]_{\alpha'}^{\cong}\ar[rr]^{g_{W,\sig}}&&\overline{P}_{W,\sig}\ar[ld]^{\cong}_{\gamma}\ar[dd]^{d_{W,\sig}}\\K_{W',\sig'}\ar[rr]^(.7){g_{W',\sig'}}&&\overline{P}_{W',\sig'}\ar[dd]^(.3){d_{W',\sig'}}&\\&W^*\ar[ld]_{\alpha}\ar[uu]^(.3){f'_{W,\sig}}\ar[rr]^(.3){c_{W,\sig_1}}&&R^{\sig_1}\ar@{=}[ld]\\(W')^*\ar[uu]^{f'_{W',\sig'}}\ar[rr]_{c_{W',\sig'_1}}&&R^{\sig_1}&}$$ commutes. Hence, there is an isomorphism $\eps\colon A_{W,\sig_1}\rightarrow A_{W,\sig'_1}$ such that the diagram $$\xymatrix@R10pt@C30pt{&\overline{P}_{W,\sig}\ar[rr]^{e_{W,\sig}}\ar[ld]_{\cong}^{\gamma}\ar[dd]^(.3){d_{W,\sig}}&&\pic_W(\sig)\ar[dd]^{b_{W,\sig}}\ar[ld]_{\cong}^{\delta}\\\overline{P}_{W',\sig'}\ar[rr]_(.65){e_{W',\sig'}}\ar[dd]_{d_{W',\sig'}}&&\pic_{W'}(\sig')\ar[dd]^(.3){b_{W',\sig'}}&\\&R^{\sig_1}\ar[rr]^(.3){a_{W,\sig_1}}\ar@{=}[ld]&&A_{W,\sig_1}\ar[ld]_{\cong}^{\eps}\\R^{\sig_1}\ar[rr]^{a_{W',\sig'_1}}&&A_{W',\sig'_1}&}$$ commutes. By restriction and coastriction it induces for every $\sigma\in\sig$ an isomorphism $\eps'_{\sigma}\colon A^{\sigma}_{W,\sig}\rightarrow A^{\sigma'}_{W',\sig'}$.
\end{no}

\begin{no}\label{1.170}
If $(m_{\sigma})_{\sigma\in\sig}\in(W^*)^{\sig}$ with $(m_{\sigma}+\sigma^{\vee})_{\sigma\in\sig}\in\ke(d_{W,\sig})$ then $m_{\rho}\in\rho^{\perp}$ for $\rho\in\sig_1$ and $m_{\sigma}-m_{\rho}\in\rho^{\perp}$ for $\sigma\in\sig$ and $\rho\in\sigma_1$, hence $m_{\sigma}\in\bigcap_{\rho\in\sigma_1}\rho^{\perp}=\sigma^{\perp}$ for $\sigma\in\sig$, and thus $(m_{\sigma}+\sigma^{\vee})_{\sigma\in\sig}=0$. So, $d_{W,\sig}$ is a monomorphism. Thus, keeping in mind \ref{1.160}, the Snake Lemma implies that $b_{W,\sig}$ is a monomorphism; by means of this we consider $\pic_W(R)$ as a sub-$R$-module of $A_{W,\sig_1}$.
\end{no}

\begin{prop}\label{1.210}
$\pic_W(\sig)=\bigcap_{\sigma\in\sig}A^{\sigma}_{W,\sig}$.
\end{prop}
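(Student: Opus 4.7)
The plan is to establish equality as subsets of $A_{W,\sig_1}$, using the inclusion $\pic_W(\sig)\hookrightarrow A_{W,\sig_1}$ via the monomorphism $b_{W,\sig}$ from (\ref{1.170}). I would first invoke the compatibility of the entire diagram $\mathbbm{D}_{W,\sig}$ with the passage to the associated full fan $\sig'$ (see (\ref{1.160})) to reduce to the case where $\sig$ is full; then $c_{W,\sig_1}$ is a monomorphism, which will simplify the construction of preimages below.

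For $\pic_W(\sig)\subseteq\bigcap_{\sigma\in\sig}A^{\sigma}_{W,\sig}$, I would fix $\alpha\in\pic_W(\sig)$ and an arbitrary $\omega\in\sig$, and apply (\ref{1.120}) to obtain a representative $p=(m_{\sigma}+\sigma^{\vee})_{\sigma\in\sig}\in\overline{P}_{W,\sig}$ of $\alpha$ with $m_{\sigma}=0$ for every $\sigma\fleq\omega$; in particular $m_{\rho}=0$ for every $\rho\in\omega_1$. Unwinding $b_{W,\sig}\circ e_{W,\sig}=a_{W,\sig_1}\circ d_{W,\sig}$ on $p$ then yields $b_{W,\sig}(\alpha)=\sum_{\rho\in\sig_1\setminus\omega_1}\rho_N(m_{\rho})\,\alpha_{\rho}\in A^{\omega}_{W,\sig}$, and letting $\omega$ vary gives the inclusion.

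For the reverse inclusion I would take $\eta\in\bigcap_{\sigma\in\sig}A^{\sigma}_{W,\sig}$, fix any lift $\zeta\in R^{\sig_1}$ of $\eta$ under $a_{W,\sig_1}$, and for each $\sigma\in\sig$ use the hypothesis $\eta\in A^{\sigma}_{W,\sig}$ to choose a lift $\xi_{\sigma}\in R^{\sig_1}$ of $\eta$ supported on $\sig_1\setminus\sigma_1$. Then $\zeta-\xi_{\sigma}\in\ke(a_{W,\sig_1})=\im(c_{W,\sig_1})$, and injectivity of $c_{W,\sig_1}$ produces a unique $m_{\sigma}\in W^*$ with $c_{W,\sig_1}(m_{\sigma})=\zeta-\xi_{\sigma}$. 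The key verification would be that $p\dfgl(m_{\sigma}+\sigma^{\vee})_{\sigma\in\sig}$ is a virtual polytope: for $\tau\fleq\sigma$ the element $c_{W,\sig_1}(m_{\sigma}-m_{\tau})=\xi_{\tau}-\xi_{\sigma}$ has vanishing $\rho$-component for every $\rho\in\tau_1\subseteq\sigma_1$, so $m_{\sigma}-m_{\tau}\in\bigcap_{\rho\in\tau_1}\rho^{\perp}=\tau^{\perp}$, the last equality following from the standard fact that a sharp polycone is the conic hull of its rays. A direct computation of $d_{W,\sig}(p)=(\rho_N(m_{\rho}))_{\rho\in\sig_1}$, combined with $\rho_N(m_{\rho})=(\zeta-\xi_{\rho})_{\rho}=\zeta_{\rho}$ (since $(\xi_{\rho})_{\rho}=0$), then gives $d_{W,\sig}(p)=\zeta$ and hence $b_{W,\sig}(e_{W,\sig}(p))=a_{W,\sig_1}(\zeta)=\eta\in\pic_W(\sig)$.

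The main obstacle will be the virtual polytope verification in the last step; this is the only place where genuine polyhedral geometry enters, through the identity $\tau^{\perp}=\bigcap_{\rho\in\tau_1}\rho^{\perp}$ for sharp polycones $\tau$. Everything else is formal manipulation of the diagram $\mathbbm{D}_{W,\sig}$ combined with (\ref{1.120}), and the reduction via (\ref{1.160}) is needed only to ensure the injectivity of $c_{W,\sig_1}$ used in defining the $m_{\sigma}$.
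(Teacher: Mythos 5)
Your proof is correct and follows essentially the paper's own argument: the inclusion $\pic_W(\sig)\subseteq\bigcap_\sigma A^\sigma_{W,\sig}$ via \ref{1.120} and the diagram identity $b_{W,\sig}\circ e_{W,\sig}=a_{W,\sig_1}\circ d_{W,\sig}$, and the reverse inclusion by choosing for each $\sigma$ a lift $\xi_\sigma$ of $\eta$ supported off $\sigma_1$, pulling $\zeta-\xi_\sigma\in\ke(a_{W,\sig_1})=\im(c_{W,\sig_1})$ back to some $m_\sigma\in W^*$, and verifying the virtual-polytope condition. The one superfluous step is the reduction to the full fan via \ref{1.160}: you only ever use \emph{existence} of a preimage $m_\sigma$, never its uniqueness, so injectivity of $c_{W,\sig_1}$ is not needed and the paper dispenses with the reduction entirely.
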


\begin{proof}
If $\alpha\in\pic_W(\sig)$, then for $\omega\in\sig$ there exists a family $(m_{\sigma})_{\sigma\in\sig}$ in $W^*$ with $\alpha=e_{W,\sig}((m_{\sigma}+\sigma^{\vee})_{\sigma\in\sig})$ and $m_{\sigma}=0$ for $\sigma\fleq\omega$ (\ref{1.120}), implying $\alpha=a_{W,\sig_1}((\rho_N(m_{\rho}))_{\rho\in\sig_1})=\sum_{\rho\in\sig_1\setminus\omega_1}\rho_N(m_{\rho})\alpha_{\rho}\in A^{\omega}_{W,\sig}$.

Conversely, let $\alpha\in\bigcap_{\sigma\in\sig}A^{\sigma}_{W,\sig}$. There exists $(r_{\rho})_{\rho\in\sig_1}\in R^{\sig_1}$ with $\alpha=\sum_{\rho\in\sig_1}r_{\rho}\alpha_{\rho}$. Let $\sigma\in\sig$. As $\alpha\in A^{\sigma}_{W,\sig}$ there exists $(s_{\rho}^{(\sigma)})_{\rho\in\sig_1}\in R^{\sig_1}$ with $s_{\rho}^{(\sigma)}=0$ for $\rho\fleq\sigma$ and $\alpha=\sum_{\rho\in\sig_1}s_{\rho}^{(\sigma)}\alpha_{\rho}$. It follows $(r_{\rho}-s_{\rho}^{(\sigma)})_{\rho\in\sig_1}\in\ke(a_{W,\sig_1})$, and hence there exists $m_{\sigma}\in W^*$ with $c_{W,\sig_1}(m_{\sigma})=(r_{\rho}-s_{\rho}^{(\sigma)})_{\rho\in\sig_1}$. So, for $\rho\in\sigma_1$ we have $\rho_N(m_{\sigma})=r_{\rho}-s_{\rho}^{(\sigma)}=r_{\rho}$, hence $\rho_N(m_{\sigma}-m_{\rho})=r_{\rho}-r_{\rho}=0$. It follows $p\dfgl(m_{\sigma}+\sigma^{\vee})_{\sigma\in\sig}\in\overline{P}_{W,\sig}$ and $d_{W,\sig}(p)=(\rho_N(m_{\rho}))_{\rho\in\sig_1}=(r_{\rho})_{\rho\in\sig_1}$, implying $\alpha=a_{W,\sig_1}(d_{W,\sig_1}(p))=e_{W,\sig}(p)\in\pic_W(\sig)$.
\end{proof}

\begin{cor}\label{1.220}
If $\sig$ is affine then $\pic_W(\sig)=0$.
\end{cor}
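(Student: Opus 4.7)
The plan is to apply Proposition \ref{1.210} directly and then exploit the structure of an affine fan to collapse the intersection to zero.

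First I would handle the trivial case $\sig=\emptyset$: here $\sig_1=\emptyset$, so $R^{\sig_1}=0$ and consequently $A_{W,\sig_1}=0$; since $\pic_W(\sig)\subseteq A_{W,\sig_1}$ by \ref{1.170}, the claim follows immediately.

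For the nontrivial case, assume $\sig=\face(\omega)$ for some sharp $W$-polycone $\omega$, so that $\omega$ is the unique element of $\sig_{\max}$. By \ref{1.210} we have $\pic_W(\sig)=\bigcap_{\sigma\in\sig}A^{\sigma}_{W,\sig}$, and from the observation in \ref{1.140} that this intersection already equals $\bigcap_{\sigma\in\sig_{\max}}A^{\sigma}_{W,\sig}$, we get $\pic_W(\sig)\subseteq A^{\omega}_{W,\sig}$. Now $A^{\omega}_{W,\sig}=\langle\alpha_{\rho}\mid\rho\in\sig_1\setminus\omega_1\rangle_R$, and since every $1$-dimensional polycone in $\sig=\face(\omega)$ is a face of $\omega$, we have $\sig_1\subseteq\omega_1$, hence $\sig_1\setminus\omega_1=\emptyset$. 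Thus $A^{\omega}_{W,\sig}=0$, which forces $\pic_W(\sig)=0$.

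There is no real obstacle here; the whole argument is a one-line deduction from \ref{1.210} once one notices that in an affine fan the unique maximal cone contains every ray of the fan, so the generating set of $A^{\omega}_{W,\sig}$ is empty. The only thing worth being careful about is remembering that $\pic_W(\sig)$ has already been identified with a submodule of $A_{W,\sig_1}$ via $b_{W,\sig}$ in \ref{1.170}, so that the equality in \ref{1.210} is genuinely a statement inside $A_{W,\sig_1}$ and can be used to conclude vanishing.
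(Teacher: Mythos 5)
Your proof is correct and uses the same key ingredient as the paper, namely Proposition \ref{1.210}; you have simply spelled out the one-line deduction that in an affine fan the maximal cone $\omega$ satisfies $\sig_1\setminus\omega_1=\emptyset$, so $A^{\omega}_{W,\sig}=0$. The paper's proof just says ``immediately from \ref{1.210}'' and leaves this observation implicit.
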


\begin{proof}
Immediately from \ref{1.210}.
\end{proof}

\begin{nosm}\label{1.230}
If $\pic_W(\sig)=0$ then $\sig$ is not necessarily affine, even if $\sig$ is $W$-regular. A counterexample is given by the $\Z^2$-regular $\Z^2$-fan in $\R^2$ with maximal polycones $\cone(1,0)$ and $\cone(0,1)$.\footnote{cf.~\ref{1.310}~c)}
\end{nosm}

\begin{no}\label{1.200}
A sub-$R$-module $B\subseteq A_{W,\sig_1}$ is called \textit{big} if $(A_{W,\sig_1}/B)\otimes_RK=0$ (i.e., if $B$ has finite index in $A_{W,\sig_1}$), and \textit{small} if $B\subseteq\pic_{W,\sig}$. Note that bigness depends only on $\sig_1$, while smallness depends on $\sig$.
\end{no}

\begin{no}\label{1.350}
Let $S$ be a cancellable monoid\footnote{i.e., if $x,y,z\in S$ with $x+z=y+z$ then $x=y$}, considered as a submonoid of its group of differences $\diff(S)$ (\cite[I.2.4]{a}). We furnish $\diff(S)$ with the structure of preordered group whose monoid of positive elements equals $S$, and $S$ with the induced structure of preordered monoid. We call $S$ \textit{sharp} if $\diff(S)$ is an ordered group, or -- equivalently -- if $S\cap(-S)=0$ (\cite[VI.1.3 Propopsition 3]{a}).

Submonoids of sharp cancellable monoids are sharp. For $m\in\N$ the cancellable monoid $\N^m$ is sharp, and its structure of ordered monoid is the product of the usual structure of ordered monoid on $\N$. Sharp cancellable monoids of finite type are artinian\footnote{i.e., nonempty subsets have minimal elements}.
\end{no}

\begin{no}\label{1.380}
The submonoid $\degmon(\sig_1)\dfgl\langle\alpha_{\rho}\mid\rho\in\sig_1\rangle_{\N}\subseteq A_{\sig_1}$ is called \textit{the degree monoid of $\sig_1$.} It is cancellable and hence furnished with a structure of preordered monoid (\ref{1.350}). The morphism of groups $a_{\sig_1}\colon\Z^{\sig_1}\rightarrow A_{\sig_1}$ induces by restriction and coastriction a surjective morphism of preordered monoids $a_{\sig_1}^+\colon\N^{\sig_1}\rightarrow\degmon(\sig_1)$ (where $\N^{\sig_1}$ is furnished with the product ordering) with $\ke(a_{\sig_1}^+)=\im(c_{\sig_1})\cap\N^{\sig_1}$.
\end{no}

\begin{prop}\label{1.390}
The following statements are equivalent:
\begin{equi}
\item[(i)] $\sig$ is relatively skeletal complete;
\item[(ii)] $\im(c_{W,\sig})\cap R_{\geq 0}^{\sig_1}=0$;
\item[(iii)] $\degmon(\sig_1)$ is sharp and $\alpha_{\rho}\neq 0$ for every $\rho\in\sig_1$.
\end{equi}
\end{prop}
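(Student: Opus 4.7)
The plan is to establish (i)$\Leftrightarrow$(ii) by a polar duality argument in $V^*$ and (ii)$\Leftrightarrow$(iii) by directly manipulating the cokernel sequence defining $A_{W,\sig_1}$ from \ref{1.140} together with the sharpness axiom. The bridge between the combinatorial and algebraic descriptions is the identity $\cone(\sig)=\cone(\{\rho_N\mid\rho\in\sig_1\})$, which holds because every sharp polycone is the conic hull of its extreme rays and the extreme rays of the cones in $\sig$ are exactly the elements of $\sig_1$. Consequently $\cone(\sig)^{\vee}\cap W^*=c_{W,\sig_1}^{-1}(R_{\geq 0}^{\sig_1})$, whereas $\langle\sig\rangle^{\perp}\cap W^*=\langle\sig\rangle^{\perp}_{W^*}=\ke(c_{W,\sig_1})$ by \ref{1.140}.

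For (i)$\Rightarrow$(ii): if $\cone(\sig)=\langle\sig\rangle$ and $m\in W^*$ satisfies $c_{W,\sig_1}(m)\in R_{\geq 0}^{\sig_1}$, then $m$ is nonnegative on $\cone(\sig)=\langle\sig\rangle$, a linear subspace, so $m$ vanishes there and $c_{W,\sig_1}(m)=0$. For (ii)$\Rightarrow$(i): hypothesis (ii) gives $\cone(\sig)^{\vee}\cap W^*=\langle\sig\rangle^{\perp}\cap W^*$; both sides are $W^*$-rational polycones in $V^*$ (the first because $\cone(\sig)^{\vee}=\bigcap_{\sigma\in\sig_{\max}}\sigma^{\vee}$ is a finite intersection of $W^*$-polycones by \ref{1.25}), and a $W^*$-rational polycone is the $\R_{\geq 0}$-hull of a finite subset of $W^*$ and hence determined by its $W^*$-points. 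Therefore $\cone(\sig)^{\vee}=\langle\sig\rangle^{\perp}$, and biduality $\sigma^{\vee\vee}=\sigma$ from \ref{1.25} yields $\cone(\sig)=\langle\sig\rangle$.

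For (ii)$\Rightarrow$(iii): $\alpha_{\rho}=0$ would mean $\delta_{\rho}\in\ke(a_{W,\sig_1})=\im(c_{W,\sig_1})$, violating (ii); and if $x\in\degmon(\sig_1)\cap(-\degmon(\sig_1))$ is written as $x=\sum a_{\rho}\alpha_{\rho}=-\sum b_{\rho}\alpha_{\rho}$ with $a_{\rho},b_{\rho}\in\N$, then $(a_{\rho}+b_{\rho})_{\rho}\in\im(c_{W,\sig_1})\cap R_{\geq 0}^{\sig_1}=0$, forcing $x=0$. For (iii)$\Rightarrow$(ii): given $(r_{\rho})_{\rho}\in\im(c_{W,\sig_1})\cap R_{\geq 0}^{\sig_1}$, the relation $\sum r_{\rho}\alpha_{\rho}=0$ rewritten as $r_{\rho_0}\alpha_{\rho_0}=-\sum_{\rho\neq\rho_0}r_{\rho}\alpha_{\rho}$ exhibits $r_{\rho_0}\alpha_{\rho_0}\in\degmon(\sig_1)\cap(-\degmon(\sig_1))=0$ for every $\rho_0\in\sig_1$; combined with the general fact that a sharp cancellable monoid is torsion-free (if $kx=0$ with $k\geq 1$, then $-x=(k-1)x$ lies in the monoid, so $x\in\degmon(\sig_1)\cap(-\degmon(\sig_1))=0$) and with $\alpha_{\rho_0}\neq 0$, this forces $r_{\rho_0}=0$.

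The delicate step is (ii)$\Rightarrow$(i): one must carefully pass from equality of $W^*$-points to equality of $W^*$-rational polycones, verify that $\cone(\sig)^{\vee}$ is indeed $W^*$-rational even when $\sig$ is not full, and apply biduality to the possibly non-full cone $\cone(\sig)$ and to the linear subspace $\langle\sig\rangle^{\perp}$. The monoidal equivalence (ii)$\Leftrightarrow$(iii) is then essentially formal once the torsion-freeness of sharp cancellable monoids is isolated.
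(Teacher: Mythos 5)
Your proof is correct and follows essentially the same route as the paper: the equivalence (i)$\Leftrightarrow$(ii) is proven by the same polar-duality argument with the same passage from $W^*$-points to full polycones via rationality, and (ii)$\Leftrightarrow$(iii) is the same cokernel/sharpness manipulation, with your (ii)$\Rightarrow$(iii) merely unwinding what the paper phrases as "$a_{\sig_1}^+$ becomes an isomorphism of ordered monoids." The one place where both you and the paper are terse is in (iii)$\Rightarrow$(ii) when $R\neq\Z$: the conclusion $r_{\rho_0}\alpha_{\rho_0}\in\degmon(\sig_1)\cap(-\degmon(\sig_1))$ for $r_\rho\in R_{\geq 0}$ strictly speaking requires passing to the $\R_{\geq 0}$-cone generated by the $\alpha_\rho$ and using rationality together with $R$-torsion-freeness of $A_{W,\sig_1}=A_{\sig_1}\otimes_{\Z}R$ (which follows from $A_{\sig_1}$ being free by sharpness), rather than torsion-freeness of the monoid $\degmon(\sig_1)$ alone; but this imprecision is already present in the paper's proof, so it is not a gap in your argument relative to the source.
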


\begin{proof}
(i) holds if and only if $\bigcap_{\rho\in\sig_1}\rho^{\vee}\subseteq\langle\sig\rangle^{\perp}$, hence if and only if $W^*\cap(\bigcap_{\rho\in\sig_1}\rho^{\vee})\subseteq\ke(c_{W,\sig_1})$, thus if and only if (ii) holds. If (ii) holds then $a_{\sig_1}^+$ is an isomorphism of preordered monoids (\ref{1.380}), and \textit{a fortiori} of ordered monoids, implying (iii) (\ref{1.350}). Finally, suppose that (iii) holds. Then, $A_{W,\sig_1}$ is torsionfree. Let $(r_{\rho})_{\rho\in\sig_1}\in\im(c_{W,\sig_1})\cap R_{\geq 0}^{\sig_1}$. For $\xi\in\sig_1$ we have $$r_{\xi}\alpha_{\xi}=-\sum_{\rho\in\sig_1\setminus\{\xi\}}r_{\rho}\alpha_{\rho}\in\degmon(\sig_1)\cap(-\degmon(\sig_1))=0,$$ implying $r_{\xi}=0$ and therefore (ii).
\end{proof}

\begin{cor}\label{1.395}
If $\sig$ is relatively skeletal complete then $A_{W,\sig_1}$ and $\pic_W(\sig)$ are free.
\end{cor}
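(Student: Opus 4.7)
The whole corollary rests on the torsionfreeness of $A_{W,\sig_1}$, which is already packaged into the proof of Proposition \ref{1.390}. Indeed, since $\sig$ is relatively skeletal complete, condition (iii) of \ref{1.390} holds, and the last paragraph of that proof established that $A_{W,\sig_1}$ is $R$-torsionfree. So the first step would be simply to invoke this: no new argument on $A_{W,\sig_1}$ itself is needed beyond what \ref{1.390} already provides.

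Once torsionfreeness is in hand, the freeness of $A_{W,\sig_1}$ is formal. From the exact sequence in \ref{1.140}, $A_{W,\sig_1}$ is a quotient of the finitely generated free $R$-module $R^{\sig_1}$, hence is itself of finite type. A torsionfree $R$-module of finite type over a principal ring is free (this is the same invocation of \cite[VII.4.4 Théorème 2 Corollaire 2]{a} that was used in the proof of \ref{1.90}). Thus $A_{W,\sig_1}$ is free.

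For $\pic_W(\sig)$, I would use that \ref{1.170} exhibits $b_{W,\sig}\colon \pic_W(\sig)\hookrightarrow A_{W,\sig_1}$ as a monomorphism. So $\pic_W(\sig)$ is a sub-$R$-module of the free $R$-module $A_{W,\sig_1}$, and over a principal ring every submodule of a free module is free; this yields the second assertion. There is essentially no obstacle here: the corollary is a clean cascade from \ref{1.390} (torsionfreeness) via the embedding of \ref{1.170} and the classical structure theory over principal rings. The only care to take is to point out that the torsionfreeness extracted from \ref{1.390} is proved there under (iii), which is equivalent to our hypothesis (i).
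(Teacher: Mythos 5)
Your argument is correct, but it takes a somewhat different route from the paper's proof. The paper cites Proposition~\ref{1.390} only for sharpness of $\degmon(\sig_1)$, then works over $\Z$: the group of differences $A_{\sig_1}$ of a sharp monoid is an ordered group, hence torsionfree, hence free (being of finite type over $\Z$), so its subgroup $\pic(\sig)$ is also free; it then deduces the claim for $A_{W,\sig_1}$ and $\pic_W(\sig)$ over the general principal ring $R$ by appealing to \ref{1.140}, i.e.\ by an implicit base change from $\Z$ to $R$. You instead argue entirely over $R$: you read off the $R$-torsionfreeness of $A_{W,\sig_1}$ from the (iii)$\Rightarrow$(ii) step in the proof of \ref{1.390}, combine it with finite generation (quotient of $R^{\sig_1}$) and principality of $R$ to get freeness, exactly as in the proof of \ref{1.90}, and then pass to $\pic_W(\sig)$ via the monomorphism $b_{W,\sig}$ of \ref{1.170}. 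Both routes rest on the same structure theory over a principal ring; yours has the merit of avoiding the terse ``follows from \ref{1.140}'' step and of making explicit which earlier facts are in play. The one thing worth flagging is that the assertion ``$A_{W,\sig_1}$ is torsionfree'' inside the proof of \ref{1.390} is itself stated there without visible justification (its verification amounts to the same reduction to $\Z$), so the paper's proof of this corollary, by re-deriving torsionfreeness from sharpness of $\degmon(\sig_1)$, can be read as supplying that missing argument rather than merely repeating it.
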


\begin{proof}
The monoid $\degmon(\sig_1)$ is sharp (\ref{1.390}), so its group of differences $A_{\sig_1}$ is an ordered group, hence torsionfree. Therefore, $A_{\sig_1}$ and its subgroup $\pic(\sig)$ are free, and then the claim follows from \ref{1.140}.
\end{proof}

\begin{nosm}\label{1.400}
If $\degmon(\sig_1)$ is sharp then $\sig$ is not necessarily relatively skeletal complete. A counterexample is given by the $\Z^2$-fan in $\R^2$ with maximal polycones $\cone(1,0)$ and $\cone(0,1)$, for $\degmon(\sig_1)=A_{\sig_1}=0$.

The monoid $\degmon(\sig_1)$ is not necessarily sharp, even if $A_{\sig_1}$ is free (and hence the converse of \ref{1.395} does not hold). A counterexample is given by the $\Z^2$-fan in $\R^2$ with maximal polycones $\cone(1,0)$, $\cone(1,1)$ and $\cone(0,1)$, for $\degmon(\sig_1)=A_{\sig_1}\cong\Z$.
\end{nosm}


\subsection{Regular and simplicial fans}\label{sub1.4}

\noindent\textit{We keep the hypotheses from \ref{sub1.3}.}

\begin{lemma}\label{1.240}
Let $A$ be a ring, let $L$ be a free $A$-module of finite rank, let $E$ be a generating set of $L$, and let $c\colon L\rightarrow L^{**}$ denote the canonical isomorphism of $A$-modules. Then, $E$ is a basis of $L$ if and only if\/ $\bigcap_{e\in E}(u_e+\ke(c(e)))\neq\emptyset$ for every family $(u_e)_{e\in E}$ in $L^*$.
\end{lemma}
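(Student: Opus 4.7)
The forward direction is immediate: given a basis $E$ of $L$ and a family $(u_e)_{e\in E}$ in $L^*$, I would let $v\in L^*$ be the unique $A$-linear form with $v(e):=u_e(e)$ on the basis. Since $(v-u_e)(e)=0$ precisely says $v-u_e\in\ke(c(e))$, this $v$ belongs to every coset $u_e+\ke(c(e))$, so the intersection is non-empty.

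For the converse I would argue by contrapositive. Assume $E$ is not a basis; since $E$ generates $L$, the obstruction is $A$-linear dependence, so there exist pairwise distinct $e_1,\dots,e_k\in E$ and $a_1,\dots,a_k\in A$, not all zero, with $\sum_{i=1}^{k}a_ie_i=0$ in $L$. The crucial step is to pick such a relation for which some individual term $a_je_j$ is itself nonzero in $L$ (not merely that $a_j\neq 0$); in the applications of this lemma the ring $A$ is an integral domain and $0\notin E$, so any $j$ with $a_j\neq 0$ automatically does the job.

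Given such an ``enriched'' relation, the freeness of $L$ yields $\bigcap_{\phi\in L^*}\ke(\phi)=0$ (already witnessed by the coordinate projections of $L\cong A^n$), so from $a_je_j\neq 0$ one obtains $\phi\in L^*$ with $a_j\phi(e_j)=\phi(a_je_j)\neq 0$. I would then set $u_{e_j}:=\phi$ and $u_e:=0$ for every $e\in E\setminus\{e_j\}$; any $v\in\bigcap_{e\in E}(u_e+\ke(c(e)))$ must satisfy $v(e)=u_e(e)$ for every $e\in E$, and applying $v$ to the chosen relation yields
\[0=v\!\left(\sum_{i=1}^{k}a_ie_i\right)=\sum_{i=1}^{k}a_iu_{e_i}(e_i)=a_j\phi(e_j)\neq 0,\]
a contradiction, so the intersection is empty.

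The main obstacle I anticipate is arranging the ``enriched'' relation with a nonzero individual term: this is routine over an integral domain (the only setting needed here), but would require more delicate bookkeeping over arbitrary rings, where a relation may consist entirely of annihilator contributions.
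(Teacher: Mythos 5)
Your proof takes essentially the same route as the paper. The forward direction is identical: the paper's witness $\sum_{f\in E}u_f^{(f)}f^*$ is precisely the unique form taking value $u_e(e)$ at each $e$. For the converse you argue pointwise, producing a detecting functional $\phi$ with $\phi(a_je_j)\neq 0$ and feeding the family $u_{e_j}=\phi$, $u_e=0$ otherwise, into the hypothesis; the paper instead lets $u_{e_0}$ range over all of $L^*$ (with $u_e=0$ for $e\neq e_0$) to obtain the inclusion $L^*\subseteq\ke(c(r_{e_0}e_0))$ and then uses injectivity of $c$ to conclude $r_{e_0}e_0=0$. These are two bookkeepings of the same idea.

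More importantly, the gap you flag is genuine and sits in the paper's proof as well: its closing step, ``implying $r_{e_0}e_0=0$ and thus the contradiction $e_0=0$,'' tacitly assumes that $r_{e_0}\neq 0$ and $e_0\neq 0$ force $r_{e_0}e_0\neq 0$, which is exactly your enriched-relation requirement and fails over a general ring. Indeed the converse of the lemma as printed is false: take $A=\Z/6\Z$, $L=A$, $E=\{2,3\}$; then $E$ generates $L$ but is not a basis, yet $\ke(c(2))$ and $\ke(c(3))$ identify under $L^*\cong A$ with the complementary subgroups $3A$ and $2A$, so any pair of cosets meets and the intersection condition holds for every family $(u_e)$. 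Since the lemma is only invoked in the paper in its forward direction (in the proof of \ref{1.250}, with $A=R$ a principal integral domain and $E$ an $R$-basis), the oversight is harmless there; your instinct to restrict the converse to integral domains with $0\notin E$ is exactly the right repair.
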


\begin{proof}
Suppose $E$ is a basis of $L$ and let $(u_e)_{e\in E}$ be a family in $L^*$. For $e\in E$ let $e^*$ denote the element of the dual basis of $E$ corresponding to $e$. For $e\in E$ there is a family $(u_e^{(f)})_{f\in E}$ in $A$ with $u_e=\sum_{f\in E}u_e^{(f)}f^*$, and $\sum_{f\in E}u_f^{(f)}f^*\in\bigcap_{e\in E}(u_e+\ke(c(e)))$.

Conversely, suppose the above condition to hold and assume there are $e_0\in E\setminus 0$ and a family $(r_e)_{e\in E}$ in $A$ with $r_{e_0}\neq 0$ and $r_{e_0}e_0=\sum_{e\in F}r_ee$, where $F\dfgl E\setminus\{e_0\}$. For $u\in L^*$ there is a $v\in(u+\ke(c(e_0)))\cap(\bigcap_{e\in F}\ke(c(e)))$, hence a $w\in\ke(c(e_0))$ with $u=v-w$. Therefore, $$L^*\subseteq(\bigcap_{e\in F}\ke(c(e)))+\ke(c(e_0))\subseteq\ke(c(\sum_{e\in F}r_ee))+\ke(c(r_{e_0}e_0))\subseteq\ke(c(r_{e_0}e_0)),$$ implying $r_{e_0}e_0=0$ and thus the contradiction $e_0=0$.
\end{proof}

\begin{prop}\label{1.250}
If $\sig$ is $W$-regular then $\pic_W(\sig)=A_{W,\sig_1}$.
\end{prop}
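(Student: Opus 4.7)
The plan is to combine Proposition \ref{1.210} with Lemma \ref{1.240} and reduce the problem to a lattice-theoretic statement about the primitive generators $\rho_N$. Since $\pic_W(\sig)\subseteq A_{W,\sig_1}$ by \ref{1.170} and $\pic_W(\sig)=\bigcap_{\sigma\in\sig}A^\sigma_{W,\sig}$ by \ref{1.210}, it is enough to prove that $\alpha_\rho\in A^\sigma_{W,\sig}$ for every $\sigma\in\sig$ and every $\rho\in\sigma_1$. Unwinding the definition of $A^\sigma_{W,\sig}$ together with $a_{W,\sig_1}\circ c_{W,\sig_1}=0$ coming from \ref{1.140}, this amounts to exhibiting $m\in W^*$ with $\rho_N(m)=1$ and $\rho''_N(m)=0$ for all $\rho''\in\sigma_1\setminus\{\rho\}$: the values $\rho'_N(m)$ for $\rho'\in\sig_1\setminus\sigma_1$ then furnish, via $a_{W,\sig_1}$, the required expression of $\alpha_\rho$ as an $R$-linear combination of the $\alpha_{\rho'}$ with $\rho'\in\sig_1\setminus\sigma_1$.

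To produce such an $m$, I would first use $W$-regularity of $\sigma$ to pick an $R$-basis $E$ of $W$ and a subset $\{a_{\rho''}\mid\rho''\in\sigma_1\}\subseteq E$ consisting of generators of the rays of $\sigma$. The key technical observation is that one may in fact choose $E$ so that $\rho''_N\in E$ for every $\rho''\in\sigma_1$. This reduces to the identity $\rho''\cap W=R_{\geq 0}\rho''_N$, which I would establish as follows: writing $\rho''_N=\sum_i a_i n_i$ in a $\Z$-basis of $N$ with $\gcd_i(a_i)=1$ and invoking B\'ezout to find integers $b_i$ with $\sum_i b_i a_i=1$, any $w=\lambda\rho''_N\in W$ has coordinates $\lambda a_i\in R$, whence $\lambda=\sum_i b_i(\lambda a_i)\in R$. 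Consequently $a_{\rho''}=s_{\rho''}\rho''_N$ and $\rho''_N=r_{\rho''}a_{\rho''}$ for some $r_{\rho''},s_{\rho''}\in R_{\geq 0}$ with $r_{\rho''}s_{\rho''}=1$, so $r_{\rho''}\in R^\times$; replacing each $a_{\rho''}\in E$ by $\rho''_N=r_{\rho''}a_{\rho''}$ produces the desired $R$-basis.

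With this basis at hand I would apply Lemma \ref{1.240} to the family $(u_e)_{e\in E}$ in $W^*$ determined by $u_{\rho_N}(\rho_N)=1$ and $u_e(e)=0$ for all remaining $e\in E$. This yields an $m\in W^*$ taking the value $1$ on $\rho_N$ and the value $0$ on every other element of $E$, in particular on each $\rho''_N$ with $\rho''\in\sigma_1\setminus\{\rho\}$; that is exactly the element required in the first paragraph.

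The only nontrivial step, and the one I expect to be the real obstacle, is the identification $\rho''\cap W=R_{\geq 0}\rho''_N$ and the resulting unit relation $r_{\rho''}\in R^\times$. Without this invertibility the construction would only deliver $r_\rho\alpha_\rho\in A^\sigma_{W,\sig}$, which is too weak whenever $R$ is strictly smaller than $K$. Once primitivity in $N$ and B\'ezout have done their work, everything else is bookkeeping with the exact rows of the diagram $\mathbbm{D}_{W,\sig}$.
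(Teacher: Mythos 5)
Your proof is correct but takes a genuinely different route from the paper's. The paper shows $d_{W,\sig}\colon\overline{P}_{W,\sig}\rightarrow R^{\sig_1}$ is an epimorphism by extending an arbitrary family $(m_\rho)_{\rho\in\sig_1}$ in $W^*$ to a virtual polytope (first over each $\face(\sigma)$, then gluing), and then applies the Snake Lemma to $\mathbbm{D}_{W,\sig}$. You bypass $\overline{P}_{W,\sig}$ entirely: using \ref{1.210} you reduce to showing $A^\sigma_{W,\sig}=A_{W,\sig_1}$ for each $\sigma\in\sig$, and then exhibit for each $\rho\in\sigma_1$ a functional $m\in W^*$ giving the explicit relation $\alpha_\rho=-\sum_{\rho'\in\sig_1\setminus\sigma_1}\rho'_N(m)\alpha_{\rho'}$; as a bonus this simultaneously gives the single-cone criterion of \ref{1.260}. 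Both arguments in fact need $\rho_N$ to be unimodular in $W$: you address this explicitly via primitivity of $\rho_N$ in $N$ and B\'ezout, whereas the paper uses it tacitly in the step ``therefore $d_{W,\sig}$ is an epimorphism'' (which requires the evaluations $W^*\rightarrow R$, $u\mapsto\rho_N(u)$, to be surjective), so isolating this as the crux is a real gain in transparency. One small compression in your sketch: from $\rho''\cap W=R_{\geq 0}\rho''_N$ alone you only get $a_{\rho''}=s_{\rho''}\rho''_N$ with $s_{\rho''}\in R_{\geq 0}$; the reverse relation $\rho''_N=r_{\rho''}a_{\rho''}$ with $r_{\rho''}\in R$ (whence $r_{\rho''}s_{\rho''}=1$ and $s_{\rho''}\in R^\times$) needs the additional input that $a_{\rho''}$ lies in an $R$-basis of $W$, obtained by reading off the $E$-coordinates of $\rho''_N$. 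You have that input — it is exactly $W$-regularity — but ``Consequently'' glosses over it; worth spelling out when you write this up.
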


\begin{proof}
Let $(m_{\rho})_{\rho\in\sig_1}$ be a family in $W^*$ and let $\sigma\in\sig$. There are $(a_{\sigma,\rho})_{\rho\in\sigma_1}\in\prod_{\rho\in\sigma_1}W\cap\rho$ and an $R$-basis $E^{(\sigma)}$ of $W$ with $\{a_{\sigma,\rho}\mid\rho\in\sigma_1\}\subseteq E^{(\sigma)}$. For $\rho\in\sigma_1$ we set $m_{a_{\sigma,\rho}}^{(\sigma)}\dfgl m_{\rho}$ and for $e\in E^{(\sigma)}\setminus\{a_{\sigma,\rho}\mid\rho\in\sigma_1\}$ we set $m_e^{(\sigma)}\dfgl 0\in W^*$. Then, $W^*\cap(\bigcap_{e\in E^{(\sigma)}}(m_e^{(\sigma)}+e^{\perp}))\neq\emptyset$ (\ref{1.240}) and hence $W^*\cap(\bigcap_{\rho\in\sigma_1}(m_{\rho}+\rho^{\perp}))\neq\emptyset$. So, there exists $m\in W^*$ with $m-m_{\rho}\in\rho^{\perp}$ for every $\rho\in\sigma_1$. Thus, there is a family $(m_{\tau}^{(\sigma)})_{\tau\fleq\sigma}$ in $W^*$ with $(m_{\rho}^{(\sigma)})_{\rho\in\sigma_1}=(m_{\rho})_{\rho\in\sig_1}$ and $(m_{\tau}^{(\sigma)}+\tau^{\vee})_{\tau\fleq\sigma}\in\overline{P}_{W,\face(\sigma)}$. If $\tau\fleq\sigma$ then for $\rho\in\tau_1$ it follows $m_{\tau}^{(\sigma)}-m_{\tau}^{(\tau)}=(m_{\tau}^{(\sigma)}-m_{\rho})-(m_{\tau}^{(\tau)}-m_{\rho})\in\rho^{\perp}$, hence $m_{\tau}^{(\sigma)}-m_{\tau}^{(\tau)}\in\bigcap_{\rho\in\tau_1}\rho^{\perp}=\tau^{\perp}$ and therefore $m_{\tau}^{(\sigma)}+\tau^{\vee}=m_{\tau}^{(\tau)}+\tau^{\vee}$. This shows that there exists a family $(m_{\sigma})_{\sigma\in\sig\setminus\sig_1}$ in $W^*$ with $(m_{\sigma}+\sigma^{\vee})_{\sigma\in\sig}\in\overline{P}_{W,\sig}$. Therefore, $d_{W,\sig}$ is an epimorphism, and the Snake Lemma applied to $\mathbbm{D}_{W,\sig}$ yields the claim.
\end{proof}

\begin{prop}\label{1.260}
Let $\sigma\in\sig$. The following statements are equivalent:
\begin{equi}
\item[(i)] $A^{\sigma}_{W,\sig}=A_{W,\sig_1}$;
\item[(ii)] For $\tau\flneq\sigma$ there exists $u\in\tau^{\perp}_{W^*}$ such that for $\rho\in\sigma_1\setminus\tau_1$ we have $\rho_N(u)=1$;
\item[(iii)] For $\tau\in\sig$ there exists $u\in(\sigma\cap\tau)^{\perp}_{W^*}$ such that for $\rho\in\sigma_1\setminus\tau_1$ we have $\rho_N(u)=1$;
\item[(iv)] $\sigma$ is $W$-regular.
\end{equi}
\end{prop}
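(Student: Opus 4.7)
The plan is to prove the cycle of implications (iii) $\Rightarrow$ (ii) $\Rightarrow$ (i) $\Rightarrow$ (iv) $\Rightarrow$ (iii). The convenient reformulation to keep in mind throughout is that (i) is equivalent to the existence of a \emph{dual family} $(m_{\rho_0})_{\rho_0 \in \sigma_1}$ in $W^*$ satisfying $\rho_N(m_{\rho_0}) = 1$ and $\rho'_N(m_{\rho_0}) = 0$ for $\rho' \in \sigma_1 \setminus \{\rho_0\}$. This translation is obtained by unwinding the cokernel description of $a_{W,\sig_1}$: the statement $\alpha_{\rho_0} \in A^{\sigma}_{W,\sig}$ amounts to an equality $\delta_{\rho_0} = \sum_{\rho' \in \sig_1 \setminus \sigma_1} r_{\rho'}\delta_{\rho'} + c_{W,\sig_1}(m)$ for some $m \in W^*$ and $r_{\rho'} \in R$, and reading off the $\sigma_1$-components gives precisely the dual relations for $m = m_{\rho_0}$.

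The quick implications are (iii) $\Rightarrow$ (ii) and (ii) $\Rightarrow$ (i). The first is a specialization: for $\tau \flneq \sigma$ the intersection $\sigma \cap \tau$ equals $\tau$, so (iii) at $\tau$ reads verbatim as (ii) at $\tau$. For (ii) $\Rightarrow$ (i), apply (ii) at $\tau = \{0\}$ (a face of $\sigma$, since $\sigma$ is sharp) to obtain $u_0 \in W^*$ with $\rho_N(u_0) = 1$ for every $\rho \in \sigma_1$; then, for each $\rho_0 \in \sigma_1$, apply (ii) at $\tau = \rho_0$ to get $u_{\rho_0} \in \rho_0^{\perp}_{W^*}$ with $\rho_N(u_{\rho_0}) = 1$ for $\rho \in \sigma_1 \setminus \{\rho_0\}$. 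The element $m_{\rho_0} \dfgl u_0 - u_{\rho_0}$ satisfies the dual relations. The degenerate cases $\dim \sigma \le 1$ are vacuous or follow directly from (ii) at $\tau = \{0\}$.

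For (iv) $\Rightarrow$ (iii), $W$-regularity of $\sigma$ implies, after rescaling, that $\{\rho_N \mid \rho \in \sigma_1\}$ is part of an $R$-basis of $W$; let $(m_\rho)_{\rho \in \sigma_1}$ be the corresponding dual-basis elements. Given $\tau \in \sig$, set $u \dfgl \sum_{\rho \in \sigma_1 \setminus \tau_1} m_\rho$. Then $\rho_N(u) = 1$ for $\rho \in \sigma_1 \setminus \tau_1$ and $\rho_N(u) = 0$ for $\rho \in \sigma_1 \cap \tau_1$. Since $\sig$ is a fan, $\sigma \cap \tau \fleq \sigma$ with $(\sigma \cap \tau)_1 = \sigma_1 \cap \tau_1$; as a face of the $W$-regular polycone $\sigma$, the polycone $\sigma \cap \tau$ is itself $W$-regular and therefore equals $\cone\{\rho_N \mid \rho \in (\sigma \cap \tau)_1\}$. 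Linearity of $u$ then gives $u \in (\sigma \cap \tau)^{\perp}_{W^*}$.

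The main step is (i) $\Rightarrow$ (iv), where one has to extract genuine $R$-module structure from the Picard-theoretic hypothesis. Set $L' \dfgl \langle \rho_N \mid \rho \in \sigma_1 \rangle_R$ and, using the dual family $(m_\rho)$ from (i), define $\pi \colon W \to L'$ by $\pi(x) \dfgl \sum_{\rho \in \sigma_1} m_\rho(x)\, \rho_N$, where $m_\rho(x) = x(m_\rho)$ under the canonical identification $W = W^{**}$. The dual relations yield $\pi(\rho'_N) = \rho'_N$ for $\rho' \in \sigma_1$, so $\pi$ is a retraction of the inclusion $L' \hookrightarrow W$, whence $W = L' \oplus \ker \pi$. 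Applying $m_{\rho_0}$ to any $R$-linear relation among the $\rho_N$ forces all coefficients to vanish, so $\{\rho_N \mid \rho \in \sigma_1\}$ is an $R$-basis of $L'$; since $\ker \pi$ is a submodule of the free module $W$ over the principal ring $R$, it is itself free, and joining a basis of $\ker \pi$ with $\{\rho_N \mid \rho \in \sigma_1\}$ produces an $R$-basis of $W$ containing the $W$-generating set $\{\rho_N \mid \rho \in \sigma_1\}$ of the sharp polycone $\sigma$. The main obstacle is precisely this construction of a direct summand from the dual family; every other implication reduces either to a formal specialization or to a routine face-theoretic verification, potentially aided by Lemma~\ref{1.240} in a style similar to the proof of Proposition~\ref{1.250}.
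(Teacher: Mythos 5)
Your proof is correct, but it follows a genuinely different route than the paper. The paper establishes the three-cycle (i)$\Rightarrow$(ii)$\Rightarrow$(iii)$\Rightarrow$(i) and then proves (ii)$\Rightarrow$(iv) and (iv)$\Rightarrow$(i) separately; the (ii)$\Rightarrow$(iv) direction is the hard one there, proceeding by first showing $\sigma$ is simplicial via a rank/cardinality count in the affine full case, and then inducting on $n$: it picks a $W$-regular facet $\tau$, uses (ii) to get $u\in\tau^{\perp}_{W^*}$ with $u(\xi_N)=1$ for the unique $\xi\in\sigma_1\setminus\tau_1$, builds the rank-one projector $p\colon y\mapsto u(y)\xi_N$, and identifies $\ker(p)$ through a rank computation over $K$. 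The (iv)$\Rightarrow$(i) direction in the paper reduces to the facial fan and invokes Propositions \ref{1.210} and \ref{1.250} (hence Lemma \ref{1.240}).

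Your cycle (iii)$\Rightarrow$(ii)$\Rightarrow$(i)$\Rightarrow$(iv)$\Rightarrow$(iii) instead packs the real work into (i)$\Rightarrow$(iv), and the mechanism is different: you first translate (i) into the existence of a dual family $(m_\rho)_{\rho\in\sigma_1}$ in $W^*$ with $\rho'_N(m_\rho)=\delta_{\rho\rho'}$ for $\rho,\rho'\in\sigma_1$ (this translation is exactly right: writing $\alpha_{\rho_0}\in A^\sigma_{W,\sig}$ out via the exact sequence $W^*\to R^{\sig_1}\to A_{W,\sig_1}\to 0$ and reading off the $\sigma_1$-coordinates), and then build the retraction $\pi(x)=\sum_{\rho\in\sigma_1}m_\rho(x)\rho_N$ of $\langle\rho_N\mid\rho\in\sigma_1\rangle_R\hookrightarrow W$. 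The resulting splitting $W=L'\oplus\ker\pi$, together with freeness of $\ker\pi$ over the principal ring $R$, immediately produces a basis of $W$ containing $\{\rho_N\mid\rho\in\sigma_1\}$. This avoids the induction on $n$, the detour through simpliciality, and any appeal to \ref{1.210}, \ref{1.240}, \ref{1.250}; in exchange you have to notice the dual-family reformulation, but that is a small price and is exactly what also makes your (ii)$\Rightarrow$(i) and (iv)$\Rightarrow$(iii) steps easy (the former by taking $m_{\rho_0}=u_0-u_{\rho_0}$ from (ii) at $\tau=\{0\}$ and $\tau=\rho_0$, the latter by summing dual-basis elements; your remark that the rescaling in (iv)$\Rightarrow$(iii) is by units is correct since $\rho_N$ is primitive in $N$ and hence generates $W\cap\langle\rho\rangle_\R$). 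Two minor cosmetic points: the closing remark invoking Lemma \ref{1.240} is a dangling suggestion you never actually use, and in (iv)$\Rightarrow$(iii) you do not need $W$-regularity to know $\sigma\cap\tau=\cone\{\rho_N\mid\rho\in(\sigma\cap\tau)_1\}$ -- that holds for any sharp polycone.
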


\begin{proof}
``(i)$\Rightarrow$(ii)'': If $\tau\flneq\sigma$ then $\alp_{\tau}\in A_{W,\sig_1}=A_{W,\sig}^{\sigma}=\langle\alp_{\rho}\mid\rho\in\sig_1\setminus\sigma_1\rangle_R$, so there exists a family $(r_{\rho})_{\rho\in\sig_1\setminus\sigma_1}$ in $R$ with $$a(\sum_{\rho\in\sig_1\setminus\tau_1}\delta_{\rho}-\sum_{\rho\in\sig_1\setminus\sigma_1}r_{\rho}\delta_{\rho})=0.$$ Hence, there exists $u\in W^*$ with $$\sum_{\rho\in\sig_1\setminus\tau_1}\delta_{\rho}-\sum_{\rho\in\sig_1\setminus\sigma_1}r_{\rho}\delta_{\rho}=\sum_{\rho\in\sig_1}\rho_N(u)\delta_{\rho}.$$ It follows $\rho_N(u)=0$ for $\rho\in\tau_1$ and $\rho_N(u)=1$ for $\rho\in\sigma_1\setminus\tau_1$, thus (ii).

``(ii)$\Rightarrow$(iii)'': Let $\tau\in\sig$. If $\sigma\fleq\tau$ then $\sigma_1\setminus\tau_1=\emptyset$, hence every $u\in(\sigma\cap\tau)^{\perp}_{W^*}$ has the desired property. Otherwise, $\omega\dfgl\sigma\cap\tau\flneq\sigma$, and as $\sigma_1\setminus\omega_1=\sigma_1\setminus\tau_1$ we get (iii).

``(iii)$\Rightarrow$(i)'': Let $\tau\in\sig$ and let $u\in(\sigma\cap\tau)^{\perp}_{W^*}$ with $\rho_N(u)=1$ for $\rho\in\sigma_1\setminus\tau_1$. Setting $$f\dfgl\sum_{\rho\in\sig_1\setminus(\sigma_1\cup\tau_1)}(1-\rho_N(u))\delta_{\rho}-\sum_{\rho\in\tau_1\setminus\sigma_1}\rho_N(u)\delta_{\rho}\in R^{\sig_1}$$ it follows $$\sum_{\rho\in\sig_1\setminus\tau_1}\delta_{\rho}-f=\sum_{\rho\in\sig_1}\rho_N(u)\delta_{\rho}=c_{W,\sig_1}(u),$$ hence $a(\sum_{\rho\in\sig_1\setminus\tau_1}\delta_{\rho}-f)=0$, and therefore $$\alp_{\tau}=a_{W,\sig_1}(f)\in\langle\alp_{\rho}\mid\rho\in\sig_1\setminus\sigma_1\rangle_R=A_{W,\sig}^{\sigma}.$$ As $A_{W,\sig_1}=\langle\alp_{\tau}\mid\tau\in\sig\rangle_R$ this implies (i).

``(ii)$\Rightarrow$(iv)'': We can without loss of generality suppose that $\sig=\face(\sigma)$ and that $\sig$ is moreover full. If $\sigma$ is not simplicial then $\card(\sig_1)>n$, hence $A^{\sigma}_{W,\sig}=0\neq A_{W,\sig_1}$ (\ref{1.220}), but as (iii) implies (i) this contradicts (iii). So, $\sigma$ is simplicial.

Now, we prove the claim by induction on $n$. If $n\leq 1$ it is clear. Suppose that $n>1$ and that the claim holds for strictly smaller values of $n$. Property (ii) is obviously inherited by faces, hence every proper face of $\sigma$ is $W$-regular. In particular, there exists a $W$-regular $\tau\in\sigma_{n-1}$. By simpliciality of $\sigma$ there exists a unique $\xi\in\sigma_1\setminus\tau_1$, and by (ii) there exists $u\in\tau^{\perp}_{W^*}$ with $u(\xi_N)=1$. The morphism $p\colon W\rightarrow W,$\linebreak$y\mapsto u(y)\xi_N$ is readily checked to be a projector of $W$ with image $\langle\xi_N\rangle_R$ and $\langle\rho_N\mid\rho\in\tau_1\rangle_R\subseteq\ke(p)$. We have $$\rk_K(\langle\rho_N\mid\rho\in\tau_1\rangle_K)=n-\dim(\xi)=n-\rk_K(\im(p\otimes_RK))=\rk_K(\ke(p\otimes_RK)),$$ hence $\langle\rho_N\mid\rho\in\tau_1\rangle_K=\ke(p\otimes_RK)$, therefore $\ke(p)\subseteq\langle\rho_N\mid\rho\in\tau_1\rangle_K\cap W=\langle\rho_N\mid\rho\in\tau_1\rangle_R\subseteq\ke(p)$ by $W$-regularity of $\tau$, and thus $\ke(p)=\langle\rho_N\mid\rho\in\tau_1\rangle_R$. So, $W=\langle\rho_N\mid\rho\in\tau_1\rangle_R\oplus\langle\xi_N\rangle_R$, and $W$-regularity of $\tau$ implies that $\{\rho_N\mid\rho\in\tau_1\}\cup\{\xi_N\}$ is an $R$-basis of $W$. This implies that $\sigma$ is $W$-regular and thus (iv).

``(iv)$\Rightarrow$(i)'': By what we have already shown we can without loss of generality suppose that $\sig=\face(\sigma)$. The claim follows then immediately from \ref{1.210} and \ref{1.250}.
\end{proof}

\begin{cor}\label{1.270}
Let $\sigma\in\sig$. Then, $\sigma$ is simplicial if and only if $A^{\sigma}_{W,\sig}$ is big.
\end{cor}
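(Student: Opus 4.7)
The plan is to reduce to Proposition \ref{1.260} by tensoring with $K$. Recall that by definition $\sigma$ is simplicial if and only if it is $W_K$-regular (\ref{1.25}), and that $\mathbbm{D}_{W,\sig}\otimes_RK=\mathbbm{D}_{W_K,\sig}$ (\ref{1.140}). Applying \ref{1.260} with $W_K$ in place of $W$ therefore tells us that $\sigma$ is simplicial if and only if $A^{\sigma}_{W_K,\sig}=A_{W_K,\sig_1}$, so it suffices to show that the latter condition is equivalent to $A^{\sigma}_{W,\sig}$ being big.

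First I would identify $A_{W,\sig_1}\otimes_RK$ with $A_{W_K,\sig_1}$: tensoring the bottom row of $\mathbbm{D}_{W,\sig}$ with $K$ (which is flat over $R$, since $K$ is the field of fractions of the principal, hence in particular integral, domain $R$) yields the defining exact sequence of $A_{W_K,\sig_1}$, and under this identification the image of $\alpha_{\rho}\otimes 1$ is the class of $\delta_{\rho}$ in $A_{W_K,\sig_1}$, i.e., the ``$\alpha_{\rho}$'' built from $W_K$. Applying the same exactness to the short exact sequence $0\to A^{\sigma}_{W,\sig}\to A_{W,\sig_1}\to A_{W,\sig_1}/A^{\sigma}_{W,\sig}\to 0$ shows that $A^{\sigma}_{W,\sig}\otimes_RK$ embeds into $A_{W,\sig_1}\otimes_RK$ with image precisely $A^{\sigma}_{W_K,\sig}$, since the latter is generated by exactly those images.

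Putting these identifications together, $A^{\sigma}_{W,\sig}$ is big, i.e., $(A_{W,\sig_1}/A^{\sigma}_{W,\sig})\otimes_RK=0$ (\ref{1.200}), if and only if $A^{\sigma}_{W_K,\sig}=A_{W_K,\sig_1}$, if and only if (by \ref{1.260} applied to $W_K$) $\sigma$ is $W_K$-regular, if and only if $\sigma$ is simplicial. The only subtlety worth double-checking is that the identification $A_{W,\sig_1}\otimes_RK\cong A_{W_K,\sig_1}$ really carries the submodule $A^{\sigma}_{W,\sig}\otimes_RK$ to $A^{\sigma}_{W_K,\sig}$; but as noted this is immediate from flatness of $K$ over $R$ and the explicit description of the generators $\alpha_{\rho}$ in the diagram $\mathbbm{D}_{W,\sig}$.
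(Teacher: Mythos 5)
Your proof is correct and takes exactly the same approach as the paper: reduce to Proposition \ref{1.260} by tensoring with $K$, using that $\sigma$ is simplicial if and only if it is $W_K$-regular and that $\mathbbm{D}_{W,\sig}\otimes_RK=\mathbbm{D}_{W_K,\sig}$. The paper states this more tersely, but your expanded justification of the identification $A^{\sigma}_{W,\sig}\otimes_RK\cong A^{\sigma}_{W_K,\sig}$ via flatness of $K$ over $R$ is precisely what that terse statement relies on.
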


\begin{proof}
$\sigma$ is simplicial if and only if it is $W_K$-regular, hence if and only if $(A_{W,\sig_1}/A^{\sigma}_{W,\sig})\otimes_RK\cong A_{W_K,\sig_1}/A^{\sigma}_{W_K,\sig}=0$ (\ref{1.140}, \ref{1.260}), and thus if and only if $A^{\sigma}_{W,\sig}$ is big.
\end{proof}

\begin{thm}\label{1.280}
a) $\sig$ is $W$-regular if and only if $\pic_W(\sig)=A_{W,\sig_1}$.

b) $\sig$ is simplicial if and only if $\pic_W(\sig)$ is big.
\end{thm}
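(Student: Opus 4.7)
My plan is to reduce both statements to the earlier characterisations of $W$-regularity and simpliciality of individual cones, using the key identity $\pic_W(\sig) = \bigcap_{\sigma \in \sig} A^\sigma_{W,\sig}$ from Proposition \ref{1.210}. The fact that the $A^\sigma_{W,\sig}$ are all sub-$R$-modules of the common ambient $A_{W,\sig_1}$ is what makes this work cleanly, and the finiteness of $\sig$ is what lets me pass from statements about individual cones to statements about the whole intersection.

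For part (a), I will observe that $\sig$ is $W$-regular by definition means every $\sigma \in \sig$ is $W$-regular; by the equivalence (i)$\Leftrightarrow$(iv) of Proposition \ref{1.260}, this is equivalent to $A^\sigma_{W,\sig} = A_{W,\sig_1}$ for every $\sigma \in \sig$. Since each $A^\sigma_{W,\sig}$ is contained in $A_{W,\sig_1}$ by construction, the latter condition is equivalent to $\bigcap_{\sigma \in \sig} A^\sigma_{W,\sig} = A_{W,\sig_1}$, which by \ref{1.210} says exactly $\pic_W(\sig) = A_{W,\sig_1}$.

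For part (b), the strategy is identical but uses Corollary \ref{1.270} instead: $\sig$ is simplicial iff every $\sigma \in \sig$ is simplicial iff $A^\sigma_{W,\sig}$ is big for every $\sigma \in \sig$. What remains is to check that a finite intersection of big sub-$R$-modules of $A_{W,\sig_1}$ is big, and conversely that any sub-$R$-module of $A_{W,\sig_1}$ containing a big one is itself big. The second implication is immediate since the quotient by the larger module is a quotient of the quotient by the smaller one, and torsion quotients are inherited by further quotients. For the first, the quotient $A_{W,\sig_1}/\bigcap_\sigma A^\sigma_{W,\sig}$ embeds into $\prod_{\sigma \in \sig} A_{W,\sig_1}/A^\sigma_{W,\sig}$, which is a finite product of torsion modules (since $\sig$ is finite) and hence torsion; tensoring with the flat $R$-algebra $K$ preserves this embedding and annihilates the target, so the quotient is big. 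Combined with \ref{1.210} this yields the equivalence.

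I do not anticipate a real obstacle here: the whole content has been packaged into Propositions \ref{1.210}, \ref{1.260} and Corollary \ref{1.270}, and the theorem is essentially the global reformulation obtained by intersecting over all $\sigma \in \sig$. The only small point requiring care is the elementary lemma on stability of bigness under finite intersection and enlargement, which is a one-line argument using flatness of $K$ over $R$ and the finiteness of $\sig$.
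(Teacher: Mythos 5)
Your proof is correct. For part (a) your argument is essentially the paper's: the paper cites \ref{1.210}, \ref{1.250} and \ref{1.260}, and your version via \ref{1.260}(i)$\Leftrightarrow$(iv) plus \ref{1.210} amounts to the same thing (with \ref{1.250} appearing implicitly inside the proof of \ref{1.260}(iv)$\Rightarrow$(i)).

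For part (b) you take a genuinely different route. The paper proves (b) ``analogously to \ref{1.270} on use of a)'': that is, it exploits that simpliciality of $\sig$ is the same as $W_K$-regularity, invokes a) over the field $K$, and then transports the statement back via the identification $\mathbbm{D}_{W,\sig}\otimes_RK=\mathbbm{D}_{W_K,\sig}$. You instead work cone-by-cone: you apply Corollary \ref{1.270} to each $\sigma\in\sig$ and then reduce to showing that within $A_{W,\sig_1}$ the class of big submodules is closed under finite intersection and under enlargement. Your stability lemma is correct -- the quotient by the intersection embeds into the (finite) product of the individual quotients, and tensoring with the flat $R$-module $K$ kills the target; the enlargement direction is the trivial surjection of quotients. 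What your approach buys is a cleaner parallelism between (a) and (b): both become the statement that $\bigcap_{\sigma}A^\sigma_{W,\sig}$ inherits the property (``$=A_{W,\sig_1}$'' resp.\ ``big'') from the individual $A^\sigma_{W,\sig}$, with the bigness stability lemma as the only extra ingredient. The paper's route avoids that lemma at the cost of a second pass through the $K$-rational structure. Both are equally elementary and either can be used.
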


\begin{proof}
a) follows immediately from \ref{1.210}, \ref{1.250} and \ref{1.260}. b) is proven analogously to \ref{1.270} on use of a).
\end{proof}

\begin{cor}\label{1.300}
There exists a big and small subgroup of $A_{W,\sig_1}$ if and only if $\sig$ is simplicial.
\end{cor}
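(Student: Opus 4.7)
The plan is to deduce the corollary directly from Theorem 1.280(b), which characterises simpliciality of $\sig$ by bigness of $\pic_W(\sig)$, together with the elementary observation that bigness is preserved under passage to a larger subgroup while smallness is immediate for $\pic_W(\sig)$ itself.

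For the ``if'' direction, suppose $\sig$ is simplicial. By Theorem 1.280(b), $\pic_W(\sig)$ is big. Since $\pic_W(\sig)\subseteq\pic_W(\sig)$, it is trivially small. Hence $\pic_W(\sig)$ itself is a big and small subgroup of $A_{W,\sig_1}$.

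For the ``only if'' direction, let $B\subseteq A_{W,\sig_1}$ be big and small. Smallness gives $B\subseteq\pic_W(\sig)$, so there is a canonical surjection $A_{W,\sig_1}/B\twoheadrightarrow A_{W,\sig_1}/\pic_W(\sig)$. Tensoring with the flat $R$-module $K$ yields a surjection from $(A_{W,\sig_1}/B)\otimes_RK=0$ onto $(A_{W,\sig_1}/\pic_W(\sig))\otimes_RK$, so $\pic_W(\sig)$ is big. Applying Theorem 1.280(b) again, $\sig$ is simplicial.

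There is no real obstacle: the only mildly non-trivial step is the tensoring argument showing that an overgroup of a big subgroup is big, which is routine since $K$ is a localisation of $R$ (hence flat) and right-exactness of $\otimes_RK$ preserves surjections. The corollary is then essentially a repackaging of Theorem 1.280(b).
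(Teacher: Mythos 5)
Your proof is correct and follows exactly the route the paper intends: the paper's entire proof is ``Immediately by 1.280~b)'', and you have simply spelled out the two implications (take $\pic_W(\sig)$ itself for the ``if'' direction; for the ``only if'' direction observe that any overgroup of a big subgroup is big). Nothing to add.
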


\begin{proof}
Immediately by \ref{1.280}~b).
\end{proof}

\begin{cor}\label{1.310}
a) If $A_{W,\sig_1}$ is finite then $\sig$ is simplicial.

b) $\sig$ is simplicial and affine if and only if $\sig$ is relatively full-dimensional and $A_{W,\sig}$ is finite.

c) If $\sig$ is relatively full-dimensional and simplicial, then $\pic_W(\sig)=0$ holds if and only if $\sig$ is affine.
\end{cor}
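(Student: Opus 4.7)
The plan is to prove the three parts in the order a), c), b), as the proof of b) will invoke both a) and c). Part a) is immediate from \ref{1.280}~b): finiteness of $A_{W,\sig_1}$ gives $A_{W,\sig_1}\otimes_RK=0$, so every sub-$R$-module of $A_{W,\sig_1}$ is big; in particular $\pic_W(\sig)$ is big, hence $\sig$ is simplicial. For c), the backward direction is precisely \ref{1.220}.

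For the forward direction of c), first reduce to the case where $\sig$ is full using the isomorphisms of \ref{1.160}, which transfer relative full-dimensionality, simpliciality, vanishing of $\pic_W(\sig)$, and affineness between $\sig$ and its associated full fan $\sig'$. Assuming $\sig$ full, the hypothesis $\pic_W(\sig)=0$ together with bigness of $\pic_W(\sig)$ (via \ref{1.280}~b)) forces $A_{W,\sig_1}\otimes_RK=0$. Tensoring the lower exact sequence of $\mathbbm{D}_{W,\sig}$ with $K$, and using that $c_{W,\sig_1}$ is injective since $\sig$ is full, yields an isomorphism $W^*_K\cong K^{\sig_1}$ and hence $\card(\sig_1)=n$. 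Choose $\omega\in\sig_n$ via full-dimensionality; simpliciality gives $\card(\omega_1)=n=\card(\sig_1)$, so $\omega_1=\sig_1$. For arbitrary $\sigma\in\sig$ we then have $\sigma_1\subseteq\omega_1$, and the face of $\omega$ corresponding to this subset is a simplicial cone with the same set of $1$-dimensional faces as $\sigma$; since a simplicial cone is determined by its set of $1$-dimensional faces, $\sigma\in\face(\omega)$. Thus $\sig=\face(\omega)$ is affine.

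For b) forward, assume $\sig$ is simplicial and affine (and nonempty); then $\sig=\face(\sigma_0)$ for a sharp $W$-polycone $\sigma_0$, giving $\sigma_0\in\sig_{\dim(\sig)}$ and hence relative full-dimensionality. Simpliciality makes $\pic_W(\sig)$ big (\ref{1.280}~b)) while affineness makes it zero (\ref{1.220}), so $A_{W,\sig_1}\otimes_RK=0$. Reducing to the full case, $\sig_1=(\sigma_0)_1$ has exactly $n$ elements and $c_{W,\sig_1}\colon W^*\to R^{\sig_1}$ is an injection between free $R$-modules of equal rank $n$; its cokernel $A_{W,\sig_1}$ is therefore a finitely generated torsion $R$-module, which is finite for the principal subrings $R\subseteq\R$ under consideration. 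For b) backward, part a) gives simpliciality, so $\pic_W(\sig)\subseteq A_{W,\sig_1}$ is finite; by \ref{1.90} and relative full-dimensionality it is also free, hence $\pic_W(\sig)=0$, and c) concludes that $\sig$ is affine.

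The only non-trivial step is the combinatorial passage from $\card(\sig_1)=n$ to $\sig=\face(\omega)$ in the forward direction of c); the remainder is bookkeeping with the diagram $\mathbbm{D}_{W,\sig}$ and repeated invocations of \ref{1.280} and \ref{1.220}.
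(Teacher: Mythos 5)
Your argument is correct but follows a rearranged route. Part (a) you do exactly as the paper (via \ref{1.280}~b)), and (c)~backward via \ref{1.220} likewise. The genuine difference is organizational: you prove (c)~forward directly by the cardinality count (reduce to the full case via \ref{1.160}, deduce $\card(\sig_1)=n$ from $A_{W,\sig_1}\otimes_RK=0$, conclude $\sig=\face(\omega)$), whereas the paper places this same count inside its proof of~(b) and then derives~(c) from~(b). Your (b)~backward then chains together (a), Proposition~\ref{1.90} (freeness of $\pic_W(\sig)$ under relative full-dimensionality), the observation that finite and free forces $\pic_W(\sig)=0$, and~(c) -- a clean alternative that avoids repeating the count and makes the role of~\ref{1.90} visible, at the cost of needing a separate (though easy) argument for (b)~forward, which the paper dismisses as clear. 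Two shared elisions, not specific to your write-up: the inference from ``$A_{W,\sig_1}$ is a finitely generated torsion $R$-module'' to ``$A_{W,\sig_1}$ is finite'' is valid for $R=\Z$ and for fields (the two cases the paper singles out) but not for every principal subring $R\subseteq\R$ (e.g.\ $R\cong\Q[X]$); this conflation is already built into the parenthetical in the definition of ``big'' in~\ref{1.200}, so you inherit it rather than introduce it. And part~(b) as stated tacitly excludes $\sig=\emptyset$, which is simplicial and affine yet not relatively full-dimensional -- you at least flag the nonemptiness hypothesis.
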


\begin{proof}
a) If $A_{W,\sig_1}$ is finite then $\pic_W(\sig)$ is big, hence $\sig$ is simplicial (\ref{1.280}~b)). b) If $\omega\in\sig_{\dim(\sig)}$ and $A_{W,\sig}$ is finite then $\card(\sig_1)=\dim(\sig)=\card(\omega_1)$, hence $\sig_1=\omega_1$, implying $\sig=\face(\omega)$ and that $\omega$ is simplicial. The converse is clear. c) If $\pic_W(\sig)=0$ then $A_{W,\sig}$ is finite (\ref{1.280}~b)), hence $\sig$ is affine by b). The converse holds by \ref{1.220}.
\end{proof}


\section{Preliminaries on graded rings and modules}\label{sec2}

\noindent\textit{Let $G$ be a group and let $R$ be a $G$-graded ring.}\smallskip

While in projective geometry one meets $\Z$-graduations, we will have need of graduations by more general groups. Hence, and due to the lack of a suitable reference, we provide now some background on graded rings and modules. In \ref{sub2.1} we fix terminology and notation and look at some basic properties of categories of graded modules. We also describe graded versions of algebras of monoids (including polynomial algebras) and of rings and modules of fractions. In \ref{sub2.2} we study the behaviour of finiteness conditions under restriction of degrees. Finally, \ref{sub2.3} treats graded local cohomology. Besides some basic properties we give a general nonsense proof of the universal property of ideal transformation functors (cf.~\cite[2.2]{bs}).

\subsection{Graded rings and modules}\label{sub2.1}

\begin{no}\label{2.10}
We define the category $\grann^G$ of $G$-graded rings as in \cite[II.11.2]{a}. There is a faithful functor $U\colon\grann^G\rightarrow\ann$ that maps a $G$-graded ring onto its underlying ring; in case $G=0$ it is an isomorphism by means of which we identify $\grann^0$ and $\ann$. We define the category $\gralg^G(R)$ of $G$-graded $R$-algebras as the category $(\grann^G)^{/R}$ of $G$-graded rings under $R$.

We define the category $\grmod^G(R)$ of $G$-graded $R$-modules as the category with objects the $G$-graded $R$-modules and with morphisms the homomorphisms of $G$-graded $R$-modules of degree $0$ as in \cite[II.11.2]{a}. This category is abelian and fulfils Grothendieck's axioms AB5 and AB4$^*$, hence in particular has inductive and projective limits. There is a faithful, exact functor $U\colon\grmod^G(R)\rightarrow\catmod(U(R))$ that maps a $G$-graded $R$-module onto its underlying $U(R)$-module; in case $G=0$ it is an isomorphism by means of which we identify $\grmod^0(R)$ and $\catmod(R)$. The functor $U$ has a right adjoint and thus commutes with inductive limits and with finite projective limits.

Keeping in mind that $R_0$ is a subring of $U(R)$, taking homogeneous components yields functors $\bullet_g\colon\grmod^G(R)\rightarrow\catmod(R_0)$ for $g\in G$ that commute with inductive limits and with finite projective limits. For a $G$-graded $R$-module $M$ we set $M^{\hom}\dfgl\bigcup_{g\in G}M_g$ and denote by $\deg\colon M^{\hom}\setminus 0\rightarrow G$ its degree map. For $g\in G$ we denote by $\bullet(g)\colon\grmod^G(R)\rightarrow\grmod^G(R)$ the functor of shifting by $g$, which is an isomorphism.
\end{no}

\begin{no}\label{2.35}
For $G$-graded $R$-modules $M$ and $N$ we define a group $$\grhm{G}{R}{M}{N}\dfgl\bigoplus_{g\in G}\hm{\grmod^G(R)}{M}{N(g)}.$$ The structure of $U(R)$-module on $\hm{U(R)}{U(M)}{U(N)}$ induces a structure of $G$-graded $R$-module on $\grhm{G}{R}{M}{N}$ with $G$-graduation $(\hm{\grmod^G(R)}{M}{N(g)})_{g\in G}$. Note that $U(\grhm{G}{R}{M}{N})$ is a sub-$U(R)$-module of $\hm{U(R)}{U(M)}{U(N)}$, but not necessarily equal to the latter.

Varying $M$ and $N$ we get a contra-covariant bifunctor $$\grhm{G}{R}{\bullet}{\sq}\colon\grmod^G(R)^2\rightarrow\grmod^G(R)$$ that is left exact in both arguments.
\end{no}

\begin{no}\label{2.50}
The tensor product of $G$-graded $R$-modules as defined in \cite[II.11.5]{a} yields a bifunctor $$\bullet\otimes_R\sq\colon\grmod^G(R)^2\rightarrow\grmod^G(R)$$ that is right exact in both arguments. For a $G$-graded $R$-algebra $S$ we get functors $$\bullet\otimes_RS\colon\grmod^G(R)\rightarrow\grmod^G(S)$$ and $$\bullet\otimes_RS\colon\gralg^G(R)\rightarrow\gralg^G(S)$$ (\cite[II.11.5; III.4.8]{a}). We have $U(\bullet\otimes_R\sq)=U(\bullet)\otimes_{U(R)}U(\sq)$, and for $G$-graded $R$-modules $M$ and $N$ and $g\in G$ we have $$(M\otimes_RN)_g=\bigoplus\{M_h\otimes_{R_0}N_{h'}\mid h,h'\in G,h+h'=g\}.$$ For $g\in G$ there is a canonical isomorphism of functors $\bullet(g)\cong R(g)\otimes_R\bullet$. If $M$ is a $G$-graded $R$-module then $\bullet\otimes_RM$ is left adjoint to $\grhm{G}{R}{M}{\bullet}$ and there is a canonical isomorphism $\bullet\otimes_RM\cong M\otimes_R\bullet$. Thus, $\bullet\otimes_R\sq$ commutes with inductive limits in both arguments.

A $G$-graded $R$-module $M$ is called \textit{flat} if $M\otimes_R\bullet\colon\grmod^G(R)\rightarrow\grmod^G(R)$ is exact; this holds if and only if $U(M)$ is flat (\cite[A.I.2.18]{no1}). A $G$-graded $R$-algebra is called \textit{flat} if its underlying $G$-graded $R$-module is so.
\end{no}

\begin{no}\label{2.90}
Let $A$ be a ring and let $E$ be an $A$-module. If $\L\subseteq E$ is a subset then a sub-$A$-module of $E$ is called \textit{of (finite) $\L$-type} if it has a (finite) generating set contained in $\L$, and $E$ is called \textit{$\L$-noetherian} if the set of all its sub-$A$-modules of $\L$-type, ordered by inclusion, is noetherian, or -- equivalently -- if every sub-$A$-module of $E$ of $\L$-type is of finite $\L$-type. If $\L\subseteq A$ then the ring $A$ is called $\L$-noetherian if its underlying $A$-module is $\L$-noetherian.
\end{no}

\begin{no}\label{2.67}
For a family $E=(E_g)_{g\in G}$ of sets there exists a $G$-graded $R$-module $L(E)$ together with a map $\iota\colon\coprod_{g\in G}E_g\rightarrow L(E)$ such that for every $G$-graded $R$-module $M$ and every family of maps\linebreak $(u_g\colon E_g\rightarrow M_g)_{g\in G}$ there exists a unique morphism of $G$-graded $R$-modules $u\colon L(E)\rightarrow M$ such that $u\circ\iota$ and $u_g$ coincide on $E_g$ for every $g\in G$. Indeed, $L(E)\dfgl\bigoplus_{g\in G}R(g)^{\oplus E_g}$ together with the map induced by the canonical injections $E_g\rightarrow L(E)$ for $g\in G$ has this property. The pair $(L(E),\iota)$ is uniquely determined up to canonical isomorphism and is called \textit{the free $G$-graded $R$-module with basis $E$.}

A $G$-graded $R$-module $M$ is called \textit{free (of finite rank)} if there is a family of sets $E=(E_g)_{g\in G}$ with $M\cong L(E)$ (such that $\coprod_{g\in G}E_g$ is finite). If $M$ is free (of finite rank) then so is $U(M)$, but the converse does not necessarily hold.

Moreover, $M$ is called \textit{of finite type} (or \textit{of finite presentation}) if there is an exact sequence $F\rightarrow M\rightarrow 0$ (or $F'\rightarrow F\rightarrow M\rightarrow 0$) in $\grmod^G(R)$ with $F$ (and $F'$) free of finite rank. Clearly, $M$ is of finite type if and only if $U(M)$ is so, and this holds if and only if $U(M)$ is of finite $M^{\hom}$-type (\ref{2.90}). Hence, \cite[X.1.4 Propositon 6]{a} implies that $M$ is of finite presentation if and only if $U(M)$ is so. It is readily checked that $M$ is the inductive limit of its graded sub-$R$-modules of finite type.

Furthermore, $M$ is called \textit{pseudocoherent} if graded sub-$R$-modules of $M$ of finite type are of finite presentation, and \textit{coherent} if it is pseudocoherent and of finite type. By the above, (pseudo-)coherence of $U(M)$ implies (pseudo-)coherence of $M$. The $G$-graded ring $R$ is called \textit{coherent} if it is so considered as a $G$-graded $R$-module.

Finally, $M$ is called \textit{noetherian} if the set of all its graded sub-$R$-modules, ordered by inclusion, is noetherian, or -- equivalently -- if every graded sub-$R$-module of $M$ is of finite type. The $G$-graded ring $R$ is called \textit{noetherian} if it is so considered as a $G$-graded $R$-module. Clearly, $M$ is noetherian if and only if $U(M)$ is $M^{\hom}$-noetherian (\ref{2.90}). If $U(M)$ is noetherian then so is $M$, but the converse does not necessarily hold. However, it does hold if $G$ is of finite type (\cite{gotoyamagishi}).
\end{no}

\begin{no}\label{2.70}
There is a functor $R[\bullet]\colon\mon_{/G}\rightarrow\gralg^G(R)$ that maps a monoid $d\colon M\rightarrow G$ over $G$ to a $G$-graded $R$-algebra $R\rightarrow R[d]$, called \textit{the $G$-graded algebra of $d$ over $R$}, where $U(R[d])=U(R)[M]$ is the algebra of $M$ over $R$ and $$R[d]_g=\bigoplus_{h\in G}\bigoplus_{m\in d^{-1}(g-h)}(R_h\otimes_{R_0}R_0e_m)$$ for $g\in G$, denoting by $(e_m)_{m\in M}$ the canonical basis of $U(R)[M]$. Varying $R$ we get a bifunctor $$\bullet[\sq]\colon\grann^G\times\mon_{/G}\rightarrow\grann^G$$ under the first canonical projection of $\grann^G\times\mon_{/G}$. For a $G$-graded $R$-algebra $S$ there is a canonical isomorphism $\bullet[\sq]\otimes_RS\cong(\bullet\otimes_RS)[\sq]$.

In particular, if $I$ is a set then a map $d\colon I\rightarrow G$ corresponds to a unique monoid $d'\colon\N^{\oplus I}\rightarrow G$ over $G$, and we can consider the algebra $R[d']$ of $d'$ over $R$. Its underlying $U(R)$-algebra is the polynomial algebra in indeterminates $(X_i)_{i\in I}$ over $U(R)$, furnished with the $G$-graduation with $\deg(X_i)=d(i)$ for $i\in I$. The $G$-graded $R$-algebra $R[d']$ is denoted by $R[(X_i)_{i\in I},d]$; if $\card(I)=1$ and $d(I)=\{g\}$ it is denoted by $R[X,g]$. 

For a $G$-graded $R$-algebra $S$ there exist a set $I$, a map $d\colon I\rightarrow G$, and a surjective morphism of $G$-graded $R$-algebras $h\colon R[(X_i)_{i\in I},d]\rightarrow S$. The $G$-graded $R$-algebra $S$ is called \textit{of finite type} if there exist $I$, $d$ and $h$ as above such that $I$ is finite.

There is a graded version of Hilbert's Basissatz: if $R$ is noetherian then $G$-graded $R$-algebras of finite type are noetherian. (In order to prove this it suffices by the above to show that the $G$-graded polynomial algebra $S\dfgl R[X,g]$ with $g\in G$ is noetherian. Furnishing $R[X]$ also with its canonical $\Z$-graduation and denoting the corresponding total degree map by $\deg_{\Z}$, we find for a graded ideal $\ia\subseteq S$ that is not of finite type recursively a sequence $(f_i)_{i\in\N}$ in $\ia^{\hom}$ with $f_i\in\ia^{\hom}\setminus\langle f_0,\ldots,f_{i-1}\rangle_S$ and $\deg_{\Z}(f_i)$ minimal in $\deg_{\Z}(\ia^{\hom}\setminus\langle f_0,\ldots,f_{i-1}\rangle_S)$ for $i\in\N$, and then we conclude as in the ungraded case.)
\end{no}

\begin{no}\label{2.80}
Let $M$ be a $G$-graded $R$-module, let $N\subseteq M$ be a graded sub-$R$-module, and let $\ia\subseteq R$ be a graded ideal. Then, $(N:_M\ia)$ is a $G$-graded sub-$R$-module of $M$. Hence, $\Sat_M(N,\ia)\dfgl\bigcup_{n\in\N}(N:_M\ia^n)$ is a graded sub-$R$-module of $M$, called \textit{the $\ia$-saturation of $N$ in $M$,} and $N$ is called \textit{$\ia$-saturated in $M$} if $N=\Sat_M(N,\ia)$. If $\ia$ is of finite type then $\Sat_M(N,\ia)$ is the smallest graded sub-$R$-module of $M$ containing $N$ that is $\ia$-saturated in $M$.
\end{no}

\begin{no}\label{2.130}
The functor $\bullet_0\colon\grmod^G(R)\rightarrow\catmod(R_0)$ of taking components of degree $0$ has a left adjoint $R\otimes_{R_0}\bullet\colon\catmod(R_0)\rightarrow\grmod^G(R)$. The corresponding counit is the morphism of functors\linebreak $\nu\colon R\otimes_{R_0}(\bullet_0)\rightarrow\Id_{\grmod^G(R)}$ with $\nu(M)(r\otimes x)=rx$ for a $G$-graded $R$-module $M$, $r\in R$ and $x\in M_0$ (\cite[2.5.5]{no2}).

The $G$-graded ring $R$ is called \textit{strongly graded} if $R_{g+h}=\langle R_gR_h\rangle_{\Z}$ for all $g,h\in G$. This holds if and only if $\nu\colon R\otimes_{R_0}(\bullet_0)\rightarrow\Id_{\grmod^G(R)}$ is an isomorphism (\cite[A.I.3.4]{no1}).
\end{no}

\begin{no}\label{2.170}
The submonoid $\degmon(R)\dfgl\langle g\in G\mid R_g\neq 0\rangle_{\N}$ of $G$ is called \textit{the degree monoid of $R$.} If $A$ is a ring such that $R$ is a $G$-graded $A$-algebra and $E\subseteq R^{\hom}\setminus 0$ is a subset with $R=A[E]$, then $\degmon(R)=\langle\deg(x)\mid x\in E\rangle_{\N}$; if additionally $R\neq 0$ and $E$ contains no zerodivisors of $R$, then $\degmon(R)=\{g\in G\mid R_g\neq 0\}$. This applies in particular if $R=A[(X_i)_{i\in I},d]$ for some ring $A\neq 0$ and some map $d\colon I\rightarrow G$ (\ref{2.70}).

Furthermore, $R$ is said to be \textit{positively $G$-graded} if the monoid $\degmon(R)$ is sharp; then, $\degmon(R)$ is canonically furnished with a structure of ordered monoid (\ref{1.350}). The $R_0$-module $\bigoplus_{g\in\degmon(R)\setminus 0}R_g$ is a (graded) ideal of $R$ if and only if $R$ is positively $G$-graded, and then it is denoted by $R_+$.
\end{no}

\begin{no}\label{2.60}
Let $S\subseteq R^{\hom}$ be a subset. We denote by $\overline{S}$ the multiplicative closure of $S$ and set $$\widetilde{S}\dfgl\{r\in R^{\hom}\mid\exists r'\in R:rr'\in\overline{S}\}.$$ A subset $T\subseteq R^{\hom}$ is called \textit{saturated over $S$} if $S\subseteq\overline{T}\subseteq\widetilde{S}$, or -- equivalently -- if $S\subseteq\overline{T}$ and $\widetilde{S}=\widetilde{T}$.

We define a $G$-graded ring $S^{-1}R$ with underlying ring $S^{-1}(U(R))$ by setting $$\textstyle(S^{-1}R)_g=\{\frac{x}{s}\mid s\in\overline{S}\setminus 0,x\in R^{\hom}\setminus 0,\deg(x)=\deg(s)+g\}\cup\{0\}$$ for $g\in G$. For $T\subseteq R^{\hom}$ with $S\subseteq\overline{T}$ there is a canonical morphism of $G$-graded rings $\eta^S_T(R)\colon$\linebreak$S^{-1}R\rightarrow T^{-1}R$ with $\frac{x}{s}\mapsto\frac{x}{s}$ for $x\in R^{\hom}$ and $s\in S$; if $T$ is saturated over $S$ then this is an isomorphism of $G$-graded rings. Furthermore, $T^{-1}R$ is flat over $S^{-1}R$ by means of $\eta^S_T(R)$ (\ref{2.50}, \cite[II.2.3 Proposition 7; II.2.4 Th\'eor\`eme 1]{ac}). If $V\subseteq R^{\hom}$ such that $T$ is saturated over $V$ then there is a morphism of $G$-graded rings $(\eta^V_T(R))^{-1}\circ\eta^S_T(R)\colon S^{-1}R\rightarrow V^{-1}R$, denoted by abuse of language by $\eta^S_V(R)$.

By the above there is an exact functor $$\bullet\otimes_RS^{-1}R\colon\grmod^G(R)\rightarrow\grmod^G(S^{-1}R),$$ denoted by $S^{-1}\bullet$, and we have $U(S^{-1}\bullet)=S^{-1}(U(\bullet))$ (\ref{2.50}). Hence, for a $G$-graded $R$-module $M$ the $S^{-1}(U(R))$-module underlying $S^{-1}M$ is the module of fractions of $U(M)$ with denominators in $S$, and for $g\in G$ we have $$\textstyle(S^{-1}M)_g=\{\frac{x}{s}\mid s\in\overline{S}\setminus 0,x\in M^{\hom}\setminus 0,\deg(x)=\deg(s)+g\}\cup\{0\}.$$ For $T,V\subseteq R^{\hom}$ with $S,V\subseteq\overline{T}$ such that $T$ is saturated over $V$ we have a morphism of functors $$\eta^S_V\dfgl\bullet\otimes_R\eta^S_V(R)\colon S^{-1}\bullet\rightarrow V^{-1}\bullet.$$ We set $\eta_S\dfgl\eta_S^{\emptyset}\colon \Id_{\grmod^G(R)}\rightarrow S^{-1}\bullet$. Furthermore, we denote the compositions of $\eta_S^V$ and $\eta_S$ with the functor $\bullet_0$ of taking components of degree $0$ by $\eta_{(V)}^{(S)}$ and $\eta_{(S)}$. If $S=\{f\}$ we write $\eta_f$ and $\eta_{(f)}$ instead of $\eta_S$ and $\eta_{(S)}$; if moreover $V=\{g\}$ where $f$ divides $g$ we write $\eta^f_g$ and $\eta_{(g)}^{(f)}$instead of $\eta^S_V$ and $\eta_{(V)}^{(S)}$.

As $S^{-1}\bullet$ commutes with $U$ there is a canonical isomorphism of bifunctors $$(S^{-1}\bullet)\otimes_{S^{-1}R}(S^{-1}\sq)\cong S^{-1}(\bullet\otimes_R\sq)$$ (\cite[II.5.1 Proposition 3]{a}). The canonical injection $(S^{-1}\bullet)_0\hookrightarrow S^{-1}\bullet$ induces a morphism $$(S^{-1}\bullet)_0\otimes_{(S^{-1}R)_0}(S^{-1}\sq)_0\rightarrow (S^{-1}\bullet)\otimes_{S^{-1}R}(S^{-1}\sq).$$ Its composition with the above isomorphism induces a morphism $$\delta_S\colon(S^{-1}\bullet)_0\otimes_{(S^{-1}R)_0}(S^{-1}\sq)_0\cong S^{-1}(\bullet\otimes_R\sq)_0$$ of bifunctors from $\grmod^G(R)^2$ to $\catmod((S^{-1}R)_0)$. This is not necessarily an isomorphism (cf.~\ref{3.290}). However, as $S^{-1}\bullet$ and $\bullet_0$ commute with inductive limits, $\delta_S(R^{\oplus I},\bullet)$ is an isomorphism for every set $I$. Moreover, for $T,U\subseteq R^{\hom}$ with $S\subseteq\overline{T}$ such that $T$ is saturated over $U$ we have a commutative diagram $$\xymatrix@R15pt{(S^{-1}\bullet)_0\otimes_{(S^{-1}R)_0}(S^{-1}\sq)_0\ar[r]^(.6){\delta_S}\ar[d]_{(\eta_U^S)_0\otimes(\eta_U^S)_0}&S^{-1}(\bullet\otimes_R\sq)_0\ar[d]^{(\eta_U^S)_0}\\(U^{-1}\bullet)_0\otimes_{(U^{-1}R)_0}(U^{-1}\sq)_0\ar[r]^(.6){\delta_U}&U^{-1}(\bullet\otimes_R\sq)_0.}$$
\end{no}


\subsection{Degree restriction}\label{sub2.2}

\noindent\textit{Let $F\subseteq G$ be a subgroup. We denote by $(G:F)$ the index of $F$ in $G$.}

\begin{no}\label{2.20}
For an $F$-graded ring $S$ we define a $G$-graded ring $S^{(G)}$ with $U(S^{(G)})=U(S)$ by setting $S^{(G)}_g=S_g$ for $g\in F$ and $S^{(G)}_g=0$ for $g\in G\setminus F$. A morphism of $F$-graded rings $u\colon S\rightarrow T$ is a morphism of $G$-graded rings $S^{(G)}\rightarrow T^{(G)}$ and as such is denoted by $u^{(G)}$. So, we get a functor $\bullet^{(G)}\colon\grann^F\rightarrow\grann^G$, called \textit{$G$-extension.}

We define an $F$-graded ring $R_{(F)}$ where $U(R_{(F)})$ equals the subring $\bigoplus_{g\in F}R_g$ of $U(R)$ by setting $(R_{(F)})_g=R_g$ for $g\in F$. A morphism of $G$-graded rings $u\colon R\rightarrow S$ induces by restriction and coastriction a morphism of $F$-graded rings $u_{(F)}\colon R_{(F)}\rightarrow S_{(F)}$. So, we get a functor $\bullet_{(F)}\colon\grann^G\rightarrow\grann^F$, called \textit{$F$-restriction,} which is right adjoint to $\bullet^{(G)}$. We have $\degmon(R_{(F)})=\degmon(R)\cap F$. If $R$ is positively $G$-graded then $R_{(F)}$ is positively $F$-graded, and $(R_+)_{(F)}=(R_{(F)})_+$.

For a $G$-graded $R$-module $M$ we define an $F$-graded $R_{(F)}$-module $M_{(F)}$ with underlying $U(R_{(F)})$-module the sub-$U(R_{(F)})$-module $\bigoplus_{g\in F}M_g$ of $U(M)$ by setting $(M_{(F)})_g=M_g$ for $g\in F$. A morphism $u\colon M\rightarrow N$ of $G$-graded $R$-modules induces by restriction and coastriction a morphism of $F$-graded $R_{(F)}$-modules $u_{(F)}\colon M_{(F)}\rightarrow N_{(F)}$. So, we get a functor $\bullet_{(F)}\colon\grmod^G(R)\rightarrow\grmod^F(R_{(F)})$, also called \textit{$F$-restriction.}
\end{no}

\begin{no}\label{2.40}
Let $S\subseteq R^{\hom}$ with $\deg(s)\in F$ for every $s\in S\setminus 0$. Then, $S\subseteq(R_{(F)})^{\hom}$, and the $F$-graded $R_{(F)}$-algebras $(\eta_S(R))_{(F)}\colon R_{(F)}\rightarrow(S^{-1}R)_{(F)}$ and $\eta_S(R_{(F)})\colon R_{(F)}\rightarrow S^{-1}(R_{(F)})$ are canonically isomorphic and will henceforth be identified. Thus, we also identify the functors $$(S^{-1}\bullet)_{(F)}\colon\grmod^G(R)\rightarrow\grmod^F((S^{-1}R)_{(F)})$$ and $$S^{-1}(\bullet_{(F)})\colon\grmod^G(R)\rightarrow\grmod^F(S^{-1}(R_{(F)})).$$
\end{no}

\begin{lemma}\label{2.100}
If $M$ is a $G$-graded $R$-module and $N\subseteq M_{(F)}$ is a graded sub-$R_{(F)}$-module then $N=\langle N\rangle_R\cap M_{(F)}$.
\end{lemma}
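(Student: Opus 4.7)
The inclusion $N\subseteq\langle N\rangle_R\cap M_{(F)}$ is immediate, since $N$ is contained in both $\langle N\rangle_R$ and $M_{(F)}$. The plan is therefore to show the reverse inclusion by a direct homogeneous decomposition argument.

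Let $x\in\langle N\rangle_R\cap M_{(F)}$. By definition of $\langle N\rangle_R$ one can write $x=\sum_{i}r_in_i$ with $r_i\in R$ and $n_i\in N$. Since $N$ is a graded sub-$R_{(F)}$-module of $M_{(F)}$, one may assume each $n_i$ is homogeneous, so $\deg(n_i)\in F$. Further decomposing each $r_i$ into its homogeneous components in $R=\bigoplus_{g\in G}R_g$, one may also assume each $r_i$ is homogeneous in $R$. Thus $x$ is written as a sum of homogeneous elements of $M$ of various degrees $\deg(r_i)+\deg(n_i)\in G$.

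Now decompose $x=\sum_{g\in G}x_g$ into its homogeneous components in $M$. Because $x\in M_{(F)}$ one has $x_g=0$ for $g\notin F$, so $x=\sum_{g\in F}x_g$. For $g\in F$ the component $x_g$ is the sum of those terms $r_in_i$ for which $\deg(r_i)+\deg(n_i)=g$; since $\deg(n_i)\in F$ and $g\in F$, this forces $\deg(r_i)=g-\deg(n_i)\in F$. Hence each $r_i$ contributing to $x_g$ lies in $R_{\deg(r_i)}=(R_{(F)})_{\deg(r_i)}\subseteq R_{(F)}$, so $r_in_i\in N$ because $N$ is an $R_{(F)}$-module. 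Therefore $x_g\in N_g$, and summing over $g\in F$ gives $x\in N$, which completes the proof.

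There is no real obstacle here; the only point to watch is the standard pitfall in graded-module arguments, namely that one must carry out the decomposition on both the coefficients $r_i$ and the generators $n_i$ simultaneously before reading off degrees, so that the constraint $\deg(r_i)+\deg(n_i)\in F$ combined with $\deg(n_i)\in F$ actually forces $r_i\in R_{(F)}$.
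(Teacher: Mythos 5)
Your proof is correct and takes essentially the same approach as the paper's: the paper's (terser) argument also reduces to checking that a homogeneous product $rn$ with $r\in R_g$, $n\in N$ homogeneous, and $rn\in M_{(F)}$ forces $g\in F$ and hence $rn\in R_{(F)}N\subseteq N$. You have merely spelled out the preliminary reduction to homogeneous components that the paper leaves implicit.
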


\begin{proof}
If $f\in F$, $g\in G$, $x\in N$ and $r\in R_g$ with $rx\in M_{(F)}$ then $g+f=\deg(rx)\in F$, hence $g\in F$, and therefore $rx\in R_{(F)}N\subseteq N$. This shows $\langle N\rangle_R\cap M_{(F)}\subseteq N$. The other inclusion is obvious.
\end{proof}

\begin{prop}\label{2.110}
Let $M$ be a $G$-graded $R$-module and let $\L\subseteq M^{\hom}$.

a) If $M$ is $\L$-noetherian then $M_{(F)}$ is $\L\cap M_{(F)}$-noetherian.

b) If $N\subseteq M$ is a graded sub-$R$-module that is $\L$-generated as an $R_0$-module then $N_{(F)}$ is $\L\cap M_{(F)}$-generated as an $R_{(F)}$-module.
\end{prop}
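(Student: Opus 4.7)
The plan is to derive both statements from Lemma \ref{2.100}, which supplies an order-preserving injection $N\mapsto\langle N\rangle_R$ from graded sub-$R_{(F)}$-modules of $M_{(F)}$ into graded sub-$R$-modules of $M$, with left inverse $P\mapsto P\cap M_{(F)}$. This lets one transport ascending chains and generating sets upstairs and then pull the conclusions back.

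For part a), I would argue by ACC. Given an ascending chain $(N_i)_{i\in\N}$ of graded sub-$R_{(F)}$-modules of $M_{(F)}$ each of $\L\cap M_{(F)}$-type, the first step is to check that a generating set $E_i\subseteq\L\cap M_{(F)}$ of $N_i$ as $R_{(F)}$-module also generates $\langle N_i\rangle_R$ as $R$-module: $\langle E_i\rangle_R$ contains $\langle E_i\rangle_{R_{(F)}}=N_i$ and hence $\langle N_i\rangle_R$, while the reverse inclusion is immediate from $E_i\subseteq N_i\subseteq\langle N_i\rangle_R$. Each $\langle N_i\rangle_R$ is therefore of $\L$-type, so by the hypothesis on $M$ the chain $(\langle N_i\rangle_R)_{i\in\N}$ stabilises. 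Applying Lemma \ref{2.100} to each $N_i$, the equality of two consecutive $R$-spans forces the equality of the corresponding $N_i$, so the original chain stabilises as well, proving that $M_{(F)}$ is $\L\cap M_{(F)}$-noetherian.

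For part b), the argument is even more direct, since $R_0\subseteq R_{(F)}$ is concentrated in degree $0$ and hence respects the graduation. Any $\L$-generating set $E\subseteq\L$ of $N$ as an $R_0$-module is contained in $\L\cap N$, so we may assume $N=\langle\L\cap N\rangle_{R_0}$. For each $f\in F$ only those $x\in\L\cap N$ with $\deg(x)=f$ contribute to $N_f$, yielding $N_f=\langle\L\cap N\cap M_f\rangle_{R_0}$. Summing over $f\in F$ gives $N_{(F)}=\langle\L\cap N\cap M_{(F)}\rangle_{R_0}\subseteq\langle\L\cap N\cap M_{(F)}\rangle_{R_{(F)}}\subseteq N_{(F)}$, and since $\L\cap N\cap M_{(F)}\subseteq\L\cap M_{(F)}$ this exhibits a generating set of $N_{(F)}$ as $R_{(F)}$-module contained in $\L\cap M_{(F)}$.

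I do not anticipate any real obstacle. The only point requiring some care is the passage in part a) from an $R_{(F)}$-generating set of $N_i$ in $\L\cap M_{(F)}$ to an $R$-generating set of $\langle N_i\rangle_R$ in $\L$, and the observation above disposes of it in one line.
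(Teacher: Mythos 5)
Your proof is correct and matches the paper's indication: the paper's own proof is just the remark that the proposition is ``straightforward on use of \ref{2.100},'' and your part~a) fleshes out exactly the expected argument, transporting an ascending chain of $\L\cap M_{(F)}$-type submodules up via $N\mapsto\langle N\rangle_R$ and recovering stabilisation via the retraction of Lemma~\ref{2.100}, while your part~b) is the straightforward degree-by-degree computation (which needs only that $R_0$ acts degree-preservingly, not the lemma itself). The one small observation worth keeping explicit is your verification that an $\L\cap M_{(F)}$-generating set of $N_i$ over $R_{(F)}$ is also an $\L$-generating set of $\langle N_i\rangle_R$ over $R$, since that is what makes the hypothesis on $M$ applicable.
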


\begin{proof}
Straightforward on use of \ref{2.100}.
\end{proof}

\begin{cor}\label{2.120}
Let $M$ be a noetherian $G$-graded $R$-module.

a) The $F$-graded $R_{(F)}$-module $M_{(F)}$ if noetherian.

b) If $g\in G$ then the $R_0$-module $M_g$ is noetherian.
\end{cor}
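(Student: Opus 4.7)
The plan is to derive both parts quickly from Proposition~\ref{2.110} together with the noetherian-equivalence in \ref{2.67}.

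For part a), I would first translate the hypothesis into the form required by \ref{2.110}~a): by \ref{2.67}, $M$ being noetherian as a $G$-graded $R$-module is equivalent to $U(M)$ being $M^{\hom}$-noetherian in the sense of \ref{2.90}. Applying \ref{2.110}~a) with $\L\dfgl M^{\hom}$ then yields that $M_{(F)}$ is $M^{\hom}\cap M_{(F)}$-noetherian. The point to observe is the identification
$$M^{\hom}\cap M_{(F)}=\bigcup_{g\in F}M_g=(M_{(F)})^{\hom},$$
after which \ref{2.67} applied to the $F$-graded $R_{(F)}$-module $M_{(F)}$ converts this back into the statement that $M_{(F)}$ is noetherian.

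For part b), I would first reduce to the case $g=0$ by shifting: since $\bullet(-g)\colon\grmod^G(R)\rightarrow\grmod^G(R)$ is an isomorphism of categories (\ref{2.10}), the $G$-graded $R$-module $M(-g)$ is noetherian, and $(M(-g))_0=M_g$ as $R_0$-modules. Then I would specialize part a) to the trivial subgroup $F\dfgl 0\subseteq G$: one has $R_{(0)}=R_0$ and $M(-g)_{(0)}=M_g$, and a $0$-graded module structure is nothing but an ordinary module structure, so noetherianness as a $0$-graded $R_0$-module coincides with noetherianness as an $R_0$-module. Applying a) to $M(-g)$ and $F=0$ therefore gives the claim.

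I do not expect serious obstacles: the entire argument is a matter of unwinding definitions and invoking \ref{2.110} and \ref{2.67}. The only mildly delicate point is the explicit verification that $M^{\hom}\cap M_{(F)}$ equals $(M_{(F)})^{\hom}$, which is immediate from $(M_{(F)})_g=M_g$ for $g\in F$, together with the observation that restricting to the trivial subgroup in a) collapses the graded setup to the ungraded $R_0$-module setup needed for b).
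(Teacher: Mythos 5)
Your proposal is correct and takes essentially the same route as the paper: part a) is Proposition~\ref{2.110}~a) applied with $\L=M^{\hom}$ (you simply spell out the translation through \ref{2.67} and \ref{2.90} that the paper leaves implicit), and part b) is part a) specialized to $F=0$ applied to a shift of $M$. The only discrepancy is the sign of the shift: the paper applies a) to $M(g)$, which under the convention $M(g)_h=M_{g+h}$ (the EGA one, consistent with the rest of the paper) gives $M(g)_0=M_g$, whereas you shift by $-g$; this is a harmless convention mismatch, not a gap in the argument.
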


\begin{proof}
Applying \ref{2.110}~a) with $\mathbbm{L}=M^{\hom}$ yields a). Applying a) with $F=0$ to $M(g)$ yields b).
\end{proof}

\begin{prop}\label{2.160}
Suppose that $(G:F)<\infty$ and that $R$ is an $R_0$-algebra of finite type. Then, $R$ is an $R_{(F)}$-module of finite type, and if $M$ is a $G$-graded $R$-module of finite type then $M_{(F)}$ is an $F$-graded $R_{(F)}$-module of finite type.
\end{prop}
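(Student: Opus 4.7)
The plan is to exploit the finiteness of $(G:F)$ by replacing each generator of $R$ by a suitable power that already lives in $R_{(F)}$. By \ref{2.70} I can choose $x_1,\dots,x_n\in R^{\hom}$ with $R=R_0[x_1,\dots,x_n]$; set $g_i\dfgl\deg(x_i)$. Since $G/F$ has finite order, there exist integers $e_i\geq 1$ with $e_ig_i\in F$ (for instance $e_i=(G:F)$), whence $x_i^{e_i}\in R_{(F)}$. The basic combinatorial input is the euclidean division in $\N$: every monomial $x^\alpha\dfgl x_1^{\alpha_1}\cdots x_n^{\alpha_n}$ can be written as $T_\alpha\cdot x^{r(\alpha)}$ with $T_\alpha\dfgl(x_1^{e_1})^{q_1}\cdots(x_n^{e_n})^{q_n}\in R_{(F)}$ and $0\leq r_i(\alpha)<e_i$.

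This immediately yields the first assertion. Since $R_0\subseteq R_{(F)}$ and every element of $R$ is an $R_0$-linear combination of monomials $x^\alpha=T_\alpha\cdot x^{r(\alpha)}$, the finitely many ``reduced'' monomials $\{x^r\mid 0\leq r_i<e_i\}$ generate $R$ as an $R_{(F)}$-module.

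For the second assertion, I would choose homogeneous generators $y_1,\dots,y_m\in M^{\hom}$ of $M$ over $R$, set $h_j\dfgl\deg(y_j)$, and form the finite subset
\[
E\dfgl\{x^ry_j\mid 0\leq r_i<e_i,\ 1\leq j\leq m\}\cap M_{(F)}\subseteq M_{(F)}^{\hom}.
\]
I then claim that $E$ generates $M_{(F)}$ as an $R_{(F)}$-module. Since $M_{(F)}$ is the sum of its homogeneous components, it suffices to treat a homogeneous element $e\in M_{(F)}$. Writing $e=\sum_j s_jy_j$ with $s_j\in R^{\hom}$ of degree $\deg(e)-h_j$ and expanding each $s_j$ as an $R_0$-linear combination of monomials $x^\alpha$, one applies the division decomposition to each $x^\alpha y_j=T_\alpha\cdot(x^{r(\alpha)}y_j)$ and collects the results.

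The one step requiring care---and the main ``obstacle''---is to verify that the remainder terms $x^{r(\alpha)}y_j$ actually lie in $M_{(F)}$, so that they belong to $E$ rather than just to some larger finite generating set of $M$. This reduces to the identity
\[
\deg(x^{r(\alpha)}y_j)=\deg(x^\alpha y_j)-\sum_iq_i(\alpha)(e_ig_i),
\]
whose right-hand side lies in $F$ because $\deg(x^\alpha y_j)=\deg(e)\in F$ and each $e_ig_i\in F$ by construction. Granted this, every homogeneous element of $M_{(F)}$ is an $R_{(F)}$-linear combination of elements of $E$, so $M_{(F)}$ is of finite type over $R_{(F)}$.
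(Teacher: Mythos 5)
Your proof is correct and carries out precisely the argument the paper delegates to Bourbaki (AC III.1.3, Prop.\ 2(ii)): choose homogeneous algebra generators, raise each to a power whose degree lands in $F$ (your $e_i$ with $e_ig_i\in F$; Lagrange in $G/F$ lets one even take $e_i=(G:F)$ uniformly), and reduce monomial exponents modulo $e_i$ to get a finite set of remainders that generate $R$ over $R_{(F)}$, respectively $M_{(F)}$ over $R_{(F)}$ after tensoring with homogeneous module generators. The degree bookkeeping you flag as the key obstacle is exactly right: since $\deg(T_\alpha)\in F$ and $\deg(x^\alpha y_j)=\deg(e)\in F$, the remainder $x^{r(\alpha)}y_j$ does lie in $M_{(F)}$, so your set $E$ suffices.
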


\begin{proof}
Analogously to \cite[III.1.3 Proposition 2(ii)]{ac}.
\end{proof}

\begin{prop}\label{2.180}
Let $R$ be positively $G$-graded and let $(x_i)_{i\in I}$ be a family in $R^{\hom}\setminus R_0$. We consider the following statements:
\begin{aufz}
\item[(1)] For $g\in G$ we have $R_g=\langle\prod_{i\in I}x_i^{n_i}\mid(n_i)_{i\in I}\in\N^{\oplus I}\wedge\deg(\prod_{i\in I}x_i^{n_i})=g\rangle_{R_0}$;
\item[(2)] $R=R_0[x_i\mid i\in I]$;
\item[(3)] $R_+=\langle x_i\mid i\in I\rangle_R$.
\end{aufz}
We have (1)$\Leftrightarrow$(2)$\Rightarrow$(3), and if $\degmon(R)$ is artinian then (1)--(3) are equivalent.
\end{prop}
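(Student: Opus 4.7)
The plan is to establish (1)$\Leftrightarrow$(2) and (2)$\Rightarrow$(3) directly from the definitions, and to obtain (3)$\Rightarrow$(1) under the artinianness hypothesis by well-founded induction on $\degmon(R)$. For (1)$\Leftrightarrow$(2), I would observe that the $R_0$-subalgebra $R_0[x_i\mid i\in I]$ of $R$ is generated as an $R_0$-module by the family of monomials $\prod_{i\in I}x_i^{n_i}$ with $(n_i)_{i\in I}\in\N^{\oplus I}$; since each such monomial is homogeneous, the degree-$g$ component of this subalgebra equals precisely the right-hand side of (1), so the family of identities in (1) is equivalent to $R=R_0[x_i\mid i\in I]$. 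For (2)$\Rightarrow$(3), the inclusion $\langle x_i\mid i\in I\rangle_R\subseteq R_+$ is immediate from $\deg(x_i)\in\degmon(R)\setminus 0$ (which holds because $R$ is positively $G$-graded and $x_i\in R^{\hom}\setminus R_0$); conversely, (1) expresses any homogeneous element of non-zero degree as an $R_0$-combination of monomials $\prod x_i^{n_i}$ of that degree, and sharpness of $\degmon(R)$ forces at least one exponent $n_i$ to be positive, so each such monomial lies in $\langle x_i\mid i\in I\rangle_R$.

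The main step is (3)$\Rightarrow$(1) under artinianness. Since $R$ is positively $G$-graded, $\degmon(R)$ is sharp and hence carries a canonical order with positive cone $\degmon(R)$ (\ref{1.350}), and artinianness means this order is well-founded. I would then prove (1) by induction on $g\in\degmon(R)$: the case $g=0$ is trivial via the empty product, and for $g\in\degmon(R)\setminus 0$ and $r\in R_g$, property (3) writes $r=\sum_i x_is_i$ with finitely many non-zero terms. Splitting each $s_i$ into its homogeneous components, I may assume $s_i\in R_{g-\deg(x_i)}$. If $g-\deg(x_i)\notin\degmon(R)$ then $s_i=0$; otherwise $g-\deg(x_i)$ is strictly less than $g$ in the ordering on $\degmon(R)$, and the induction hypothesis decomposes $s_i$ into the form prescribed by (1), so that multiplication by $x_i$ gives $r$ the required presentation.

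The delicate point is this descent: it is precisely the artinianness of $\degmon(R)$ that makes the ordering well-founded and hence lets the induction terminate. Without that hypothesis one cannot iteratively reduce the degree, consistent with the fact that (3)$\Rightarrow$(1) genuinely fails in general; all the other manipulations are routine and unconditional.
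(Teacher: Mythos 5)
Your proof is correct and follows essentially the same route as the paper: the equivalence (1)$\Leftrightarrow$(2) and the implication (2)$\Rightarrow$(3) are deduced directly from the definitions, and (3)$\Rightarrow$(1) under artinianness is established by well-founded (noetherian) induction on the degree in $\degmon(R)$, using (3) to express a homogeneous element of positive degree as a combination of the $x_i$ with homogeneous coefficients of strictly smaller degree. The only cosmetic difference is that you verify the degree-$g$ identity in (1) directly rather than first reducing (3)$\Rightarrow$(1) to (3)$\Rightarrow$(2).
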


\begin{proof}
The first claim is clear. Suppose that (3) holds and set $R'\dfgl R_0[x_i\mid i\in I]$. To show (1) it suffices to show that $R_g\subseteq R'$ for every $g\in\degmon(R)$. For $g=0$ this is clear. Let $g\in\degmon(R)\setminus 0$ and suppose $R_h\subseteq R'$ for every $h\in\degmon(R)$ with $h<g$. For $y\in R_g\subseteq R_+$ there exist a finite subset $J\subseteq I$ and a family $(r_i)_{i\in J}$ in $R^{\hom}\setminus 0$ with $y=\sum_{i\in J}r_ix_i$ and $\deg(r_i)=g-\deg(x_i)<g$ for $i\in J$, implying $r_i\in R'$ for $i\in J$ and thus $y\in R'$. So, as $\degmon(R)$ is artinian, (1) follows by noetherian induction (\cite[III.6.5 Proposition 7]{e}).\footnote{cf.~\cite[III.1.2 Proposition 1]{ac}}
\end{proof}

\begin{cor}\label{2.190}
Let $R$ be positively $G$-graded. If $R$ is an $R_0$-algebra of finite type then $R_+$ is an $R$-module of finite type, and if $\degmon(R)$ is artinian then the converse holds.
\end{cor}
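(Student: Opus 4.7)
The plan is to deduce both implications directly from Proposition \ref{2.180}, using the observation that the generating sets occurring there can always be taken finite and homogeneous.

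For the first direction, suppose $R = R_0[y_1,\dots,y_k]$ is an $R_0$-algebra of finite type. Decompose each $y_j$ into its homogeneous components and discard those of degree $0$ (which already lie in $R_0$). Positivity of the $G$-graduation guarantees that a homogeneous component of degree $0$ is in $R_0$ and that the remaining components sit in $R^{\hom}\setminus R_0$. This produces a finite family $(x_i)_{i\in I}$ in $R^{\hom}\setminus R_0$ with $R=R_0[x_i\mid i\in I]$. Applying the implication (2)$\Rightarrow$(3) of Proposition \ref{2.180} yields $R_+=\langle x_i\mid i\in I\rangle_R$, so $R_+$ is an $R$-module of finite type.

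For the converse, assume $\degmon(R)$ is artinian and that $R_+$ is an $R$-module of finite type. Since $R_+$ is a graded sub-$R$-module, it admits a finite homogeneous generating set; using again that $R$ is positively graded, each such generator lies in $R^{\hom}\setminus R_0$. We thus obtain a finite family $(x_i)_{i\in I}$ in $R^{\hom}\setminus R_0$ satisfying condition (3) of Proposition \ref{2.180}. Under the artinianity hypothesis, Proposition \ref{2.180} also gives (3)$\Rightarrow$(2), so $R=R_0[x_i\mid i\in I]$, i.e., $R$ is an $R_0$-algebra of finite type.

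There is no real obstacle here: the corollary is essentially a reformulation of Proposition \ref{2.180} once one observes that finiteness and homogeneity of generating sets can be arranged on both sides. The only small point to check carefully is that discarding degree-$0$ parts of algebra generators, and homogenising module generators of $R_+$, is harmless, which it is by the positivity assumption on $R$.
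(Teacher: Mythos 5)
Your proof is correct and takes the same route as the paper, which simply cites Proposition \ref{2.180} as giving the corollary immediately; you have merely spelled out the routine verification that the generating families can be arranged to be finite, homogeneous, and of nonzero degree, which is exactly what the implicit argument requires.
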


\begin{proof}
Immediately from \ref{2.180}.
\end{proof}

\begin{cor}\label{2.200}
Let $R$ be positively $G$-graded and suppose that $(G:F)<\infty$. If $R$ is an $R_0$-algebra of finite type and $\degmon(R)\cap F$ is artinian, then $R_{(F)}$ is an $R_0$-algebra of finite type.
\end{cor}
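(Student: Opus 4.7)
The plan is to chain together the two preceding results \ref{2.160} and \ref{2.190} via the irrelevant ideal, using the compatibility $(R_+)_{(F)}=(R_{(F)})_+$ from \ref{2.20}.

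First, since $R$ is an $R_0$-algebra of finite type, \ref{2.190} applies directly (no artinianness needed for this direction) and shows that $R_+$ is an $R$-module of finite type. As $R_+$ is a graded ideal, it is in particular a $G$-graded $R$-module of finite type. Invoking \ref{2.160} with the hypotheses $(G:F)<\infty$ and $R$ of finite type over $R_0$, the $F$-restriction $(R_+)_{(F)}$ is an $F$-graded $R_{(F)}$-module of finite type.

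Next, by the last identity in \ref{2.20} we have $(R_+)_{(F)}=(R_{(F)})_+$, so $(R_{(F)})_+$ is of finite type over $R_{(F)}$. To apply the converse direction of \ref{2.190} to the positively $F$-graded ring $R_{(F)}$, we need $\degmon(R_{(F)})$ to be artinian. But \ref{2.20} also tells us that $\degmon(R_{(F)})=\degmon(R)\cap F$, which is artinian by hypothesis. Hence the converse in \ref{2.190} yields that $R_{(F)}$ is an $(R_{(F)})_0$-algebra of finite type, and since $(R_{(F)})_0=R_0$, this gives the claim.

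There is no real obstacle here; the statement is essentially a bookkeeping exercise chaining \ref{2.190} ($\Rightarrow$) $\to$ \ref{2.160} $\to$ \ref{2.190} ($\Leftarrow$). The only mildly nontrivial point is recognizing that the artinianness hypothesis is exactly what is needed to invoke the converse direction of \ref{2.190} for the restricted ring, and that $\degmon(R_{(F)})$ is the expected intersection.
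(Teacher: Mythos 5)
Your proof is correct and takes essentially the same route as the paper's, which invokes Bourbaki (AC III.1.3 Proposition 2(i)) and cites \ref{2.160}, \ref{2.170} and \ref{2.190}; you have simply spelled out the chain explicitly, using the forward implication of \ref{2.190}, then \ref{2.160} applied to $R_+$, and finally the converse implication of \ref{2.190} for $R_{(F)}$, with the identities $(R_+)_{(F)}=(R_{(F)})_+$ and $\degmon(R_{(F)})=\degmon(R)\cap F$ from \ref{2.20} supplying exactly the hypotheses needed at that last step.
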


\begin{proof}
Analogously to \cite[III.1.3 Proposition 2(i)]{ac} on use of \ref{2.160}, \ref{2.170} and \ref{2.190}.
\end{proof}


\subsection{Graded local cohomology}\label{sub2.3}

\begin{no}\label{2.150}
A $G$-graded $R$-module $M$ is projective if and only if $\grhm{G}{R}{M}{\bullet}$ is exact, and this is the case if and only if the $U(R)$-module $U(M)$ is projective. A $G$-graded $R$-module $M$ is injective if and only if $\grhm{G}{R}{\bullet}{M}$ is exact. If $U(M)$ is injective then so is $M$, but the converse does not necessarily hold (\cite[2.2--4]{no2}).

The category $\grmod^G(R)$ has enough projectives and injectives, since $\bigoplus_{g\in G}R(g)$ is a projective generator (\ref{2.10}, \cite[1.9.1; 1.10.1]{tohoku}).
\end{no}

\begin{no}\label{2.210}
Let $\ia\subseteq R$ be a graded ideal. The subfunctor $\grgam{G}{\ia}$ of $\Id_{\grmod^G(R)}$ with $\grgam{G}{\ia}(M)=\bigcup_{n\in\N}(0:_M\ia^n)$ for a $G$-graded $R$-module $M$ is left exact and is called \textit{the $G$-graded $\ia$-torsion functor} (cf.~\cite[1.1.6]{bs}). Clearly, $U\circ\grgam{G}{\ia}=\grgam{0}{U(\ia)}$ equals the (usual) $\ia$-torsion functor $\Gamma_{\ia}$. There is a canonical isomorphism of functors $\grgam{G}{\ia}(\bullet)\cong\ilim_{n\in\N}\grhm{G}{R}{R/\ia^n}{\bullet}$ by means of which we identify them (cf.~\cite[1.2.11]{bs}). A $G$-graded $R$-module $M$ is called an \textit{$\ia$-torsion module} if $M=\grgam{G}{\ia}(M)$.
\end{no}

\begin{no}\label{2.220}
Let $\ia\subseteq R$ be a graded ideal. We say that $R$ \textit{has ITI with respect to $\ia$} if  $\grgam{G}{\ia}(I)$ is injective for every injective $G$-graded $R$-module $I$. This holds if and only if every $G$-graded $\ia$-torsion $R$-module has an injective resolution whose components are $\ia$-torsion modules. If $R$ is noetherian then it has ITI with respect to every ideal, but the converse is not true. Moreover, there exist (necessarily non-noetherian) rings without ITI with respect to some ideal of finite type (\cite{qr}).
\end{no}

\begin{no}\label{2.230}
For a $G$-graded $R$-module $M$ we consider the right derived cohomological functor\linebreak $(\grext{G}{i}{R}{M}{\bullet})_{i\in\Z}$ of $\grhm{G}{R}{M}{\bullet}$ (\ref{2.10}, \ref{2.35}, \ref{2.150}). Note that the graded covariant Ext functors do not necessarily commute with $U$ (\cite{coarsening}).

Now, let $F$ be a projective system in $\grmod^G(R)$ over a right filtering ordered set $J$, and let $i\in\Z$. Composing $F$ with the contravariant functor from $\grmod^G(R)$ to the category of endofunctors of $\grmod^G(R)$ that maps $M$ onto $\grext{G}{i}{R}{M}{\bullet}$ yields an inductive system of endofunctors of $\grmod^G(R)$ over $J$. As the category of endofunctor of $\grmod^G(R)$ is abelian and fulfils AB5 (\ref{2.10}, \cite[1.6.1; 1.7 d)]{tohoku}) we can consider the inductive limit $\ilim_J\grext{G}{i}{R}{F}{\bullet}$ in this category. Since $J$ is right filtering it follows from AB5 that $\ilim_J\grext{G}{0}{R}{F}{\bullet}=\ilim_J\grhm{G}{R}{F}{\bullet}$ is left exact, and hence $(\ilim_J\grext{G}{i}{R}{F}{\bullet})_{i\in\Z}$ is its right derived cohomological functor.
\end{no}

\begin{no}\label{2.240}
Let $\ia\subseteq R$ be a graded ideal. Applying the construction from \ref{2.230} to the projective systems $(R/\ia^n)_{n\in\N}$ and $(\ia^n)_{n\in\N}$ in $\grmod^G(R)$ over the right filtering ordered set $\N$ and setting $\grloc{G}{i}{\ia}(\bullet)\dfgl\ilim_{n\in\N}\grext{G}{i}{R}{R/\ia^n}{\bullet}$ and $\gridt{G}{i}{\ia}(\bullet)\dfgl\ilim_{n\in\N}\grext{G}{i}{R}{\ia^n}{\bullet}$ for $i\in\Z$ we get universal $\delta$-functors $(\grloc{G}{i}{\ia})_{i\in\Z}$ and $(\gridt{G}{i}{\ia})_{i\in\Z}$ from $\grmod^G(R)$ to itself that are the right derived cohomological functors of the $G$-graded $\ia$-torsion functor $\grgam{G}{\ia}$ (\ref{2.210}) and the so-called \textit{$G$-graded $\ia$-transformation functor} $\gride{G}{\ia}\dfgl\gridt{G}{0}{\ia}$. For $i\in\Z$ we call $\grloc{G}{i}{\ia}$ and $\gridt{G}{i}{\ia}$ \textit{the $i$-th $G$-graded local cohomology functor with respect to $\ia$} and \textit{the $i$-th $G$-graded ideal transformation functor with respect to $\ia$,} respectively.
\end{no}

\begin{prop}\label{2.250}
Let $\ia\subseteq R$ be a graded ideal.

a) There are an exact sequence of functors $$0\rightarrow\grgam{G}{\ia}\hookrightarrow\Id_{\grmod^G(R)}\xrightarrow{\eta_{\ia}}\gride{G}{\ia}\xrightarrow{\zeta_{\ia}}\grloc{G}{1}{\ia}\rightarrow 0$$ and a unique morphism of $\delta$-functors $(\zeta^i_{\ia})_{i\in\Z}\colon(\gridt{G}{i}{\ia})_{i\in\Z}\rightarrow(\grloc{G}{i+1}{\ia})_{i\in\Z}$ with $\zeta^0_{\ia}=\zeta_{\ia}$, and if $i\in\N^*$ then $\zeta^i_{\ia}$ is an isomorphism.

b) Suppose that $R$ has ITI with respect to $\ia$. Then, $\grloc{G}{i}{\ia}\circ\grgam{G}{\ia}=0$ for $i\in\N^*$ and $\gride{G}{\ia}\circ\grgam{G}{\ia}=\grgam{G}{\ia}\circ\gride{G}{\ia}=0$, and $\gride{G}{\ia}\circ\eta_{\ia}=\eta_{\ia}\circ\gride{G}{\ia}$ is an isomorphism.
\end{prop}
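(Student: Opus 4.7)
My plan for part (a) is to apply the contravariant functor $\grhm{G}{R}{\bullet}{M}$ to the short exact sequence $0\to\ia^n\to R\to R/\ia^n\to 0$ and to pass to the inductive limit over $n$. Since $R$ is free, hence projective as a $G$-graded $R$-module, we have $\grext{G}{i}{R}{R}{M}=0$ for $i\geq 1$, so the long Ext sequence collapses to
\[0\to\grhm{G}{R}{R/\ia^n}{M}\to M\to\grhm{G}{R}{\ia^n}{M}\to\grext{G}{1}{R}{R/\ia^n}{M}\to 0\]
together with isomorphisms $\grext{G}{i}{R}{\ia^n}{M}\cong\grext{G}{i+1}{R}{R/\ia^n}{M}$ for $i\geq 1$, all natural in $M$ and compatible with the transition morphisms of the projective systems $(R/\ia^n)_n$ and $(\ia^n)_n$. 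Exactness of inductive limits (AB5, \ref{2.10}) yields the required four-term exact sequence (with $\zeta_{\ia}$ the third arrow) and natural isomorphisms $\gridt{G}{i}{\ia}\cong\grloc{G}{i+1}{\ia}$ for $i\geq 1$. Since $(\gridt{G}{i}{\ia})_{i\in\Z}$ is, by \ref{2.240}, the universal $\delta$-functor obtained as the right derived cohomological functor of $\gride{G}{\ia}$, the morphism $\zeta_{\ia}$ extends uniquely to a morphism of $\delta$-functors into $(\grloc{G}{i+1}{\ia})_{i\in\Z}$. Naturality of the Ext connecting homomorphisms shows that the isomorphisms just constructed assemble into such an extension, so by uniqueness they coincide with $\zeta^i_{\ia}$ for $i\geq 1$.

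For part (b), the keystone is $\grloc{G}{i}{\ia}\circ\grgam{G}{\ia}=0$ for $i\in\N^*$. Under ITI, any $\ia$-torsion module $N$ admits an injective resolution by $\ia$-torsion injectives: embed $N$ into an injective $I^0$, replace $I^0$ by the still-injective $\grgam{G}{\ia}(I^0)\supseteq N$, and iterate on $\grgam{G}{\ia}(I^0)/N$. On the resulting resolution $J^\bullet$, the functor $\grgam{G}{\ia}$ acts as the identity, whence $\grloc{G}{i}{\ia}(N)=H^i(J^\bullet)=0$ for $i\geq 1$. Plugging an $\ia$-torsion $N$ into the four-term sequence of (a) and using this vanishing at index $1$ together with $\grgam{G}{\ia}(N)=N$ (the first arrow being the identity) forces $\gride{G}{\ia}(N)=0$, establishing $\gride{G}{\ia}\circ\grgam{G}{\ia}=0$.

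The remaining vanishing $\grgam{G}{\ia}\circ\gride{G}{\ia}=0$ and the fact that $\eta_{\ia}\circ\gride{G}{\ia}$ is an isomorphism are handled jointly by showing that $\gride{G}{\ia}(M)$ is $\ia$-closed (that is, both $\grgam{G}{\ia}$ and $\grloc{G}{1}{\ia}$ vanish on it). For injective $I$, ITI makes $0\to\grgam{G}{\ia}(I)\to I\to\overline{I}\to 0$ split, so $\overline{I}\dfgl I/\grgam{G}{\ia}(I)$ is an injective direct summand of $I$ with $\grgam{G}{\ia}(\overline{I})=0$; the four-term sequence (using $\grloc{G}{1}{\ia}(I)=0$ for injective $I$) then gives $\gride{G}{\ia}(I)=\overline{I}$, which is therefore $\ia$-closed. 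For general $M$ with injective resolution $0\to M\to I^\bullet$, left exactness of $\gride{G}{\ia}$ gives $\gride{G}{\ia}(M)=\ke(\gride{G}{\ia}(I^0)\to\gride{G}{\ia}(I^1))$. Applying the local cohomology long exact sequence to $0\to\gride{G}{\ia}(M)\to\gride{G}{\ia}(I^0)\to Q\to 0$, where $Q\hookrightarrow\gride{G}{\ia}(I^1)$, and using that $\gride{G}{\ia}(I^0)$ and $\gride{G}{\ia}(I^1)$ are $\ia$-closed and hence $Q$ is $\ia$-torsion-free, yields $\grgam{G}{\ia}(\gride{G}{\ia}(M))=0$ and $\grloc{G}{1}{\ia}(\gride{G}{\ia}(M))=0$. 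Feeding $N\dfgl\gride{G}{\ia}(M)$ into the four-term sequence then shows $\eta_{\ia}(N)$ is an isomorphism, i.e., $\eta_{\ia}\circ\gride{G}{\ia}$ is an isomorphism.

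Finally, the equality $\gride{G}{\ia}\circ\eta_{\ia}=\eta_{\ia}\circ\gride{G}{\ia}$ of natural transformations is verified directly on the inductive-limit description: for a class $[f\colon\ia^n\to M]\in\gride{G}{\ia}(M)$, both send it to the class represented by $s\mapsto[r\mapsto f(rs)]$, agreement being nothing but $R$-linearity of $f$ together with commutativity of $R$. The principal technical obstacle is the long exact sequence chase in the previous paragraph that transfers $\ia$-closedness from the injective resolution to $\gride{G}{\ia}(M)$; this requires careful bookkeeping of the various vanishings supplied by ITI, and is the step where the hypothesis is crucially used.
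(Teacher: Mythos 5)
Your proof is correct and follows essentially the same route as the paper, which simply refers to Brodmann--Sharp \cite[2.2.4; 2.1.7; 2.2.8]{bs}: derive the four-term sequence and degree-shift isomorphisms from the long Ext-sequence of $0\to\ia^n\to R\to R/\ia^n\to 0$ passed to the AB5 inductive limit, get the unique $\delta$-functor extension from universality of $(\gridt{G}{i}{\ia})_{i\in\Z}$, and deduce part b) from an ITI-provided injective resolution by $\ia$-torsion injectives together with a diagram chase establishing that $\gride{G}{\ia}(M)$ is $\ia$-closed. The one point worth spelling out more fully is the assertion $\grgam{G}{\ia}(\overline{I})=0$ for $\overline{I}=I/\grgam{G}{\ia}(I)$: this does not follow for a general module $I$ when $\ia$ need not be finitely generated, but it does follow here because ITI makes $I\cong\grgam{G}{\ia}(I)\oplus\overline{I}$, whence $\grgam{G}{\ia}(I)=\grgam{G}{\ia}(I)\oplus\grgam{G}{\ia}(\overline{I})$ forces $\grgam{G}{\ia}(\overline{I})=0$.
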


\begin{proof}
Analogously to \cite[2.2.4; 2.1.7; 2.2.8]{bs}.
\end{proof}

\begin{lemma}\label{d10}
Let $\C$ and $\D$ be abelian categories, let $F\colon\C\rightarrow\D$ be a functor, and let $(T^0,T^1)$ be an exact $\delta$-functor from $\C$ to $\D$ with $T^0\circ F=T^1\circ F=0$. If $a$ is a morphism in $\C$ with $F(\ke(a))=\ke(a)$ and $F(\cok(a))=\cok(a)$ then $T^0(a)$ is an isomorphism.
\end{lemma}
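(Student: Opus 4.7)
The plan is to exploit the vanishing hypothesis $T^0\circ F=T^1\circ F=0$ together with the identities $F(\ke(a))=\ke(a)$ and $F(\cok(a))=\cok(a)$ to infer that $T^0(\ke(a))$, $T^1(\ke(a))$ and $T^0(\cok(a))$ all vanish, and then to deduce that $T^0(a)$ is an isomorphism via two applications of the long exact sequence associated with the $\delta$-functor $(T^0,T^1)$.

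More precisely, I would write $a\colon M\to N$, set $K\dfgl\ke(a)$, $C\dfgl\cok(a)$, $I\dfgl\im(a)$, and factor $a$ canonically as the composition $M\twoheadrightarrow I\hookrightarrow N$. This yields the two short exact sequences
\[
0\longrightarrow K\longrightarrow M\longrightarrow I\longrightarrow 0\qquad\text{and}\qquad 0\longrightarrow I\longrightarrow N\longrightarrow C\longrightarrow 0
\]
in $\C$. Applying the exact $\delta$-functor $(T^0,T^1)$ to the first yields, in its initial four terms, an exact sequence $0\to T^0(K)\to T^0(M)\to T^0(I)\to T^1(K)$, which collapses to an isomorphism $T^0(M)\xrightarrow{\sim}T^0(I)$ since $T^0(K)=T^1(K)=0$. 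Applying it to the second yields the exact prefix $0\to T^0(I)\to T^0(N)\to T^0(C)$, which collapses to an isomorphism $T^0(I)\xrightarrow{\sim}T^0(N)$ since $T^0(C)=0$. The composition of these two isomorphisms is precisely $T^0(a)$, establishing the claim.

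There is essentially no obstacle here: the only point worth stressing is that the literal equalities $F(K)=K$ and $F(C)=C$, together with $T^0\circ F=T^1\circ F=0$, give the required vanishings at once, as $T^i(K)=T^i(F(K))=0$ for $i\in\{0,1\}$ and $T^0(C)=T^0(F(C))=0$. The rest is a routine diagram chase, and the statement appears to be singled out precisely for later use with morphisms of the form $\eta_{\ia}(M)$, whose kernel and cokernel (see \ref{2.250}~a)) are killed by $\grgam{G}{\ia}$ when $R$ has ITI with respect to $\ia$, in view of \ref{2.250}~b).
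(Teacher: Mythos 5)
Your proof is correct and follows the paper's own argument essentially verbatim: both factor $a$ through its image, apply the $\delta$-functor to the two resulting short exact sequences, and use the vanishings $T^0(\ke(a))=T^1(\ke(a))=T^0(\cok(a))=0$ coming from $T^i\circ F=0$ to conclude that both pieces of $T^0(a)$ are isomorphisms. Your closing remark about the intended application to $\eta_{\ia}$ also correctly anticipates the use of this lemma in \ref{d40}.
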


\begin{proof}
Let $A$ and $B$ denote source and target of $a$, respectively, and let $a=a''\circ a'$ be the canonical factorisation of $a$ over its image. The exact sequences $$0=T^0(F(\ke(a)))\longrightarrow T^0(A)\xrightarrow{T^0(a')}T^0(\im(a))\longrightarrow T^1(F(\ke(a)))=0$$ and $$0\longrightarrow T^0(\im(a))\xrightarrow{T^0(a'')}T^0(B)\longrightarrow T^0(F(\cok(a)))=0$$ show that $T^0(a')$ and $T^0(a'')$ are isomorphisms, and thus $T^0(a)=T^0(a'')\circ T^0(a')$ is an isomorphism, too.
\end{proof}

\begin{lemma}\label{d20}
Let $\C$ be a category, let $T\colon\C\rightarrow\C$ be a functor, let $\eta\colon\Id_{\C}\rightarrow T$ be a morphism of functors such that $\eta\circ T=T\circ\eta$ is an isomorphism, and let $a\in\hm{\C}{A}{B}$ and $b\in\hm{\C}{A}{C}$ such that $T(a)$ is an isomorphism. There exists a unique morphism $b\colon B\rightarrow T(C)$ in $\C$ with $b\circ a=\eta(C)\circ c$, and $b=T(c)\circ T(a)^{-1}\circ\eta(B)$; if $c$ and $\eta(B)$ are isomorphisms then so is $b$.
\end{lemma}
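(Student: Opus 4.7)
The plan is to take the formula $b \dfgl T(c)\circ T(a)^{-1}\circ\eta(B)$ as the definition of $b$ and then verify, in order, the equation $b\circ a=\eta(C)\circ c$, the uniqueness of a morphism with this property, and the statement about isomorphisms. The only substantive tool in the whole argument will be the naturality of $\eta$, supplemented at one crucial point by the hypothesis that $\eta\circ T=T\circ\eta$ is an isomorphism.

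For the equation, I would compute $b\circ a$ directly. The naturality square of $\eta$ at $a$ gives $\eta(B)\circ a=T(a)\circ\eta(A)$, so
$$b\circ a=T(c)\circ T(a)^{-1}\circ T(a)\circ\eta(A)=T(c)\circ\eta(A),$$
and a second application of naturality, this time at $c$, rewrites the right-hand side as $\eta(C)\circ c$. For uniqueness, suppose $b'\colon B\to T(C)$ also satisfies $b'\circ a=\eta(C)\circ c$. Applying $T$ and using that $T(a)$ is invertible forces $T(b')=T(\eta(C))\circ T(c)\circ T(a)^{-1}$, which is the same expression as $T(b)$; hence $T(b')=T(b)$.

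The step where the hypothesis enters is the upgrade from $T(b')=T(b)$ to $b'=b$. Naturality of $\eta$ applied to $b'$ and $b$ respectively gives
$$\eta(T(C))\circ b'=T(b')\circ\eta(B)=T(b)\circ\eta(B)=\eta(T(C))\circ b,$$
and since $\eta\circ T=T\circ\eta$ is an isomorphism of functors, the component $\eta(T(C))$ is an isomorphism and in particular left-cancellable, yielding $b'=b$. Finally, if $c$ and $\eta(B)$ are isomorphisms, then $T(c)$ is an isomorphism (functors preserve isomorphisms), $T(a)^{-1}$ is an isomorphism by hypothesis on $a$, and $\eta(B)$ is an isomorphism by hypothesis, so the defining formula exhibits $b$ as a composition of three isomorphisms. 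The only delicate point is the left-cancellation in the uniqueness part; once one realises that the hypothesis $\eta T=T\eta$ supplies exactly the invertibility of $\eta(T(C))$ needed there, the rest is a straightforward chase through naturality squares.
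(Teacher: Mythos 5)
Your proposal is correct and follows essentially the same strategy as the paper: define $b$ by the given formula, verify the equation via two applications of naturality of $\eta$, and for uniqueness apply $T$ and use invertibility of $T(a)$ and of $\eta(T(C))$. The only small difference is in the final step of uniqueness — you compare $b'$ and $b$ via $T(b')=T(b)$ and left-cancel $\eta(T(C))$, whereas the paper expresses $b'$ explicitly as $\eta(T(C))^{-1}\circ T(b')\circ\eta(B)$ and additionally uses the identity $\eta\circ T=T\circ\eta$ to rewrite $\eta(T(C))^{-1}$ as $T(\eta(C))^{-1}$ before simplifying; your variant is marginally more economical since it needs only the invertibility of $\eta\circ T$ and not its equality with $T\circ\eta$.
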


\begin{proof}
Setting $b\dfgl T(c)\circ T(a)^{-1}\circ\eta(B)$ we get $$b\circ a=T(c)\circ T(a)^{-1}\circ\eta(B)\circ a=T(c)\circ T(a)^{-1}\circ T(a)\circ\eta(A)=T(c)\circ\eta(A)=\eta(C)\circ c,$$ hence $b$ exists. For $b'\in\hm{\C}{B}{T(c)}$ with $b'\circ a=\eta(C)\circ c$ we get $T(b')\circ T(a)=T(\eta(C))\circ T(c)$, hence $T(b')=T(\eta(C))\circ T(c)\circ T(a)^{-1}$, and $b'=\eta(T(C))^{-1}\circ T(b')\circ\eta(B)$, therefore $$b'=T(\eta(C))^{-1}\circ T(b')\circ\eta(B)=T(c)\circ T(a)^{-1}\circ\eta(B)=b.$$ The second statement is clear.
\end{proof}

\begin{prop}\label{d40}
Let $\ia\subseteq R$ be a graded ideal such that $R$ has ITI with respect to $\ia$. Let $a\colon A\rightarrow B$ and $c\colon A\rightarrow C$ be morphisms in $\grmod^G(R)$ with $\grgam{G}{\ia}(\ke(a))=\ke(a)$ and $\grgam{G}{\ia}(\cok(a))=\cok(a)$. Then, $\gride{G}{\ia}(a)$ is an isomorphism, there is a unique morphism $b\colon B\rightarrow\gride{G}{\ia}(C)$ in $\grmod^G(R)$ with $b\circ a=\eta_{\ia}(C)\circ c$, and $b=\gride{G}{\ia}(c)\circ\gride{G}{\ia}(a)^{-1}\circ\eta_{\ia}(B)$; if $c$ and $\eta_{\ia}(B)$ are isomorphisms then so is $b$.
\end{prop}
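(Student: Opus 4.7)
The plan is to apply Lemma \ref{d10} to establish that $\gride{G}{\ia}(a)$ is an isomorphism, and then to feed this into Lemma \ref{d20} to obtain the statement about $b$. Both lemmas have been tailored for exactly this situation, so the work consists in checking their hypotheses against \ref{2.250}.

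First I would set $F \dfgl \grgam{G}{\ia}$ and take the exact $\delta$-functor $(T^0,T^1) \dfgl (\gride{G}{\ia},\gridt{G}{1}{\ia})$ from $\grmod^G(R)$ to itself; this is a piece of the universal $\delta$-functor $(\gridt{G}{i}{\ia})_{i\in\Z}$ of \ref{2.240}. By \ref{2.250}~b) we have $T^0\circ F = \gride{G}{\ia}\circ\grgam{G}{\ia} = 0$. For $T^1\circ F$, the morphism $\zeta^1_{\ia}$ of \ref{2.250}~a) is an isomorphism $\gridt{G}{1}{\ia}\xrightarrow{\cong}\grloc{G}{2}{\ia}$, and $\grloc{G}{2}{\ia}\circ\grgam{G}{\ia} = 0$ by \ref{2.250}~b); hence $T^1\circ F = 0$. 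The hypothesis on $a$ then reads $F(\ke(a)) = \ke(a)$ and $F(\cok(a)) = \cok(a)$, so Lemma \ref{d10} applies and yields that $\gride{G}{\ia}(a)$ is an isomorphism.

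Next I would invoke Lemma \ref{d20} with $\C = \grmod^G(R)$, $T = \gride{G}{\ia}$, $\eta = \eta_{\ia}$, and the morphisms $a$ and $c$ from the statement. The standing hypothesis $\eta\circ T = T\circ\eta$ is an isomorphism is precisely the remaining assertion of \ref{2.250}~b), namely that $\gride{G}{\ia}\circ\eta_{\ia} = \eta_{\ia}\circ\gride{G}{\ia}$ is an isomorphism. The hypothesis that $T(a)$ is an isomorphism has just been verified. Lemma \ref{d20} then delivers the unique $b\colon B\to\gride{G}{\ia}(C)$ with $b\circ a = \eta_{\ia}(C)\circ c$, gives the formula $b = \gride{G}{\ia}(c)\circ\gride{G}{\ia}(a)^{-1}\circ\eta_{\ia}(B)$, and provides the final clause that $b$ is an isomorphism whenever $c$ and $\eta_{\ia}(B)$ are.

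The only step that is not essentially automatic is the identification of the correct $\delta$-functor to plug into \ref{d10}; the temptation would be to use $(\grloc{G}{0}{\ia},\grloc{G}{1}{\ia}) = (\grgam{G}{\ia},\grloc{G}{1}{\ia})$, but of course \emph{that} one does not kill $F$-images, so one must shift one degree up via $\zeta^i_{\ia}$ to land in the transformation functor $\gride{G}{\ia}$. Once this choice is made, everything else is a mechanical combination of \ref{d10}, \ref{d20}, and \ref{2.250}; no further computation is needed.
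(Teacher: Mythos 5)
Your proof is correct and follows essentially the same route as the paper: set $F=\grgam{G}{\ia}$, $T=\gride{G}{\ia}$, $\eta=\eta_{\ia}$, verify the hypotheses of \ref{d10} and \ref{d20} from \ref{2.250}~b), and conclude. The one place where you supply slightly more detail than the paper's one-line proof is in identifying the exact $\delta$-functor $(\gride{G}{\ia},\gridt{G}{1}{\ia})$ and checking $\gridt{G}{1}{\ia}\circ\grgam{G}{\ia}=0$ via the isomorphism $\zeta^1_{\ia}$, which the paper leaves implicit.
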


\begin{proof}
Setting $T=\gride{G}{\ia}$, $F=\grgam{G}{\ia}$ and $\eta=\eta_{\ia}$ this follows from \ref{d10} and \ref{d20} on use of \ref{2.250}~b).
\end{proof}

\begin{prop}\label{d30}
Let $\C$ and $\D$ be categories, let $T\colon\C\rightarrow\C$, $S\colon\D\rightarrow\D$ and $F\colon\D\rightarrow\C$ be functors, and let $\eta\colon\Id_{\C}\rightarrow T$ and $\eps\colon\Id_{\D}\rightarrow S$ be morphisms of functors such that $\eta\circ T=T\circ\eta$ and $T\circ F\circ\eps$ are isomorphisms. Then, there exists a unique morphism of functors $\eps'\colon F\circ S\rightarrow T\circ F$ with $\eps'\circ(F\circ\eps)=\eta\circ F$. Moreover, $\eps'$ is a mono-, epi- or isomorphism if and only if $(\eta\circ F)\circ S$ has the same property.
\end{prop}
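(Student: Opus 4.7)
The plan is to reduce pointwise to Lemma \ref{d20} and then verify naturality. Fix $D\in\ob(\D)$. The hypothesis that $T\circ F\circ\eps$ is an isomorphism of functors means that $T(F(\eps(D)))\colon T(F(D))\to T(F(S(D)))$ is an isomorphism in $\C$, so applying \ref{d20} with $a\dfgl F(\eps(D))$ and $c\dfgl\Id_{F(D)}$ yields a unique morphism
\[
\eps'(D)\dfgl T(F(\eps(D)))^{-1}\circ\eta(F(S(D)))\colon F(S(D))\to T(F(D))
\]
satisfying $\eps'(D)\circ F(\eps(D))=\eta(F(D))$.

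Next I would verify that the family $(\eps'(D))_{D\in\ob(\D)}$ is a morphism of functors $F\circ S\to T\circ F$. Given $f\in\hm{\D}{D_1}{D_2}$, the required equality $T(F(f))\circ\eps'(D_1)=\eps'(D_2)\circ F(S(f))$ is checked by first applying naturality of $\eta$ at $F(S(f))$ to rewrite the right-hand side as $T(F(\eps(D_2)))^{-1}\circ T(F(S(f)))\circ\eta(F(S(D_1)))$, and then applying $T\circ F$ to the naturality relation $\eps(D_2)\circ f=S(f)\circ\eps(D_1)$ to obtain $T(F(S(f)))\circ T(F(\eps(D_1)))=T(F(\eps(D_2)))\circ T(F(f))$, which collapses the expression to the left-hand side. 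Uniqueness of $\eps'$ as a morphism of functors is immediate from the pointwise uniqueness in \ref{d20}.

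For the final assertion I would observe that the components $T(F(\eps(D)))^{-1}$ assemble into a morphism of functors $\psi\colon T\circ F\circ S\to T\circ F$ that is an isomorphism, being pointwise the inverse of the isomorphism $T\circ F\circ\eps$. By construction, $\eps'=\psi\circ((\eta\circ F)\circ S)$ in the functor category. Since both pre- and post-composition with an isomorphism preserve and reflect the properties of being a monomorphism, epimorphism, or isomorphism, $\eps'$ has any one of these three properties if and only if $(\eta\circ F)\circ S$ does.

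There is no genuine obstacle here; the result is essentially a uniform version of \ref{d20}, and the only bookkeeping required is the naturality computation, whose shape is dictated entirely by the given naturalities of $\eps$ and $\eta$.
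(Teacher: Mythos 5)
Your proposal is correct and matches the paper's proof essentially step for step: the pointwise application of Lemma \ref{d20} with $a=F(\eps(A))$ and $c=\Id_{F(A)}$, the resulting closed formula $\eps'(A)=T(F(\eps(A)))^{-1}\circ\eta(F(S(A)))$, the naturality verification via the naturalities of $\eta$ and $\eps$ together with the isomorphy of $T(F(\eps(\cdot)))$ (the paper draws a cube, you compute algebraically, but it is the same argument), and the factorisation $\eps'=(T\circ F\circ\eps)^{-1}\circ((\eta\circ F)\circ S)$ for the final claim.
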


\begin{proof}
For an object $A$ of $\D$, applying \ref{d20} with $a=F(\eps(A))$ and $c=\Id_{F(A)}$ yields a unique morphism $\eps'(A)\colon F(S(A))\rightarrow T(F(A))$ in $\C$ with $\eps'(A)\circ F(\eps(A))=\eta(F(A))$, and $$\eps'(A)=T(F(\eps(A)))^{-1}\circ\eta(F(S(A))).$$ For $a\in\hm{\D}{A}{B}$ we get a diagram $$\xymatrix@R15pt@C60pt{F(A)\ar[rr]^{\eta(F(A))}\ar[rd]_{F(\eps(A))}\ar[dd]_{F(a)}&&T(F(A))\ar[dd]_(.65){T(F(a))}\ar[rd]^{T(F(\eps(A)))}&\\&F(S(A))\ar[rr]^(.35){\eta(F(S(A)))}\ar[dd]_(.65){F(S(a))}\ar[ru]^{\eps'(A)}&&T(F(S(A)))\ar[dd]^{T(F(S(a)))}\\F(B)\ar[rr]^(.35){\eta(F(B))}\ar[rd]_{F(\eps(B))}&&T(F(B))\ar[rd]^{T(F(\eps(B)))}&\\&F(S(B))\ar[rr]^{\eta(F(S(B)))}\ar[ru]_{\eps'(B)}&&T(F(S(B)))}$$ whose outer faces commute obviously. As $T(F(\eps(B)))$ is an isomorphism it follows that the whole diagram commutes, so the morphisms $\eps'(A)$ are natural in $A$, and thus the claim is proven.
\end{proof}

\begin{cor}\label{d50}
Let $\ia\subseteq R$ be a graded ideal such that $R$ has ITI with respect to $\ia$. Let \linebreak$S\colon\grmod^G(R)\rightarrow\grmod^G(R)$ be a functor, and let $\eps\colon\Id_{\grmod^G(R)}\rightarrow S$ be a morphism of functors such that $\grgam{G}{\ia}\circ\ke(\eps)=\ke(\eps)$ and $\grgam{G}{\ia}\circ\cok(\eps)=\cok(\eps)$. Then, $\gride{G}{\ia}\circ\eps$ is an isomorphism, there is a unique morphism of functors $\eps'\colon S\rightarrow\gride{G}{\ia}$ with $\eps'\circ\eps=\eta_{\ia}$, and $\eps'=(\gride{G}{\ia}\circ\eps)^{-1}\circ(\eta_{\ia}\circ S)$. Moreover, $\eps'$ is a mono-, epi- or isomorphism if and only if $\eta_{\ia}\circ S$ has the same property, and this holds if and only if $\grgam{G}{\ia}\circ S=0$, $\grloc{G}{1}{\ia}\circ S=0$, or $\grgam{G}{\ia}\circ S=\grloc{G}{1}{\ia}\circ S=0$, respectively.
\end{cor}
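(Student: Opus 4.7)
The plan is to realise \ref{d50} as a direct application of \ref{d40}, \ref{d30} and the fundamental exact sequence in \ref{2.250}(a), with the ITI hypothesis serving only to invoke \ref{2.250}(b).

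First I would show that $\gride{G}{\ia}\circ\eps$ is a (natural) isomorphism. Fix a $G$-graded $R$-module $M$. The component $\eps(M)\colon M\rightarrow S(M)$ has $\ke(\eps(M))=\ke(\eps)(M)$ and $\cok(\eps(M))=\cok(\eps)(M)$ by the way kernels and cokernels of morphisms of functors are formed pointwise, and both of these are $\ia$-torsion modules by hypothesis. Thus \ref{d40}, applied with $a=\eps(M)$, yields at once that $\gride{G}{\ia}(\eps(M))$ is an isomorphism. Since this holds for every $M$, the morphism of functors $\gride{G}{\ia}\circ\eps$ is an isomorphism.

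Next I would apply \ref{d30} with $\C=\D=\grmod^G(R)$, $T=\gride{G}{\ia}$, $F=\Id_{\grmod^G(R)}$, $\eta=\eta_{\ia}$, and $\eps$, $S$ as given. The assumption $\eta\circ T=T\circ\eta$ being an isomorphism is part of \ref{2.250}(b) (here the ITI hypothesis enters), and $T\circ F\circ\eps=\gride{G}{\ia}\circ\eps$ is an isomorphism by the previous step. Proposition \ref{d30} then provides a unique morphism of functors $\eps'\colon S\rightarrow\gride{G}{\ia}$ with $\eps'\circ\eps=\eta_{\ia}$, and by the formula in the proof of \ref{d20} (as used in \ref{d30}) it is given by $\eps'=(\gride{G}{\ia}\circ\eps)^{-1}\circ(\eta_{\ia}\circ S)$. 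The last assertion of \ref{d30} yields immediately that $\eps'$ is a mono-, epi- or isomorphism if and only if $(\eta_{\ia}\circ F)\circ S=\eta_{\ia}\circ S$ has the same property.

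Finally, I would identify when $\eta_{\ia}\circ S$ is a mono-, epi- or isomorphism by horizontal composition on the right with $S$ of the four-term exact sequence
$$0\longrightarrow\grgam{G}{\ia}\hookrightarrow\Id_{\grmod^G(R)}\xrightarrow{\eta_{\ia}}\gride{G}{\ia}\xrightarrow{\zeta_{\ia}}\grloc{G}{1}{\ia}\longrightarrow 0$$
from \ref{2.250}(a), obtaining the exact sequence
$$0\longrightarrow\grgam{G}{\ia}\circ S\longrightarrow S\xrightarrow{\eta_{\ia}\circ S}\gride{G}{\ia}\circ S\longrightarrow\grloc{G}{1}{\ia}\circ S\longrightarrow 0.$$
From this one reads off that $\eta_{\ia}\circ S$ is a monomorphism, epimorphism, or isomorphism if and only if $\grgam{G}{\ia}\circ S=0$, $\grloc{G}{1}{\ia}\circ S=0$, or both hold, respectively. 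Combining with the previous paragraph gives the claimed equivalences. I do not expect a substantial obstacle: the proof is pure bookkeeping once the right functorial hypotheses have been verified; the only point requiring attention is the identification of kernels and cokernels of morphisms of functors with their pointwise counterparts, which is standard in an abelian functor category.
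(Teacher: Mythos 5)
Your proof is correct and matches the paper's one-line proof exactly: it derives the claim from \ref{d40} (applied pointwise to $\eps(M)$ to see that $\gride{G}{\ia}\circ\eps$ is an isomorphism), \ref{d30} with $T=\gride{G}{\ia}$, $F=\Id_{\grmod^G(R)}$, and \ref{2.250}~b) for the ITI hypothesis, with the final equivalences read off from the exact sequence of \ref{2.250}~a) postcomposed with $S$. You have simply filled in the routine details that the paper leaves implicit.
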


\begin{proof}
This follows from \ref{d30} (with $T=\gride{G}{\ia}$ and $F=\Id_{\grmod^G(R)}$), \ref{2.250}~b) and \ref{d40}.
\end{proof}


\section{Cox rings, Cox schemes, and toric schemes}\label{sec3}

\noindent\textit{Throughout this section let $V$ be an $\R$-vector space with $n\dfgl\rk_{\R}(V)\in\N$, let $N$ be a $\Z$-structure on $V$, let $M\dfgl N^*$, let $\sig$ be an $N$-fan, and let $B\subseteq A_{\sig_1}$ be a subgroup. If no confusion can arise we write $A$, $a$ and $c$ instead of $A_{\sig_1}$, $a_{\sig_1}$ and $c_{\sig_1}$ (\ref{1.140}).}\smallskip

In \ref{sub3.1} we define and study the Cox ring associated with $\sig$. Beware that our notion of Cox ring differs from and must not be confused with the one of the same name introduced in \cite{hukeel} (although both notions are inspired by \cite{cox}). In \ref{sub3.2} and \ref{sub3.3}, Cox schemes and toric schemes are defined as special cases of a far more general construction of schemes from certain projective systems of monoids (treated in detail in \cite{geometry}). Then, the Cox scheme associated with $\sig$ is shown to be isomorphic to the toric scheme associated with $\sig$ if and only if $\sig$ is not contained in a hyperplane of the ambient space. Together with the base change property of toric schemes this allows to study Cox schemes instead of toric schemes, which is an advantage since Cox schemes have an affine open covering given by degree $0$ parts of rings of fractions of some graded ring -- similar to projective schemes.

In \cite{cox} and \cite{mustata}, the Cox rings are graded by $A$ or, in case $\sig$ is simplicial, by $\pic(\sig)$. To treat these two cases at once and to reveal what is really needed, we develop our theory for an arbitrary subgroup $B\subseteq A$ and impose further conditions on $B$ -- as being big or small (\ref{1.200}) -- only if we need them.

\subsection{Cox rings}\label{sub3.1}

\begin{no}\label{3.10}
By restriction, $a\colon\Z^{\sig_1}\rightarrow A$ induces a morphism of monoids $a\res_{\N^{\sig_1}}\colon\N^{\sig_1}\rightarrow A$. We have a diagram of categories $$\xymatrix@C40pt@R0pt{&&\ar@{=>}[dd]&&\\\ann\ar[r]^(.45){\bullet^{(A)}}&\grann^A\ar@/^3ex/[rr]^{\Id_{\grann^A}}\ar@/_3ex/[rr]_{\bullet[a\,\res_{\N^{\sig_1}}]}&&\grann^A\ar[r]^{\bullet_{(B)}}&\grann^B\\&&&&}$$ (\ref{2.70}, \ref{2.20}). Setting $S_{\sig_1,B}(\bullet)\dfgl\bullet^{(A)}[a\res_{\N^{\sig_1}}]_{(B)}$ we get a diagram of categories $$\xymatrix@C40pt@R0pt{&\ar@<1ex>@{=>}[dd]&\\\ann\ar@/^3ex/[rr]^{\bullet^{(B)}}\ar@/_3ex/[rr]_{S_{\sig_1,B}(\bullet)}&&\grann^B.\\&&}$$ If no confusion can arise we write $S_B$ instead of $S_{\sig_1,B}$, and if $\alpha\in B$ then $S_B(\bullet)_{\alpha}=S_A(\bullet)_{\alpha}$ is henceforth denoted by $S(\bullet)_{\alpha}$. For a ring $R$ the above yields a functor $S_B\colon\alg(R)\rightarrow\gralg^B(R^{(B)})$ (\ref{2.10}), and for an $R$-algebra $R'$ there is a canonical isomorphism $$S_B(\bullet)\otimes_RR'\cong S_B(\bullet\otimes_RR')$$ in $\hmf{\alg(R)}{\gralg^B((R')^{(B)})}$ (\ref{2.70}).
\end{no}

\begin{no}\label{3.20}
Let $R$ be a ring. The $B$-graded ring $S_B(R)$ is called \textit{the $B$-restricted Cox ring over $R$ associated with $\sig_1$.} The underlying $R$-algebra of $S_A(R)$ is the polynomial algebra over $R$ in indeterminates $(Z_{\rho})_{\rho\in\sig_1}$, and its $A$-graduation is given by $\deg(Z_{\rho})=\alpha_{\rho}$ for $\rho\in\sig_1$ (\ref{1.140}); we denote its set of monomials\footnote{i.e., products of indeterminates} by $\T_R$. Hence, the underlying $R$-algebra of $S_B(R)$ is the sub-$R$-algebra of the polynomial algebra over $R$ in indeterminates $(Z_{\rho})_{\rho\in\sig_1}$ generated by the monomials $\prod_{\rho\in\sig_1}Z_{\rho}^{m_{\rho}}$ with degree $\sum_{\rho\in\sig_1}m_{\rho}\alpha_{\rho}$ in $B$, i.e., by $\T_{B,R}\dfgl\T_R\cap S_B(R)$. For a morphism of rings $h\colon R\rightarrow R'$ the morphism of $B$-graded rings $S_B(h)\colon S_B(R)\rightarrow S_B(R')$ is given by $t\mapsto t$ for $t\in\T_{B,R}$ (\ref{2.70}).

For $\sigma\in\sig$ we set $\zs\dfgl\prod_{\rho\in\sig_1}Z_{\rho}\in S_A(R)$, so that $\deg(\zs)=\alp_{\sigma}$ (\ref{1.140}). The graded ideal $I_{\sig,B}(R)\dfgl(\langle\zs\mid\sigma\in\sig\rangle_{S_A(R)})_{(B)}\subseteq S_B(R)$ is called \textit{the $B$-restricted irrelevant ideal over $R$ associated with $\sig$;} if no confusion can arise we denote it by $I_B(R)$.

If $R\neq 0$ then $\degmon(S_B(R))=\degmon(S_A(R))\cap B=\degmon(\sig_1)\cap B$ (\ref{1.380}, \ref{2.170}, \ref{2.20}), so $\degmon(S_B(R))$ is independent of $R$.
\end{no}

\begin{prop}\label{3.30}
Let $R$ be a ring.

a) $S_B(R)$ is $\T_{B,R}$-noetherian and $I_B(R)$ is finitely $\T_{B,R}$-generated.\footnote{cf.~\ref{2.90}}

b) The following statements are equivalent:\/\footnote{Since $A$ and $B$ are of finite type it does not matter whether we understand noetherianness as a property of graded  or of ungraded rings (\ref{2.67}).}
\begin{equi}
\item[(i)] $S_A(R)$ is noetherian;
\item[(ii)] $S_B(R)$ is noetherian;
\item[(iii)] $S(R)_0$ is noetherian.
\end{equi}
\end{prop}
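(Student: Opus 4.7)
The plan splits into parts (a) and (b).

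For part (a), the key observation is a compatibility between divisibility and the grading: if $t,t'\in\T_{B,R}$ and $t$ divides $t'$ in $S_A(R)$, then the quotient $t'/t$ automatically lies in $\T_{B,R}$, since $\deg(t'/t)=\deg(t')-\deg(t)\in B$. So on $\T_{B,R}$ the divisibility relations in $S_A(R)$ and in $S_B(R)$ coincide, and both reduce to the componentwise order on exponent vectors in $\N^{\sig_1}$. By Dickson's lemma every subset $E\subseteq\T_{B,R}$ has only finitely many divisibility-minimal elements $E_{\min}$, and the preceding observation yields $\langle E\rangle_{S_B(R)}=\langle E_{\min}\rangle_{S_B(R)}$, proving $\T_{B,R}$-noetherianness. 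For the irrelevant ideal $I_B(R)$, each homogeneous component is $R$-spanned by the monomials in $\T_{B,R}$ divisible by some $\zs$, so $I_B(R)$ is an $S_B(R)$-ideal of $\T_{B,R}$-type, and the $\T_{B,R}$-noetherianness just established gives finite $\T_{B,R}$-generation.

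For part (b), the implications (i)$\Rightarrow$(ii) and (ii)$\Rightarrow$(iii) follow quickly from the degree-restriction material of Section \ref{sub2.2}. Since $\sig_1$ is finite, $A$ is finitely generated as a $\Z$-module, and so is its subgroup $B$; hence noetherianness of $S_A(R)$ and $S_B(R)$ as rings coincides with their noetherianness as graded modules over themselves by \ref{2.67}. Corollary \ref{2.120}(a) applied with $F=B$, together with the identification $S_A(R)_{(B)}=S_B(R)$, gives (i)$\Rightarrow$(ii); while Corollary \ref{2.120}(b) applied with $g=0$ and $M=S_B(R)$ yields $S(R)_0$ noetherian, i.e., (ii)$\Rightarrow$(iii).

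The main obstacle is (iii)$\Rightarrow$(i). The strategy is first to recover noetherianness of $R$ itself, after which $S_A(R)=R[(Z_\rho)_{\rho\in\sig_1}]$ is noetherian by Hilbert's basis theorem (or its graded version \ref{2.70}) since $\sig_1$ is finite. To show $R$ is noetherian, I would exploit that $S(R)_0$ decomposes as a free $R$-module $\bigoplus_{m\in\ke(a\res_{\N^{\sig_1}})} R\cdot Z^m$ in which $R$ itself appears as the direct summand at $m=0$. This gives $JS(R)_0\cap R=J$ for every ideal $J\subseteq R$, so any strictly ascending chain of ideals of $R$ would extend to a strictly ascending chain of ideals of $S(R)_0$, contradicting its noetherianness.
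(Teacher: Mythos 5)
Your proof is correct, but it diverges from the paper's at two points worth noting. For part~(a), you argue directly inside $S_B(R)$: you observe that if $t,t'\in\T_{B,R}$ and $t\mid t'$ in $S_A(R)$, then $t'/t$ again lies in $\T_{B,R}$ (because $B$ is a subgroup), so divisibility among elements of $\T_{B,R}$ is the same in $S_A(R)$ as in $S_B(R)$, and Dickson's lemma applied to exponent vectors in $\N^{\sig_1}$ finishes the job. The paper instead first observes that $S_A(R)$ is $\T_R$-noetherian (Dickson) and then pushes that down to $S_B(R)=S_A(R)_{(B)}$ via the general degree-restriction statement \ref{2.110}; your route avoids that reduction and makes the crucial degree-compatibility of divisibility explicit, which arguably hides less. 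For (iii)$\Rightarrow$(i) in part~(b), you first descend noetherianness from $S(R)_0$ to $R$ using the direct-summand structure $S(R)_0=\bigoplus_{m\in\ke(a\res_{\N^{\sig_1}})}R\cdot Z^m$ (so $JS(R)_0\cap R=J$), and only then apply Hilbert's Basissatz over $R$. The paper is shorter here: it notes that $S_A(R)$ is already of finite type over $S(R)_0$ (the finitely many $Z_\rho$ generate it even over the larger base $R\subseteq S(R)_0$) and applies Hilbert directly over the noetherian ring $S(R)_0$, bypassing the step of recovering noetherianness of $R$. Both arguments are valid; yours does a bit more work but also yields the additional observation that noetherianness of $S(R)_0$ forces noetherianness of $R$, which the paper does not record. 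Your treatment of (i)$\Rightarrow$(ii)$\Rightarrow$(iii) via \ref{2.120} matches the paper's.
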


\begin{proof}
Monomial ideals in polynomial algebras in finitely many indeterminates are generated by finitely many monomials by Dickson's Lemma (\cite[1.3.6]{kr}), hence a) follows from \ref{2.110}~b). The implications ``(i)$\Rightarrow$(ii)$\Rightarrow$(iii)'' in b) follow from \ref{2.120}. The remaining implication follows from Hilbert's Basissatz since $S_A(R)$ is of finite type over $R$ and hence of finite type over $S(R)_0$.
\end{proof}

\begin{prop}\label{3.40}
The following statements are equivalent:
\begin{equi}
\item[(i)] $\sig$ is relatively skeletal complete;
\item[(ii)] $S(\bullet)_0=\Id_{\ann}(\bullet)$;
\item[(iii)] There is a ring $R\neq 0$ with $S(R)_0=R$.
\end{equi}
\end{prop}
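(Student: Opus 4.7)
The plan is to prove the cycle (i)~$\Rightarrow$~(ii)~$\Rightarrow$~(iii)~$\Rightarrow$~(i), noting that (ii)~$\Rightarrow$~(iii) is trivial (apply the functorial equality to $R=\Z\neq 0$). The key algebraic observation is that for any ring $R$, the underlying $R$-module of $S(R)_0$ is free with basis those monomials $\prod_{\rho\in\sig_1}Z_{\rho}^{m_{\rho}}$ for which $(m_{\rho})\in\N^{\sig_1}$ satisfies $\sum_{\rho}m_{\rho}\alpha_{\rho}=0$ in $A$. By exactness of the defining sequence of $A$ in \ref{1.140}, this condition on exponents is $(m_{\rho})\in\ke(a)\cap\N^{\sig_1}=\im(c)\cap\N^{\sig_1}$. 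So, whenever $R\neq 0$, the inclusion $R\hookrightarrow S(R)_0$ is an equality precisely when $\im(c)\cap\N^{\sig_1}=0$.

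For (i)~$\Rightarrow$~(ii), I would apply Proposition~\ref{1.390} in the case $R=\Z$, $W=N$ (so that $R_{\geq 0}^{\sig_1}=\N^{\sig_1}$); relative skeletal completeness of $\sig$ gives $\im(c)\cap\N^{\sig_1}=0$. Combined with the observation above, this yields $S(R)_0=R$ for every ring $R$ (including $R=0$, where the equality is vacuous). Functoriality is then immediate: by \ref{3.20} the morphism $S_A(h)$ is the identity on monomials, and since $1$ is the only degree-$0$ monomial, $S(h)_0$ coincides with $h$ itself. Thus $S(\bullet)_0=\Id_{\ann}$ as functors.

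For (iii)~$\Rightarrow$~(i), suppose $R\neq 0$ and $S(R)_0=R$. Since $S_A(R)$ is a polynomial $R$-algebra and $R\neq 0$, distinct monomials are $R$-linearly independent, so the freeness description forces $\im(c)\cap\N^{\sig_1}=0$. Proposition~\ref{1.390} (with $R=\Z$, $W=N$) then gives relative skeletal completeness of $\sig$.

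The only point requiring a moment's care is the description of the degree-$0$ monomials of the polynomial algebra $S_A(R)$, but this is straightforward from the explicit $A$-graduation recalled in \ref{3.20}; everything else is a direct appeal to Proposition~\ref{1.390}. I do not foresee a real obstacle.
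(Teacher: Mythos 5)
Your proposal is correct and follows essentially the same approach as the paper: both rest on identifying $S(R)_0$ as the monoid algebra $R[\im(c)\cap\N^{\sig_1}]$ (equivalently, free over $R$ on the degree-$0$ monomials) and then appealing to Proposition~\ref{1.390} to translate the triviality of that monoid into relative skeletal completeness. The only cosmetic difference is that you run the cycle (i)$\Rightarrow$(ii)$\Rightarrow$(iii)$\Rightarrow$(i) while the paper proves (i)$\Leftrightarrow$(ii) directly and then (iii)$\Rightarrow$(i), but the content of each step is the same.
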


\begin{proof}
As $S(\bullet)_0=S_A(\bullet)_0=\bullet[\ke(a\res_{\N^{\sig_1}})]=\bullet[\im(c)\cap\N^{\sig_1}]$, (i) and (ii) are equivalent by \ref{1.390}. If $R$ is a ring with $S(R)_0=R\neq 0$, then the $R$-module underlying $S(R)_0=R[\ke(a\res_{\N^{\sig_1}})]$ is free of rank $1$, implying $\im(c)\cap\N^{\sig_1}=\ke(a\res_{\N^{\sig_1}})=0$ and thus (i) (\ref{1.390}).
\end{proof}

\begin{cor}\label{3.45}
Let $R$ be a ring. If $R$ is noetherian then so is $S_B(R)$, and if $\sig$ is relatively skeletal complete then the converse holds.
\end{cor}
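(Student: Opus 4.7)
The plan is to reduce everything to the equivalences already established in Proposition \ref{3.30}~b) and Proposition \ref{3.40}, so that the corollary becomes essentially a one-line consequence of Hilbert's Basissatz.

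For the forward implication, suppose $R$ is noetherian. The underlying $R$-algebra of $S_A(R)$ is the polynomial algebra $R[(Z_\rho)_{\rho\in\sig_1}]$ in \emph{finitely many} indeterminates (since $\sig$, being a fan, has only finitely many $1$-dimensional polycones). By the classical Hilbert Basissatz (applied $\card(\sig_1)$ times), $U(S_A(R))$ is noetherian, hence so is $S_A(R)$ (whether viewed as a graded or ungraded ring, cf.\ \ref{2.67}, since $A$ is of finite type). The equivalence (i)$\Leftrightarrow$(ii) of \ref{3.30}~b) then yields that $S_B(R)$ is noetherian.

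For the converse, assume $\sig$ is relatively skeletal complete and that $S_B(R)$ is noetherian. By (ii)$\Leftrightarrow$(iii) of \ref{3.30}~b), the ring $S(R)_0$ is noetherian. But \ref{3.40} asserts that relative skeletal completeness of $\sig$ is equivalent to the functor identity $S(\bullet)_0=\Id_{\ann}(\bullet)$; in particular $S(R)_0=R$, so $R$ itself is noetherian.

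No step looks like a real obstacle: the first direction is just Hilbert's Basissatz packaged through \ref{3.30}~b), and the second is immediate from \ref{3.40}. The only point to watch is that one should invoke \ref{2.67} (valid because $A$, and hence $B$, is of finite type) to pass freely between noetherianness of the underlying ring and noetherianness of the graded ring when applying \ref{3.30}~b); this is already flagged in the footnote to that proposition.
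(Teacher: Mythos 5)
Your proof is correct and matches the paper's approach exactly: the paper's proof cites precisely Hilbert's Basissatz, Proposition \ref{3.30}~b), and Proposition \ref{3.40}, and your argument fleshes out how these three ingredients combine in each direction.
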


\begin{proof}
Immediately from Hilbert's Basissatz, \ref{3.30}~b) and \ref{3.40}.
\end{proof}

\begin{prop}\label{3.60}
Let $R$ be a ring. If $B$ is big and $\sig$ is relatively skeletal complete then $S_B(R)$ is a positively $B$-graded $R$-algebra of finite type.
\end{prop}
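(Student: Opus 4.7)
The plan is to reduce the claim to (2.200) applied to the $A$-graded ring $S_A(R)$ with subgroup $B \subseteq A$, after separately checking positivity of the gradings.

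The trivial case $R = 0$ being immediate, suppose $R \neq 0$. By (3.20) we have $\degmon(S_A(R)) = \degmon(\sig_1)$ and $\degmon(S_B(R)) = \degmon(\sig_1) \cap B$. By (1.390), relative skeletal completeness of $\sig$ implies $\degmon(\sig_1)$ is sharp; since submonoids of sharp cancellable monoids are sharp (1.350), the same holds for $\degmon(\sig_1) \cap B$. This yields positivity of $S_A(R)$ and $S_B(R)$ as $A$-graded and $B$-graded rings respectively.

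For the finite-type assertion I would invoke (2.200) with $G = A$, $F = B$, and the $G$-graded ring $S_A(R)$. Its hypotheses unwind as: $S_A(R)$ positively $A$-graded (just shown); $(A:B) < \infty$, which holds since bigness of $B$ means $A/B$ is torsion, and $A$ is finitely generated (quotient of $\Z^{\sig_1}$), so $A/B$ is in fact finite; $S_A(R)$ is an $S_A(R)_0$-algebra of finite type, which follows from (3.40) (giving $S_A(R)_0 = R$ under relative skeletal completeness) together with the fact that $S_A(R) = R[Z_\rho \mid \rho \in \sig_1]$ is a polynomial algebra in finitely many variables; and $\degmon(\sig_1) \cap B$ artinian. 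The conclusion of (2.200) is then precisely that $S_B(R) = S_A(R)_{(B)}$ is an $R$-algebra of finite type.

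The main obstacle lies in the artinianness of $\degmon(\sig_1) \cap B$. By (1.350) a sharp cancellable monoid of finite type is artinian, and sharpness is already in hand, so it suffices to show $\degmon(\sig_1) \cap B$ is of finite type. I would use a Gordan-type argument: the preimage $L \dfgl a^{-1}(B) \subseteq \Z^{\sig_1}$ is a subgroup of finite index (equal to $(A:B)$ since $a$ is surjective), so $L$ is a lattice in $\R^{\sig_1}$, and $L \cap \N^{\sig_1} = L \cap \R_{\geq 0}^{\sig_1}$ is the set of lattice points of a rational polyhedral cone in this lattice, hence a finitely generated submonoid of $\N^{\sig_1}$ by Gordan's lemma. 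Its image under $a$ is precisely $\degmon(\sig_1) \cap B$, which is thus of finite type, completing the verification.
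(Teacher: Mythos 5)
Your proof is correct and follows essentially the same route as the paper: reduce to $R\neq 0$, establish positivity via (1.390) and (3.20), and invoke (2.200) with $G=A$, $F=B$, using (3.40) to identify $S_A(R)_0=R$.

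The one place you differ is in justifying artinianness of $\degmon(\sig_1)\cap B$. You prove the stronger statement that this monoid is of finite type, via Gordan's lemma applied to the full-rank sublattice $a^{-1}(B)\subseteq\Z^{\sig_1}$, and then invoke (1.350). The paper's proof is terse here, citing only (1.350), and the more economical reading is: since $\sig$ is relatively skeletal complete, $a_{\sig_1}^+$ is an isomorphism of preordered monoids $\N^{\sig_1}\cong\degmon(\sig_1)$ (as noted in the proof of (1.390)); $\N^{\sig_1}$ is artinian by (1.350); and the induced preorder on the submonoid $\degmon(\sig_1)\cap B$ is finer than the restriction of the ambient one, so a $\leq_{\N^{\sig_1}}$-minimal element of any nonempty subset is automatically $\leq$-minimal — hence submonoids inherit artinianness. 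Your Gordan argument is a perfectly valid and somewhat more explicit alternative; it buys finite generation of the degree monoid (which the short argument does not), at the price of invoking a result the paper otherwise does not use. Both are sound.
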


\begin{proof}
If $R=0$ this is clear, so we suppose $R\neq 0$. Since $\degmon(\sig_1)$ is a sharp monoid (\ref{1.390}) so is its submonoid $\degmon(\sig_1)\cap B=\degmon(S_B(R))$ (\ref{3.20}), and thus $S_B(R)$ is positively $B$-graded. In particular (considering $B=A$), $S_A(R)$ is a positively $A$-graded $S_A(R)_0$-algebra of finite type. Thus, artinianness of $\degmon(S_B(R))$ implies the claim (\ref{1.350}, \ref{2.200}, \ref{3.40}).
\end{proof}

\begin{nosm}\label{3.70}
Let $R$ be a ring. If $S_B(R)$ is a positively $B$-graded $R$-algebra of finite type then $\sig$ is not necessarily skeletal complete, even if $B=A$. A counterexample is obtained immediately from the first example in \ref{1.400}.
\end{nosm}


\subsection{Cox schemes}\label{sub3.2}

\begin{no}\label{3.80}
Let $\sigma\in\sig$, and suppose there is an $m\in\N$ with $m\alp_{\sigma}\in B$. The functor $$S_B(\bullet)_{\zs^m}\colon\ann\rightarrow\grann^B$$ under $\bullet^{(B)}$ and the canonical morphism $\eta_{\zs^m}\colon S_B(\bullet)\rightarrow S_B(\bullet)_{\zs^m}$ in $\hmf{\ann}{\grann^B}^{/\bullet^{(B)}}$ (\ref{2.60}) are independent of the choice of $m$; we denote them by $S_{B,\sigma}(\bullet)$ and $\eta_{\sigma}$, respectively. We have $S_{B,\sigma}(\bullet)=S_{A,\sigma}(\bullet)_{(B)}$ (\ref{2.40}), and $S_B(\bullet)(\alpha)_{(\sigma)}=S_{B,\sigma}(\bullet)_{\alpha}=S_{A,\sigma}(\bullet)_{\alpha}=S_A(\bullet)(\alpha)_{(\sigma)}$ for $\alpha\in B$ is henceforth denoted just by $S(\bullet)(\alpha)_{(\sigma)}$. For $\alpha=0$ we have in particular a functor $$S(\bullet)_{(\sigma)}\dfgl S_{B,\sigma}(\bullet)_0\colon\ann\rightarrow\ann$$ under $\Id_{\ann}$, independent of $B$, which we henceforth denote by $S_{(\sigma)}(\bullet)$, and a canonical morphism $\eta_{(\sigma)}\colon S(\bullet)_0\rightarrow S_{(\sigma)}(\bullet)$ in $\hmf{\ann}{\ann}^{/\Id_{\ann}}$. For a ring $R$ we have functors $$\bullet_{\sigma}\dfgl\bullet\otimes_{S_B(R)}S_{B,\sigma}(R)\colon\grmod^B(S_B(R))\rightarrow\grmod^B(S_{B,\sigma}(R))$$ (\ref{2.60}) and $$\bullet_{(\sigma)}\dfgl(\bullet_{\sigma})_0\colon\grmod^B(S_B(R))\rightarrow\catmod(S_{(\sigma)}(R)).$$

Now, let $\tau\fleq\sigma$, and suppose that $m\alp_{\tau}\in B$. Then, $\zs^m$ divides $\zt^m$ in $S_B(R)$ for every ring $R$. Hence, mapping a ring $R$ onto the canonical morphism of $B$-graded $R$-algebras $$\eta^{\zs^m}_{\zt^m}(S_B(R))\colon S_B(R)_{\zs^m}\rightarrow S_B(R)_{\zt^m},$$ which is independent of the choice of $m$, yields a morphism $$\eta^{\sigma}_{\tau}\colon S_{B,\sigma}(\bullet)\rightarrow S_{B,\tau}(\bullet)$$ in $\hmf{\ann}{\grann^B}^{/\bullet^{(B)}}$. Composition with $\bullet_0$ yields a morphism $$\eta^{(\sigma)}_{(\tau)}\colon S_{(\sigma)}(\bullet)\rightarrow S_{(\tau)}(\bullet)$$ in $\hmf{\ann}{\ann}^{/\Id_{\ann}}$. Moreover, for a ring $R$ we have functors $$\eta^{\sigma}_{\tau}(R)^*\colon\grmod^B(S_{B,\sigma}(R))\rightarrow\grmod^B(S_{B,\tau}(R))$$ and $$\eta^{(\sigma)}_{(\tau)}(R)^*\colon\catmod(S_{(\sigma)}(R))\rightarrow\catmod(S_{(\tau)}(R))$$ such that the diagram of categories $$\xymatrix@R15pt{\grmod^B(S_B(R))\ar[r]^(.47){\bullet_{\sigma}}\ar[rd]_{\bullet_{\tau}}&\grmod^B(S_{B,\sigma}(R))\ar[r]^(.53){\bullet_0}\ar[d]^{\eta^{\sigma}_{\tau}(R)^*}&\catmod(S_{(\sigma)}(R))\ar[d]^{\eta^{\sigma}_{\tau}(R)^*}\\&\grmod^B(S_{B,\tau}(R))\ar[r]^(.53){\bullet_0}&\catmod(S_{(\tau)}(R))}$$ commutes.

Finally, if $B$ is big then there exists $m\in\N$ such that for every $\sigma\in\sig$ we have $m\alp_{\sigma}\in B$, and thus the above gives rise to a projective system $((S_{(\sigma)}(\bullet))_{\sigma\in\sig},(\eta^{(\sigma)}_{(\tau)})_{\tau\fleq\sigma})$ in $\hmf{\ann}{\ann}^{/\Id_{\ann}}$.
\end{no}

\begin{no}\label{3.100}
Let $I$ be a lower semilattice and let $\mathbbm{M}=((M_i)_{i\in I},(p_{ij})_{i\leq j})$ be a projective system in $\mon$ over $I$ such that for $i,j\in I$ with $i\leq j$ there is a $t_{ij}\in M_j$ with\footnote{If $E$ is a monoid and $t\in E$ then we denote by $E-t$ the monoid of differences with negatives the multiples of $t$, and by $\eps_t\colon E\rightarrow E-t$ the canonical epimorphism.} $M_i=M_j-t_{ij}$ and $p_{ij}=\eps_{t_{ij}}$. For $i\in I$ we set $X_{\mathbbm{M},i}(\bullet)\dfgl\spec(\bullet[M_i])\colon\ann^{\circ}\rightarrow\sch$ and denote by $t_{\mathbbm{M},i}(\bullet)\colon X_{\mathbbm{M},i}\rightarrow\spec$ the canonical morphism of contravariant functors. For $i,j\in I$ with $i\leq j$ we set $\iota_{\mathbbm{M},i,j}(\bullet)\dfgl\spec(\bullet[p_{ij}])\colon X_{\mathbbm{M},i}(\bullet)\rightarrow X_{\mathbbm{M},j}(\bullet)$; by our hypothesis this is an open immersion.

If $R$ is a ring then the inductive system $$((X_{\mathbbm{M},i}(R)\xrightarrow{t_{\mathbbm{M},i}(R)}\spec(R))_{i\in I},(\iota_{\mathbbm{M},i,j})_{i\leq j})$$ in $\sch_{/R}$ has an inductive limit $t_{\mathbbm{M}}(R)\colon X_{\mathbbm{M}}(R)\rightarrow\spec(R)$ in $\sch_{/R}$ such that for every $i\in I$ the canonical morphism $\iota_{\mathbbm{M},i}(R)\colon X_{\mathbbm{M},i}(R)\rightarrow X_{\mathbbm{M}}(R)$ is an open immersion by means of which we consider $X_{\mathbbm{M},i}(R)$ as an open subscheme of $X_{\mathbbm{M}}(R)$. The inductive limit $X_{\mathbbm{M}}(R)$ can then be understood as obtained by gluing the family $(X_{\mathbbm{M},i}(R))_{i\in I}$ along $(X_{\mathbbm{M},\inf(i,j)}(R))_{(i,j)\in I^2}$.

The above gives rise to a functor $X_{\mathbbm{M}}\colon\alg(R)^{\circ}\rightarrow\sch_{/R}$ over $\spec$ together with open immersions $\iota_{\mathbbm{M},i}\colon X_{\mathbbm{M},i}\rightarrow X_{\mathbbm{M}}$. For an $R$-algebra $R'$ we have $X_{\mathbbm{M}}(\bullet\otimes_RR')\cong X_{\mathbbm{M}}(\bullet)\otimes_RR'$.\footnote{cf.~\cite{geometry} for an extensive study of geometric properties of these functors}
\end{no}

\begin{no}\label{3.110}
For $\tau\fleq\sigma\in\sig$ and $u\in\sigma^{\vee}$ with $\tau=\sigma\cap u^{\perp}$ we have $c(u)\in\N^{\sig_1}-\del_{\sigma}$ and $(\N^{\sig_1}-\del_{\sigma})-c(u)=\N^{\sig_1}-\del_{\tau}$. As $c(u)\in\ke(a)$ this implies $$(\N^{\sig_1}-\del_{\sigma})\cap\ke(a)-c(u)=(\N^{\sig_1}-\del_{\tau})\cap\ke(a).$$ Therefore, the family $((\N^{\sig_1}-\del_{\sigma})\cap\ke(a))_{\sigma\in\sig}$ defines a projective system of submonoids of $\Z^{\sig_1}$ fulfilling the hypothesis of \ref{3.100}; we denote it by $C_{\sig}$. By \ref{3.100} we get a functor $Y_{\sig}\dfgl X_{C_{\sig}}\colon\ann^{\circ}\rightarrow\sch$ over $\spec$, mapping a ring $R$ onto an $R$-scheme $u_{\sig}(R)\colon Y_{\sig}(R)\rightarrow\spec(R)$ -- called \textit{the Cox scheme over $R$ associated with $\sig$} -- and a finite affine open covering $(Y_{\sigma})_{\sigma\in\sig}$ of $Y_{\sig}$, where $Y_{\sigma}(\bullet)=\spec(\bullet[(\N^{\sig_1}-\del_{\sigma})\cap\ke(a)])$.

By the above we have a projective system $((\bullet[(\N^{\sig_1}-\del_{\sigma})\cap\ke(a)])_{\sigma\in\sig},(\eps^{\sigma}_{\tau})_{\tau\fleq\sigma})$ in $\hmf{\ann}{\ann}^{/\Id_{\ann}}$ over $\sig$, where $\eps^{\sigma}_{\tau}$ is obtained by restriction and coastriction from the canonical injection\linebreak $\N^{\sig_1}-\del_{\sigma}\hookrightarrow\N^{\sig_1}-\del_{\tau}$.
\end{no}

\begin{prop}\label{3.120}
There is a canonical isomorphism $$((\bullet[(\N^{\sig_1}-\del_{\sigma})\cap\ke(a)])_{\sigma\in\sig},(\eps^{\sigma}_{\tau})_{\tau\fleq\sigma})\cong((S_{(\sigma)}(\bullet))_{\sigma\in\sig},(\eta^{(\sigma)}_{(\tau)})_{\tau\fleq\sigma})$$ of projective systems in $\hmf{\ann}{\ann}^{/\Id_{\ann}}$.
\end{prop}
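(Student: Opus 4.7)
The plan is to construct the isomorphism componentwise from the elementary fact that monoid algebras commute with localization at a monomial, then to verify compatibility with base change and with the transition morphisms.

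\emph{Step 1 (pointwise isomorphism).} Fix a ring $R$ and $\sigma \in \sig$. The key observation is that for any monoid $M$ and any $t \in M$, the canonical morphism $R[M] \to R[M-t]$ induces an isomorphism of $R$-algebras $R[M]_{e_t} \xrightarrow{\sim} R[M-t]$, where $e_t$ denotes the basis element of $R[M]$ corresponding to $t$. Applying this with $M = \N^{\sig_1}$ (graded by $a \res_{\N^{\sig_1}}$ so that $R[M] = S_A(R)$) and $t = \del_\sigma$ (so that $e_t = \zs$ up to the convention of~\ref{3.20}) gives an isomorphism of $A$-graded $R$-algebras $S_A(R)_{\zs} \xrightarrow{\sim} R[\N^{\sig_1}-\del_\sigma]$, where the right-hand side carries the $A$-grading inherited from $a$. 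Taking components of degree $0$ in $A$ and unwinding the definitions, the degree-$0$ part of $R[\N^{\sig_1}-\del_\sigma]$ is precisely $R[(\N^{\sig_1}-\del_\sigma)\cap\ke(a)]$, while the degree-$0$ part of $S_A(R)_{\zs}$ is $S_{(\sigma)}(R)$ by definition (\ref{3.80}); this yields the desired isomorphism $\phi_{\sigma}(R) \colon R[(\N^{\sig_1}-\del_{\sigma})\cap\ke(a)] \xrightarrow{\sim} S_{(\sigma)}(R)$.

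\emph{Step 2 (naturality and structure over $\Id_{\ann}$).} Both sides are constructed functorially in $R$ from the assignments $R \mapsto R[M]$ and $R \mapsto R[-]_{f}$, which are themselves functorial; hence $\phi_{\sigma}$ is a morphism of functors $\ann \to \ann$. The structure morphism $\Id_{\ann} \to \bullet[(\N^{\sig_1}-\del_{\sigma})\cap\ke(a)]$ is the canonical inclusion sending $R$ into its monoid algebra via the neutral element of the monoid, while the structure morphism $\Id_{\ann} \to S_{(\sigma)}(\bullet)$ factors as $R \to S_A(R) \to S_A(R)_{\zs} \to S_{(\sigma)}(R)$; both realise the same map $r \mapsto r \cdot 1$, so $\phi_{\sigma}$ lives in $\hmf{\ann}{\ann}^{/\Id_{\ann}}$.

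\emph{Step 3 (compatibility with transitions).} Let $\tau \fleq \sigma$, so $\tau_1 \subseteq \sigma_1$ and consequently $\sig_1 \setminus \sigma_1 \subseteq \sig_1 \setminus \tau_1$. A direct calculation shows $\N^{\sig_1}-\del_{\sigma} = \N^{\sigma_1}\oplus\Z^{\sig_1\setminus\sigma_1}$ and similarly for $\tau$, so there is a canonical inclusion $\iota^{\sigma}_{\tau} \colon \N^{\sig_1}-\del_{\sigma}\hookrightarrow\N^{\sig_1}-\del_{\tau}$ of submonoids of $\Z^{\sig_1}$. On the left side, $\eps^{\sigma}_{\tau}$ is by construction the morphism induced by intersecting $\iota^{\sigma}_{\tau}$ with $\ke(a)$ and applying $R[-]$. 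On the right side, $\eta^{(\sigma)}_{(\tau)}$ is obtained from the localization morphism $\eta^{\zs^m}_{\zt^m}$ by passing to degree $0$, and under the pointwise identification of Step~1 this localization corresponds precisely to inverting the basis element $e_{\del_{\tau}-\del_{\sigma}} = \prod_{\rho \in \sigma_1\setminus\tau_1} Z_{\rho}^{m}$ inside $R[\N^{\sig_1}-\del_{\sigma}]$, which on the monoid level is exactly the inclusion $\iota^{\sigma}_{\tau}$. Hence both squares commute and the family $(\phi_{\sigma})_{\sigma\in\sig}$ constitutes an isomorphism of projective systems.

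\emph{Main obstacle.} There is no deep obstacle; the verification is entirely formal bookkeeping about how monoid localizations, gradings, and restriction to degree zero interact. The only step requiring care is the explicit identification in Step~3 of the localization morphism on the Cox side with the monoid inclusion on the other side, because one has to track the exponents $m$ appearing in $\zs^m$ (whose choice is irrelevant by~\ref{3.80}) and confirm that they cancel correctly when passing to the monoid of differences.
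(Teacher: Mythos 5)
Your proposal is correct and takes essentially the same route as the paper: both arguments first construct a canonical isomorphism $\bullet[\N^{\sig_1}-\del_\sigma]\cong S_{A,\sigma}(\bullet)$ of $A$-graded functors under $\bullet^{(A)}$ (your Step~1 makes explicit the monoid-algebra-localization fact the paper leaves implicit), verify compatibility with the transition morphisms $\eps^{\sigma}_{\tau}$ and $\eta^{\sigma}_{\tau}$, and then take components of degree $0$, using $\bullet[\N^{\sig_1}-\del_{\sigma}]_0=\bullet[(\N^{\sig_1}-\del_{\sigma})\cap\ke(a)]$. (Minor slip: $e_{\del_\tau-\del_\sigma}=\prod_{\rho\in\sigma_1\setminus\tau_1}Z_\rho$ without the exponent $m$; this does not affect the argument since one may invert any power.)
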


\begin{proof}
For $\sigma\in\sig$ there is a canonical isomorphism $\eps_{\sigma}\colon\bullet[\N^{\sig_1}-\del_{\sigma}]\rightarrow S_{A,\sigma}(\bullet)$ in $\hmf{\ann}{\grann^A}^{/\bullet^{(A)}}$ such that for $\tau\fleq\sigma\in\sig$ the diagram $$\xymatrix@R15pt@C40pt{\bullet[\N^{\sig_1}-\del_{\sigma}]\ar[d]_{\eps^{\sigma}_{\tau}}\ar[r]^(.54){\eps_{\sigma}}_(.54){\cong}&S_{A,\sigma}(\bullet)\ar[d]^{\eta^{\sigma}_{\tau}}\\\bullet[\N^{\sig_1}-\del_{\tau}]\ar[r]^(.54){\eps_{\tau}}_(.54){\cong}&S_{A,\tau}(\bullet)}$$ commutes. As $\bullet[\N^{\sig_1}-\del_{\sigma}]_0=\bullet[(\N^{\sig_1}-\del_{\sigma})\cap\ke(a)]$ for $\sigma\in\sig$ we get the claim by taking components of degree $0$.
\end{proof}

\begin{no}\label{3.130}
It follows from \ref{3.120} that $Y_{\sig}$ is canonically isomorphic to the inductive limit of $$((\spec(S_{(\sigma)}(\bullet)))_{\sigma\in\sig},(\spec\circ\eta^{(\sigma)}_{(\tau)})_{\tau\fleq\sigma})$$ in $\hmf{\ann^{\circ}}{\sch}_{/\spec}$. Hence, if $R$ is a ring then the Cox scheme $Y_{\sig}(R)$ can be understood as obtained by gluing the family $(\spec(S_{(\sigma)}(R)))_{\sigma\in\sig}$ along $(\spec(S_{(\sigma\cap\tau)}(R)))_{(\sigma,\tau)\in\sig^2}$.
\end{no}


\subsection{Toric schemes and Cox schemes}\label{sub3.3}

\begin{no}\label{3.140}
The family $(\sigma^{\vee}_M)_{\sigma\in\sig}$ gives rise to a projective system of submonoids of $M$ fulfilling the hypothesis of \ref{3.100}; we denote it by $\sig_M^{\vee}$. By \ref{3.100} we get a functor $X_{\sig}\dfgl X_{\sig_M^{\vee}}\colon\ann^{\circ}\rightarrow\sch$ over $\spec$, mapping a ring $R$ onto an $R$-scheme $t_{\sig}(R)\colon X_{\sig}(R)\rightarrow\spec(R)$ -- called \textit{the toric scheme over $R$ associated with $\sig$} -- and a finite affine open covering $(X_{\sigma})_{\sigma\in\sig}$ of $X_{\sig}$, where $X_{\sigma}(\bullet)=\spec(\bullet[\sigma_M^{\vee}])$.
\end{no}

\begin{no}\label{3.150}
By restriction and coastriction, $c\colon M\rightarrow\Z^{\sig_1}$ induces for $\sigma\in\sig$ a surjective morphism of monoids $c_{\sigma}\colon\sigma^{\vee}_M\rightarrow(\N^{\sig_1}-\del_{\sigma})\cap\ke(a)$ such that for $\tau\fleq\sigma\in\sig$ the diagram $$\xymatrix@R15pt@C60pt{\sigma^{\vee}_M\ar[r]^(.34){c_{\sigma}}\ar@{ (->}[d]&(\N^{\sig_1}-\del_{\sigma})\cap\ke(a)\ar@{ (->}[d]\\\tau^{\vee}_M\ar[r]^(.34){c_{\tau}}&(\N^{\sig_1}-\del_{\tau})\cap\ke(a)}$$ commutes. So, for $\sigma\in\sig$ we get a closed immersion $\gamma_{\sigma}\dfgl\spec(\bullet[c_{\sigma}])\colon Y_{\sigma}\rightarrow X_{\sigma}$ in $\hmf{\ann^{\circ}}{\sch}_{/\spec}$ such that for $\tau\fleq\sigma\in\sig$ the diagram $$\xymatrix@R15pt@C40pt{Y_{\sigma}\ar[r]^{\gamma_{\sigma}}&X_{\sigma}\\Y_{\tau}\ar[r]^{\gamma_{\tau}}\ar[u]&X_{\tau},\ar[u]}$$ where the unmarked morphisms are the canonical open immersions, commutes. Thus, there is a unique morphism $\gamma_{\sig}\colon Y_{\sig}\rightarrow X_{\sig}$ in $\hmf{\ann^{\circ}}{\sch}_{/\spec}$ that induces $\gamma_{\sigma}$ by restriction and coastriction for every $\sigma\in\sig$.
\end{no}

\begin{prop}\label{3.160}
$\gamma_{\sig}\colon Y_{\sig}\rightarrow X_{\sig}$ is an isomorphism if and only if $\sig$ is full.
\end{prop}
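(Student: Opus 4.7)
The plan is to reduce the proposition to bijectivity of the monoid maps $c_\sigma$ from \ref{3.150}. Since $\gamma_\sig$ is constructed by gluing the closed immersions $\gamma_\sigma=\spec(\bullet[c_\sigma])$ along the face relations, and since each $c_\sigma\colon\sigma^\vee_M\to(\N^{\sig_1}-\del_\sigma)\cap\ke(a)$ is already surjective, the question reduces to when $c_\sigma$ is injective. Now $c_\sigma$ is the restriction of $c=c_{\sig_1}$ to $\sigma^\vee_M$, and since $\sigma$ is sharp the dual $\sigma^\vee$ is full-dimensional in $V^*$, so $\sigma^\vee_M$ generates $M$ as a group; hence $c_\sigma$ is injective if and only if $c$ is. By \ref{1.140} this means $\ke(c)=\langle\sig\rangle^\perp_M=0$, which is precisely the condition that $\sig$ is full.

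Granting this equivalence, the direction ``$\sig$ full $\Rightarrow$ $\gamma_\sig$ an isomorphism'' is formal: bijectivity of each $c_\sigma$ makes each $\bullet[c_\sigma]$ an isomorphism in $\hmf{\ann}{\ann}^{/\Id_{\ann}}$ and each $\gamma_\sigma$ an isomorphism in $\hmf{\ann^\circ}{\sch}_{/\spec}$; the compatibility diagrams in \ref{3.150} are preserved under inversion, so the $\gamma_\sigma^{-1}$ glue to the required inverse of $\gamma_\sig$.

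For the converse, assume $\sig\neq\emptyset$, so that the zero cone $\{0\}$ lies in $\sig$. The key step is the identification $\gamma_\sig^{-1}(X_{\{0\}})=Y_{\{0\}}$. For each $\sigma\in\sig$ pick $m\in M$ in the interior of $\sigma^\vee$, which is nonempty since $\sigma^\vee$ is full-dimensional; then $\sigma^\vee_M-m=M$ and, using surjectivity of $c$ onto $\ke(a)$, also $((\N^{\sig_1}-\del_\sigma)\cap\ke(a))-c_\sigma(m)=\ke(a)$. Under the construction in \ref{3.100} and \ref{3.110}, these realise $X_{\{0\}}\hookrightarrow X_\sigma$ and $Y_{\{0\}}\hookrightarrow Y_\sigma$ as the principal opens $D(t^m)$ and $D(t^{c_\sigma(m)})$. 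Since $\gamma_\sigma=\spec(\bullet[c_\sigma])$ sends $t^m$ to $t^{c_\sigma(m)}$, one obtains $\gamma_\sigma^{-1}(X_{\{0\}})=Y_{\{0\}}$, and taking unions over $\sigma\in\sig$ gives $\gamma_\sig^{-1}(X_{\{0\}})=Y_{\{0\}}$. If $\gamma_\sig$ is an isomorphism, restricting it to this open makes $\gamma_{\{0\}}=\gamma_\sig|_{Y_{\{0\}}}$ an isomorphism; but $c_{\{0\}}$ is just the corestriction of $c$ onto $\ke(a)$, so this forces $c$ to be injective and $\sig$ to be full.

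The main obstacle I foresee is precisely this compatibility of the open immersions with $\gamma$, i.e., the identification $\gamma_\sig^{-1}(X_{\{0\}})=Y_{\{0\}}$; once it is in place, the proposition reduces to the short chain of equivalences in the first paragraph.
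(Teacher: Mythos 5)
Your proof is correct, and the forward direction ($\sig$ full implies $\gamma_\sig$ an isomorphism) is essentially the paper's: both reduce to injectivity of each $c_\sigma$, equivalent to injectivity of $c$ since $\ke(c)=M\cap\langle\sig\rangle^\perp\subseteq\sigma^\vee_M$, equivalent to $\sig$ full. (The paper's phrasing via $\ke(c)\subseteq\sigma^\vee_M$ is marginally more direct than your observation that $\sigma^\vee_M$ generates $M$ as a group, but these are the same fact.)

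Your converse, however, takes a genuinely different route from the paper. The paper argues over $\Z$: the image of $\gamma_\sigma(\Z)$ is nonempty, closed (closed immersion) and open (restriction of an open isomorphism) in $X_\sigma(\Z)$, which is connected by an external geometric input (\cite[3.4]{geometry}), so $\gamma_\sigma(\Z)$ is a surjective closed immersion; then a citation to \cite[I.5.2.5]{ega} upgrades this to an epimorphism of affine schemes, forcing $\Z[c_\sigma]$ and hence $c_\sigma$ to be injective. You instead verify, purely at the level of monoids and principal opens, the identity $\gamma_\sig^{-1}(X_{\{0\}})=Y_{\{0\}}$: picking $m\in M$ in the interior of $\sigma^\vee$ (nonempty since $\sigma$ is sharp, so $\sigma^\vee$ is full) gives $\tau=\{0\}=\sigma\cap m^\perp$ and realises both $X_{\{0\}}\hookrightarrow X_\sigma$ and $Y_{\{0\}}\hookrightarrow Y_\sigma$ as the distinguished opens $D(e_m)$ and $D(e_{c_\sigma(m)})$, which $\gamma_\sigma$ matches up; an isomorphism $\gamma_\sig$ then restricts to an isomorphism $\gamma_{\{0\}}$, and since $c_{\{0\}}$ is the corestriction of $c$ onto $\ke(a)$, looking at the monoid algebra over $\Z$ shows $c$ is injective, i.e., $\sig$ is full. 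This buys you an elementary, self-contained argument that avoids both the connectedness input and the EGA reference, and works with a single, explicit cone; the paper's argument is arguably more uniform across all $\sigma$ but at the cost of more geometric machinery. (Both proofs implicitly assume $\sig\neq\emptyset$ so that $\{0\}\in\sig$; you make this explicit.)

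Two small points worth making precise in a write-up. First, the interior element $m$ should be taken in $\sigma^\vee_M$, not merely $\sigma^\vee$; this is possible since (\ref{1.25}) a face can always be cut out by a rational functional, and interior rational points exist because $\sigma^\vee$ is full. Second, the identity $\gamma_\sig^{-1}(X_{\{0\}})=Y_{\{0\}}$ is obtained by intersecting with each $Y_\sigma$: one checks $\gamma_\sigma^{-1}(X_{\{0\}}\cap X_\sigma)=Y_{\{0\}}\cap Y_\sigma$ via the distinguished-open computation and then takes the union over $\sigma\in\sig$.
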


\begin{proof}
As $\ke(c)=M\cap\langle\sig\rangle^{\perp}\subseteq\sigma^{\vee}_M$ for $\sigma\in\sig$ we see that $\sig$ is full if and only if $c_{\sigma}$ is a monomorphism for every $\sigma\in\sig$ (\ref{3.150}). Therefore, if $\sig$ is full then $c_{\sigma}$ is an isomorphism for $\sigma\in\sig$, hence so is $\gamma_{\sigma}$ for $\sigma\in\sig$, and thus so is $\gamma_{\sig}$.

Conversely, suppose that $\gamma_{\sig}$ is an isomorphism and let $\sigma\in\sig$. The (topological) image of $Y_{\sigma}(\Z)$ under $\gamma_{\sigma}(\Z)$ is nonempty, closed in $X_{\sigma}(\Z)$ since $\gamma_{\sigma}(\Z)$ is a closed immersion (\ref{3.150}), and open in $X_{\sigma}(\Z)$ since $\gamma_{\sig}$ is an isomorphism and hence open. As $X_{\sigma}(\Z)$ is connected (\cite[3.4]{geometry}) this implies that $\gamma_{\sigma}(\Z)$ is surjective and therefore an epimorphism in the category of affine schemes (as can be seen on use of \cite[I.5.2.5]{ega}). Therefore, $\Z[c_{\sigma}]$ is a monomorphism, hence so is $c_{\sigma}$, and thus the claim is proven.
\end{proof}

\begin{prop}\label{3.135}
Let $R$ be a ring.

a) If $R$ is noetherian then so is $Y_{\sig}(R)$.

b) If $\sig$ is complete then $Y_{\sig}(R)$ is proper over $R$.
\end{prop}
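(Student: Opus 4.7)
The plan for both parts is to exploit the finite affine open covering $(Y_{\sigma}(R))_{\sigma\in\sig}$ of $Y_{\sig}(R)$ from \ref{3.110}, together with the identification $Y_{\sigma}(\bullet)=\spec(S_{(\sigma)}(\bullet))$ obtained from \ref{3.120} and \ref{3.130}.

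For (a), I would first show that each $Y_{\sigma}(R)$ is a noetherian scheme, i.e., that $S_{(\sigma)}(R)$ is a noetherian ring. The quickest route goes via graded algebra: by Hilbert's Basissatz the polynomial ring $S_A(R)=R[Z_{\rho}\mid\rho\in\sig_1]$ is noetherian, so its localisation $S_{A,\sigma}(R)=S_A(R)_{\zs^m}$ is a noetherian ring. Since $A=A_{\sig_1}$ is of finite type, ungraded noetherianness agrees with $A$-graded noetherianness (\ref{2.67}), so $S_{A,\sigma}(R)$ is noetherian as an $A$-graded module over itself. Then \ref{2.120}~b) applied to this module yields that its degree-zero component $S_{(\sigma)}(R)$ is noetherian as a module over itself, i.e., as a ring. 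Alternatively, one sees directly via \ref{3.120} that $S_{(\sigma)}(R)=R[(\N^{\sig_1}-\del_{\sigma})\cap\ke(a)]$, and the monoid $(\N^{\sig_1}-\del_{\sigma})\cap\ke(a)$ is the intersection of a finitely generated submonoid of an abelian group of finite type with a subgroup, hence is itself finitely generated by Gordan's lemma (\cite{bg}); consequently $S_{(\sigma)}(R)$ is a finitely generated $R$-algebra, and Hilbert's Basissatz gives the noetherianness. Since $\sig$ is finite, $Y_{\sig}(R)$ is covered by finitely many noetherian affine opens and is therefore noetherian.

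For (b), completeness of $\sig$ means $\bigcup\sig=V$, which \emph{a fortiori} implies $\langle\sig\rangle=V$, i.e., that $\sig$ is full. Proposition~\ref{3.160} then identifies $\gamma_{\sig}(R)\colon Y_{\sig}(R)\rightarrow X_{\sig}(R)$ as an isomorphism of $R$-schemes, so the claim reduces to properness of the toric scheme $X_{\sig}(R)\rightarrow\spec(R)$ under completeness of $\sig$. This is a classical fact, proven via the valuative criterion using that the dual cones $\sigma^{\vee}$ for $\sigma\in\sig$ cover $V^*$ once $\sig$ is complete; I would simply cite it from the companion paper~\cite{geometry}.

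The substantive work is concentrated in part~(a); its main obstacle is establishing that the degree-zero part of a graded localisation is a noetherian ring, which requires either the monoid-theoretic Gordan argument or the degree-restriction finiteness result \ref{2.120}. Part~(b) is essentially a bookkeeping application of \ref{3.160}, since there is no toric-versus-Cox distinction once $\sig$ is full and the properness of toric schemes in the complete case is already recorded in \cite{geometry}.
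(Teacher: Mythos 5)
Your proof is correct and follows essentially the same route as the paper: for (a) the paper likewise reduces to noetherianness of each $S_{(\sigma)}(R)$ by combining Hilbert's Basissatz (via \ref{3.45}) with the degree-restriction result \ref{2.120}~b) and then invokes finiteness of $\sig$ and the covering from \ref{3.130}; for (b) it likewise passes through the isomorphism of \ref{3.160} to the classical properness of toric schemes for complete fans (cited to \cite[4.4]{demazure} rather than to \cite{geometry}, but the content is the same). Your treatment is in fact slightly more explicit than the paper's, in spelling out the intermediate localisation step ($S_{B,\sigma}$ noetherian) before invoking \ref{2.120}~b), and the alternative Gordan-style argument you sketch is a legitimate independent route.
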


\begin{proof}
If $R$ is noetherian then so is $S_{(\sigma)}$ for $\sigma\in\sig$ (\ref{3.45}, \ref{2.120}~b)), hence \ref{3.130} and finiteness of $\sig$ imply that $Y_{\sig}(R)$ is noetherian. b) follows from \ref{3.160} and the well-known result that toric schemes associated with complete fans are proper over their base (\cite[4.4]{demazure}\footnote{In \textit{loc.\,cit.} the hypothesis that the fan is regular is not needed in the proof.}).
\end{proof}


\section{Quasicoherent sheaves on toric schemes}\label{sec4}

\noindent\textit{We keep the hypotheses from Section \ref{sec3}. In addition, suppose that $B$ is big, and let $R$ be a ring. If no confusion can arise we write $e\dfgl e_{\sig}$ (\ref{1.140}), $Y\dfgl Y_{\sig}(R)$ and $Y_{\sigma}\dfgl Y_{\sigma}(R)$ for $\sigma\in\sig$ (\ref{3.110}), $S_B\dfgl S_B(R)$ and $I_B\dfgl I_{\sig,B}(R)$ (\ref{3.20}), and $S_{B,\sigma}\dfgl S_{B,\sigma}(R)$ and $S_{(\sigma)}\dfgl S_{(\sigma)}(R)$ for $\sigma\in\sig$ (\ref{3.80}). Moreover, if $\sigma\in\sig$ and $\alpha\in B$ then the component $(S_{B,\sigma})_{\alpha}$ of $S_{B,\sigma}$, equal to $S_B(\alpha)_{(\sigma)}$ and independent of $B$, is sometimes denoted by $S_{\sigma,\alpha}$.\footnote{Similarly, in the notations to be introduced below we usually do not indicate dependency on $\sig$ or $R$.} Furthermore, for a scheme $X$ we denote by $\U_X$ the set of open subsets of $X$, and by $\qcmod(\o_X)$ and $\prmod(\o_X)$ the categories of quasicoherent $\o_X$-modules and of presheaves of $\o_X$-modules, respectively; for a group $G$ and an $\o_X$-algebra $\A$ we denote by $\pgrmod^G(\A)$ the category of presheaves of $G$-graded $\A$-modules.}\smallskip

In \ref{sub4.1} we construct an exact functor $\S_{\sig}$ from the category of graded modules over the Cox ring $S_B$ associated with $\sig$ to the category of quasicoherent modules on the Cox scheme $Y$ associated with $\sig$ and prove first results about it. In \ref{sub4.2} we show that if $\sig$ is simplicial then certain rings of fractions of $S_B$ are strongly graded. This fact will be of crucial importance later on. A first application shows that $\S_{\sig}$ preserves some finiteness properties. In \ref{sub4.3} we try to construct a right quasi-inverse to $\S_{\sig}$. Since $\S_{\sig}$ does not necessarily commute with shifting, we have two candidates $\Gamma_*$ and $\Gamma_{**}$ of such total functors of sections, and both will later turn out to be useful. The key result in this subsection is that while these two functors may differ, their compositions with $\S_{\sig}$ are canonically isomorphic. In \ref{sub4.4} we put everything together and show that the first total functor of sections $\Gamma_*$ is a right quasi-inverse of $\S_{\sig}$, so that the latter functor is in particular essentially surjective. Moreover, supposing that $\sig$ is simplicial, we show that $\S_{\sig}$ preserves (pseudo-)coherence, that it vanishes precisely on $I_B$-torsion modules, and that it induces a bijection between $I_B$-saturated graded sub-$S_B$-modules of a $B$-graded $S_B$-module $F$ and quasicoherent sub-$\o_Y$-modules of $\S_{\sig}(F)$. In \ref{sub4.5} we prove the toric Serre-Grothendieck correspondence, relating sheaf cohomology on the Cox scheme $Y$ with $B$-graded local cohomology with support in $I_B$; the former enters as the right derived cohomological functor of the second total functor of sections $\Gamma_{**}$. Finally, as an application we derive a toric version of Serre's finiteness theorem.

\subsection{Quasicoherent sheaves associated with graded modules}\label{sub4.1}

\begin{no}\label{3.210}
For $\sigma\in\sig$ there is a functor $\S_{\sigma}\colon\catmod(S_{(\sigma)})\rightarrow\qcmod(\o_{Y_{\sigma}})$, mapping an $S_{(\sigma)}$-module onto the quasicoherent $\o_{Y_{\sigma}}$-module associated with it, and taking global sections yields a quasi-inverse $$\Gamma(Y_{\sigma},\bullet)\colon\qcmod(\o_{Y_{\sigma}})\rightarrow\catmod(S_{(\sigma)})$$ of $\S_{\sigma}$ (\cite[I.1.4.2]{ega}). As for $\tau\fleq\sigma\in\sig$ the diagram of categories $$\xymatrix@R15pt@C40pt{\catmod(S_{(\sigma)})\ar[r]^(.47){\S_{\sigma}}\ar[d]_{(\eta^{(\sigma)}_{(\tau)})^*}&\qcmod(\o_{Y_{\sigma}})\ar[d]\\\catmod(S_{(\tau)})\ar[r]^(.47){\S_{\tau}}&\qcmod(\o_{Y_{\tau}}),}$$ where the unmarked morphism is the inverse image under the canonical open immersion $Y_{\tau}\hookrightarrow Y_{\sigma}$, commutes (\cite[I.1.7.7]{ega}), there exists a unique functor $\S_B\colon\grmod^B(S_B)\rightarrow\qcmod(\o_Y)$ such that for $\sigma\in\sig$ the diagram $$\xymatrix@R15pt@C40pt{\grmod^B(S_B)\ar[r]^{\S_B}\ar[d]_{\bullet_{(\sigma)}}&\qcmod(\o_Y)\ar[d]^{\res_{Y_{\sigma}}}\\\catmod(S_{(\sigma)})\ar[r]^{\S_{\sigma}}&\qcmod(\o_{Y_{\sigma}})}$$ of categories commutes (\cite[0.3.3.1--2]{ega}). By \cite[I.1.3.9]{ega} this functor is exact and commutes with inductive limits. By \ref{2.40}, the diagram of categories $$\xymatrix@R15pt@C40pt{\grmod^A(S_A)\ar[r]^{\S_A}\ar[d]_{\bullet_{(B)}}&\qcmod(\o_Y)\\\grmod^B(S_B)\ar[ru]_{\S_B}&}$$ commutes.

If $F$ is a $B$-graded $S_B$-module then $\S_B(F)$ is called \textit{the (quasicoherent) $\o_Y$-module associated with $F$.} If $\sigma\in\sig$ then $\S_{\sigma}(S_{(\sigma)})=\o_{Y_{\sigma}}$ is a sheaf of rings on $Y_{\sigma}$, and gluing yields on $\S_B(S_B)$ a structure of sheaf of rings on $Y_{\sig}$, equal to $\o_Y$; we always furnish $\S_B(S_B)$ with this structure. 
\end{no}

\begin{no}\label{3.250}
Let $F$ be a $B$-graded $S_B$-module. We denote by $\mathbbm{J}_F$, $\mathbbm{J}^{\sat}_F$ and $\widetilde{\mathbbm{J}}_F$ the sets of graded sub-$S_B$-modules of $F$, of $I_B$-saturated graded sub-$S_B$-modules of $F$ (\ref{2.80}), and of quasicoherent sub-$\o_Y$-modules of $\S_B(F)$, respectively. Since $\S_B$ is exact (\ref{3.210}) it induces a map $\Xi_F\colon\mathbbm{J}_F\rightarrow\widetilde{\mathbbm{J}}_F$, restricting to a map $\Xi_F^{\sat}\colon\mathbbm{J}_F^{\sat}\rightarrow\widetilde{\mathbbm{J}}_F$. If $G\in\mathbbm{J}_F$ and $j\colon G\hookrightarrow F$ denotes the canonical injection then we identify $\S_B(G)$ with its image $\Xi_F(G)$ under the monomorphism $\S_B(j)\colon\S_B(G)\rightarrow\S_B(F)$ and thus consider it as an element of $\widetilde{\mathbbm{J}}_F$. Then, for $G,H\in\mathbbm{J}_F$ we have $\S_B(G+H)=\S_B(G)+\S_B(H)$ and $\S_B(G\cap H)=\S_B(G)\cap\S_B(H)$ (\ref{3.210}).
\end{no}

\begin{prop}\label{3.230}
For an $R$-algebra $h\colon R\rightarrow R'$ there exists a canonical isomorphism $$Y(h)^*(\S_{B,R}(\bullet))\cong\S_{B,R'}(\bullet\otimes_RR')$$ in $\hmf{\grmod^B(S_B(R))}{\qcmod(\o_{Y(R')})}$.
\end{prop}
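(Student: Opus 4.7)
The plan is to reduce the statement to an affine, local computation on each piece of the affine open cover $(Y_\sigma(R'))_{\sigma \in \sig}$ of $Y(R')$, using the explicit construction of $Y$ by gluing given in \ref{3.130}, the base change property of the $\spec(S_{(\sigma)}(\bullet))$, and the fact that $\S_B$ was defined via the affine functors $\S_\sigma$ in \ref{3.210}.

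First, I observe that base change behaves well on each affine piece. The canonical isomorphism $S_B(\bullet)\otimes_R R'\cong S_B(\bullet\otimes_R R')$ from \ref{3.10}, together with the localization identity $(\bullet\otimes_R R')_{\zs^m}\cong(\bullet_{\zs^m})\otimes_R R'$ and the fact that tensor product commutes with the direct sum decomposition by $B$-graduation (whence with taking the degree $0$ part), yields canonical isomorphisms $S_{B,\sigma}(R')\cong S_{B,\sigma}(R)\otimes_R R'$ and $S_{(\sigma)}(R')\cong S_{(\sigma)}(R)\otimes_R R'$ for every $\sigma\in\sig$, and these are compatible with the transition morphisms $\eta^{(\sigma)}_{(\tau)}$ for $\tau\fleq\sigma$. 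By the explicit description of $Y_\sig$ in \ref{3.130} this shows that $Y(h)$ restricts on each $\sigma\in\sig$ to the canonical morphism $Y_\sigma(h)\colon Y_\sigma(R')\to Y_\sigma(R)$ of affine schemes associated with $S_{(\sigma)}(R)\to S_{(\sigma)}(R')$.

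Next I would produce, for a $B$-graded $S_B(R)$-module $F$ and a $\sigma\in\sig$, a canonical isomorphism of $S_{(\sigma)}(R')$-modules
\[
F_{(\sigma)}\otimes_{S_{(\sigma)}(R)}S_{(\sigma)}(R')\;\cong\;(F\otimes_R R')_{(\sigma)}.
\]
This is obtained from a direct chain of canonical isomorphisms: localize at $\zs^m$, invoke commutation of tensor product with localization, take the degree $0$ part (which again commutes with tensoring by the flat $R$-algebra $R'$ because it is a direct summand of the $B$-graded decomposition), and use the identifications of $S_{B,\sigma}(R')$ and $S_{(\sigma)}(R')$ already established. Applying the affine sheafification functor $\S_\sigma\colon\catmod(S_{(\sigma)}(R'))\to\qcmod(\o_{Y_\sigma(R')})$ and using the standard affine base change formula $Y_\sigma(h)^*(\widetilde{M})\cong\widetilde{M\otimes_{S_{(\sigma)}(R)}S_{(\sigma)}(R')}$ (\cite[I.1.6.5]{ega}) gives on $Y_\sigma(R')$ a canonical isomorphism
\[
Y_\sigma(h)^*(\S_{\sigma,R}(F_{(\sigma)}))\;\cong\;\S_{\sigma,R'}((F\otimes_R R')_{(\sigma)}),
\]
which by the commutative square defining $\S_B$ in \ref{3.210} is the restriction to $Y_\sigma(R')$ of the isomorphism we want to construct.

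Finally, I would check compatibility on overlaps: for $\tau\fleq\sigma\in\sig$, the two affine isomorphisms produced above on $Y_\sigma(R')$ and $Y_\tau(R')$ agree after restriction to the intersection $Y_\tau(R')\subseteq Y_\sigma(R')$. This follows by naturality, since each ingredient (localization, commutation of tensor with direct sum, affine base change) is natural in $\sigma$ with respect to the morphisms $\eta^{(\sigma)}_{(\tau)}$. Then \cite[0.3.3.1--2]{ega} gives a unique global morphism restricting to the local isomorphisms, which is therefore itself an isomorphism, and its naturality in $F$ is inherited from the naturality at the level of each affine piece. The most delicate step is the verification of gluing compatibility on overlaps, but since every isomorphism in sight is constructed from universal properties, the required diagrams commute automatically, so no separate calculation is needed.
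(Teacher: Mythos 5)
Your proposal is correct and follows essentially the same route as the paper, which for each $\sigma\in\sig$ checks commutativity of a cube of categories built from $\S_B$, $\bullet_{(\sigma)}$, restriction to $Y_\sigma$, and base change by $R'$, and then concludes via the defining square of $\S_B$ in \ref{3.210} and \cite[0.4.3.6; I.1.7.7]{ega}. One harmless slip: you invoke flatness of $R'$ over $R$ (which is not assumed) when arguing that passage to degree-$0$ parts commutes with $\bullet\otimes_RR'$, but the parallel justification you also give --- that the degree-$0$ part is an $R$-module direct summand and tensor product preserves direct sums --- is the correct one and needs no flatness.
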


\begin{proof}
If $\sigma\in\sig$ then the diagram of categories $$\xymatrix@C20pt@R10pt{&\grmod^B(S_B(R))\ar[rr]^{\S_{B,R}}\ar[dd]^(.35){\bullet_{(\sigma)}}\ar[ld]_{\bullet\otimes_RR'}&&\qcmod(\o_{Y(R)})\ar[dd]^{\res_{Y_{\sigma}(R)}}\ar[ld]^{\bullet\otimes_RR'}\\\grmod^B(S_B(R'))\ar[rr]^(.4){\S_{B,R'}}\ar[dd]_{\bullet_{(\sigma)}}&&\qcmod(\o_{Y_{\sigma}(R')})\ar[dd]^(.35){\res_{Y_{\sigma}(R')}}&\\&\catmod(S_{(\sigma)}(R))\ar[rr]^(.4){\S_{\sigma,R}}\ar[ld]_{\bullet\otimes_RR'}&&\qcmod(\o_{Y_{\sigma}(R)})\ar[ld]^{\bullet\otimes_RR'}\\\catmod(S_{(\sigma)}(R'))\ar[rr]^{\S_{\sigma,R'}}&&\qcmod(\o_{Y_{\sigma}(R')})&}$$ commutes, as is readily checked on use of \ref{3.210} and \cite[0.4.3.6; I.1.7.7]{ega}. This implies the claim.
\end{proof}

\begin{prop}\label{3.270}
If $F$ is a $B$-graded $S_B$-module that is an $I_B$-torsion module then $\S_B(F)=0$.\footnote{See \ref{3.980} for a converse.}
\end{prop}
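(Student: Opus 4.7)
The plan is to reduce the problem to a local statement on each affine open $Y_\sigma$ in the cover $(Y_\sigma)_{\sigma\in\sig}$ of $Y$, and then exploit the explicit description of $\S_B(F)|_{Y_\sigma}$ as the quasicoherent sheaf associated with $F_{(\sigma)}$ on an affine scheme. Concretely, by the defining commutative diagram from \ref{3.210} one has $\S_B(F)\res_{Y_\sigma}=\S_\sigma(F_{(\sigma)})$, and since $\S_\sigma\colon\catmod(S_{(\sigma)})\to\qcmod(\o_{Y_\sigma})$ is an equivalence of categories, vanishing of $\S_B(F)$ is equivalent to $F_{(\sigma)}=0$ for every $\sigma\in\sig$.

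First I would fix $\sigma\in\sig$ and, using that $B$ is big (so $(A_{\sig_1}/B)\otimes K=0$, hence there exists $m\in\N^*$ with $m\alp_\sigma\in B$), recall from \ref{3.80} that $S_{B,\sigma}=S_B(R)_{\zs^m}$, so that $F_\sigma=F\otimes_{S_B}S_{B,\sigma}$ is the localisation $F_{\zs^m}$ of $F$ at the multiplicative subset generated by $\zs^m\in S_B^{\hom}$, and $F_{(\sigma)}=(F_{\zs^m})_0$. Next I would observe that $\zs^m\in I_B$: by the definition of $I_B$ in \ref{3.20}, $\zs^m$ lies in the ideal of $S_A$ generated by $\{\zt\mid\tau\in\sig\}$, and since its degree $m\alp_\sigma$ lies in $B$ it belongs to the $B$-restriction of that ideal, which is $I_B$.

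With this in hand, the torsion hypothesis does the rest: for any homogeneous $x\in F$ there exists $n\in\N$ with $I_B^n x=0$, hence $(\zs^m)^n\cdot x=0$ in $F$, so $\tfrac{x}{1}=0$ in $F_{\zs^m}$. Since $F$ is generated over $S_B$ by its homogeneous elements, this forces $F_{\zs^m}=0$, therefore $F_{(\sigma)}=0$, and so $\S_B(F)\res_{Y_\sigma}=\S_\sigma(0)=0$. As $\sigma\in\sig$ was arbitrary and the $Y_\sigma$ cover $Y$, we conclude $\S_B(F)=0$.

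There is essentially no obstacle here; the only point that needs care is justifying $\zs^m\in I_B$ via the precise definition of the $B$-restricted irrelevant ideal and the bigness of $B$, which guarantees the existence of an $m$ with $m\alp_\sigma\in B$. Everything else is a direct combination of the functorial identification $\S_B(F)\res_{Y_\sigma}\cong\S_\sigma(F_{(\sigma)})$ with the standard fact that localisation of a torsion module at an element of the torsion ideal vanishes.
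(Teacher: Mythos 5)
Your proposal is correct and is essentially the same argument as the paper's: both reduce to showing $F_{(\sigma)}=0$ on each affine piece, pick $m\in\N$ with $m\alp_\sigma\in B$ (using bigness of $B$), and observe that a suitable power of $\zs$ lying in $S_B$ kills any homogeneous element because $F$ is $I_B$-torsion. The only difference is presentational: you explicitly note $\zs^m\in I_B$ before invoking the torsion hypothesis, whereas the paper leaves this step implicit, asserting directly the existence of $l$ with $\zs^l x'=0$.
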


\begin{proof}
Let $\sigma\in\sig$ and $x\in F_{(\sigma)}$. There exist $m\in\N$ with $m\alp_{\sigma}\in B$ and $x'\in F_{m\alp_{\sigma}}$ with $x=\frac{x'}{\zs^m}$, and there exists $l\in\N$ with $\zs^lx'=0$, implying $x=0$. This shows $F_{(\sigma)}=0$, and thus the claim.
\end{proof}

\begin{prop}\label{3.295}
If $F$ is a noetherian $B$-graded $S_B$-module then $\S_B(F)$ is of finite type; if $R$ is in addition noetherian then $\S_B(F)$ is coherent.\footnote{See \ref{3.340} and \ref{3.990} for more natural results.}
\end{prop}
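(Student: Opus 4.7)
The plan is to work chart by chart. By \ref{3.210}, the restriction $\S_B(F)\res_{Y_\sigma}$ is the quasicoherent sheaf on the affine scheme $Y_\sigma=\spec(S_{(\sigma)})$ associated with the $S_{(\sigma)}$-module $F_{(\sigma)}$, and $(Y_\sigma)_{\sigma\in\sig}$ is a finite open cover of $Y$. Consequently, once we know that $F_{(\sigma)}$ is a finitely generated $S_{(\sigma)}$-module for every $\sigma\in\sig$, the sheaf $\S_B(F)$ is of finite type on $Y$. The coherence claim will then follow from \ref{3.135}~a), since $Y$ is noetherian whenever $R$ is, and on a noetherian scheme every quasicoherent sheaf of finite type is automatically coherent.

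To establish finite generation of $F_{(\sigma)}$, I would first show that the base change $F_\sigma=F\otimes_{S_B}S_{B,\sigma}$ is a noetherian $B$-graded $S_{B,\sigma}$-module. Fix $m\in\N$ with $m\alp_\sigma\in B$, so that $\zs^m\in S_B$ and $S_{B,\sigma}=S_B[1/\zs^m]$ (\ref{3.80}). Given a graded sub-$S_{B,\sigma}$-module $G'\subseteq F_\sigma$, let $\phi\colon F\to F_\sigma$ denote the canonical morphism and set $G\dfgl\phi^{-1}(G')$; this is a graded sub-$S_B$-module of $F$ and is of finite type by noetherianity of $F$. Any homogeneous element of $G'$ has the form $\phi(g)/\zs^{km}$ with $g\in F$ homogeneous, and since $G'$ is stable under multiplication by the unit $\zs^{km}\in S_{B,\sigma}$ one infers $\phi(g)\in G'$, whence $g\in G$. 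Therefore $G'$ is generated over $S_{B,\sigma}$ by the image under $\phi$ of any finite generating set of $G$, and is in particular of finite type.

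Applying \ref{2.120}~b) to the noetherian $B$-graded $S_{B,\sigma}$-module $F_\sigma$ then shows that its degree-zero component $(F_\sigma)_0=F_{(\sigma)}$ is noetherian, and hence of finite type, as a module over $(S_{B,\sigma})_0=S_{(\sigma)}$, completing the first assertion.

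None of these steps seems genuinely hard. The only detail that demands a moment's care is the identification of an arbitrary graded sub-$S_{B,\sigma}$-module $G'\subseteq F_\sigma$ with the localization of its preimage $G\subseteq F$, but this is forced by the fact that the $\zs^{km}$ become invertible in $S_{B,\sigma}$; the remainder of the argument is a straightforward combination of \ref{2.120}~b), \ref{3.135}~a), and the affine-chart description of $\S_B(F)$ in \ref{3.210}.
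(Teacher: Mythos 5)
Your proof is correct and follows essentially the same route as the paper: localize to each affine chart $Y_\sigma$, observe that $F_\sigma$ is noetherian, apply \ref{2.120}~b) to get $F_{(\sigma)}$ of finite type, and use \ref{3.135}~a) plus the standard fact about noetherian schemes for coherence. The only difference is that you spell out the (standard) argument that localizing a noetherian graded module at $\zs^m$ yields a noetherian graded module, which the paper takes for granted.
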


\begin{proof}
For $\sigma\in\sig$ the $B$-graded $S_{B,\sigma}$-module $F_{\sigma}$ is noetherian, hence $F_{(\sigma)}$ is noetherian (\ref{2.120}~b)) and in particular of finite type. Thus, $\S_B(F)$ is of finite type (\cite[I.1.4.3]{ega}). If $R$ is noetherian then so is $Y$ (\ref{3.135}~a)), hence $\S_B(F)$ is coherent (\cite[I.2.7.1]{ega}).
\end{proof}

\begin{no}\label{3.280}
By \ref{2.60} we have for $\sigma\in\sig$ a canonical morphism of bifunctors $$\delta_{\sigma}\colon\bullet_{(\sigma)}\otimes_{S_{(\sigma)}}\sq_{(\sigma)}\rightarrow(\bullet\otimes_{S_B}\sq)_{(\sigma)}$$ such that for $\tau\fleq\sigma$ the diagram $$\xymatrix@R15pt@C40pt{\bullet_{(\sigma)}\otimes_{S_{(\sigma)}}\sq_{(\sigma)}\ar[r]^{\delta_{\sigma}}\ar[d]_{\eta^{(\sigma)}_{(\tau)}\otimes\eta^{(\sigma)}_{(\tau)}}&(\bullet\otimes_{S_B}\sq)_{(\sigma)}\ar[d]^{\eta^{(\sigma)}_{(\tau)}}\\\bullet_{(\tau)}\otimes_{S_{(\tau)}}\sq_{(\tau)}\ar[r]^{\delta_{\tau}}&(\bullet\otimes_{S_B}\sq)_{(\tau)}}$$ commutes. Therefore, and keeping in mind \cite[I.1.3.12]{ega}, these morphisms give rise to a morphism of bifunctors $$\delta_B\colon\S_B(\bullet)\otimes_{\o_Y}\S_B(\sq)\rightarrow\S_B(\bullet\otimes_{S_B}\sq)$$ with $\delta_B(\bullet,\sq)(Y_{\sigma})=\delta_{\sigma}(\bullet,\sq)$ for $\sigma\in\sig$.
\end{no}

\begin{nosm}\label{3.290}
The morphism $$\delta_B\colon\S_B(\bullet)\otimes_{\o_Y}\S_B(\sq)\rightarrow\S_B(\bullet\otimes_{S_B}\sq)$$ from \ref{3.280} is neither necessarily a mono- nor an epimorphism, and in particular not necessarily an isomorphism. For a counterexample\footnote{taken from \cite[01ML]{stacks}} we consider the $\Z^2$-polycones $\rho_1=\cone(1,0)$, $\rho_2=\cone(0,1)$ and $\rho_3=\cone(-2,-3)$ in $\R^2$, the complete, simplicial $\Z^2$-fan $\sig$ with maximal cones $\sigma_1=\rho_1+\rho_2$, $\sigma_2=\rho_2+\rho_3$ and $\sigma_3=\rho_3+\rho_1$, and $R\neq 0$. It follows $A=\Z$, and setting $Z_i\dfgl Z_{\rho_i}$ for $i\in[1,3]$ we have $S\dfgl S_A=R[Z_1,Z_2,Z_3]$ with $\deg(Z_1)=2$, $\deg(Z_2)=3$, and $\deg(Z_3)=1$. We consider the $\Z$-graded $S$-modules $S(1)$ and $S(2)$, so that $S(1)\otimes_SS(2)$ is canonically isomorphic to and henceforth identified with $S(3)$, and we claim that $$\delta_A(S(1),S(2))\colon\S_A(S(1))\otimes_{\o_Y}\S_A(S(2))\rightarrow\S_A(S(1)\otimes_SS(2))$$ is neither a mono- nor an epimorphism.

Indeed, by construction of $\delta_A$ and as $\widehat{Z}_{\sigma_3}=Z_2$, it suffices to show that the morphism of $S_{(Z_2)}$-modules $$\delta_{\sigma_3}\colon S(1)_{(Z_2)}\otimes_{S_{(Z_2)}}S(2)_{(Z_2)}\rightarrow S(3)_{(Z_2)}$$ with $x\otimes y\mapsto xy$ is neither injective nor surjective. Keeping in mind that $S(m)_{(Z_2)}=(S_{Z_2})_m$ for $m\in\Z$ it is readily checked that $S_{(Z_2)}=R[Z_3^3Z_2^{-1},Z_3Z_1Z_2^{-1},Z_1^3Z_2^{-2}]$, $S(1)_{(Z_2)}=\langle Z_3,Z_1^2Z_2^{-1}\rangle_{S_{(Z_2)}}$, $S(2)_{(Z_2)}=\langle Z_3^2,Z_1\rangle_{S_{(Z_2)}}$ and $S(3)_{(Z_2)}=\langle Z_2\rangle_{S_{(Z_2)}}$, so that $\delta_{\sigma_3}$ equals the morphism of $S_{(Z_2)}$-modules $$\langle Z_3,Z_1^2Z_2^{-1}\rangle_{S_{(Z_2)}}\otimes_{S_{(Z_2)}}\langle Z_3^2,Z_1\rangle_{S_{(Z_2)}}\rightarrow\langle Z_2\rangle_{S_{(Z_2)}}$$ with $x\otimes y\mapsto xy$. This is neither injective, for $(Z_3Z_1Z_2^{-1}\cdot Z_3)\otimes Z_1$ and $Z_1^2Z_2^{-1}\otimes Z_3^2$ are both mapped to $Z_3^2Z_1^2Z_2^{-2}\cdot Z_2$, nor surjective, for $Z_2$ does not lie in its image.
\end{nosm}


\subsection{Strongly graded rings of fractions}\label{sub4.2}

\begin{prop}\label{3.300}
Let $\sigma\in\sig$, let $\alpha\in B$, and let $(m_{\rho})_{\rho\in\sig_1}\in(\N^{\sig_1}-\del_{\sigma})\cap c^{-1}(\alpha)$ with $m_{\rho}=0$ for every $\rho\in\sigma_1$. If $R\neq 0$ then the $S_{(\sigma)}$-module $S_{\sigma,\alpha}$ is free of rank $1$.
\end{prop}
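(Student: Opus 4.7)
The natural candidate for a free generator is the formal monomial
$$\zeta\dfgl\prod_{\rho\in\sig_1}Z_{\rho}^{m_{\rho}}.$$
The crux of the proof is to recognise $\zeta$ as an \emph{invertible} homogeneous element of degree $\alpha$ in the localisation $(S_A)_{\zs}$, and then to observe that multiplication by $\zeta$ provides the desired isomorphism $S_{(\sigma)}\xrightarrow{\cong}S_{\sigma,\alpha}$.

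\textbf{Step 1: $\zeta$ makes sense and lies in $S_{\sigma,\alpha}$.} By hypothesis $(m_{\rho})_{\rho\in\sig_1}\in\N^{\sig_1}-\del_{\sigma}$, so $m_{\rho}\in\N$ for $\rho\in\sigma_1$ and $m_{\rho}\in\Z$ for $\rho\in\sig_1\setminus\sigma_1$. The additional assumption $m_{\rho}=0$ for $\rho\in\sigma_1$ means that only indeterminates $Z_{\rho}$ with $\rho\in\sig_1\setminus\sigma_1$ actually occur in $\zeta$. Since each such $Z_{\rho}$ divides $\zs=\prod_{\rho\in\sig_1\setminus\sigma_1}Z_{\rho}$, which is invertible in $(S_A)_{\zs}$, each of these $Z_{\rho}$ is itself invertible in $(S_A)_{\zs}$. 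Hence $\zeta$ is a well-defined invertible element of $(S_A)_{\zs}$. Its degree is $\sum_{\rho\in\sig_1}m_{\rho}\alpha_{\rho}=\alpha\in B$ by the hypothesis on $(m_{\rho})$, so $\zeta\in(S_{B,\sigma})_{\alpha}=S_{\sigma,\alpha}$.

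\textbf{Step 2: multiplication by $\zeta$ is an $S_{(\sigma)}$-linear isomorphism.} Multiplication by $\zeta$ inside $(S_A)_{\zs}$ shifts degrees by $\alpha$, and therefore restricts to an $S_{(\sigma)}$-linear map
$$\mu_{\zeta}\colon S_{(\sigma)}\longrightarrow S_{\sigma,\alpha},\quad\eta\mapsto\eta\zeta.$$
Since $\zeta$ is a unit in $(S_A)_{\zs}$, multiplication by $\zeta^{-1}$ is an $S_{(\sigma)}$-linear map $S_{\sigma,\alpha}\to S_{\sigma,0}=S_{(\sigma)}$ providing a two-sided inverse to $\mu_{\zeta}$. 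Thus $\mu_{\zeta}$ is an isomorphism.

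\textbf{Step 3: conclusion.} Hence $S_{\sigma,\alpha}$ is a free $S_{(\sigma)}$-module with basis $\{\zeta\}$. The assumption $R\neq 0$ guarantees $S_{(\sigma)}\neq 0$ (for instance, because the localisation $(S_A)_{\zs}$ of the polynomial ring $S_A=R[(Z_{\rho})_{\rho\in\sig_1}]$ by the nonzerodivisor $\zs$ contains $R\hookrightarrow S_{(\sigma)}$ in degree $0$), so the rank is well defined and equal to $1$.

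There is no real obstacle: once one recognises that the hypothesis $m_{\rho}=0$ for $\rho\in\sigma_1$ is precisely what is needed so that $\zeta$ becomes a product of units in $(S_A)_{\zs}$, the freeness follows formally from the fact that a unit of degree $\alpha$ identifies the degree-$0$ and the degree-$\alpha$ components of a graded ring.
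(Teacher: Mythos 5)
Your proof is correct, and it takes a somewhat cleaner route than the paper. The paper proves generation directly: for a general monomial generator $\prod_{\rho}Z_{\rho}^{q_{\rho}}$ of $S_{\sigma,\alpha}$, it writes $(q_{\rho})-(m_{\rho})=c(m)$ for some $m\in M$ (both tuples map to $\alpha$, so the difference lies in $\ke(a)=\im(c)$), notes that the resulting monomial $\prod_{\rho}Z_{\rho}^{\rho_N(m)}$ lies in $S_{(\sigma)}$ precisely because $m_{\rho}=0$ for $\rho\in\sigma_1$ forces $\rho_N(m)=q_{\rho}\geq 0$ there, and thus factors $\prod Z_{\rho}^{q_{\rho}}=\bigl(\prod Z_{\rho}^{\rho_N(m)}\bigr)\cdot\zeta$; freeness of $\zeta$ over $S_{(\sigma)}$ is then asserted as obvious (it is a nonzerodivisor). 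You instead observe that $\zeta$ is a \emph{unit} in $(S_A)_{\widehat{Z}_{\sigma}}$, since the hypothesis $m_{\rho}=0$ for $\rho\in\sigma_1$ guarantees $\zeta$ involves only variables $Z_{\rho}$ with $\rho\notin\sigma_1$, each of which divides the inverted element $\widehat{Z}_{\sigma}$; multiplication by a homogeneous unit of degree $\alpha$ then identifies the degree-$0$ and degree-$\alpha$ components in one stroke. Both arguments exploit the hypothesis on $(m_{\rho})$ in the same essential place, but yours packages surjectivity and injectivity together and makes the mechanism more transparent, at the modest cost of being less explicit about how monomial generators decompose. (Incidentally, the paper's statement writes $c^{-1}(\alpha)$ where $a^{-1}(\alpha)$ is clearly meant, since $\alpha\in A$ and $c$ lands in $\Z^{\sig_1}$; you implicitly read it correctly when computing $\deg\zeta=\sum m_{\rho}\alpha_{\rho}=\alpha$.)
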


\begin{proof}
For $(q_{\rho})_{\rho\in\sig_1}\in(\N^{\sig_1}-\del_{\sigma})\cap c^{-1}(\alpha)$ there is an $m\in M$ with $(q_{\rho})_{\rho\in\sig}=c(m)+(m_{\rho})_{\rho\in\sig_1}$, and thus $\prod_{\rho\in\sig_1}Z_{\rho}^{q_{\rho}}=(\prod_{\rho\in\sig_1}Z_{\rho}^{\rho_N(m)})(\prod_{\rho\in\sig_1}Z_{\rho}^{m_{\rho}})$ in $S_{\sigma}$. As elements of $S_{\sigma,\alpha}$ are $S_{(\sigma)}$-linear combinations of elements of the form $\prod_{\rho\in\sig_1}Z_{\rho}^{q_{\rho}}$ with $(q_{\rho})_{\rho\in\sig_1}\in(\N^{\sig_1}-\del_{\sigma})\cap c^{-1}(\alpha)$, this implies that $\prod_{\rho\in\sig_1}Z_{\rho}^{m_{\rho}}$ generates the $S_{(\sigma)}$-module $S_{\sigma,\alpha}$. This element obviously being free over $S_{(\sigma)}$ the claim is proven.
\end{proof}

\begin{cor}\label{3.310}
Let $\sigma\in\sig$ and let $\alpha\in\pic(\sig)$. If $R\neq 0$ then there exists $(m_{\rho})_{\rho\in\sig_1}\in$\linebreak$(\N^{\sig_1}-\del_{\sigma})\cap c^{-1}(\alpha)$ such that the $S_{(\sigma)}$-module $S_{\sigma,\alpha}$ is free of rank $1$ with basis $\prod_{\rho\in\sig_1}Z_{\rho}^{m_{\rho}}$.
\end{cor}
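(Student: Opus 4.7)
The plan is to reduce to Proposition \ref{3.300} by producing a family $(m_{\rho}^{*})_{\rho\in\sig_1}\in\Z^{\sig_1}$ that lifts $\alpha$ along $a$, lies in $\N^{\sig_1}-\del_{\sigma}$, and satisfies $m_{\rho}^{*}=0$ for every $\rho\in\sigma_1$. The extra assumption $\alpha\in\pic(\sig)$, rather than merely $\alpha\in B$, is exactly what is needed in order for such a lift to exist.

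To build it, I first apply \ref{1.120} with $\omega=\sigma$ to obtain a representative $p=(m_{\tau}+\tau^{\vee})_{\tau\in\sig}\in\overline{P}_{\sig}$ of $\alpha$ under $e_{\sig}$ with $m_{\tau}=0$ for every $\tau\fleq\sigma$; in particular $m_{\rho}=0$ for every $\rho\in\sigma_1$. Next, I push $p$ down to $\Z^{\sig_1}$ by setting $(m_{\rho}^{*})_{\rho\in\sig_1}\dfgl d_{\sig}(p)=(\rho_N(m_{\rho}))_{\rho\in\sig_1}$. By the previous vanishing, $m_{\rho}^{*}=0$ for every $\rho\in\sigma_1$, which both forces $(m_{\rho}^{*})_{\rho\in\sig_1}\in\N^{\sig_1}-\del_{\sigma}$ and supplies the vanishing hypothesis of \ref{3.300}. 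Commutativity of the diagram $\mathbbm{D}_{\sig}$ from \ref{1.140}, combined with the identification $\pic(\sig)\subseteq A$ via $b_{\sig}$ from \ref{1.170}, then yields $a((m_{\rho}^{*})_{\rho\in\sig_1})=b_{\sig}(e_{\sig}(p))=b_{\sig}(\alpha)=\alpha$. Applying \ref{3.300} to this lift delivers the conclusion, with basis $\prod_{\rho\in\sig_1}Z_{\rho}^{m_{\rho}^{*}}$.

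There is no genuine obstacle here: the corollary is an immediate combination of \ref{1.120}, the diagram in \ref{1.140}, \ref{1.170}, and \ref{3.300}. The only point to keep straight is the change of viewpoint on $\alpha\in\pic(\sig)$, as the class of a virtual polytope in $\overline{P}_{\sig}$ versus as an element of $A_{\sig_1}$; the morphism $d_{\sig}$ together with the commutativity of $\mathbbm{D}_{\sig}$ is precisely what translates between these two descriptions, and the freedom to prescribe vanishing along all faces of $\sigma$ afforded by \ref{1.120} is what secures the positivity condition encoded in $\N^{\sig_1}-\del_{\sigma}$.
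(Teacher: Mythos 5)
Your proof is correct and takes the same route as the paper, which simply cites \ref{3.300} and \ref{1.120} without elaboration; you have supplied precisely the unwinding the paper leaves implicit, correctly tracing the passage from the virtual-polytope picture to $\Z^{\sig_1}$ via $d_{\sig}$ and the commutativity of $\mathbbm{D}_{\sig}$.
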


\begin{proof}
Clear from \ref{3.300} and \ref{1.120}.
\end{proof}

\begin{no}\label{3.600}
Let $\alpha\in B$. Since $\S_B(S_B(\alpha))=\S_A(S_A(\alpha))$ (\ref{3.210}), the right exact functor $$\S_B(S_B(\alpha))\otimes_{\o_Y}\bullet\colon\qcmod(\o_Y)\rightarrow\qcmod(\o_Y)$$ does not depend on $B$; we denote it by $\bullet(\alpha)$. We have $\o_Y(\alpha)=\S_B(S_B(\alpha))$, and hence for $\sigma\in\sig$ it follows $\o_Y(\alpha)(Y_{\sigma})=S_B(\alpha)_{(\sigma)}=S_{\sigma,\alpha}$.
\end{no}

\begin{cor}\label{3.390}
If $\alpha\in\pic(\sig)$ then the $\o_Y$-module $\o_Y(\alpha)$ is invertible.
\end{cor}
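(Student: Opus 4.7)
The plan is simply to verify that $\o_Y(\alpha)$ is locally trivial on the affine open cover $(Y_\sigma)_{\sigma\in\sig}$ of $Y$, since invertibility of an $\o_Y$-module means nothing else than being locally free of rank $1$. First I dispose of the degenerate case $R=0$: then every $R[M]=0$, so each $Y_\sigma(R)$ and hence $Y$ is empty, and the claim is vacuous. So henceforth I assume $R\neq 0$.

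Fix $\sigma\in\sig$. By the commutative square defining $\S_B$ in \ref{3.210}, the restriction $\o_Y(\alpha)\res_{Y_\sigma}=\S_B(S_B(\alpha))\res_{Y_\sigma}$ is canonically identified with the quasicoherent $\o_{Y_\sigma}$-module $\S_\sigma(S_B(\alpha)_{(\sigma)})$ associated to the $S_{(\sigma)}$-module $S_B(\alpha)_{(\sigma)}=S_{\sigma,\alpha}$, as also recorded in \ref{3.600}. Now apply Corollary \ref{3.310}: since $\alpha\in\pic(\sig)$, there exists $(m_\rho)_{\rho\in\sig_1}\in(\N^{\sig_1}-\del_\sigma)\cap c^{-1}(\alpha)$ such that the monomial $\prod_{\rho\in\sig_1}Z_\rho^{m_\rho}$ is a basis of the free $S_{(\sigma)}$-module $S_{\sigma,\alpha}$ of rank $1$. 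Consequently $\o_Y(\alpha)\res_{Y_\sigma}\cong\o_{Y_\sigma}$ for every $\sigma\in\sig$, and as $(Y_\sigma)_{\sigma\in\sig}$ is an open cover of $Y$, this is the desired local triviality.

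No genuine obstacle arises here: the entire combinatorial content — namely the existence, for $\alpha\in\pic(\sig)$ and $\sigma\in\sig$, of a free monomial generator of $S_{\sigma,\alpha}$ over $S_{(\sigma)}$ — has already been extracted in \ref{3.300} and \ref{3.310} via the normalised representative provided by \ref{1.120}. The present corollary merely globalises that local observation through the exact functor $\S_B$.
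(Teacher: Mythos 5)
Your proof is correct and follows the same route as the paper: restrict to the affine cover $(Y_\sigma)_{\sigma\in\sig}$, identify $\o_Y(\alpha)\res_{Y_\sigma}$ with $\S_\sigma(S_{\sigma,\alpha})$ via \ref{3.600}, and invoke \ref{3.310} for freeness of rank $1$. Your explicit treatment of the $R=0$ case is a small but welcome bit of extra care, since \ref{3.310} carries the hypothesis $R\neq 0$ that the paper leaves implicit.
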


\begin{proof}
As $\o_Y(\alpha)\res_{Y_{\sigma}}=\S_{\sigma}(S_{\sigma,\alpha})$ for $\sigma\in\sig$ (\ref{3.600}), this follows from \ref{3.310}.
\end{proof}

\begin{thm}\label{3.330}
Suppose that $B$ is small and let $\sigma\in\sig$.

a) The $B$-graded ring $S_{B,\sigma}$ is strongly graded.\footnote{cf.~\ref{2.130}}

b) If $F$ is a $B$-graded $S_B$-module and $\alpha\in B$ then there is an isomorphism of $S_{(\sigma)}$-modules $(F_{\sigma})_{\alpha}\cong F_{(\sigma)}$.
\end{thm}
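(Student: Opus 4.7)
The plan is to reduce both parts to the existence of a unit of $S_{B,\sigma}$ inside each homogeneous component $(S_{B,\sigma})_\alpha$ with $\alpha\in B$. Assuming $R\neq 0$ (the other case being trivial), smallness of $B$ means $B\subseteq\pic(\sig)$, so \ref{3.310} applies to every $\alpha\in B$ and furnishes a monomial $t_\alpha\dfgl\prod_{\rho\in\sig_1}Z_\rho^{m_\rho^{(\alpha)}}$, with $m_\rho^{(\alpha)}=0$ for $\rho\in\sigma_1$, that freely generates $S_{\sigma,\alpha}$ over $S_{(\sigma)}$. Since each $Z_\rho$ with $\rho\in\sig_1\setminus\sigma_1$ divides $\zs$, it is invertible in $S_{A,\sigma}$; hence so is $t_\alpha$, and its inverse sits in degree $-\alpha\in B$, so it lies already in $S_{B,\sigma}$.

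Part (a) then follows immediately: for $\alpha,\beta\in B$ and $x\in(S_{B,\sigma})_{\alpha+\beta}$ the decomposition $x=t_\alpha\cdot(t_\alpha^{-1}x)$ shows $(S_{B,\sigma})_{\alpha+\beta}\subseteq(S_{B,\sigma})_\alpha\cdot(S_{B,\sigma})_\beta$, and the reverse inclusion is automatic. For part (b), multiplication by $t_\alpha$ is an $S_{(\sigma)}$-linear map $F_{(\sigma)}=(F_\sigma)_0\to(F_\sigma)_\alpha$ whose inverse is multiplication by $t_\alpha^{-1}\in(S_{B,\sigma})_{-\alpha}$; the two compositions give the identity since $t_\alpha t_\alpha^{-1}=1$ in $S_{B,\sigma}$. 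Equivalently, (b) can be deduced formally from (a) via the counit isomorphism $\nu$ recalled in \ref{2.130}, applied to the shift $F_\sigma(\alpha)$ and read off in degree $-\alpha$, using that $(S_{B,\sigma})_{-\alpha}$ is again free of rank one over $S_{(\sigma)}$ by \ref{3.310}.

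The sole delicate point is the claim that the inverse of $t_\alpha$ -- which a priori lives only in the larger ring $S_{A,\sigma}$ -- stays inside the $B$-restriction $S_{B,\sigma}$. This is what the smallness hypothesis $B\subseteq\pic(\sig)$ guarantees through \ref{3.310}: it forces the generator $t_\alpha$ of $(S_{B,\sigma})_\alpha$ to be supported entirely on those $Z_\rho$ with $\rho\in\sig_1\setminus\sigma_1$, which are precisely the variables already inverted by the localization at $\zs$.
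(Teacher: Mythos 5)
Your proof is correct, and it takes a cleaner and more conceptual route than the paper's. The paper proves (a) by showing that the multiplication map $S_{\sigma,\alpha}\otimes_{S_{(\sigma)}}S_{\sigma,\beta}\rightarrow S_{\sigma,\alpha+\beta}$ is bijective, using \ref{1.120} and \ref{3.300} to pick free generators normalized at $\sigma$ so that their product is again the normalized generator in degree $\alpha+\beta$; part (b) is then deduced from (a) abstractly via the counit $\nu$ of \ref{2.130} and one further application of \ref{3.310}. You instead isolate the essential mechanism: the free generator $t_\alpha$ of $S_{\sigma,\alpha}$ is a homogeneous \emph{unit} of $S_{B,\sigma}$, because it is a Laurent monomial supported on the variables $Z_\rho$ with $\rho\in\sig_1\setminus\sigma_1$ (all inverted by localization at $\zs$) and because $-\alpha\in B$ keeps its inverse inside the $B$-restriction. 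Once that is noted, (a) follows from the elementary observation that a graded ring with a homogeneous unit in every degree is strongly graded, and (b) follows from the fact that multiplication by a degree-$\alpha$ unit is an $S_{(\sigma)}$-linear isomorphism between degree components, with no need for the $\nu$-machinery. One small point of precision: you attribute the normalization $m_\rho^{(\alpha)}=0$ for $\rho\in\sigma_1$ to \ref{3.310}, but the statement of \ref{3.310} only asserts membership in $\N^{\sig_1}-\del_\sigma$, which gives $m_\rho\geq 0$ for $\rho\in\sigma_1$; the vanishing is what \ref{1.120} provides and what the hypothesis of \ref{3.300} requires. Since that vanishing is precisely what makes $t_\alpha$ a unit (a $Z_\rho$ with $\rho\in\sigma_1$ is \emph{not} invertible in $S_{A,\sigma}$), you should cite \ref{1.120} together with \ref{3.300} rather than \ref{3.310} alone.
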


\begin{proof}
a) It suffices to show that for $\alpha,\beta\in B$ the morphism of $S_{(\sigma)}$-modules $$S_{\sigma,\alpha}\otimes_{S_{(\sigma)}}S_{\sigma,\beta}\rightarrow S_{\sigma,\alpha+\beta}$$ induced by the multiplication of $S_{B,\sigma}$ is an isomorphism. Being induced by restriction and coastriction of the canonical isomorphism $S_{B,\sigma}\otimes_{S_{B,\sigma}}S_{B,\sigma}\rightarrow S_{B,\sigma}$ it is injective. There are $p=(m_{\tau}+\tau^{\vee})_{\tau\in\sig},q=(l_{\tau}+\tau^{\vee})_{\tau\in\sig}\in\overline{P}_{\sig}$ with $e(p)=\alpha$, $e(q)=\beta$ and $m_{\tau}=l_{\tau}=0$ for $\tau\fleq\sigma$ (\ref{1.120}). As $p+q=(m_{\tau}+l_{\tau}+\tau^{\vee})_{\tau\in\sig}$ and $e(p+q)=\alpha+\beta$ the claim follows from \ref{3.300}.

b) We have $S_{\sigma,\alpha}\cong S_{(\sigma)}$ by \ref{3.310} and $F_{\sigma}\cong S_{B,\sigma}\otimes_{S_{(\sigma)}}F_{(\sigma)}$ by \ref{2.130} and a). Taking components of degree $\alpha$ yields $(F_{\sigma})_{\alpha}\cong S_{\sigma,\alpha}\otimes_{S_{(\sigma)}}F_{(\sigma)}\cong S_{(\sigma)}\otimes_{S_{(\sigma)}}F_{(\sigma)}\cong F_{(\sigma)}$.
\end{proof}

\begin{prop}\label{3.340}
Suppose that $B$ is small. If $F$ is a $B$-graded $S_B$-module of finite type (or of finite presentation) then $\S_B(F)$ is an $\o_Y$-module of finite type (or of finite presentation).
\end{prop}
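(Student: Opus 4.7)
The plan is to check the two finiteness properties locally on the covering $(Y_\sigma)_{\sigma\in\sig}$. By construction of $\S_B$ (\ref{3.210}) we have $\S_B(F)\!\upharpoonright_{Y_\sigma}=\S_\sigma(F_{(\sigma)})$, and since $\sig$ is finite while being of finite type or of finite presentation is a local property on a finite affine open cover, it suffices to show that for every $\sigma\in\sig$ the $S_{(\sigma)}$-module $F_{(\sigma)}$ is of finite type (resp.\ of finite presentation) whenever $F$ is so as a $B$-graded $S_B$-module.

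Fix $\sigma\in\sig$. First I would choose a surjection $L\twoheadrightarrow F$ in $\grmod^B(S_B)$ with $L=\bigoplus_{i=1}^n S_B(g_i)$ free of finite rank (\ref{2.67}). Applying the exact functor $\bullet_\sigma=\bullet\otimes_{S_B}S_{B,\sigma}$ yields a surjection $L_\sigma\twoheadrightarrow F_\sigma$ of $B$-graded $S_{B,\sigma}$-modules, and taking components in degree $0$ (which is exact, \ref{2.10}) gives a surjection $(L_\sigma)_0\twoheadrightarrow (F_\sigma)_0=F_{(\sigma)}$ of $S_{(\sigma)}$-modules. Now $(L_\sigma)_0=\bigoplus_{i=1}^n S_{\sigma,-g_i}$, and the key observation is that, since $B$ is small, each $-g_i$ lies in $\pic(\sig)$. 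Then \ref{3.310} provides, for each $i$, an $S_{(\sigma)}$-basis of $S_{\sigma,-g_i}$ consisting of a single monomial, so $(L_\sigma)_0$ is a free $S_{(\sigma)}$-module of finite rank. This shows that $F_{(\sigma)}$ is of finite type over $S_{(\sigma)}$ and completes the case of finite type.

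For the case of finite presentation I would start from an exact sequence $L'\to L\to F\to 0$ in $\grmod^B(S_B)$ with $L,L'$ free of finite rank. The same two-step argument (apply $\bullet_\sigma$, then take degree $0$) produces an exact sequence $(L'_\sigma)_0\to (L_\sigma)_0\to F_{(\sigma)}\to 0$ of $S_{(\sigma)}$-modules in which both $(L_\sigma)_0$ and $(L'_\sigma)_0$ are free of finite rank by the same appeal to \ref{3.310}. Hence $F_{(\sigma)}$ is of finite presentation over $S_{(\sigma)}$, so $\S_\sigma(F_{(\sigma)})$ is of finite presentation, and globalising over the finite cover $(Y_\sigma)_{\sigma\in\sig}$ yields the claim.

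The only real content, and thus the potential obstacle, is knowing that the localised free generators $S_B(g_i)_\sigma$ have free rank-one components in every degree of $B$; this is precisely what smallness of $B$ buys us via \ref{3.310} (equivalently, the strong-gradedness of $S_{B,\sigma}$ recorded in \ref{3.330}), so no further work is needed beyond invoking these results.
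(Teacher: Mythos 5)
Your argument is correct and matches the paper's: present $F$ by finite direct sums of shifts $S_B(g_i)$, apply the exact localisation-and-degree-zero functor $\bullet_{(\sigma)}$, and use smallness of $B$ to identify each $S_{\sigma,g_i}$ with $S_{(\sigma)}$ via \ref{3.310} (the paper cites \ref{3.330}~b), which is the same fact). Note only that with the paper's convention $(L_\sigma)_0=\bigoplus_i S_{\sigma,g_i}$ rather than $\bigoplus_i S_{\sigma,-g_i}$, a harmless sign slip since $\pic(\sig)$ is a group.
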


\begin{proof}
We prove the claim about finite presentation; the other one is proven analogously. Let $\sigma\in\sig$. There exist finite subsets $C,C'\subseteq B$, families $(n_{\alpha})_{\alpha\in C}$ and $(n'_{\alpha})_{\alpha\in C'}$ in $\N$, and an exact sequence of $B$-graded $S_B$-modules $$\bigoplus_{\alpha\in C'}(S_B(\alpha)^{\oplus n'_{\alpha}})\overset{v}\longrightarrow\bigoplus_{\alpha\in C}(S_B(\alpha)^{\oplus n_{\alpha}})\overset{u}\longrightarrow F\longrightarrow 0.$$ Applying the exact functor $\bullet_{(\sigma)}$ yields an exact sequence of $S_{(\sigma)}$-modules $$\bigoplus_{\alpha\in C'}((S(\alpha)_{(\sigma)})^{\oplus n'_{\alpha}})\overset{v_{(\sigma)}}\longrightarrow\bigoplus_{\alpha\in C}((S(\alpha)_{(\sigma)})^{\oplus n_{\alpha}})\overset{u_{(\sigma)}}\longrightarrow F_{(\sigma)}\longrightarrow 0.$$ As $B$ is small there exists for $\alpha\in B$ an isomorphism of $S_{(\sigma)}$-modules $S_B(\alpha)_{(\sigma)}=S_{\sigma,\alpha}\cong S_{(\sigma)}$ (\ref{3.330}~b)). Hence, there is an exact sequence of $S_{(\sigma)}$-modules $$\bigoplus_{\alpha\in C'}((S_{(\sigma)})^{\oplus n'_{\alpha}})\longrightarrow\bigoplus_{\alpha\in C}((S_{(\sigma)})^{\oplus n_{\alpha}})\longrightarrow F_{(\sigma)}\longrightarrow 0,$$ implying that $F_{(\sigma)}$ is of finite presentation and thus the claim.
\end{proof}

\begin{prop}\label{3.350}
Let $F$ be a $B$-graded $S_B$-module. We consider the following statements:
\begin{aufz}
\item[(1)] $F_{\sigma}$ is flat over $R$ for every $\sigma\in\sig$;
\item[(2)] $F_{(\sigma)}$ is flat over $R$ for every $\sigma\in\sig$;
\item[(3)] $\S_B(F)$ is flat over $R$.
\end{aufz}
We have (1)$\Rightarrow$(2)$\Leftrightarrow$(3), and if $B$ is small then (1)--(3) are equivalent.\footnote{In \cite{torsion} it is shown that, if $B$ is small, flatness of $\S_B(F)$ implies flatness of $F_{\alpha}$ for certain values of $\alpha$ that can be described cohomologically.}
\end{prop}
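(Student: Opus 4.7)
The plan is to exploit the graded decomposition of $F_{\sigma}$ together with the affine-local character of flatness for quasicoherent sheaves; all three implications should reduce to manipulations of direct sums of $R$-modules together with Theorem \ref{3.330}.

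First I would establish the equivalence (2)$\Leftrightarrow$(3). By the defining property of $\S_B$ recorded in \ref{3.210}, we have $\S_B(F)\res_{Y_{\sigma}}\cong\S_{\sigma}(F_{(\sigma)})$, and $(Y_{\sigma})_{\sigma\in\sig}$ is an affine open cover of $Y$. Since $Y_{\sigma}=\spec(S_{(\sigma)})$ and the structure morphism $Y\rightarrow\spec(R)$ restricts on $Y_{\sigma}$ to $\spec(S_{(\sigma)})\rightarrow\spec(R)$, flatness of a quasicoherent $\o_Y$-module over $R$ can be checked on this cover, and on the affine piece it amounts to flatness of the $R$-module of global sections $F_{(\sigma)}$.

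Next, for (1)$\Rightarrow$(2), the underlying $R$-module of $F_{\sigma}$ splits as $\bigoplus_{\alpha\in B}(F_{\sigma})_{\alpha}$, and in particular $F_{(\sigma)}=(F_{\sigma})_0$ is a direct $R$-summand of $F_{\sigma}$. Since a direct summand of a flat $R$-module is flat, (2) follows from (1).

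Finally, assume that $B$ is small and that (2) holds. By Theorem \ref{3.330}~b), for every $\sigma\in\sig$ and every $\alpha\in B$ there is an isomorphism of $S_{(\sigma)}$-modules (hence of $R$-modules) $(F_{\sigma})_{\alpha}\cong F_{(\sigma)}$. Therefore the underlying $R$-module of $F_{\sigma}$ is isomorphic to $\bigoplus_{\alpha\in B}F_{(\sigma)}$, a direct sum of copies of an $R$-flat module, which is flat. This gives (1) and completes the equivalence.

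There is no serious obstacle: the only point requiring a bit of care is to invoke correctly the affine-local characterisation of flatness over the base for a quasicoherent sheaf in the step (2)$\Leftrightarrow$(3); everything else is a direct consequence of the graded decomposition and of Theorem \ref{3.330}~b).
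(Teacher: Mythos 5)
Your proof is correct and follows essentially the same route as the paper's: (1)$\Rightarrow$(2) via direct summands of flat modules being flat, (2)$\Leftrightarrow$(3) via the affine-local characterisation of flatness over the base on the cover $(Y_\sigma)_{\sigma\in\sig}$, and (2)$\Rightarrow$(1) for small $B$ via Theorem \ref{3.330}~b) and the fact that a direct sum of flat modules is flat.
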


\begin{proof}
(1) implies (2) by \cite[I.2.3 Proposition 2]{ac}. (2) holds if and only if $\S_{\sigma}(F_{(\sigma)})=\S_B(F)\res_{Y_{\sigma}}$ is flat over $R$ for every $\sigma\in\sig$ (\cite[IV.2.1.2]{ega}), and this is equivalent to (3). Finally, if $B$ is small then (2) implies that $(F_{\sigma})_{\alpha}$ is flat for every $\sigma\in\sig$ and every $\alpha\in B$ (\ref{3.330}~b)), and thus $F_{\sigma}=\bigoplus_{\alpha\in B}(F_{\sigma})_{\alpha}$ is flat (\cite[I.2.3 Proposition 2]{ac}).
\end{proof}


\subsection{Total functors of sections}\label{sub4.3}

\begin{no}\label{3.610}
Let $\alpha,\beta\in B$. For $\sigma\in\sig$ the multiplication of $S_{B,\sigma}$ induces a morphism of $S_{(\sigma)}$-modules $S_{\sigma,\alpha}\otimes_{S_{(\sigma)}}S_{\sigma,\beta}\rightarrow S_{\sigma,\alpha+\beta}$ such that for $\tau\fleq\sigma\in\sig$ the diagram $$\xymatrix@R15pt@C40pt{S_{\sigma,\alpha}\otimes_{S_{(\sigma)}}S_{\sigma,\beta}\ar[r]\ar[d]_{(\eta^{\sigma}_{\tau})_{\alpha}\otimes(\eta^{\sigma}_{\tau})_{\beta}}&S_{\sigma,\alpha+\beta}\ar[d]^{(\eta^{\sigma}_{\tau})_{\alpha+\beta}}\\S_{\tau,\alpha}\otimes_{S_{(\tau)}}S_{\tau,\beta}\ar[r]&S_{\tau,\alpha+\beta}}$$ commutes, i.e., a morphism of $\o_Y(Y_{\sigma})$-modules $$\o_Y(\alpha)(Y_{\sigma})\otimes_{\o_Y(Y_{\sigma})}\o_Y(\beta)(Y_{\sigma})\rightarrow\o_Y(\alpha+\beta)(Y_{\sigma})$$ such that for $\tau\fleq\sigma\in\sig$ the diagram $$\xymatrix@R15pt@C40pt{\o_Y(\alpha)(Y_{\sigma})\otimes_{\o_Y(Y_{\sigma})}\o_Y(\beta)(Y_{\sigma})\ar[r]\ar[d]_{\res_{Y_{\tau}}\otimes\,\res_{Y_{\tau}}}&\o_Y(\alpha+\beta)(Y_{\sigma})\ar[d]^{\res_{Y_{\tau}}}\\\o_Y(\alpha)(Y_{\tau})\otimes_{\o_Y(Y_{\tau})}\o_Y(\beta)(Y_{\tau})\ar[r]&\o_Y(\alpha+\beta)(Y_{\tau})}$$ commutes (\ref{3.600}). Gluing yields a morphism of $\o_Y$-modules $$\mu_{\alpha,\beta}\colon\o_Y(\alpha)\otimes_{\o_Y}\o_Y(\beta)\rightarrow\o_Y(\alpha+\beta).$$ Denoting for $\alpha,\beta\in B$ by $$\sigma_{\alpha,\beta}\colon\o_Y(\alpha)\otimes_{\o_Y}\o_Y(\beta)\longrightarrow\o_Y(\beta)\otimes_{\o_Y}\o_Y(\alpha)$$ the canonical isomorphism it is readily checked that $\mu_{\alpha,\beta}=\mu_{\beta,\alpha}\circ\sigma_{\alpha,\beta}$ for $\alpha,\beta\in B$ and $$\mu_{\alpha+\beta,\gamma}\circ(\mu_{\alpha,\beta}\otimes_{\o_Y}\Id_{\o_Y(\gamma)})=\mu_{\alpha+\beta,\gamma}\circ(\Id_{\o_Y(\alpha)}\otimes_{\o_Y}\mu_{\beta,\gamma})$$ for $\alpha,\beta,\gamma\in B$.
\end{no}

\begin{no}\label{3.620}
We define a presheaf of $B$-graded $\o_Y$-algebras $$\Gamma_{*,B}(\o_Y)\colon(V\subseteq U)\mapsto(\Gamma_{*,B,U}(\o_Y)\rightarrow\Gamma_{*,B,V}(\o_Y));$$ its underlying presheaf of $B$-graded $\o_Y$-modules is given by $$(V\subseteq U)\mapsto(\bigoplus_{\alpha\in B}\Gamma(U,\o_Y(\alpha))\rightarrow\bigoplus_{\alpha\in B}\Gamma(V,\o_Y(\alpha))),$$ where the restriction morphisms are induced by the restriction morphisms of $\o_Y(\alpha)$, and its multiplication is given for $U\in\U_Y$ and $\alpha,\beta\in B$ by the composition of the canonical morphism $$\o_Y(\alpha)(U)\otimes_{\o_Y(U)}\o_Y(\beta)(U)\rightarrow(\o_Y(\alpha)\otimes_{\o_Y}\o_Y(\beta))(U)$$ with $\mu_{\alpha,\beta}(U)$ (\ref{3.610}).
\end{no}

\begin{no}\label{3.630}
If $\alpha\in B$ and $\tau\fleq\sigma\in\sig$ then taking components of degree $\alpha$ of the canonical morphisms $\eta_{\sigma}\colon S_B\rightarrow S_{B,\sigma}$ and $\eta_{\tau}\colon S_B\rightarrow S_{B,\tau}$, and keeping in mind \ref{3.600}, we get a commutative diagram of $S_0$-modules $$\xymatrix@R15pt@C60pt{S_{\alpha}\ar[r]^(.45){(\eta_{\sigma})_{\alpha}}\ar[rd]_(.42){(\eta_{\tau})_{\alpha}}&\o_Y(\alpha)(Y_{\sigma})\ar[d]^{\res_{Y_{\tau}}}\\&\o_Y(\alpha)(Y_{\tau}).}$$ Hence, there is a unique morphism of $S_0$-modules $\eta_{\alpha}\colon S_{\alpha}\rightarrow\o_Y(\alpha)(Y)$ such that for $\sigma\in\sig$ the diagram $$\xymatrix@R15pt@C60pt{S_{\alpha}\ar[r]^(.45){\eta_{\alpha}}\ar[rd]_(.42){(\eta_{\sigma})_{\alpha}}&\o_Y(\alpha)(Y)\ar[d]^{\res_{Y_{\sigma}}}\\&\o_Y(\alpha)(Y_{\sigma})}$$ commutes. Thus, we get a morphism of $S_0$-modules $$\eta_B\dfgl\bigoplus_{\alpha\in B}\eta_{\alpha}\colon S_B\rightarrow\Gamma_{*,B,Y}(\o_Y),$$ which is readily seen to be a morphism of $B$-graded rings. By means of this we consider $\Gamma_{*,B,Y}(\o_Y)$ as a $B$-graded $S_B$-algebra and hence $\Gamma_{*,B}(\o_Y)$ as a presheaf of $B$-graded $S_B$-algebras.
\end{no}

\begin{prop}\label{3.640}
The morphism of $B$-graded rings $\eta_B\colon S_B\rightarrow\Gamma_{*,B,Y}(\o_Y)$ is surjective; it is an isomorphism if and only if $\sig\neq\emptyset$ or $R=0$.
\end{prop}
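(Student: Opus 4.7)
The plan is to argue degree by degree: for each $\alpha\in B$, analyse the component $\eta_\alpha\colon S_\alpha\to\o_Y(\alpha)(Y)$. By the gluing description of $Y$ in \ref{3.130} and the definition of $\o_Y(\alpha)$ in \ref{3.600}, the module $\o_Y(\alpha)(Y)$ is the equaliser of the restriction maps between the $S_{\sigma,\alpha}$ for $\sigma\in\sig$, so a section corresponds to a compatible family $(s_\sigma)_{\sigma\in\sig}$ with $s_\sigma\in S_{\sigma,\alpha}$, and $\eta_\alpha$ sends $s\in S_\alpha$ to its constant image $(s)_{\sigma\in\sig}$.

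First I would handle the principal case $\sig\neq\emptyset$ and $R\neq 0$. The key idea is to embed everything into a common Laurent polynomial module: since $R\neq 0$ every $Z_\rho$ is a non-zero-divisor in $S_A=R[Z_\rho\mid\rho\in\sig_1]$, so each $S_{B,\sigma}$ injects canonically into $R[Z_\rho^{\pm 1}\mid\rho\in\sig_1]$, and taking degree $\alpha$ yields an injection of $S_{\sigma,\alpha}$ into the free $R$-module $L_\alpha$ spanned by the Laurent monomials $\prod_\rho Z_\rho^{m_\rho}$ with $a((m_\rho))=\alpha$. Under this embedding $S_{\sigma,\alpha}$ is the submodule supported in $(\N^{\sig_1}-\del_\sigma)\cap a^{-1}(\alpha)$, i.e.\ monomials with non-negative exponent on $Z_\rho$ for $\rho\in\sigma_1$, while $S_\alpha$ is the submodule supported in $\N^{\sig_1}\cap a^{-1}(\alpha)$. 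For $\sigma,\tau\in\sig$ both $S_{\sigma,\alpha}$ and $S_{\tau,\alpha}$ inject into $S_{\sigma\cap\tau,\alpha}\subseteq L_\alpha$, so the compatibility condition forces $s_\sigma=s_\tau$ in $L_\alpha$; thus all $s_\sigma$ coincide with a single element $s\in L_\alpha$.

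The central combinatorial step, which I expect to be the only non-routine point, is the identity $\bigcap_{\sigma\in\sig}S_{\sigma,\alpha}=S_\alpha$ inside $L_\alpha$. This holds because every $\rho\in\sig_1$ is itself a cone in $\sig$ with $\rho_1=\{\rho\}$, so membership in $S_{\rho,\alpha}$ forces the exponent of $Z_\rho$ to be non-negative in every supported monomial; letting $\rho$ range over $\sig_1$ recovers the defining support condition of $S_\alpha$. Applied to our Laurent polynomial $s$ this gives $s\in S_\alpha$ with $\eta_\alpha(s)=(s_\sigma)_\sigma$, proving surjectivity. Injectivity in the same case follows at once, since the map $S_\alpha\to S_{\sigma,\alpha}$ is an injection (again using that the $Z_\rho$ are non-zero-divisors) for any chosen $\sigma\in\sig$.

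Finally I would dispose of the boundary cases. If $R=0$, then $S_B=0$ and $\o_Y=0$, so $\eta_B$ is trivially an isomorphism. If $\sig=\emptyset$, then $Y=\emptyset$ and the target $\Gamma_{*,B,Y}(\o_Y)$ is zero, so $\eta_B$ is automatically surjective; injectivity then amounts to $S_B=0$, which, since $1\in S_{B,0}$ whenever $R\neq 0$, holds precisely when $R=0$. Putting these cases together, $\eta_B$ is always surjective, and it is an isomorphism precisely when $\sig\neq\emptyset$ or $R=0$.
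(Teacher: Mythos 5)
Your proof is correct and is essentially the paper's argument in different packaging. The paper restricts each local section $x\res_{Y_\sigma}$ to the torus chart $Y_{\{0\}}$, whose degree-$\alpha$ part $S_{\{0\},\alpha}$ is precisely your $L_\alpha$, and deduces via a divisibility argument ($Z_\rho^m$ divides $y^{(\{0\})}$ for each $\rho\in\sig_1$, because $\rho\in\sig$) that the common restriction already lies in $S_\alpha$ --- this is exactly your identity $\bigcap_{\sigma\in\sig}S_{\sigma,\alpha}=S_\alpha$; the injectivity argument and the treatment of the boundary cases $\sig=\emptyset$ and $R=0$ also match.
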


\begin{proof}
If $\sig=\emptyset$ then $\eta_B$ is a morphism of rings $R\rightarrow 0$, hence surjective, and an isomorphism if and only if $R=0$. Let $\sig\neq\emptyset$ and $\alpha\in B$. It suffices to show that $\eta_{\alpha}\colon S_{\alpha}\rightarrow\o_Y(\alpha)(Y)$ is an isomorphism.

First, let $x\in\o_Y(\alpha)(Y)$. If $\sigma\in\sig$ then $x\res_{Y_{\sigma}}\in S_{\sigma,\alpha}$ (\ref{3.600}), hence there is an $m\in\N$ such that if $\sigma\in\sig$ then $m\alp_{\sigma}\in B$ and there is a $y^{(\sigma)}\in S_{\alpha+m\alp_{\sigma}}$ with $x\res_{Y_{\sigma}}=\frac{y^{(\sigma)}}{\zs^m}$. For $\sigma\in\sig$ it follows $$\textstyle\frac{y^{(\sigma)}\cdot\prod_{\rho\in\sigma_1}Z_{\rho}^m}{\prod_{\rho\in\sig_1}Z_{\rho}^m}=x\res_{Y_{\sigma}}\res_{Y_{\{0\}}}=x\res_{Y_{\{0\}}}=\frac{y^{(\{0\})}}{\prod_{\rho\in\sig_1}Z_{\rho}^m}\in S_{\{0\},\alpha},$$ hence $y^{(\sigma)}\cdot\prod_{\rho\in\sigma_1}Z_{\rho}^m=y^{(\{0\})}$. Therefore, $Z_{\rho}^m$ divides $y^{(\{0\})}$ for every $\rho\in\sig_1$, and thus there is a $y\in S_{\alpha}$ with $\frac{y}{1}=x\res_{Y_{\{0\}}}\in S_{\{0\},\alpha}$. For $\sigma\in\sig$ it follows $\eta_{\alpha}(y)\res_{Y_{\sigma}}=\frac{y}{1}=\frac{y^{(\sigma)}}{\zs^m}=x\res_{Y_{\sigma}}\in S_{\sigma,\alpha}$, hence $\eta_{\alpha}(y)=x$, showing that $\eta_{\alpha}$ is surjective.

Next, let $x\in\ke(\eta_{\alpha})$. For $\sigma\in\sig$ we have $\frac{x}{1}=\eta_{\alpha}(x)\res_{Y_{\sigma}}=0\in S_{\sigma,\alpha},$ hence there is an $m\in\N$ with $\zs^mx=0\in S_B$. This implies $x=0$, and thus $\eta_{\alpha}$ is injective.
\end{proof}

\begin{no}\label{3.650}
Let $\sigma\in\sig$. It is clear from \ref{3.640} that $\eta_B$ induces an isomorphism of $B$-graded rings $(\eta_B)_{\sigma}\colon$\linebreak$S_{B,\sigma}\rightarrow\Gamma_{*,B,Y_{\sigma}}(\o_Y)$ such that for $\tau\fleq\sigma$ the diagram $$\xymatrix@R10pt{S_B\ar[rr]^{\eta_{\sigma}}\ar[rd]_{\eta_{\tau}}\ar[dd]^{\cong}_{\eta_B}&&S_{B,\sigma}\ar[ld]^{\eta^{\sigma}_{\tau}}\ar[dd]_{\cong}^{(\eta_B)_{\sigma}}\\&S_{B,\tau}\ar[dd]_(.3){\cong}^(.3){(\eta_B)_{\tau}}&\\\Gamma_{*,B,Y}(\o_Y)\ar[rr]^(.3){\res_{Y_{\sigma}}}\ar[rd]_{\res_{Y_{\tau}}}&&\Gamma_{*,B,Y_{\sigma}}(\o_Y)\ar[ld]^{\res_{Y_{\tau}}}\\&\Gamma_{*,B,Y_{\tau}}(\o_Y)&}$$ commutes. Hence, for $\tau\fleq\sigma$ we have a commutative diagram of $B$-graded $S_B$-algebras $$\xymatrix@R15pt{&\Gamma_{*,B,Y}(\o_Y)\ar[d]_{\eta_{\sigma}}\ar[ldd]_{\eta_{\tau}}\ar[rdd]^{\res_{Y_{\tau}}}\ar[rrd]^{\res_{Y_{\sigma}}}&&\\&\Gamma_{*,B,Y}(\o_Y)_{\sigma}\ar[rr]_{\cong}\ar[ld]^{\eta^{\sigma}_{\tau}}&&\Gamma_{*,B,Y_{\sigma}}(\o_Y)\ar[ld]^{\res_{Y_{\tau}}}\\\Gamma_{*,B,Y}(\o_Y)_{\tau}\ar[rr]^{\cong}&&\Gamma_{*,B,Y_{\tau}}(\o_Y)&}$$ where the unmarked isomorphisms are induced by $\res_{Y_{\sigma}}$ and $\res_{Y_{\tau}}$. By means of the above isomorphisms we identify the $B$-graded $S_B$-algebras $S_{B,\sigma}$, $\Gamma_{*,B,Y}(\o_Y)_{\sigma}$ and $\Gamma_{*,B,Y_{\sigma}}(\o_Y)$.
\end{no}

\begin{no}\label{3.660}
Let $\F$ be a quasicoherent $\o_Y$-module. We define a presheaf of $B$-graded $\Gamma_{*,B}(\o_Y)$-modules $$\Gamma_{*,B}(\F)\colon(V\subseteq U)\mapsto(\Gamma_{*,B,U}(\F)\rightarrow\Gamma_{*,B,V}(\F));$$ its underlying presheaf of $\o_Y$-modules (i.e., its composition with scalar restriction from $\Gamma_{*,B}(\o_Y)$ to $\Gamma_{*,B}(\o_Y)_0=\o_Y$) is given by $$(V\subseteq U)\mapsto(\bigoplus_{\alpha\in B}\Gamma(U,\o_Y(\alpha)\otimes_{\o_Y}\F)\rightarrow\bigoplus_{\alpha\in B}\Gamma(V,\o_Y(\alpha)\otimes_{\o_Y}\F)),$$ where the restriction morphisms are induced by the restriction morphisms of $\o_Y(\alpha)\otimes_{\o_Y}\F$, and its $\Gamma_{*,B}(\o_Y)$-action is given for $U\in\U_Y$ and $\alpha,\beta\in B$ by the composition of the canonical morphism $$\o_Y(\alpha)(U)\otimes_{\o_Y(U)}(\o_Y(\beta)\otimes_{\o_Y}\F)(U)\rightarrow(\o_Y(\alpha)\otimes_{\o_Y}\o_Y(\beta)\otimes_{\o_Y}\F)(U)$$ with $(\mu_{g,h}\otimes_{\o_Y}\F)(U)$.

In case $\F=\o_Y$ we get back $\Gamma_{*,B}(\o_Y)$ considered as a $B$-graded module over itself, so that our notation does not lead to confusion.
\end{no}

\begin{no}\label{3.670}
The construction in \ref{3.660} can be canonically extended to give rise to a functor $$\Gamma_{*,B}\colon\qcmod(\o_Y)\rightarrow\pgrmod^B(\Gamma_{*,B}(\o_Y)),\;(\F\overset{u}\rightarrow\G)\mapsto(\Gamma_{*,B}(\F)\xrightarrow{\Gamma_{*,B}(u)}\Gamma_{*,B}(\G)).$$ In particular, we get for $U\in\U_Y$ a functor $$\Gamma_{*,B,U}\colon\qcmod(\o_Y)\rightarrow\grmod^B(\Gamma_{*,B,U}(\o_Y))$$ and for $V\in\U_U$ a morphism of functors $\Gamma_{*,B,U}\rightarrow\Gamma_{*,B,V}$. For $U\in\U_Y$ we have a commutative diagram of categories $$\xymatrix@C40pt@R15pt{\qcmod(\o_Y)\ar[r]^(.48){\Gamma_{*,A,U}}\ar[rd]_{\Gamma_{*,B,U}}&\grmod^A(S_A)\ar[d]^{\bullet_{(B)}}\\&\grmod^B(S_B).}$$

If $U\in\U_Y$ then composing $\Gamma_{*,B,U}$ with scalar restriction to $S_B$ yields a functor\linebreak $\qcmod(\o_Y)\rightarrow\grmod^B(S_B)$, again denoted by $\Gamma_{*,B,U}$. In case $U=Y$ the functor $$\Gamma_{*,B,Y}\colon\qcmod(\o_Y)\rightarrow\grmod^B(S_B)$$ is called \textit{the $B$-restricted first total functor of sections over $R$ associated with $\sig$.} 
\end{no}

\begin{no}\label{3.680}
Let $\F$ be a quasicoherent $\o_Y$-module. For $\sigma\in\sig$ and $x\in\Gamma_{*,B,Y}(\F)_{(\sigma)}$ there exist $m\in\N$ with $m\alp_{\sigma}\in B$ and $x'\in\Gamma_{*,B,Y}(\F)_{m\alp_{\sigma}}=(\S_B(S_B(m\alp_{\sigma}))\otimes_{\o_Y}\F)(Y)$ with $x=\frac{x'}{\zs^m}$. As $x'\res_{Y_{\sigma}}\in\Gamma_{*,B,Y_{\sigma}}(\F)_{m\alp_{\sigma}}=(\S_B(S_B(m\alp_{\sigma}))\otimes_{\o_Y}\F)(Y_{\sigma})$ and as $\Gamma_{*,B,Y_{\sigma}}(\F)$ is an $S_{B,\sigma}$-module it follows $$\textstyle\beta_{B,\sigma}(\F)(x)\dfgl\frac{1}{\zs^m}\cdot x'\res_{Y_{\sigma}}\in\Gamma_{*,B,Y_{\sigma}}(\F)_0=\F(Y_{\sigma}).$$ This yields a morphism of $S_{(\sigma)}$-modules $\beta_{B,\sigma}(\F)\colon\Gamma_{*,B,Y}(\F)_{(\sigma)}\rightarrow\F(Y_{\sigma})$. If $\tau\fleq\sigma\in\sig$ then the diagram $$\xymatrix@R15pt@C60pt{\Gamma_{*,B,Y}(\F)_{(\sigma)}\ar[r]^(.55){\beta_{B,\sigma}(\F)}\ar[d]_{\eta^{(\sigma)}_{(\tau)}}&\F(Y_{\sigma})\ar[d]^{\res_{Y_{\tau}}}\\\Gamma_{*,B,Y}(\F)_{(\tau)}\ar[r]^(.55){\beta_{B,\tau}(\F)}&\F(Y_{\tau})}$$ commutes, and hence gluing yields a unique morphism of $\o_Y$-modules $$\beta_B(\F)\colon\S_B(\Gamma_{*,B,Y}(\F))\rightarrow\F$$ with $\beta_B(\F)(Y_{\sigma})=\beta_{B,\sigma}(\F)$ for $\sigma\in\sig$. Varying $\F$ gives rise to a morphism of functors $$\xymatrix@C40pt@R0pt{&\ar@<1ex>@{=>}[dd]^{\beta_B}&\\\qcmod(\o_Y)\ar@/^3ex/[rr]^{\S_B\circ\Gamma_*^{B,Y}}\ar@/_3ex/[rr]_{\Id_{\qcmod(\o_Y)}}&&\qcmod(\o_Y).\\&&}$$
\end{no}

\begin{no}\label{3.690}
For $\alpha\in B$ we have a functor $$\S_B(\bullet(\alpha))=\S_B(S_B(\alpha)\otimes_{S_B}\bullet)\colon\grmod^B(S_B)\rightarrow\qcmod(\o_Y).$$ Taking sections over $U\in\U_Y$ yields a functor $$\Gamma_{\alpha,B,U}(\bullet)\dfgl\Gamma(U,\S_B(\bullet(\alpha)))\colon\grmod^B(S_B)\rightarrow\catmod(\o_Y(U)).$$ So, for $U\in\U_Y$ we get a functor $$\Gamma_{**,B,U}\dfgl\bigoplus_{\alpha\in B}\Gamma_{\alpha,B,U}\colon\grmod^B(S_B)\rightarrow\catmod(\o_Y(U)).$$ If $F$ is a $B$-graded $S_B$-module and $U,V\in\U_Y$ with $V\subseteq U$ then the restriction morphisms\linebreak $\S_B(F(\alpha))(U)\rightarrow\S_B(F(\alpha))(V)$ for $\alpha\in B$ induce a morphism $\Gamma_{**,B,U}(F)\rightarrow\Gamma_{**,B,V}(F)$ of $\o_Y(U)$-modules. This defines a presheaf of $\o_Y$-modules $$\Gamma_{**,B}(F)\colon(V\subseteq U)\mapsto(\Gamma_{**,B,U}(F)\rightarrow\Gamma_{**,B,V}(F)).$$ Finally, we get a functor $\Gamma_{**,B}\colon\grmod^B(S_B)\rightarrow\prmod(\o_Y)$.
\end{no}

\begin{no}\label{3.700}
Let $F$ be a $B$-graded $S_B$-module. For $\alpha,\beta\in B$ we consider the morphism of $\o_Y$-modules $$\delta(S_B(\alpha),F(\beta))\colon\S_B(S_B(\alpha))\otimes_{\o_Y}\S_B(F(\beta))\rightarrow\S_B(F(\alpha+\beta))$$ from \ref{3.280}. By taking sections over $U\in\U_Y$ we get a structure of $B$-graded $\Gamma_{*,B,U}(\o_Y)$-module on $\Gamma_{**,B,U}(F)$ such that for $V\in\U_U$ the restriction morphism $\Gamma_{**,B,U}(F)\rightarrow\Gamma_{**,B,V}(F)$ is a morphism of $B$-graded $\Gamma_{*,B,U}(\o_Y)$-modules. Thus, we get a presheaf of $B$-graded $\Gamma_{*,B}(\o_Y)$-modules $$\Gamma_{**,B}(F)\colon(V\subseteq U)\mapsto(\Gamma_{**,B,U}(F)\rightarrow\Gamma_{**,B,V}(F)).$$

It is readily checked that by the above we can consider $\Gamma_{**,B}$ as a functor taking values in\linebreak $\pgrmod^B(\Gamma_{*,B}(\o_Y))$. Hence, for $U\in\U_Y$ we get a functor $$\Gamma_{**,B,U}\colon\grmod^B(S_B)\rightarrow\grmod^B(\Gamma_{**,B,U}(\o_Y)),$$ and the diagram of categories $$\xymatrix@C60pt@R15pt{\grmod^A(S_A)\ar[r]^(.43){\Gamma_{**,A,U}}\ar[d]_{\bullet_{(B)}}&\grmod^A(\Gamma_{*,A,U}(\o_Y))\ar[d]^{\bullet_{(B)}}\\\grmod^B(S_B)\ar[r]^(.43){\Gamma_{**,B,U}}&\grmod^B(\Gamma_{*,B,U}(\o_Y))}$$ commutes. In particular, by scalar restriction to $S_B$ we get a functor $$\Gamma_{**,B,Y}\colon\grmod^B(S_B)\rightarrow\grmod^B(S_B),$$ called \textit{the $B$-restricted second total functor of sections over $R$ associated with $\sig$.}
\end{no}

\begin{no}\label{3.710}
Let $F$ be a $B$-graded $S_B$-module and let $\alpha\in B$. If $\tau\fleq\sigma\in\sig$ then taking components of degree $\alpha$ of the canonical morphisms $\eta_{\sigma}(F)\colon F\rightarrow F_{\sigma}$ and $\eta_{\tau}(F)\colon F\rightarrow F_{\tau}$ yields a commutative diagram of $S_0$-modules $$\xymatrix@R15pt@C60pt{F_{\alpha}\ar[r]^(.49){\eta_{\sigma}(F)_{\alpha}}\ar[rd]_(.45){\eta_{\tau}(F)_{\alpha}}&(F_{\sigma})_{\alpha}\ar[d]^{\eta^{\sigma}_{\tau}(F)_{\alpha}}\\&(F_{\tau})_{\alpha}.}$$ As $(F_{\sigma})_{\alpha}=F(\alpha)_{(\sigma)}=\Gamma_{\alpha,B,Y_{\sigma}}(F)$ for $\sigma\in\sig$ there is a unique morphism of $S_0$-modules $\etaq_{\alpha}(F)\colon$\linebreak$F_{\alpha}\rightarrow\Gamma_{\alpha,B,Y}(F)$ such that for $\sigma\in\sig$ the diagram $$\xymatrix@R15pt@C60pt{F_{\alpha}\ar[r]^(.43){\etaq_{\alpha}(F)}\ar[rd]_(.45){\eta_{\sigma}(F)_{\alpha}}&\Gamma_{\alpha,B,Y}(F)\ar[d]^{\res_{Y_{\sigma}}}\\&\Gamma_{\alpha,B,Y_{\sigma}}(F)}$$ commutes. Thus, we get a morphism of $B$-graded $S_B$-modules $$\etaq_B(F)\dfgl\bigoplus_{\alpha\in B}\etaq_{\alpha}(F)\colon F\rightarrow\Gamma_{**,B,Y}(F).$$ Varying $F$ gives rise to a morphism of functors $$\xymatrix@C40pt@R0pt{&\ar@<1ex>@{=>}[dd]^{\etaq_B}&\\\grmod^B(S_B)\ar@/^3ex/[rr]^{\Id_{\grmod^B(S_B)}}\ar@/_3ex/[rr]_{\Gamma_{**,B,Y}}&&\grmod^B(S_B).\\&&}$$
\end{no}

\begin{no}\label{3.720}
For $\alpha\in B$ we consider the morphism of functors $$\delta(S_B(\alpha),\bullet)\colon \S_B(\bullet)(\alpha)\rightarrow\S_B(\bullet(\alpha))$$ from \ref{3.280}. Taking sections over $U\in\U_Y$ and direct sums over $\alpha\in B$ yields a morphism of functors $$\delta_{B,U}\colon\Gamma_{*,B,U}\circ\S_B\rightarrow\Gamma_{**,B,U}$$ such that for $V\in\U_U$ the diagram $$\xymatrix@R15pt@C40pt{\Gamma_{*,B,U}\circ\S_B\ar[r]^(.55){\delta_{B,U}}\ar[d]_{\res_V\circ\S_B}&\Gamma_{**,B,U}\ar[d]^{\res_V}\\\Gamma_{*,B,V}\circ\S_B\ar[r]^(.55){\delta_{B,V}}&\Gamma_{**,B,V}}$$ commutes, and that $\delta_{B,U}(S_B)=\Id_{\Gamma_{*,B,U}(\o_Y)}.$ In particular, we have a morphism of functors $$\xymatrix@C40pt@R0pt{&\ar@<1ex>@{=>}[dd]^{\delta_{B,Y}}&\\\grmod^B(S_B)\ar@/^3ex/[rr]^{\Gamma_{*,B,Y}\circ\S_B}\ar@/_3ex/[rr]_{\Gamma_{**,B,Y}}&&\grmod^B(S_B).\\&&}$$
\end{no}

\begin{nosm}\label{3.730}
The morphism of functors $\delta_{B,U}\colon\Gamma_{*,B,U}\circ\S_B\rightarrow\Gamma_{**,B,U}$ is not necessarily an isomorphism. A counterexample is obtained immediately from \ref{3.290}.
\end{nosm}

\begin{lemma}\label{3.800}
$\grgam{B}{I_B}\circ\ke(\etaq_B)=\ke(\etaq_B)$ and $\grgam{B}{I_B}\circ\cok(\etaq_B)=\cok(\etaq_B)$.
\end{lemma}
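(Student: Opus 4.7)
The plan is to verify both conditions directly using the sheaf description of $\S_B(F(\alpha))$ via its gluing on the affine cover $(Y_\sigma)_{\sigma\in\sig}$, together with the finite monomial generation of $I_B$ guaranteed by (\ref{3.30})~a). The underlying observation is that a homogeneous element $y$ of a $B$-graded $S_B$-module is $I_B$-torsion if and only if for every $\sigma\in\sig$ there exists $m$ with $m\alp_\sigma\in B$ and $\zs^m y=0$. One direction is immediate since $\zs^m\in I_B$ for such $m$; for the other, each of the finitely many monomial generators $t_i$ of $I_B$ is divisible in $S_A$ by some $\widehat Z_{\sigma_i}$, and a sufficiently high power $t_i^{k_i}$ can then be written as $s_i\cdot\widehat Z_{\sigma_i}^{m_{\sigma_i}}$ with both factors in $S_B$, so each $t_i$ acts nilpotently on $y$.

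For the kernel: if $x\in F_\alpha$ lies in $\ke(\etaq_\alpha(F))$, then by the sheaf property of $\S_B(F(\alpha))$ the restriction $x/1\in(F_\sigma)_\alpha=F(\alpha)_{(\sigma)}$ vanishes for every $\sigma\in\sig$. By definition of $F_\sigma$ as a localization of $F$ at powers of $\zs$ that lie in $S_B$, this forces $\zs^{m_\sigma}x=0$ in $F$ for some $m_\sigma$ with $m_\sigma\alp_\sigma\in B$. The preliminary observation then yields $I_B^N x=0$.

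For the cokernel: let $s\in\Gamma_{\alpha,B,Y}(F)$. By the same reduction, it suffices to produce for each $\sigma\in\sig$ an integer $p$ with $p\alp_\sigma\in B$ such that $\zs^p s\in\im(\etaq_{\alpha+p\alp_\sigma}(F))$. Fix $\sigma$ and write $s\res_{Y_\tau}=x_\tau/\zt^{q_\tau}$ with $q_\tau\alp_\tau\in B$ for each $\tau\in\sig$. The compatibility of $s$ on $Y_{\sigma\cap\tau}$ provides, for some $k$ uniform in $\tau$, a relation $\widehat Z_{\sigma\cap\tau}^k(\zt^{q_\tau}x_\sigma-\zs^{q_\sigma}x_\tau)=0$ in $F$. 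I then set $y\dfgl\zs^{p-q_\sigma}x_\sigma\in F_{\alpha+p\alp_\sigma}$ for $p$ large with $p\alp_\sigma\in B$. Using the factorisation $\widehat Z_{\sigma\cap\tau}=\zt\cdot\prod_{\rho\in\tau_1\setminus\sigma_1}Z_\rho$ together with the fact that the second factor divides $\zs$ in $S_A$, multiplying the above relation by an appropriate monomial $w\in S_B$ transforms it into $\zt^{k+q_\tau}y=\zt^k\zs^p x_\tau$ in $F$, which is precisely the equality $\etaq(y)\res_{Y_\tau}=\zs^p s\res_{Y_\tau}$ in $F_\tau$. Since this holds for every $\tau$, the sheaf property yields $\etaq(y)=\zs^p s$ globally. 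The main obstacle is the bookkeeping of degrees: each auxiliary monomial appearing in the argument must be arranged to lie in $S_B$, not merely in $S_A$. This is handled uniformly by choosing $p$, $q_\tau$ and $k$ to be multiples of an integer $m$ such that $m\alpha_\rho\in B$ for every $\rho\in\sig_1$; such $m$ exists because $B$ is big and $\sig_1$ is finite.
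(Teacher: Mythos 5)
Your proposal is correct and follows essentially the same route as the paper's proof: reduce to showing that for each $\sigma$ some power of $\zs$ annihilates (for the kernel) or maps into the image (for the cokernel), by using the $B$-graded localization description of $F_\sigma$ and the gluing on the affine cover $(Y_\sigma)_{\sigma\in\sig}$. The one genuinely helpful thing you add is the explicit ``preliminary observation'' converting the family of $\zs$-annihilation statements into an $I_B$-power statement via the finite monomial generating set of $I_B$, together with the uniform choice of an $m$ with $m\alpha_\rho\in B$ for all $\rho\in\sig_1$; the paper invokes this passage more tersely. One small imprecision: in the ``only if'' direction of your observation you write ``$\zs^m\in I_B$'', but what is actually needed is $\zs^m\in I_B^N$; this follows by taking $m$ to be $N$ times the least positive integer $m_0$ with $m_0\alp_\sigma\in B$, since $\zs^{m_0}\in I_B$. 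As this direction is not used in the lemma, the gap is harmless.
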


\begin{proof}
Let $F$ be a $B$-graded $S_B$-module and let $\alpha\in B$. First, let $x\in\ke(\etaq_B(F))_{\alpha}$. If $\sigma\in\sig$ then $0=\etaq_B(F)(x)\res_{Y_{\sigma}}=\frac{x}{1}\in(F_{\sigma})_{\alpha}$, hence there exists $m_{\sigma}\in\N$ with $\zs^{m_{\sigma}}x=0$. Therefore, there exists $m\in\N$ with $I_B^mx=0$, implying the first claim.

Next, let $x\in\Gamma_{**,B,Y}(F)_{\alpha}=\S_B(F(\alpha))(Y)$. There exists $m\in\N$ such that for $\sigma\in\sig$ we have $m\alp_{\sigma}\in B$ and there exists $x_{\sigma}\in F_{\alpha+m\alp_{\sigma}}$ with $x\res_{Y_{\sigma}}=\frac{x_{\sigma}}{\zs^m}\in\S_B(F(\alpha))(Y_{\sigma})=F_{\sigma,\alpha}$. Let $\sigma\in\sig$. For $\tau\in\sig$ we have $$\textstyle\frac{\zs^mx_{\tau}}{\zt^m}=\zs^mx\res_{Y_{\tau}}\res_{Y_{\sigma\cap\tau}}=\zs^mx\res_{Y_{\sigma}}\res_{Y_{\sigma\cap\tau}}=\frac{\zs^mx_{\sigma}}{\zs^m}=\frac{x_{\sigma}}{1}\in F_{\sigma\cap\tau},$$ so that the canonical image of $\frac{x_{\sigma}}{1}-\frac{\zs^mx_{\tau}}{\zt^m}\in F_{\tau}$ in $F_{\sigma\cap\tau}$ is zero. Therefore, there exists $r\in\N$ such that for $\tau\in\sig$ we have $$\textstyle(\prod_{\rho\in\tau_1\setminus\sigma_1}Z_{\rho}^r)\frac{x_{\sigma}}{1}=(\prod_{\rho\in\tau_1\setminus\sigma_1}Z_{\rho}^r)\frac{\zs^mx_{\tau}}{\zt^m}\in F_{\tau},$$ hence $\frac{\zs^rx_{\sigma}}{1}=\frac{\zs^{r+m}x_{\tau}}{\zt^m}\in F_{\tau}$. We set $y\dfgl\zs^rx_{\sigma}\in F_{\alpha+(r+m)\alp_{\sigma}}$. If $\tau\in\sig$ then $$\textstyle\etaq_B(F)(y)\res_{Y_{\tau}}=\frac{y}{1}=\frac{\zs^rx_{\sigma}}{1}=\frac{\zs^{r+m}x_{\tau}}{\zt^m}=\zs^{r+m}x\res_{Y_{\tau}}\in F_{\tau,\alpha+(r+m)\alp_{\sigma}},$$ so that $\zs^{r+m}x=\etaq_B(F)(y)\in\im(\etaq_B(F))$. This shows that there exists $l\in\N$ with $I_B^lx\in\im(\etaq_B(F))$. Hence, $\cok(\etaq_B(F))$ is an $I_B$-torsion module.
\end{proof}

\begin{prop}\label{3.820}
The morphism of functors $\S_B\circ\etaq_B\colon\S_B\rightarrow\S_B\circ\Gamma_{**,B,Y}$ is an isomorphism, and the diagram $$\xymatrix@R15pt@C60pt{\S_B\circ\Gamma_{*,B,Y}\circ\S_B\ar[rr]^{\S_B\circ\delta_{B,Y}}\ar[rd]_{\beta_B\circ\S_B}&&\S_B\circ\Gamma_{**,B,Y}\\&\S_B\ar[ru]_{\S_B\circ\etaq_B}&}$$ in $\hmf{\grmod^B(S_B)}{\qcmod(\o_Y)}$ commutes.
\end{prop}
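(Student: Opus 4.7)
The plan is to prove the two parts separately: the first by a short formal argument using exactness of $\S_B$ together with the torsion properties of the kernel and cokernel of $\etaq_B$, and the second by a local verification on the affine open cover $(Y_\sigma)_{\sigma\in\sig}$ of $Y$.

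For the first assertion I would proceed as follows. By Lemma \ref{3.800}, for every $B$-graded $S_B$-module $F$ both $\ke(\etaq_B(F))$ and $\cok(\etaq_B(F))$ are $I_B$-torsion $S_B$-modules. Since $\S_B$ is exact (\ref{3.210}) and vanishes on $I_B$-torsion modules by Proposition \ref{3.270}, applying $\S_B$ to the canonical four-term exact sequence
$$0\longrightarrow\ke(\etaq_B(F))\longrightarrow F\xrightarrow{\etaq_B(F)}\Gamma_{**,B,Y}(F)\longrightarrow\cok(\etaq_B(F))\longrightarrow 0$$
kills the two outer terms and yields that $\S_B(\etaq_B(F))$ is an isomorphism; this is natural in $F$, so $\S_B\circ\etaq_B$ is an isomorphism of functors.

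For the commutativity of the triangle I would argue locally, exploiting the commutative square from \ref{3.210} that identifies $\S_B(\bullet)\res_{Y_\sigma}$ with $\S_\sigma(\bullet_{(\sigma)})$. Fixing $F$ and $\sigma\in\sig$, the claim reduces to verifying the equality
$$\delta_{B,Y}(F)_{(\sigma)}=(\etaq_B(F))_{(\sigma)}\circ\beta_{B,\sigma}(\S_B(F))$$
of morphisms $\Gamma_{*,B,Y}(\S_B(F))_{(\sigma)}\to\Gamma_{**,B,Y}(F)_{(\sigma)}$. For a representative $z=x'/\zs^m$ with $x'\in(\S_B(S_B(m\alp_\sigma))\otimes_{\o_Y}\S_B(F))(Y)$, I would unfold the definitions of $\beta_{B,\sigma}$ (\ref{3.680}), of $(\etaq_B(F))_{(\sigma)}$ (\ref{3.710} together with \ref{3.80}), and of $\delta_{B,Y}$ (\ref{3.720} via \ref{3.280}) and check that both compositions produce the fraction $\delta(S_B(m\alp_\sigma),F)(Y)(x')/\zs^m$ in $\Gamma_{**,B,Y}(F)_{(\sigma)}$. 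The crucial input here is Proposition \ref{3.300}: it gives that $\o_Y(m\alp_\sigma)(Y_\sigma)=S_{\sigma,m\alp_\sigma}$ is free of rank one over $S_{(\sigma)}$ with basis $\zs^m$, so that the restriction $x'\res_{Y_\sigma}$ can be written uniquely as $\zs^m\otimes y$ with $y\in F_{(\sigma)}$ and the multiplication by $1/\zs^m$ in the definition of $\beta_{B,\sigma}$ recovers exactly the factor that $\etaq_B(F)_{(\sigma)}$ then shifts back via $\delta$.

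The main obstacle I anticipate is the bookkeeping in the second part: one must thread together the identification of sheafified tensor products on $Y_\sigma$ with module-theoretic tensor products, the $S_{B,\sigma}$-module structure on $\Gamma_{*,B,Y_\sigma}(\S_B(F))$ inherited from $\Gamma_{*,B}(\o_Y)$ via the isomorphism of \ref{3.640}--\ref{3.650}, and the various localizations at powers of $\zs$, making sure each identification respects both the $B$-graduation and the shift by $m\alp_\sigma$. By contrast, the first part is essentially formal once Lemma \ref{3.800} and Proposition \ref{3.270} are invoked.
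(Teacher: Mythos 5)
Your first part is exactly the paper's argument: apply the exact functor $\S_B$ to the exact sequence $0\to\ke(\etaq_B(F))\to F\to\Gamma_{**,B,Y}(F)\to\cok(\etaq_B(F))\to 0$ and use Lemma \ref{3.800} together with Proposition \ref{3.270}. For the second part you also make the right reduction, namely to the commutativity of the triangle of $S_{(\sigma)}$-modules
$$\delta_{B,Y}(F)_{(\sigma)}=\etaq_B(F)_{(\sigma)}\circ\beta_{B,\sigma}(\S_B(F))$$
for each $\sigma\in\sig$; but where you propose to verify this by an explicit element-chase with a representative $x'/\zs^m$, the paper proceeds more abstractly. It embeds the desired triangle into a larger diagram whose outer vertices are $\Gamma_{*,B,Y_\sigma}(\S_B(F))_{(\sigma)}$, $\Gamma_{**,B,Y_\sigma}(F)_{(\sigma)}$ at top and $\Gamma_{*,B,Y_\sigma}(\S_B(F))_0=F_{(\sigma)}=\Gamma_{**,B,Y_\sigma}(F)_0$ at bottom, observes that the vertical maps $\zeta,\zeta'$ (localization of an $S_{B,\sigma}$-module at $\zs$, degree $0$) are isomorphisms by \ref{3.640}/\ref{3.650}, that $\delta_{B,Y_\sigma}(F)_0=\Id_{F_{(\sigma)}}$ by construction (\ref{3.720}), and that the other three quadrangles commute by the naturality of $\delta$ under restriction (\ref{3.720}) and the very definitions of $\beta_B$ (\ref{3.680}) and $\etaq_B$ (\ref{3.710}); the triangle then follows because the restriction-to-$Y_\sigma$ map out of $\Gamma_{**,B,Y}(F)_{(\sigma)}$ is forced to be an isomorphism (it factors $\zeta'$ through the isomorphism $\etaq_B(F)_{(\sigma)}$). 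Your route is workable, and you correctly identify the freeness of $S_{\sigma,m\alp_\sigma}$ (\ref{3.300}/\ref{3.310}) and the sheaf-module tensor identification on $Y_\sigma$ as the ingredients you would need; but you should be aware that carrying the bookkeeping through would essentially reconstruct the same degree-$0$ identity $\delta_{B,Y_\sigma}(F)_0=\Id$ that the paper isolates up front, and that the paper's approach avoids any explicit choice of representative by exploiting the two localization isomorphisms $\zeta,\zeta'$ instead. Neither approach has a gap; the paper's is simply the cleaner packaging of the same computation.
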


\begin{proof}
Let $F$ be a $B$-graded $S_B$-module. Since $\ke(\etaq_B(F))$ and $\cok(\etaq_B(F))$ are $I_B$-torsion modules (\ref{3.800}), exactness of $\S_B$ (\ref{3.210}) and \ref{3.270} imply that $\S_B(\etaq_B(F))$ is an isomorphism and thus the first claim.

Let $\sigma\in\sig$. It remains to show that the diagram $$\xymatrix@R15pt@C60pt{\Gamma_{*,B,Y}(\S_B(F))_{(\sigma)}\ar[rd]_{\beta_{B,\sigma}(\S_B(F))\quad}\ar[rr]^{\delta_{B,Y}(F)_{(\sigma)}}&&\Gamma_{**,B,Y}(F)_{(\sigma)}\\&F_{(\sigma)}\ar[ru]_{\etaq_B(F)_{(\sigma)}}^(.4){\cong}&}$$ commutes. We know that $\Gamma_{*,B,Y_{\sigma}}(\S_B(F))$ and $\Gamma_{**,B,Y_{\sigma}}(F)$ are $S_{B,\sigma}$-modules (\ref{3.640}), so the canonical morphisms $\Gamma_{*,B,Y_{\sigma}}(\S_B(F))\rightarrow\Gamma_{*,B,Y_{\sigma}}(\S_B(F))_{\sigma}$ and $\Gamma_{**,B,Y_{\sigma}}(F)\rightarrow\Gamma_{**,B,Y_{\sigma}}(F)_{\sigma}$ are isomorphisms, and thus so are their components of degree $0$; we denote them by $\zeta$ and $\zeta'$, respectively. Furthermore, $\Gamma_{*,B,Y_{\sigma}}(\S_B(F))_0=F_{(\sigma)}=\Gamma_{**,B,Y_{\sigma}}(F)_0$ (\ref{3.660}, \ref{3.690}). So, we have a diagram of $S_{(\sigma)}$-modules $$\xymatrix@R15pt@C15pt{\Gamma_{*,B,Y_{\sigma}}(\S_B(F))_{(\sigma)}\ar[rrrr]^{\delta_{B,Y_{\sigma}}(F)_{(\sigma)}}&&&&\Gamma_{**,B,Y_{\sigma}}(F)_{(\sigma)}\\&\Gamma_{*,B,Y}(\S_B(F))_{(\sigma)}\ar[lu]^{(\cdot\,\res_{Y_{\sigma}})_{(\sigma)}}\ar[rr]^{\delta_{B,Y}(F)_{(\sigma)}}\ar[rd]_{\beta_{B,\sigma}(\S_B(F))\quad}&&\Gamma_{**,B,Y}(F)_{(\sigma)}\ar[ru]_{(\cdot\,\res_{Y_{\sigma}})_{(\sigma)}}&\\\Gamma_{*,B,Y_{\sigma}}(\S_B(F))_0\ar@{=}[rr]\ar[uu]^{\zeta}_{\cong}&&F_{(\sigma)}\ar@{=}[rr]\ar[ru]^(.4){\cong}_{\etaq_B(F)_{(\sigma)}}&&\Gamma_{**,B,Y_{\sigma}}(F)_0.\ar[uu]^{\cong}_{\zeta'}}$$ Its upper quadrangle commutes by \ref{3.720}. Its left and right quadrangles commute by the definitions of $\beta_B$ and $\etaq_B$, respectively (\ref{3.680}, \ref{3.710}). By the definition of $\delta_{B,Y_{\sigma}}$ (\ref{3.720}) and what was shown above, $\delta_{B,Y_{\sigma}}(F)_0\colon\Gamma_{*,B,Y_{\sigma}}(\S_B(F))_0\rightarrow\Gamma_{**,B,Y_{\sigma}}(F)_0$ equals $\Id_{F_{(\sigma)}}$, hence the outer quadrangle commutes, too. Therefore, the diagram commutes, and thus the claim is proven.
\end{proof}


\subsection{Correspondence between graded modules and quasicoherent sheaves}\label{sub4.4}

\begin{lemma}\label{3.910}
For $\sigma,\tau\in\sig$ there exist $p\in S_{(\tau)}$, $l\in\N$ and $q\in S_{\tau,l\alp_{\sigma}}$ with $Y_{\tau\cap\sigma}=(Y_{\tau})_p$ and $qp=\zs^l\in S_{\tau,l\alp_{\sigma}}$.
\end{lemma}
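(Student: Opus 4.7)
The plan is to locate a degree-zero Laurent monomial $p$ in the $Z_\rho$'s sitting in $S_{A,\tau}$ whose inversion there produces $S_{A,\tau\cap\sigma}$, and then define $q \dfgl \zs^l/p$ for a suitable $l$. The key ingredient is the face relation $\tau\cap\sigma \fleq \tau$ --- part of the fan axiom (\ref{1.40}) --- which by the definition of face (\ref{1.25}) provides some $u \in \tau^\vee_M$ with $\tau \cap u^\perp = \tau \cap \sigma$.

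Writing $n_\rho \dfgl \rho_N(u)$ for $\rho \in \sig_1$, one gets $n_\rho \geq 0$ for all $\rho \in \tau_1$ (since $u \in \tau^\vee$), with $n_\rho = 0$ iff $\rho \subseteq u^\perp \cap \tau = \tau \cap \sigma$ iff $\rho \in \tau_1 \cap \sigma_1 = (\tau\cap\sigma)_1$; hence $n_\rho > 0$ precisely for $\rho \in \tau_1 \setminus \sigma_1$, while the sign of $n_\rho$ is arbitrary for $\rho \notin \tau_1$. I then set
\[
p \dfgl \prod_{\rho \in \sig_1} Z_\rho^{n_\rho} \in S_{A,\tau};
\]
the negative powers involve only $\rho \notin \tau_1$, so this is well-defined in $S_{A,\tau}$, and $c(u) \in \ke(a)$ (exactness in \ref{1.140}) forces $\deg(p) = 0$, i.e.\ $p \in S_{(\tau)}$.

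To see $(Y_\tau)_p = Y_{\tau \cap \sigma}$ I pass to $S_{A,\tau}$: inverting $p$ amounts to inverting $\prod_{n_\rho > 0} Z_\rho$ (the complementary factor of $p$ being already a unit), and among these indices the ones lying in $\tau_1$ are exactly $\tau_1 \setminus \sigma_1$. Together with the invertibility of $\widehat Z_\tau = \prod_{\rho \notin \tau_1} Z_\rho$, this inverts precisely those $Z_\rho$ with $\rho \notin \tau_1 \cap \sigma_1 = (\tau \cap \sigma)_1$ --- i.e.\ the factors of $\widehat Z_{\tau \cap \sigma}$. Hence $(S_{A,\tau})[p^{-1}] = S_{A,\tau \cap \sigma}$; taking components of degree zero and using the identification \ref{3.120} gives $(S_{(\tau)})[p^{-1}] = S_{(\tau \cap \sigma)}$, whence the claim.

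For $q$ and $l$: since $B$ is big, $A/B$ is torsion and some $l_0 \in \N$ satisfies $l_0 \alp_\sigma \in B$; I will take $l$ to be a positive multiple of $l_0$ further satisfying $l \geq n_\rho$ for every $\rho \in \tau_1$, and set $q \dfgl \zs^l/p$. A check of $Z_\rho$-exponents in $q$ (the only nontrivial case being $\rho \in \tau_1 \setminus \sigma_1$, where the exponent is $l - n_\rho \geq 0$) shows $q \in S_{A,\tau}$; $\deg(p) = 0$ yields $\deg(q) = l\alp_\sigma$, so $q \in S_{\tau, l\alp_\sigma}$; and $qp = \zs^l$ holds by construction. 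The subtle point is recognizing that one does \emph{not} need a symmetric separation normal vanishing on both $\sigma$ and $\tau$: a single face-defining $u \in \tau^\vee$ suffices, the uncontrolled values of $n_\rho$ for $\rho \notin \tau_1$ being harmlessly absorbed by the invertibility of those $Z_\rho$ in $S_{A,\tau}$.
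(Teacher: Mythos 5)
Your proof is correct and takes essentially the same approach as the paper: pick $u\in\tau^\vee_M$ with $\tau\cap u^\perp=\tau\cap\sigma$, set $p=\prod_{\rho\in\sig_1}Z_\rho^{\rho_N(u)}$, and take $q=\zs^l/p$ for $l$ large enough. The only differences are cosmetic: you unwind the identification $(Y_\tau)_p=Y_{\tau\cap\sigma}$ by hand (passing through $S_{A,\tau}$ and restricting to degree $0$) where the paper simply cites \ref{3.110}/\ref{3.130}, and you additionally arrange $l\alp_\sigma\in B$, which is slightly stronger than the stated conclusion but harmless.
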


\begin{proof}
As $\sigma\cap\tau\fleq\tau$ there exists $u\in\tau^{\vee}_M$ with $\sigma\cap\tau=\tau\cap u^{\perp}$. We have $p\dfgl\prod_{\rho\in\sig_1}Z_{\rho}^{\rho_N(u)}\in S_{(\tau)}$. The canonical open immersion $Y_{\sigma\cap\tau}\hookrightarrow Y_{\tau}$ being the spectrum of $\eta_p\colon S_{(\tau)}\rightarrow(S_{(\tau)})_p$ (\ref{3.110}, \ref{3.130}), we have $Y_{\sigma\cap\tau}=(Y_{\tau})_p$. Setting $l\dfgl\max\{1,\sup\{\rho_N(u)\mid\rho\in\tau_1\setminus\sigma_1\}\}\in\N$ and $$q\dfgl(\prod_{\rho\in\sig_1\setminus(\tau_1\cup\sigma_1)}Z_{\rho}^l)(\prod_{\rho\in\tau_1\setminus\sigma_1}Z_{\rho}^{l-\rho_N(u)})(\prod_{\rho\in\sig_1\setminus\tau_1}Z_{\rho}^{-\rho_N(u)})\in S_{A,\tau}$$ it follows $qp=\zs^l\in S_{\tau,l\alp_{\sigma}}$ and thus $q\in S_{\tau,l\alp_{\sigma}}$.
\end{proof}

\begin{prop}\label{3.920}
The morphism of functors $\beta_B\colon\S_B\circ\Gamma_{*,B,Y}\rightarrow\Id_{\qcmod(\o_Y)}$ is an isomorphism.
\end{prop}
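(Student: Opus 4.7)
The plan is to verify that $\beta_B(\F)$ is an isomorphism locally on the affine open cover $(Y_\sigma)_{\sigma\in\sig}$ of $Y$. Since both $\S_B(\Gamma_{*,B,Y}(\F))$ and $\F$ are quasicoherent $\o_Y$-modules (\ref{3.210}), it suffices to show that for each $\sigma\in\sig$ the $S_{(\sigma)}$-linear map $\beta_{B,\sigma}(\F)\colon\Gamma_{*,B,Y}(\F)_{(\sigma)}\rightarrow\F(Y_\sigma)$ is bijective.

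My strategy is to express both sides as equalizers and compare them. The sheaf axiom for $\F\res_{Y_\sigma}$ on the finite affine cover $(Y_{\sigma\cap\tau})_{\tau\in\sig}$ gives
\[
\F(Y_\sigma)=\ke\Bigl(\prod_{\tau\in\sig}\F(Y_{\sigma\cap\tau})\rightrightarrows\prod_{(\tau,\tau')\in\sig^2}\F(Y_{\sigma\cap\tau\cap\tau'})\Bigr),
\]
and for every $\alpha\in B$ the sheaf axiom for $\o_Y(\alpha)\otimes_{\o_Y}\F$ on $(Y_\tau)_{\tau\in\sig}$ expresses $\Gamma(Y,\o_Y(\alpha)\otimes_{\o_Y}\F)$ as an analogous equalizer. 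Taking the filtered colimit along $m\in\N$ with $m\alp_\sigma\in B$ (transitions given by multiplication by $\widehat{Z}_\sigma$), which implements the functor $(\cdot)_{(\sigma)}$, commutes with these finite limits. Consequently the problem reduces to proving, for each $\tau\in\sig$ (and similarly for $\tau\cap\tau'$), a natural isomorphism
\[
\ilim_m\Gamma\bigl(Y_\tau,\o_Y(m\alp_\sigma)\otimes_{\o_Y}\F\bigr)\cong\F(Y_{\sigma\cap\tau})
\]
compatibly with the $S_{(\sigma)}$-actions.

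To prove this local claim I would invoke \ref{3.910} to produce $p\in S_{(\tau)}$, $l\in\N$ and $q\in S_{\tau,l\alp_\sigma}$ with $Y_{\sigma\cap\tau}=(Y_\tau)_p$ and $qp=\widehat{Z}_\sigma^l$. Quasicoherence of $\F$ on the affine scheme $Y_\tau$ identifies $\Gamma(Y_\tau,\o_Y(m\alp_\sigma)\otimes_{\o_Y}\F)$ with $S_{\tau,m\alp_\sigma}\otimes_{S_{(\tau)}}\F(Y_\tau)$, so the colimit becomes $\bigl((S_{B,\tau})_{\widehat{Z}_\sigma^m}\bigr)_0\otimes_{S_{(\tau)}}\F(Y_\tau)$. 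The crucial algebraic step is to show that the canonical $S_{(\tau)}$-algebra homomorphism $(S_{(\tau)})_p\rightarrow\bigl((S_{B,\tau})_{\widehat{Z}_\sigma^m}\bigr)_0$ is an isomorphism: it is well defined because $q/\widehat{Z}_\sigma^l$ is a degree-$0$ inverse of $p$ in the localised ring, and its inverse is obtained by rewriting any degree-$0$ element $x/\widehat{Z}_\sigma^{mk}$ as a quotient of an element of $S_{(\tau)}$ by a power of $p$, using the relation $qp=\widehat{Z}_\sigma^l$ to eliminate $q$. Tensoring with $\F(Y_\tau)$ then yields $\F(Y_\tau)_p=\F(Y_{\sigma\cap\tau})$, and a direct inspection confirms that the resulting isomorphism of equalizers coincides with $\beta_{B,\sigma}(\F)$.

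The main technical obstacle is precisely this algebraic identification; its geometric content is that inverting the homogeneous section $\widehat{Z}_\sigma$ on $Y_\tau$ is the same as restricting to the open subscheme $(Y_\tau)_p=Y_{\sigma\cap\tau}$. Once it is settled, the rest of the argument is a routine exchange of a filtered colimit with a finite limit.
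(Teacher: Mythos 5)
Your proposal is correct but organizes the argument quite differently from the paper. The paper proves bijectivity of $\beta_{B,\sigma}(\F)$ by explicit element-chasing: surjectivity is obtained by using \ref{3.910} together with the elementary extension lemma \cite[I.1.4.1]{ega} on each $Y_\tau$ to build compatible local sections $g_\tau$ (uniformizing degrees and killing gluing discrepancies by taking maxima of the auxiliary exponents $k_\tau,j_\tau,l_{\tau\tau'}m_{\tau\tau'}$), then gluing; injectivity is a parallel argument showing $\zs^l g=0$ for a suitable $l$. Your route instead packages the same inputs structurally: write $\F(Y_\sigma)$ and each $\Gamma(Y,\o_Y(\alpha)\otimes\F)$ as equalizers over the finite cover $(Y_\tau)_{\tau\in\sig}$, observe that passing to $\bullet_{(\sigma)}$ is a filtered colimit that commutes with the finite limit because $\sig$ is finite, and reduce to the single-$\tau$ identification $\ilim_m\Gamma(Y_\tau,\o_Y(m\alp_\sigma)\otimes\F)\cong\F(Y_{\sigma\cap\tau})$. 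Both arguments hinge on the same key fact (\ref{3.910}) that inverting $\zs$ on $Y_\tau$ is the same as inverting $p$; yours absorbs the paper's ``take the maximum of the exponents'' bookkeeping into the abstract commutation of colimits with finite limits, which is cleaner, while the paper's version is deliberately more elementary. One small imprecision in your sketch: in proving surjectivity of $(S_{(\tau)})_p\to\bigl((S_{B,\tau})_{\zs}\bigr)_0$ you say you ``eliminate $q$'' via $qp=\zs^l$, but that substitution only produces fractions with $q$-powers in the denominator, which need not lie in $S_{(\tau)}$; the clean statement is that for $j$ large enough, $y\dfgl xp^j/\zs^{mk}$ already lies in $S_{(\tau)}$ (one checks nonnegativity of the exponents of $Z_\rho$ for $\rho\in\tau_1$ using $\rho_N(u)\geq 1$ on $\tau_1\setminus\sigma_1$), whence $x/\zs^{mk}=y/p^j$; equivalently, both rings are canonically $S_{(\sigma\cap\tau)}$. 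With that correction the proof goes through.
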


\begin{proof}
Let $\F$ be a quasicoherent $\o_Y$-module and let $\sigma\in\sig$. It suffices to show that the map $$\beta_{B,\sigma}(\F)\colon\Gamma_{*,B,Y}(\F)_{(\sigma)}\rightarrow\F(Y_{\sigma})$$ is bijective.

\textit{Surjectivity:} Let $f\in\F(Y_{\sigma})$. We construct a preimage of $f$ under $\beta_{B,\sigma}(\F)$. For $\tau\in\sig$ there are $p_{\tau}\in S_{(\tau)}$, $j_{\tau}\in\N$ with $j_{\tau}\alp_{\sigma}\in B$ and $q_{\tau}\in S_{\tau,j_{\tau}\alp_{\sigma}}$ with $Y_{\tau\cap\sigma}=(Y_{\tau})_{p_{\tau}}$ and $q_{\tau}p_{\tau}=\zs^{j_{\tau}}\in S_{\tau,j_{\tau}\alp_{\sigma}}$ (\ref{3.910}). So, we have an affine scheme $Y_{\tau}=\spec(S_{(\tau)})$, a quasicoherent $\o_{Y_{\tau}}$-module $\F\res_{Y_{\tau}}$, and sections $p_{\tau}\in S_{(\tau)}=\o_{Y_{\tau}}(Y_{\tau})$ and $f\res_{Y_{\tau\cap\sigma}}\in\F(Y_{\tau\cap\sigma})=\F((Y_{\tau})_{p_{\tau}})$. Thus, by \cite[I.1.4.1]{ega} there exist $k_{\tau}\in\N$ with $k_{\tau}\alp_{\sigma}\in B$ and $g''_{\tau}\in\F(Y_{\tau})$ with $g''_{\tau}\res_{Y_{\tau\cap\sigma}}=p_{\tau}^{k_{\tau}}\res_{Y_{\tau\cap\sigma}}f\res_{Y_{\tau\cap\sigma}}\in\F(Y_{\tau\cap\sigma})$. Let $k\dfgl\max\{k_{\tau}\mid\tau\in\sig\}\in\N$ and $j\dfgl\max\{j_{\tau}\mid\tau\in\sig\}\in\N$. If $\tau\in\sig$ then $$g'_{\tau}\dfgl\zs^{k(j-j_{\tau})}q_{\tau}^kp_{\tau}^{k-k_{\tau}}g''_{\tau}\in\F(kj\alp_{\sigma})(Y_{\tau})$$ and $$g'_{\tau}\res_{Y_{\tau\cap\sigma}}=(\zs^{k(j-j_{\tau})}q_{\tau}^kp_{\tau}^{k-k_{\tau}})\res_{Y_{\tau\cap\sigma}}p_{\tau}^{k_{\tau}}\res_{Y_{\tau\cap\sigma}}f\res_{Y_{\tau\cap\sigma}}=\zs^{kj}f\res_{Y_{\tau\cap\sigma}}\in\F(kj\alp_{\sigma})(Y_{\tau\cap\sigma}).$$ If $\tau,\tau'\in\sig$ then setting $\omega\dfgl\tau\cap\tau'$ and $h_{\tau\tau'}\dfgl g_{\tau}'\res_{Y_{\omega}}-g'_{\tau'}\res_{Y_{\omega}}\in\F(kj\alp_{\sigma})(Y_{\omega})$ there are $r_{\tau\tau'}\in S_{(\omega)}$, $l_{\tau\tau'}\in\N$ with $l_{\tau\tau'}\alp_{\sigma}\in B$ and $s_{\tau\tau'}\in S_{\omega,l_{\tau\tau'}\alp_{\sigma}}$ with $Y_{\omega\cap\sigma}=(Y_{\omega})_{r_{\tau\tau'}}$ and $s_{\tau\tau'}r_{\tau\tau'}=\zs^{l_{\tau\tau'}}$ (\ref{3.910}). So, we have an affine scheme $Y_{\omega}=\spec(S_{(\omega)})$, a quasicoherent $\o_{Y_{\omega}}$-module $\F(kj\alp_{\sigma})\res_{Y_{\omega}}$, and sections $r_{\tau\tau'}\in S_{(\omega)}=\o_{Y_{\omega}}(Y_{\omega})$ and $h_{\tau\tau'}\in\F(kj\alp_{\sigma})(Y_{\omega})$ with $$h_{\tau\tau'}\res_{(Y_{\omega})_{r_{\tau\tau'}}}=g'_{\tau}\res_{Y_{\tau\cap\sigma}}\res_{Y_{\omega\cap\sigma}}-g'_{\tau'}\res_{Y_{\tau'\cap\sigma}}\res_{Y_{\omega\cap\sigma}}=\zs^{kj}f\res_{Y_{\tau\cap\sigma\cap\tau'}}-\zs^{kj}f\res_{Y_{\tau'\cap\sigma\cap\tau}}=0.$$ Thus, by \cite[I.1.4.1]{ega} there exists $m_{\tau\tau'}\in\N$ with $m_{\tau\tau'}\alp_{\sigma}\in B$ and  $r_{\tau\tau'}^{m_{\tau\tau'}}h_{\tau\tau'}=0\in\F(kj\alp_{\sigma})(Y_{\omega})$. Let $l\dfgl\max\{l_{\tau\tau'}m_{\tau\tau'}\mid(\tau,\tau')\in\sig^2\}\in\N$. If $\tau\in\sig$ then $g_{\tau}\dfgl\zs^lg'_{\tau}\in\F((kj+l)\alp_{\sigma})(Y_{\tau})$. If $\tau,\tau'\in\sig$ then setting $\omega\dfgl\tau\cap\tau'$ it follows $$g_{\tau}\res_{Y_{\omega}}-g_{\tau'}\res_{Y_{\omega}}=\zs^{l-l_{\tau\tau'}m_{\tau\tau'}}\zs^{l_{\tau\tau'}m_{\tau\tau'}}h_{\tau\tau'}=$$$$\zs^{l-l_{\tau\tau'}m_{\tau\tau'}}s_{\tau\tau'}^{m_{\tau\tau'}}r_{\tau\tau'}^{m_{\tau\tau'}}h_{\tau\tau'}=0\in\F((kj+l)\alp_{\sigma})(Y_{\omega}).$$ Thus, there exists $g\in\F((kj+l)\alp_{\sigma})(Y)$ with $g\res_{Y_{\tau}}=g_{\tau}$ for every $\tau\in\sig$, and $\frac{g}{\zs^{kj+l}}\in\Gamma_{*,B,Y}(\F)_{(\sigma)}$. Finally, for $\tau\in\sig$ we have $$\textstyle\frac{g\res_{Y_{\sigma}}}{\zs^{kj+l}}\res_{Y_{\tau\cap\sigma}}=\frac{g\res_{Y_{\tau\cap\sigma}}}{\zs^{kj+l}}=\frac{g_{\tau}\res_{Y_{\tau\cap\sigma}}}{\zs^{kj+l}}=\frac{\zs^lg'_{\tau}\res_{Y_{\tau\cap\sigma}}}{\zs^{kj+l}}=\frac{\zs^{kj+l}f\res_{Y_{\tau\cap\sigma}}}{\zs^{kj+l}}=f\res_{Y_{\tau\cap\sigma}},$$ hence $\beta_{B,\sigma}(\F)(\frac{g}{\zs^{kj+l}})=\frac{g\res_{Y_{\sigma}}}{\zs^{kj+l}}=f$ as desired.

\textit{Injectivity:} Let $f\in\Gamma_{*,B,Y}(\F)_{(\sigma)}$ with $\beta_{B,\sigma}(\F)(f)=0$. There are $k\in\N$ and $g\in\F(k\alp_{\sigma})(Y)$ with $f=\frac{g}{\zs^k}$, and $\frac{g\res_{Y_{\sigma}}}{\zs^k}=0\in\F(Y_{\sigma})$, hence $g\res_{Y_{\sigma}}=\zs^k\frac{g\res_{Y_{\sigma}}}{\zs^k}=0\in\F(k\alp_{\sigma})(Y_{\sigma})$. To show that $f=0$ we have to show the existence of an $l\in\N$ with $\zs^lg=0\in\F((l+k)\alp_{\sigma})(Y)$. For $\tau\in\sig$ there are $p_{\tau}\in S_{(\tau)}$, $m_{\tau}\in\N$ and $q_{\tau}\in(S_{\tau})_{m_{\tau}\alp_{\sigma}}$ with $Y_{\tau\cap\sigma}=(Y_{\tau})_{p_{\tau}}$ and $q_{\tau}p_{\tau}=\zs^{m_{\tau}}\in S_{\tau,m_{\tau}\alp_{\sigma}}$ (\ref{3.910}). So, we have an affine scheme $Y_{\tau}=\spec(S_{(\tau)})$, a quasicoherent $\o_{Y_{\tau}}$-module $\F(k\alp_{\sigma})\res_{Y_{\tau}}$, and sections $p_{\tau}\in S_{(\tau)}=\o_{Y_{\tau}}(Y_{\tau})$ and $g\res_{Y_{\tau}}\in\F(k\alp_{\sigma})(Y_{\tau})$ with $$g\res_{Y_{\tau}}\res_{(Y_{\tau})_{p_{\tau}}}=g\res_{Y_{\sigma}}\res_{Y_{\tau\cap\sigma}}=0\in\F(k\alp_{\sigma})((Y_{\tau})_{p_{\tau}}).$$  Thus, by \cite[I.1.4.1]{ega} there exists $l_{\tau}\in\N$ with $p_{\tau}^{l_{\tau}}(g\res_{Y_{\tau}})=0\in\F(k\alp_{\sigma})(Y_{\tau})$, hence $$(\zs^{l_{\tau}m_{\tau}}g)\res_{Y_{\tau}}=q_{\tau}^{l_{\tau}}p_{\tau}^{l_{\tau}}(g\res_{Y_{\tau}})=0\in\F((l_{\tau}m_{\tau}+k)\alp_{\sigma})(Y_{\tau}).$$ Setting $l\dfgl\max\{l_{\tau}m_{\tau}\mid\tau\in\sig\}\in\N$ it follows $(\zs^lg)\res_{Y_{\tau}}=0\in\F((l+k)\alp_{\sigma})(Y_{\tau})$ for every $\tau\in\sig$ and thus $\zs^lg=0\in\F((l+k)\alp_{\sigma})(Y)$ as desired.
\end{proof}

\begin{thm}\label{3.930}
The functor $\S_B\colon\grmod^B(S_B)\rightarrow\qcmod(\o_Y)$ is essentially surjective.
\end{thm}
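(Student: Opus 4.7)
The plan is to appeal directly to Proposition~\ref{3.920}, which already establishes that the counit-like morphism $\beta_B\colon\S_B\circ\Gamma_{*,B,Y}\to\Id_{\qcmod(\o_Y)}$ is an isomorphism of functors. Essential surjectivity of $\S_B$ is a formal consequence: given any quasicoherent $\o_Y$-module $\F$, set $F\dfgl\Gamma_{*,B,Y}(\F)$, which is an object of $\grmod^B(S_B)$ by construction (\ref{3.670}). Then $\beta_B(\F)\colon\S_B(F)\to\F$ is an isomorphism, exhibiting $\F$ as isomorphic to the image of $F$ under $\S_B$.

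There is essentially no obstacle left, since the hard work has already been carried out in Proposition~\ref{3.920}: the local surjectivity argument (lifting sections $f\in\F(Y_\sigma)$ to fractions $\frac{g}{\zs^{kj+l}}$ by gluing preimages on the affine cover $(Y_\tau)_{\tau\in\sig}$ using \ref{3.910} and \cite[I.1.4.1]{ega}) and the local injectivity argument (killing sections by a sufficiently high power of $\zs$) together establish that $\beta_{B,\sigma}(\F)$ is bijective for every $\sigma\in\sig$, hence $\beta_B(\F)$ is an isomorphism. Thus the proof of Theorem~\ref{3.930} reduces to invoking this fact.

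In short, the proof consists of the single observation: for $\F\in\qcmod(\o_Y)$, one has $\F\cong\S_B(\Gamma_{*,B,Y}(\F))$ by \ref{3.920}.
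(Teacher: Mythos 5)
Your proof is correct and is precisely the paper's own argument: the paper's proof of Theorem~\ref{3.930} reads simply ``Immediately from \ref{3.920},'' i.e., it invokes the isomorphism $\beta_B\colon\S_B\circ\Gamma_{*,B,Y}\to\Id_{\qcmod(\o_Y)}$ to produce, for each quasicoherent $\F$, the preimage $\Gamma_{*,B,Y}(\F)$ with $\S_B(\Gamma_{*,B,Y}(\F))\cong\F$.
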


\begin{proof}
Immediately from \ref{3.920}.
\end{proof}

\begin{cor}\label{3.940}
If $F$ is a $B$-graded $S_B$-module then the map $\Xi_F\colon\J_F\rightarrow\widetilde{\J}_F$ is surjective.
\end{cor}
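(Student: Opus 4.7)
The plan is to use the two total functors of sections $\Gamma_{*,B,Y}$ and $\Gamma_{**,B,Y}$ constructed in \ref{sub4.3}, together with the fact that $\beta_B$ is an isomorphism (\ref{3.920}) and the identity $\S_B\circ\delta_{B,Y}=(\S_B\circ\etaq_B)\circ(\beta_B\circ\S_B)$ from \ref{3.820}, to pull $\G$ back to a submodule of $F$ whose image is $\G$.

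Let $\iota\colon\G\hookrightarrow\S_B(F)$ be the given inclusion. Consider the composite morphism of $B$-graded $S_B$-modules
$$\varphi\dfgl\delta_{B,Y}(F)\circ\Gamma_{*,B,Y}(\iota)\colon\Gamma_{*,B,Y}(\G)\longrightarrow\Gamma_{**,B,Y}(F),$$
let $K\dfgl\im(\varphi)\subseteq\Gamma_{**,B,Y}(F)$, and define $G\dfgl\etaq_B(F)^{-1}(K)\subseteq F$. This is a graded sub-$S_B$-module, so $G\in\J_F$; I claim $\S_B(G)=\G$.

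Since $\S_B$ is exact (\ref{3.210}), it preserves the pullback defining $G$, and since $\S_B(\etaq_B(F))$ is an isomorphism by \ref{3.820}, one obtains $\S_B(G)=\S_B(\etaq_B(F))^{-1}(\S_B(K))$. Exactness of $\S_B$ also identifies $\S_B(K)$ with the image of $\S_B(\varphi)$. Using \ref{3.820} and the naturality of $\beta_B$, this image is computed as
$$\im\bigl(\S_B(\etaq_B(F))\circ\beta_B(\S_B(F))\circ\S_B(\Gamma_{*,B,Y}(\iota))\bigr)=\im\bigl(\S_B(\etaq_B(F))\circ\iota\circ\beta_B(\G)\bigr),$$
which equals $\S_B(\etaq_B(F))(\G)$, because $\beta_B(\G)$ is an isomorphism (\ref{3.920}). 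Pulling back through the isomorphism $\S_B(\etaq_B(F))$ then yields $\S_B(G)=\G$, as required.

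The main conceptual point is that $\delta_{B,Y}$ is generally neither injective nor surjective (\ref{3.730}), so $\Gamma_{*,B,Y}(\G)$ cannot be viewed directly as a submodule of $F$; the detour via $\Gamma_{**,B,Y}$ and pullback along $\etaq_B(F)$ is precisely what bypasses this obstruction, exploiting that it disappears after applying $\S_B$. Once this setup is in place, the verification reduces to a routine chase through the natural transformations assembled in \ref{sub4.3}.
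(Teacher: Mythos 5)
Your proof is correct and takes essentially the same route as the paper: both define $G\dfgl\etaq_B(F)^{-1}\bigl(\im(\delta_{B,Y}(F)\circ\Gamma_{*,B,Y}(\iota))\bigr)$, apply the exact functor $\S_B$, and use the identity from \ref{3.820} together with the fact that $\beta_B$ is a natural isomorphism (\ref{3.920}) to conclude $\S_B(G)=\G$. Your write-up is a bit more explicit than the paper's about invoking naturality of $\beta_B$ at $\iota$, but the underlying diagram chase is the same.
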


\begin{proof}
Let $\G\in\widetilde{\J}_F$ and let $j\colon\G\hookrightarrow\S_B(F)$ denote its canonical injection. We define a graded sub-$S_B$-module $G\dfgl\etaq_B(F)^{-1}(\im(\delta_{B,Y}(F)\circ\Gamma_{*,B,Y}(j)))\subseteq F$ with canonical injection $i\colon G\hookrightarrow F$, getting a commutative diagram of $B$-graded $S_B$-modules $$\xymatrix@R15pt@C40pt{F\ar[r]^(.4){\etaq_B(F)}&\Gamma_{**,B,Y}(F)&\Gamma_{*,B,Y}(\S_B(F))\ar[l]_(.52){\delta_{B,Y}(F)}\\G\ar@{ (->}[u]^i\ar[r]^(.27){\xi}&\im\bigl(\delta_{B,Y}(F)\circ\Gamma_{*,B,Y}(j)\bigr)\ar@{ (->}[u]&\Gamma_{*,B,Y}(\G)\ar[l]\ar[u]_{\Gamma_{*,B,Y}(j)}}$$ whose left quadrangle is cartesian. Applying the exact functor $\S_B$ (\ref{3.210}) yields by \ref{3.820} a commutative diagram of $\o_Y$-modules $$\xymatrix@R15pt@C40pt{\S_B(F)\ar[r]_(.43){\S_B(\etaq_B(F))}&\S_B(\Gamma_{**,B,Y}(F))&\S_B(\Gamma_{*,B,Y}(\S_B(F)))\ar[l]^{\S_B(\delta_{B,Y}(F))}\ar@/_5ex/[ll]^{\beta_B(\S_B(F))}\\\Xi_F(G)\ar@{ (->}[u]\ar[r]^(.27){\S_B(\xi)}&\im\bigl(\S_B(\delta_{B,Y}(F))\circ\S_B(\Gamma_{*,B,Y}(j))\bigr)\ar@{ (->}[u]&\S_B(\Gamma_{*,B,Y}(\G))\ar[l]\ar[u]_{\S_B(\Gamma_{*,B,Y}(j))}}$$ whose left quadrangle is cartesian. As $\beta_B(\S_B(F))$ and $\S_B(\etaq_B(F))$ are isomorphisms (\ref{3.920}, \ref{3.820}), $\S_B(\delta_{B,Y}(F))$ is an isomorphism, too. Therefore, the above diagram shows that $\Xi_F(G)\hookrightarrow\S_B(F)$ is the image under $\beta_B(\S_B(F))$ of the image of $$\S_B(\Gamma_{*,B,Y}(j))\colon\S_B(\Gamma_{*,B,Y}(\G))\rightarrow\S_B(\Gamma_{*,B,Y}(\S_B(F))).$$ As $\beta_B$ is an isomorphism (\ref{3.920}) this equals $i\colon\G\hookrightarrow\S_B(F)$, and the claim is proven.
\end{proof}

\begin{cor}\label{3.950}
If $F$ is a $B$-graded $S_B$-module and $\G\in\widetilde{\J}_F$ is of finite type then there exists $G\in\J_F$ of finite type with $\Xi_F(G)=\G$.
\end{cor}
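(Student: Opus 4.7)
The plan is to start from the surjectivity result in \ref{3.940}: there exists some $G'\in\J_F$ with $\Xi_F(G')=\G$. The obstacle is that the $G'$ produced by the construction in \ref{3.940} need not be of finite type. I will shrink $G'$ to a suitable graded sub-$S_B$-module $G$ of finite type whose associated sheaf still equals $\G$. The key idea is to lift finitely many local generators of $\G$ from the affine pieces $Y_\sigma$ back to numerators in $G'$.

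Concretely, fix $\sigma\in\sig$. By the commutative square in \ref{3.210}, $\S_B(G')\res_{Y_\sigma}\cong\S_\sigma(G'_{(\sigma)})$, and since $\S_\sigma$ is a quasi-inverse of $\Gamma(Y_\sigma,\bullet)$ we have $\G(Y_\sigma)=G'_{(\sigma)}$ as sub-$S_{(\sigma)}$-modules of $F_{(\sigma)}$. Since $\G$ is of finite type and $Y_\sigma$ is affine (hence quasicompact), $G'_{(\sigma)}$ is of finite type over $S_{(\sigma)}$. Choose finitely many generators; each of them has the form $\frac{x^{(\sigma)}_j}{\zs^{m^{(\sigma)}_j}}$ with $m^{(\sigma)}_j\in\N$ such that $m^{(\sigma)}_j\alp_\sigma\in B$ and $x^{(\sigma)}_j\in G'_{m^{(\sigma)}_j\alp_\sigma}$, by the very description of elements of $S_{B,\sigma}$ as fractions with denominators powers of $\zs$ (\ref{3.80}). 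Because $\sig$ is finite, doing this for every $\sigma$ yields a finite family $E\dfgl\{x^{(\sigma)}_j\mid\sigma\in\sig,\;1\leq j\leq k_\sigma\}\subseteq(G')^{\hom}$.

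Let $G\dfgl\langle E\rangle_{S_B}\subseteq G'\subseteq F$; this is a graded sub-$S_B$-module of finite type. I now claim $\S_B(G)=\S_B(G')=\G$ as sub-$\o_Y$-modules of $\S_B(F)$. Indeed, exactness of the localization functor $\bullet_\sigma$ (\ref{2.60}) combined with the degree-$0$ functor yields an inclusion $G_{(\sigma)}\hookrightarrow G'_{(\sigma)}$; on the other hand, by construction $G_{(\sigma)}$ contains all the chosen generators $\frac{x^{(\sigma)}_j}{\zs^{m^{(\sigma)}_j}}$ of $G'_{(\sigma)}$, so $G_{(\sigma)}=G'_{(\sigma)}$. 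Applying $\S_\sigma$ and using the commutative square of \ref{3.210} again gives $\S_B(G)\res_{Y_\sigma}=\S_B(G')\res_{Y_\sigma}$ for every $\sigma\in\sig$. Since $(Y_\sigma)_{\sigma\in\sig}$ is an open covering of $Y$, the equality $\S_B(G)=\S_B(G')$ follows, and hence $\Xi_F(G)=\Xi_F(G')=\G$.

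No real obstacle arises; the only small point to keep track of is the passage from a local generator of $\G(Y_\sigma)$ (a fraction) to a numerator in $G'$ itself, which is handled cleanly once $B$-bigness lets us write $S_{B,\sigma}$ as a localisation at $\zs$. Hence the produced $G$ is the desired finite-type preimage.
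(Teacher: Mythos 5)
Your proof is correct, but it takes a genuinely different route from the paper's. The paper first uses \ref{3.940} to reduce to the special case $\G=\S_B(F)$, then writes $F$ as the filtered colimit of its finite-type graded submodules $F_H$ over finite subsets $H$ of a homogeneous generating set, applies the colimit-preserving functor $\S_B$, and invokes the general fact (\cite[0.5.2.3]{ega}) that on a quasicompact scheme a finite-type quasicoherent module which is a filtered colimit of quasicoherent submodules must already equal one of them. Your argument instead works directly with a preimage $G'$ of $\G$, lifts, for each $\sigma$, a finite set of $S_{(\sigma)}$-generators of $\G(Y_\sigma)=G'_{(\sigma)}$ to homogeneous numerators in $G'$, and takes $G$ to be the (finite-type) graded submodule they generate; the verification $G_{(\sigma)}=G'_{(\sigma)}$ for all $\sigma$ then identifies the two sheaves. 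Both proofs are sound. Yours is more elementary and self-contained (it does not need the colimit machinery or \cite[0.5.2.3]{ega}, only the exactness of $\bullet_{(\sigma)}$ and the affine correspondence between finite-type quasicoherent sheaves and finite-type modules); the paper's is shorter once the EGA lemma is available and parallels the standard projective argument. One small presentational point: you could have stated the reduction to $\G=\S_B(F)$ as the paper does, which makes the notation slightly lighter, but working with a general $G'\subseteq F$ is equally fine.
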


\begin{proof}
By \ref{3.940} we can without loss of generality suppose $\G=\S_B(F)$. Let $L\subseteq F^{\hom}$ be a generating set of $F$ and let $\mathbbm{L}$ denote the right filtering ordered set of finite subsets of $L$. For $H\in\mathbbm{L}$ we denote by $F_H$ the graded sub-$S_B$-module of $F$ generated by $H$ and by $i_H\colon\S_B(F_H)\rightarrow\S_B(F)$ the induced monomorphism in $\qcmod(\o_Y)$. We have $F=\ilim_{H\in\mathbbm{L}}F_H$ (\ref{2.67}), and thus $\S_B(F)=\ilim_{H\in\mathbbm{L}}\S_B(F_H)$ (\ref{3.210}). Now, $\mathbbm{L}$ is right filtering, $Y$ is quasicompact, and $\S_B(F)$ is of finite type, so that by \cite[0.5.2.3]{ega} there exists $H\in\mathbbm{L}$ such that $i_H$ is an epimorphism and hence an isomorphism. It follows $\S_B(F)=\S_B(F_H)$ and thus the claim.
\end{proof}

\begin{cor}\label{3.990}
Suppose that $B$ is small and let $F$ be a $B$-graded $S_B$-module. If $F$ is (pseudo-)\linebreak coherent then so is $\S_B(F)$.\footnote{If $X$ is a scheme then an $\o_X$-module $\F$ is called \textit{pseudocoherent} if every sub-$\o_X$-module of $\F$ of finite type is of finite presentation; hence, an $\o_X$-module is coherent (in the sense of \cite[0.5.3.1]{ega}) if and only if it is pseudocoherent and of finite type.}
\end{cor}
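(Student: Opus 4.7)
The plan is to combine three ingredients already established: the sub-$S_B$-module lifting result \ref{3.950}, the finiteness-preservation of $\S_B$ under the small-$B$ hypothesis \ref{3.340}, and the definitions of (pseudo-)coherence (both in the graded and in the sheaf-theoretic sense).

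First I would handle the pseudocoherent case. Let $F$ be a pseudocoherent $B$-graded $S_B$-module and let $\G\subseteq\S_B(F)$ be a sub-$\o_Y$-module of finite type, so $\G\in\widetilde{\J}_F$. By \ref{3.950} there exists $G\in\J_F$ of finite type with $\Xi_F(G)=\G$, i.e.\ $\S_B(G)=\G$ (under the identification of \ref{3.250}). Since $G$ is a graded sub-$S_B$-module of $F$ of finite type and $F$ is pseudocoherent, $G$ is of finite presentation. Now applying \ref{3.340} (which requires $B$ small) we get that $\S_B(G)=\G$ is of finite presentation. Thus every sub-$\o_Y$-module of $\S_B(F)$ of finite type is of finite presentation, so $\S_B(F)$ is pseudocoherent.

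For the coherent case, recall that coherent means pseudocoherent and of finite type (as recalled in the footnote). If $F$ is coherent then $F$ is pseudocoherent, hence $\S_B(F)$ is pseudocoherent by what was just shown; moreover $F$ is of finite type, so by \ref{3.340} the $\o_Y$-module $\S_B(F)$ is of finite type. Hence $\S_B(F)$ is coherent.

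I do not foresee any real obstacle here: the substantive work has already been done in \ref{3.340} (which uses the strongly graded property \ref{3.330} coming from smallness of $B$) and in \ref{3.950} (which uses the essential surjectivity package from \ref{3.920}--\ref{3.940}). The present statement is just the assembly of these ingredients via the definition of (pseudo-)coherence. The only small point to keep track of is to verify that the $G\in\J_F$ produced by \ref{3.950} is indeed a graded sub-$S_B$-module of $F$ (this is guaranteed, since $\J_F$ was defined in \ref{3.250} as the set of such), so that the pseudocoherence hypothesis on $F$ can legitimately be applied to it.
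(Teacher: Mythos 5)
Your proof is correct and matches the paper's own argument step by step: deduce pseudocoherence via \ref{3.950} to lift the sub-$\o_Y$-module of finite type to a graded submodule of finite type, use pseudocoherence of $F$ and then \ref{3.340} to get finite presentation, and finally assemble coherence from pseudocoherence plus finite type via \ref{3.340}. There is nothing to add.
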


\begin{proof}
Let $F$ be pseudocoherent and let $\G\in\widetilde{\J}_F$ be of finite type. There exists $G\in\J_F$ of finite type with $\G=\S_B(G)$ (\ref{3.950}). Then, $G$ is of finite presentation by hypothesis, hence $\G$ is of finite presentation (\ref{3.340}), and thus $\S_B(F)$ is pseudocoherent. The claim about coherence follows now from \ref{3.340}.
\end{proof}

\begin{cor}\label{3.1000}
If $\sig$ is $N$-regular and $R$ is stably coherent\/\footnote{i.e., polynomial algebras in finitely many indeterminates over $R$ are coherent (cf.~\cite[7.3]{glaz})} then $\o_Y$ is coherent.
\end{cor}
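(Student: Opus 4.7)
The plan is to apply Corollary \ref{3.990} with $F = S_B$, so that $\S_B(S_B) = \o_Y$ (see \ref{3.210}). For this we need to choose a subgroup $B \subseteq A$ that is simultaneously big and small (so that the standing hypothesis of Section \ref{sec4} is satisfied and the smallness hypothesis of \ref{3.990} applies), and then verify that $S_B$ is coherent as a $B$-graded $S_B$-module over itself.

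First, since $\sig$ is $N$-regular, Theorem \ref{1.280}~a) yields $\pic(\sig) = A_{\sig_1}$. Hence the choice $B \dfgl A_{\sig_1}$ is simultaneously big (of index $1$ in $A$) and small (equal to $\pic(\sig)$), so all hypotheses in force in Section \ref{sec4} are met, and \ref{3.990} is applicable once coherence of $S_B$ is established.

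Next I would verify coherence of $S_A$ as an $A$-graded $S_A$-module. By \ref{3.20}, the underlying $R$-algebra $U(S_A)$ is the polynomial algebra over $R$ in the finite family of indeterminates $(Z_\rho)_{\rho \in \sig_1}$. Stable coherence of $R$ (as recalled in the footnote) means precisely that such a polynomial algebra is coherent as an ungraded ring, so $U(S_A)$ is coherent. By the remark on (pseudo-)coherence in \ref{2.67}, coherence of $U(S_A)$ implies coherence of $S_A$ as an $A$-graded $S_A$-module over itself.

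Finally, applying Corollary \ref{3.990} to the coherent $B$-graded $S_B$-module $F = S_B$ yields that $\S_B(S_B) = \o_Y$ is a coherent $\o_Y$-module, which is the claim. There is no serious obstacle here; the argument is essentially a bookkeeping combination of \ref{1.280}~a), the definition of stable coherence, the compatibility of coherence with the forgetful functor $U$ from \ref{2.67}, and \ref{3.990}.
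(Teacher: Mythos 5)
Your proposal is correct and follows essentially the same route as the paper's proof: take $B = A_{\sig_1}$, which is big trivially and small because $N$-regularity gives $\pic(\sig)=A_{\sig_1}$ via \ref{1.280}~a); get coherence of $S_A$ from stable coherence of $R$ together with the observation in \ref{2.67} that coherence of the underlying ring implies coherence of the graded ring; and then apply \ref{3.990} to $F=S_A$, using $\o_Y=\S_A(S_A)$ from \ref{3.210}. The paper's version is just a terser write-up of the same bookkeeping.
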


\begin{proof}
Stable coherence of $R$ implies coherence of $S_A$ (\ref{2.67}, \ref{3.20}) and $N$-regularity of $\sig$ implies $\pic(\sig)=A$ (\ref{1.280}~a)). As $\o_Y=\S_A(S_A)$ (\ref{3.210}) we get the claim from \ref{3.990}.
\end{proof}

\begin{prop}\label{3.960}
Let $F$ be a $B$-graded $S_B$-module and let $G,H\in\J_F$.\! If\/ $\Sat_F(G,I_B)=\Sat_F(H,I_B)$ then $\Xi_F(G)=\Xi_F(H)$; if $B$ is small then the converse holds.
\end{prop}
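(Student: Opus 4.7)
The plan is to separate the two directions. For the first (which holds without the small hypothesis), the strategy is to exploit exactness of $\S_B$ (\ref{3.210}) together with its vanishing on $I_B$-torsion modules (\ref{3.270}): applying $\S_B$ to the short exact sequence $0 \to G \to \Sat_F(G,I_B) \to \Sat_F(G,I_B)/G \to 0$, whose right-hand term is $I_B$-torsion by construction of the saturation, yields $\Xi_F(G) = \Xi_F(\Sat_F(G,I_B))$; combining this with the analogous equality for $H$ and the hypothesis produces $\Xi_F(G) = \Xi_F(H)$.

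For the converse, assume $B$ is small. The heart of the argument is the intermediate lemma: \textit{if $P$ is a $B$-graded $S_B$-module with $\S_B(P) = 0$, then $P$ is an $I_B$-torsion module.} Granting this, set $K \dfgl G+H$: by \ref{3.250} and the hypothesis $\Xi_F(G) = \Xi_F(H)$ we have $\S_B(K) = \S_B(G) + \S_B(H) = \S_B(G)$, whence $\S_B(K/G) = 0$ by exactness, so the lemma forces $K/G$ to be an $I_B$-torsion module and hence $K \subseteq \Sat_F(G,I_B)$. Therefore $\Sat_F(G,I_B) \subseteq \Sat_F(K,I_B) \subseteq \Sat_F(\Sat_F(G,I_B),I_B) = \Sat_F(G,I_B)$; by symmetry $\Sat_F(K,I_B) = \Sat_F(H,I_B)$, whence the required equality.

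To prove the lemma, observe that $\S_B(P) = 0$ means $P_{(\sigma)} = 0$ for every $\sigma \in \sig$. The smallness of $B$ enters via \ref{3.330}~b), which supplies $(P_\sigma)_\alpha \cong P_{(\sigma)} = 0$ for every $\alpha \in B$ and hence $P_\sigma = 0$. Fix a homogeneous $x \in P$ and pick $m \in \N$ with $m\alp_\sigma \in B$ for every $\sigma \in \sig$ (possible by bigness of $B$ and finiteness of $\sig$). The vanishing of each $P_\sigma = P_{\zs^m}$ furnishes $k_\sigma \in \N$ with $\zs^{k_\sigma} x = 0$; setting $k \dfgl \max_{\sigma \in \sig} k_\sigma$ gives $\zs^k x = 0$ for every $\sigma \in \sig$. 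By \ref{3.30}~a), $I_B$ is generated by finitely many monomials $t_1,\ldots,t_r \in \T_{B,R}$; each $t_\ell$ lies in the $S_A$-ideal $\langle \zs \mid \sigma \in \sig \rangle_{S_A}$ and is therefore divisible in $S_A$ by some $\zs_{\sigma(\ell)}$. For $j \geq k\cdot\card(\sig)$, any generator $t_{i_1}\cdots t_{i_j}$ of $I_B^j$ is then divisible by $\zs_{\sigma(i_1)}\cdots\zs_{\sigma(i_j)}$, and by pigeonhole some $\sigma\in\sig$ occurs at least $k$ times in the list $\sigma(i_1),\ldots,\sigma(i_j)$, so $\zs^k$ divides the product and annihilates $x$. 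Hence $I_B^j x = 0$, proving the lemma.

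The principal obstacle is the lemma, and within it the role of the smallness hypothesis, which is used solely through \ref{3.330}~b) to upgrade vanishing of the degree-zero component $P_{(\sigma)}$ to vanishing of the entire graded localization $P_\sigma$. Everything else -- the reduction to a statement about vanishing of $\S_B$, and the combinatorial pigeonhole argument on generators of $I_B$ -- is elementary and independent of any extra assumption beyond bigness of $B$.
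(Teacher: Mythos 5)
Your proof is correct, but it follows a genuinely different path from the paper's in both directions. For the forward implication, the paper argues element-by-element: it takes $u=\frac{f}{\zs^k}\in G_{(\sigma)}$, uses $f\in\Sat_F(G,I_B)=\Sat_F(H,I_B)$ to push $f$ into $H$ after multiplying by a power of $\zs$, and concludes $G_{(\sigma)}=H_{(\sigma)}$. You instead apply exactness of $\S_B$ and vanishing on torsion to the saturation sequence $0\to G\to\Sat_F(G,I_B)\to\Sat_F(G,I_B)/G\to 0$; this is cleaner and more structural. For the converse, the paper again works element-by-element, using smallness via \ref{1.210} to write $k\alp_\sigma+\alpha$ as a $\Z$-combination of $\alpha_\rho$ for $\rho\notin\sigma_1$ and thereby produce an explicit element of $G_{(\sigma)}=H_{(\sigma)}$. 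You instead isolate the key vanishing criterion --- $\S_B(P)=0$ forces $P$ to be $I_B$-torsion when $B$ is small --- which is precisely the content of the paper's Corollary~\ref{3.980} (there derived from \ref{3.970} and hence from \ref{3.960}, so your independent proof avoids circularity). Your route through $K\dfgl G+H$ is a neat reduction, and the lemma itself is established via \ref{3.330}~b) (which shares the underlying Picard-module input with the paper's argument) plus a pigeonhole count on the finitely many monomial generators of $I_B$.

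Two small points in the lemma deserve to be made explicit. First, from $P_\sigma=0$ one only directly gets $\zs^{mn_\sigma}x=0$ for some $n_\sigma$, so the exponents $k_\sigma$ (and hence $k$) should be taken as multiples of $m$ to ensure $\zs^{k}\in S_B$ and that $\zs^{k-k_\sigma}$ acts on $P$. Second, in the pigeonhole step the factorisation $t_{i_1}\cdots t_{i_j}=\zs^k q$ a priori takes place in $S_A$; one should note that $q$ actually lies in $S_B$, since $\deg(q)=\deg(t_{i_1}\cdots t_{i_j})-k\alp_\sigma\in B$ (both summands are in $B$, the latter because $k$ is a multiple of $m$), so that $q(\zs^k x)=0$ is a legitimate computation in the $S_B$-module $P$. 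With these clarifications the argument is complete.
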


\begin{proof}
First, suppose that $\Sat_F(G,I_B)=\Sat_F(H,I_B)$, and let $\sigma\in\sig$ and $u\in G_{(\sigma)}$. There are $k\in\N$ with $k\alp_{\sigma}\in B$ and $f\in G_{k\alp_{\sigma}}$ with $u=\frac{f}{\zs^k}$, hence $f\in G_{k\alp_{\sigma}}\subseteq\Sat_F(G,I_B)_{k\alp_{\sigma}}=\Sat_F(H,I_B)_{k\alp_{\sigma}}$. Therefore, there is an $l\in\N$ with $l\alp_{\sigma}\in B$ and $\zs^lf\in H$, implying $u=\frac{f}{\zs^k}=\frac{\zs^lf}{\zs^{k+l}}\in H_{(\sigma)}$. This shows $G_{(\sigma)}\subseteq H_{(\sigma)}$. By reasons of symmetry we get $G_{(\sigma)}=H_{(\sigma)}$. By definition of $\S_B$ it follows $\Xi_F(G)=\Xi_F(H)$.

Next, suppose that $B$ is small and $\Xi_F(G)=\Xi_F(H)$, hence $G_{(\sigma)}=H_{(\sigma)}$ for $\sigma\in\sig$. Let $\alpha\in B$ and $f\in G_{\alpha}$. There is a $k\in\N$ such that for $\sigma\in\sig$ we have $k\alp_{\sigma}\in B$, hence $k\alp_{\sigma}+\alpha\in B$. For $\sigma\in\sig$ there is a family $(r_{\rho})_{\rho\in\sig_1\setminus\sigma_1}$ in $\Z$ with $k\alp_{\sigma}+\alpha=\sum_{\rho\in\sig_1\setminus\sigma_1}r_{\rho}\alpha_{\rho}$, and we have $g_{\sigma}\dfgl\frac{\zs^kf}{\prod_{\rho\in\sig_1\setminus\sigma_1}Z_{\rho}^{r_{\rho}}}\in G_{(\sigma)}=H_{(\sigma)}$, hence there are $l\in\N$ with $l\alp_{\sigma}\in B$ and $h_{\sigma}\in H_{l\alp_{\sigma}}$ with $g_{\sigma}=\frac{h_{\sigma}}{\zs^l}$. For $\sigma\in\sig$ it follows $\zs^{l+k}f=(\prod_{\rho\in\sig_1\setminus\sigma_1}Z_{\rho}^{r_{\rho}})h_{\sigma}\in H$. Thus, there is an $m\in\N$ with $I_B^mf\subseteq H$, implying $f\in\Sat_F(H,I_B)$. This shows $\Sat_F(G,I_B)\subseteq\Sat_F(H,I_B)$ (\ref{2.80}, \ref{3.30}~a)), and then the claim follows by reasons of symmetry.
\end{proof}

\begin{thm}\label{3.970}
If $F$ is a $B$-graded $S_B$-module then the map $\Xi_F^{\sat}\colon\J^{\sat}_F\rightarrow\widetilde{\J}_F$ is surjective; if $B$ is small then it is bijective.
\end{thm}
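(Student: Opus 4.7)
The plan is to derive this theorem directly from the two main inputs already at hand: the unrestricted surjectivity Corollary \ref{3.940}, and the comparison Proposition \ref{3.960} relating $\Xi_F(G)=\Xi_F(H)$ with $\Sat_F(G,I_B)=\Sat_F(H,I_B)$. The key additional ingredient is the fact, recorded in \ref{2.80}, that since $I_B$ is of finite type (\ref{3.30}~a)), the saturation $\Sat_F(\bullet,I_B)$ is idempotent and produces an $I_B$-saturated submodule, i.e., $\Sat_F(G,I_B)\in\J_F^{\sat}$ for every $G\in\J_F$ and $\Sat_F(G,I_B)=G$ for every $G\in\J_F^{\sat}$.

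For surjectivity, I would argue as follows. Let $\G\in\widetilde{\J}_F$. By \ref{3.940}, there exists $G'\in\J_F$ with $\Xi_F(G')=\G$. Set $G\dfgl\Sat_F(G',I_B)$, so $G\in\J_F^{\sat}$. Since $\Sat_F(G,I_B)=\Sat_F(G',I_B)$, Proposition \ref{3.960} yields $\Xi_F(G)=\Xi_F(G')=\G$, so $\Xi_F^{\sat}(G)=\G$. This handles the first statement with no further input.

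For the bijectivity assertion under the extra hypothesis that $B$ is small, I would use the converse direction of \ref{3.960}. Suppose $G,H\in\J_F^{\sat}$ satisfy $\Xi_F^{\sat}(G)=\Xi_F^{\sat}(H)$. Then $\Xi_F(G)=\Xi_F(H)$, so by the second (harder) direction of \ref{3.960} we get $\Sat_F(G,I_B)=\Sat_F(H,I_B)$. Because $G$ and $H$ are $I_B$-saturated, both sides simplify to $G$ and $H$ respectively, giving $G=H$.

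No step is genuinely an obstacle here: the whole proof is a two-line bookkeeping exercise once one grants \ref{3.940} and \ref{3.960}. The only point requiring any care is recalling from \ref{2.80} that finite type of $I_B$ makes $\Sat_F(\bullet,I_B)$ a closure operator, which is exactly what makes the existence step (pick a preimage and saturate it) land in $\J_F^{\sat}$ and makes the uniqueness step (two saturated preimages must coincide) collapse to a genuine equality rather than just an equality of saturations.
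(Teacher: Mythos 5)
Your proof is correct and follows exactly the paper's own argument: surjectivity is obtained by picking a preimage via \ref{3.940}, saturating it (using \ref{2.80} and \ref{3.30}~a)), and invoking \ref{3.960} to see the image is unchanged; injectivity for small $B$ is the converse direction of \ref{3.960} combined with the fact that $I_B$-saturated submodules equal their own saturations. No substantive difference from the paper.
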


\begin{proof}
For $\G\in\widetilde{\J}_F$ there exists $G\in\J_F$ with $\Xi_F(G)=\G$ (\ref{3.940}). As $\Sat_F(G,I_B)\in\J^{\sat}_F$ and $\Xi_F(\Sat_F(G,I_B))=\Xi_F(G)$ (\ref{2.80}, \ref{3.30}~a), \ref{3.960}) this proves the first claim. If $B$ is small then for $G,H\in\J^{\sat}_F$ with $\Xi_F(G)=\Xi_F(H)$ it follows $G=\Sat_F(G,I_B)=\Sat_F(H,I_B)=H$ (\ref{3.960}), and hence $\Xi^{\sat}_F$ is injective.
\end{proof}

\begin{cor}\label{3.980}
Suppose that $B$ is small and let $F$ be a $B$-graded $S_B$-module. Then, $F$ is an $I_B$-torsion module if and only if $\S_B(F)=0$.
\end{cor}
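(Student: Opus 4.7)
The proof is an almost immediate application of earlier results, most notably \ref{3.270} and \ref{3.960}. The plan is to handle the two implications separately.

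For the forward direction, I would simply invoke \ref{3.270}, which states without any smallness hypothesis that if $F$ is an $I_B$-torsion module then $\S_B(F)=0$. No additional argument is needed here.

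For the converse, suppose $\S_B(F)=0$. The key is to view $F$ itself and the zero submodule as two elements of $\J_F$ and use the converse statement in \ref{3.960} (which requires exactly that $B$ be small). Since $\Xi_F(F)=\S_B(F)=0=\S_B(0)=\Xi_F(0)$ inside $\widetilde{\J}_F$, \ref{3.960} yields $\Sat_F(F,I_B)=\Sat_F(0,I_B)$. Now observe that $\Sat_F(F,I_B)=\bigcup_{n\in\N}(F:_F I_B^n)=F$ trivially (taking $n=0$), while on the other hand $\Sat_F(0,I_B)=\bigcup_{n\in\N}(0:_F I_B^n)=\grgam{B}{I_B}(F)$ by the definition of the $B$-graded $I_B$-torsion functor (\ref{2.210}, \ref{2.80}). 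Combining these identifications gives $F=\grgam{B}{I_B}(F)$, which is exactly the statement that $F$ is an $I_B$-torsion module.

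No step presents a real obstacle, since the hard work has been done in establishing \ref{3.940}, \ref{3.960}, and \ref{3.970}. The only thing to keep in mind is that smallness of $B$ is needed only for the converse direction, which matches the literature: without smallness, $\S_B$ may kill modules that are not torsion, as one can already detect from the failure of $\delta_B$ to be an isomorphism (\ref{3.290}).
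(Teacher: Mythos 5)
Your proof is correct and mirrors the paper's argument almost exactly: the paper invokes \ref{3.270} for the forward direction and the bijectivity statement \ref{3.970} together with the identifications $\Sat_F(F,I_B)=F$ and $\Sat_F(0,I_B)=\grgam{B}{I_B}(F)$ for the converse. You simply call the underlying Proposition \ref{3.960} directly rather than its immediate Corollary \ref{3.970}, which is not a genuinely different route, and your observation that $\Xi_F(0)=0$ is trivial slightly streamlines the appeal to \ref{3.270} in the converse direction.
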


\begin{proof}
As $\Sat_F(F,I_B)=F$ and $\Sat_F(0,I_B)=\grgam{B}{I_B}(F)$ this follows by \ref{3.270} and \ref{3.970}.
\end{proof}

\begin{nosm}\label{3.1100}
The following fact about projective schemes is maybe known, but surprisingly seems to be neither well-known nor accessible in the literature: if $S$ is a positively $\Z$-graded ring that is generated as an $S_0$-algebra by finitely many elements of degree $1$, and $X=\proj(S)$, then the functor $\grmod^{\Z}(S)\rightarrow\qcmod(\o_X)$ that maps a $\Z$-graded $S$-module to its associated quasicoherent $\o_X$-module (\cite[II.2.5.3]{ega}) preserves the properties of being of finite type, of finite presentation, pseudocoherent, and coherent. This can be proven analogously to \ref{3.340} and \ref{3.990}, on use of \cite[II.5.7; II.2.7.8; II.2.7.11]{ega} instead of \ref{3.330}~b) and \ref{3.950}.
\end{nosm}


\subsection{The toric Serre-Grothendieck correspondence}\label{sub4.5}

\begin{no}\label{4.510}
Let $U\in\U_Y$. If $\alpha\in B$ then the functor $$\Gamma_{\alpha,B,U}\colon\grmod^B(S_B)\rightarrow\catmod(S_0)$$ is left exact (\ref{3.690}). Keeping in mind \ref{2.10} and \ref{2.150} we denote its right derived cohomological functor by $(H_{\alpha,B,U}^i)_{i\in\Z}$, and we canonically identify $\Gamma_{\alpha,B,U}$ and $H_{\alpha,B,U}^0$. The functor $$\Gamma_{**,B,U}\colon\grmod^B(S_B)\rightarrow\grmod^B(S_B)$$ is left exact, too (\ref{3.690}). We denote its right derived cohomological functor by $(H_{**,B,U}^i)_{i\in\Z}$, and we canonically identify $\Gamma_{**,B,U}$ and $H_{**,B,U}^0$. Since $\grmod^B(S_B)$ fulfils Grothendieck's axiom AB4 (\ref{2.10}) we have $H_{**,B,U}^i=\bigoplus_{\alpha\in B}H_{\alpha,B,U}^i$ for every $i\in\Z$ (\cite[2.2.1]{tohoku}).

If $F$ is a $B$-graded $S_B$-module and $i\in\Z$ then exactness of $\S_B$ (\ref{3.210}) and \cite[2.2.1]{tohoku} imply that $H_{\alpha,B,U}^i(F)=H^i(U,\S_B(F(\alpha)))$ is the $i$-th sheaf cohomology of $\S_B(F(\alpha))$ over $U$ in the sense of \cite[0.12.1]{ega}. In particular, $H_{0,B,U}^i(F)=H^i(U,\S_B(F))$ is the $i$-th sheaf cohomology of $\S_B(F)$ over $U$.

If $i\in\Z$ then by the above and \ref{3.210} we have $$H^i_{\alpha,A,U}(\bullet)=H^i(U,\S_A(\bullet(\alpha)))=H^i(U,\S_B(\bullet(\alpha)_{(B)}))=H^i(U,\S_B(\bullet_{(B)}(\alpha)))=H^i_{\alpha,B,U}(\bullet_{(B)})$$ for $\alpha\in B$, and therefore $H_{**,A,U}^i(\bullet)_{(B)}=H_{**,B,U}^i(\bullet_{(B)})$.
\end{no}

\begin{lemma}\label{4.530}
$\grgam{B}{I_B}\circ\Gamma_{**,B,Y}=0$.
\end{lemma}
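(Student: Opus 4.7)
The plan is to show directly that any homogeneous element $x$ of $\grgam{B}{I_B}(\Gamma_{**,B,Y}(F))$ vanishes, by restricting to each member of the affine cover $(Y_\sigma)_{\sigma\in\sig}$ and exploiting the fact that $\zs$ becomes a unit in the corresponding localization.

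More precisely, fix a $B$-graded $S_B$-module $F$, fix $\alpha\in B$, and let $x\in\grgam{B}{I_B}(\Gamma_{**,B,Y}(F))_\alpha$. By definition, $x$ is an element of $\Gamma_{**,B,Y}(F)_\alpha=\S_B(F(\alpha))(Y)$, and there exists $m\in\N$ with $I_B^m x=0$. Now fix $\sigma\in\sig$. Since $B$ is big there exists $k\in\N$ with $k\alp_\sigma\in B$ (cf.~\ref{3.80}); the element $\zs^k\in S_A$ lies in $\langle\zs\mid\sigma\in\sig\rangle_{S_A}$ and has degree $k\alp_\sigma\in B$, so $\zs^k\in I_B$ and hence $\zs^{mk}\in I_B^m$. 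Consequently $\zs^{mk}x=0$ in $\S_B(F(\alpha))(Y)$, and thus also after restriction $\zs^{mk}\cdot(x\res_{Y_\sigma})=0$ in $\S_B(F(\alpha))(Y_\sigma)=F(\alpha)_{(\sigma)}=F_{\sigma,\alpha}$.

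Since $F_\sigma=F\otimes_{S_B}S_{B,\sigma}$ is the localization of $F$ at $\zs$ (up to the flat base change from $S_B$ to $S_{B,\sigma}$, cf.~\ref{3.80}), the element $\zs$ acts invertibly on $F_\sigma$ and in particular on its homogeneous component $F_{\sigma,\alpha}$. Therefore $\zs^{mk}\cdot(x\res_{Y_\sigma})=0$ forces $x\res_{Y_\sigma}=0$. This holds for every $\sigma\in\sig$, and $(Y_\sigma)_{\sigma\in\sig}$ is a covering of $Y$ (\ref{3.110}), so the sheaf axiom for $\S_B(F(\alpha))$ yields $x=0$.

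This proves $\grgam{B}{I_B}(\Gamma_{**,B,Y}(F))_\alpha=0$ for every $\alpha\in B$, hence $\grgam{B}{I_B}(\Gamma_{**,B,Y}(F))=0$, and varying $F$ gives the vanishing of the composed functor. There is no real obstacle here: the only subtlety is verifying that $\zs^k\in I_B$ when $k\alp_\sigma\in B$, which is immediate from the definition of $I_B$ in~\ref{3.20}; everything else is unwinding of definitions.
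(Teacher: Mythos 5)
Your argument is correct and matches the paper's proof in substance: both produce a power of $\zs$ that lies in $I_B^m$ (hence annihilates $x$) while simultaneously being a unit on $Y_\sigma$, and then invoke the sheaf axiom. The only blemishes are cosmetic degree slips — $\zs^{mk}\cdot(x\res_{Y_\sigma})$ lives in $F_{\sigma,\alpha+mk\alp_\sigma}$ rather than $F_{\sigma,\alpha}$, and when $\alp_\sigma\notin B$ it is a power $\zs^k$ with $k\alp_\sigma\in B$ (not $\zs$ itself) that lies in $S_{B,\sigma}$ and acts invertibly on $F_\sigma$ — neither of which affects the conclusion.
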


\begin{proof}
Let $F$ be a $B$-graded $S_B$-module, let $\alpha\in B$, let $x\in\Gamma_{**,B,Y}(F)_{\alpha}=\S_B(F(\alpha))(Y)$, and let $m\in\N$ with $I_B^mx=0$. There exists $l\in\N$ such that for $\sigma\in\sig$ we have $l\alp_{\sigma}\in B$ and there exists $x_{\sigma}\in F_{\alpha+l\alp_{\sigma}}$ with $x\res_{Y_{\sigma}}=\frac{x_{\sigma}}{\zs^l}\in(F_{\sigma})_{\alpha}$. It follows $x\res_{Y_{\sigma}}=\frac{\zs^mx_{\sigma}}{\zs^{m+l}}=\frac{1}{\zs^m}(\zs^mx)\res_{Y_{\sigma}}=0$ for $\sigma\in\sig$, thus $x=0$ as desired.
\end{proof}

\begin{prop}\label{4.540}
Suppose that $S_B$ has ITI\/\footnote{cf.~\ref{2.220}} with respect to $I_B$. There exists a unique morphism of functors $\etaq'_B\colon\Gamma_{**,B,Y}\rightarrow\gride{B}{I_B}$ such that the diagram $$\xymatrix@C40pt@R15pt{\Id_{\grmod^B(S_B)}\ar[r]^(.53){\etaq_B}\ar[rd]_{\eta_{I_B}}&\Gamma_{**,B,Y}\ar[d]^{\etaq'_B}\\&\gride{B}{I_B}}$$ commutes, and $\etaq'_B$ is an isomorphism.
\end{prop}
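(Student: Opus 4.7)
The plan is to apply Corollary~\ref{d50} with $S\dfgl\Gamma_{**,B,Y}$ and $\eps\dfgl\etaq_B$. The ITI hypothesis is given in the assumption, and Lemma~\ref{3.800} furnishes the two required torsion conditions $\grgam{B}{I_B}\circ\ke(\etaq_B)=\ke(\etaq_B)$ and $\grgam{B}{I_B}\circ\cok(\etaq_B)=\cok(\etaq_B)$. Corollary~\ref{d50} then immediately yields the existence of a unique morphism of functors $\etaq'_B\colon\Gamma_{**,B,Y}\rightarrow\gride{B}{I_B}$ with $\etaq'_B\circ\etaq_B=\eta_{I_B}$, together with the explicit formula $\etaq'_B=(\gride{B}{I_B}\circ\etaq_B)^{-1}\circ(\eta_{I_B}\circ\Gamma_{**,B,Y})$.

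For the isomorphism claim, the final clause of Corollary~\ref{d50} reduces the task to showing that $\eta_{I_B}\circ\Gamma_{**,B,Y}$ is an isomorphism; by the four-term exact sequence of Proposition~\ref{2.250}~a) this amounts to the two vanishings $\grgam{B}{I_B}\circ\Gamma_{**,B,Y}=0$ and $\grloc{B}{1}{I_B}\circ\Gamma_{**,B,Y}=0$. The first is exactly Lemma~\ref{4.530}, so the whole weight of the argument rests on the second.

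The vanishing $\grloc{B}{1}{I_B}(\Gamma_{**,B,Y}(F))=0$ for every $B$-graded $S_B$-module $F$ is the main obstacle. I would handle it by computing $\grloc{B}{1}{I_B}$ through the \v{C}ech-style complex on the finite generating set $\{\zs^m\mid\sigma\in\sig\}$ of $I_B$ (available because $B$ is big, so integers $m$ with $m\alp_\sigma\in B$ exist uniformly): a degree-$\alpha$ $1$-cocycle in this complex corresponds to a compatible family of sections of $\S_B(F(\alpha))$ over the double intersections $Y_\sigma\cap Y_\tau$ of the affine cover $(Y_\sigma)_{\sigma\in\sig}$ of $Y$, and separatedness of $Y$ together with the sheaf axiom force such a cocycle to split into a family of sections over the individual $Y_\sigma$, exhibiting it as a coboundary. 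The technical mechanics---producing the correct lifts through multiplication by sufficiently high powers of the $\zs$---parallel the sheaf-patching argument used in the proof of Proposition~\ref{3.920}.
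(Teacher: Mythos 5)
Your reduction to the final vanishing is sound and matches the paper's framing: Corollary~\ref{d50} together with Lemmas~\ref{3.800} and~\ref{4.530} does yield existence, uniqueness and monomorphy, and it does reduce the isomorphism claim to showing $\grloc{B}{1}{I_B}\circ\Gamma_{**,B,Y}=0$ (equivalently, that $\etaq'_B$ is an epimorphism). Up to that point you are in step with the paper.

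The gap is in the final step. You propose to compute $\grloc{B}{1}{I_B}(\Gamma_{**,B,Y}(F))$ via a \v{C}ech-style complex on the generators $\zs$ of $I_B$. But the \v{C}ech complex computes local cohomology only under extra hypotheses (noetherianness, or weak proregularity of the generating sequence), and the present setting assumes only that $S_B$ has ITI with respect to $I_B$ --- which does not grant these. For non-noetherian $S_B$ the canonical comparison between \v{C}ech cohomology and $(\grloc{B}{i}{I_B})_{i}$ can fail, so vanishing of the \v{C}ech $H^1$ would not in general yield $\grloc{B}{1}{I_B}=0$. A secondary issue: your description of the $1$-cocycles is off --- cochains in cohomological degree $1$ of the (local-cohomology) \v{C}ech complex are indexed by single $\sigma\in\sig$, not by pairs; what you describe (sections over double intersections) is the $1$-cochain group of the sheaf \v{C}ech complex, a different object.

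The paper avoids both problems by proving surjectivity of $\etaq'_B$ directly, working inside $\gride{B}{I_B}$ rather than with $\grloc{B}{1}{I_B}$. Concretely, it reduces to degree $0$, uses the ind-presentation $\gride{B}{I_B}(F)=\ilim_{m}\grhm{B}{S_B}{I_B^m}{F}$ to represent an element by some $h\colon I_B^l\to F$, defines $y_\sigma\dfgl h(\zs^l)/\zs^l\in F_{(\sigma)}=\S_B(F)(Y_\sigma)$, checks compatibility on overlaps so that the $y_\sigma$ glue to $y\in\S_B(F)(Y)=\Gamma_{**,B,Y}(F)_0$, and then shows $\etaq'(y)=x$ by observing that the difference $\etaq'(y)-x$ is $I_B$-torsion and $\grgam{B}{I_B}\circ\gride{B}{I_B}=0$ under ITI (\ref{2.250}~b)). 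This is the explicit "preimage construction" your sketch gestures at, but it never invokes the \v{C}ech complex as a resolution of $\grgam{B}{I_B}$ and therefore needs none of the missing hypotheses. If you want to keep your $H^1$-vanishing packaging, you would have to either prove that the sequence $(\zs)_{\sigma\in\sig}$ is weakly proregular here, or replace the \v{C}ech computation with the paper's direct argument.
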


\begin{proof}
Existence, uniqueness and monomorphy of $\etaq'_B$ follow from \ref{d50}, \ref{3.800} and \ref{4.530}. It remains to show that it is an epimorphism. Let $F$ be a $B$-graded $S_B$-module and let $\alpha\in B$. We write $\etaq'$, $\etaq$ and $\eta$ instead of $\etaq'_B(F)$, $\etaq_B(F)$ and $\eta_{I_B}(F)$. It suffices to show that $\etaq'_{\alpha}\colon\S_B(F(\alpha))(Y)\rightarrow\gride{B}{I_B}(F)_{\alpha}$ is surjective. As $\bullet_{\alpha}$ commutes with inductive limits (\ref{2.10}) we have $$\gride{B}{I_B}(F)_{\alpha}=(\ilim_{m\in\N}\grhm{B}{S_B}{I_B^m}{F})_{\alpha}=\ilim_{m\in\N}(\grhm{B}{S_B}{I_B^m}{F(\alpha)}_0).$$ Thus, replacing $F$ by $F(-\alpha)$ we can suppose $\alpha=0$ and show that $$\etaq'_0\colon\S_B(F)(Y)\rightarrow\ilim_{m\in\N}\hm{\grmod^B(S_B)}{I_B^m}{F}$$ is surjective. For $l\in\N$ we denote by $$\iota_l\colon\hm{\grmod^B(S_B)}{I_B^l}{F}\rightarrow\ilim_{m\in\N}\hm{\grmod^B(S_B)}{I_B^m}{F}$$ the canonical morphism of $S_0$-modules, so that if $x\in F_0$ then $\iota_l$ maps the morphism $I_B^l\rightarrow F,\;a\mapsto ax$ onto $\eta(x)$.

Let $x\in\ilim_{m\in\N}\hm{\grmod^B(S_B)}{I_B^m}{F}$. There are $l\in\N$ and $h\in\hm{\grmod^B(S_B)}{I_B^l}{F}$ with $x=\iota_l(h)$. Let $y_{\sigma}\dfgl\frac{h(\zs^l)}{\zs^l}\in F_{(\sigma)}=\S_B(F)(Y_{\sigma})$ for $\sigma\in\sig$, so that $y_{\sigma}\res_{Y_{\tau}}=\frac{h(\zs^l)\zt^l}{\zs^l\zt^l}=\frac{h(\zt^l)\zs^l}{\zs^l\zt^l}=y_{\tau}$ for $\tau\fleq\sigma\in\sig$. Hence, there is a $y\in\S_B(F)(Y)$ with $y\res_{Y_{\sigma}}=y_{\sigma}$ for $\sigma\in\sig$. Let $r\in I_B^m$. For $\sigma\in\sig$ we have $$\textstyle(ry)\res_{Y_{\sigma}}=r(y\res_{Y_{\sigma}})=\frac{rh(\zs^l)}{\zs^l}=\frac{\zs^lh(r)}{\zs^l}=\frac{h(r)}{1}=\etaq(h(r))\res_{Y_{\sigma}},$$ implying $ry=\etaq(h(r))$, therefore $$r\etaq'(y)=\etaq'(ry)=\etaq'(\etaq(h(r)))=\eta(h(r))=\iota_l(rh)=r\iota_l(h)=rx,$$ and thus $r(\etaq'(y)-x)=0$. This shows $I_B^l(\etaq'(y)-x)=0$, thus $\etaq'(y)-x\in\grgam{B}{I_B}(\gride{B}{I_B}(F))=0$ (\ref{2.250}~b)), and therefore $\etaq'(y)=x$.
\end{proof}

\begin{thm}\label{4.550}
Suppose that $S_B$ has ITI with respect to $I_B$. There exist a morphism of functors $\zeta_B\colon\Gamma_{**,B,Y}\rightarrow\grloc{B}{1}{I_B}$ such that the sequence $$0\longrightarrow\grgam{B}{I_B}\xrightarrow{\xi_{I_B}}\Id_{\grmod^B(S_B)}\xrightarrow{\etaq_B}\Gamma_{**,B,Y}\xrightarrow{\zeta_B}\grloc{B}{1}{I_B}\longrightarrow 0$$ is exact, and a unique morphism of $\delta$-functors $$(\zeta^i_B)_{i\in\Z}\colon(H^i_{**,B,Y})_{i\in\Z}\longrightarrow(\grloc{B}{i+1}{I_B})_{i\in\Z}$$ with $\zeta_B^0=\zeta_B$, and $\zeta_B^i$ is an isomorphism for every $i>0$.
\end{thm}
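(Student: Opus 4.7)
The plan is to transport Proposition \ref{2.250}~a) across the isomorphism $\etaq'_B \colon \Gamma_{**,B,Y} \xrightarrow{\cong} \gride{B}{I_B}$ provided by Proposition \ref{4.540}.

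First, I would set $\zeta_B \dfgl \zeta_{I_B} \circ \etaq'_B$ and verify exactness of the claimed sequence. Recall from Proposition \ref{4.540} that $\etaq'_B$ is an isomorphism with $\etaq'_B \circ \etaq_B = \eta_{I_B}$. Using Proposition \ref{2.250}~a), one then gets $\ke(\etaq_B) = \ke(\etaq'_B \circ \etaq_B) = \ke(\eta_{I_B}) = \im(\xi_{I_B})$; the relation $\ke(\zeta_{I_B}) = \im(\eta_{I_B})$ transports under the isomorphism $\etaq'_B$ to $\ke(\zeta_B) = \im(\etaq_B)$; and surjectivity of $\zeta_B$ follows from that of $\zeta_{I_B}$ together with the fact that $\etaq'_B$ is an isomorphism.

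Second, I would obtain the higher $\zeta^i_B$ by a $\delta$-functor argument. Since $\Gamma_{**,B,Y}$ is left exact (\ref{4.510}) and $\grmod^B(S_B)$ has enough injectives (\ref{2.150}), the right derived functor $(H^i_{**,B,Y})_{i\in\Z}$ is a universal $\delta$-functor by general theory (\cite{tohoku}); the $\delta$-functor $(\gridt{B}{i}{I_B})_{i\in\Z}$ is universal as well (\ref{2.240}). Hence the isomorphism $\etaq'_B$ in degree $0$ extends uniquely to an isomorphism of $\delta$-functors $(\chi^i)_{i\in\Z}\colon (H^i_{**,B,Y})_{i\in\Z} \xrightarrow{\cong} (\gridt{B}{i}{I_B})_{i\in\Z}$. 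Setting $\zeta^i_B \dfgl \zeta^i_{I_B} \circ \chi^i$, where $\zeta^i_{I_B}$ is the morphism of $\delta$-functors from Proposition \ref{2.250}~a), yields a morphism of $\delta$-functors with $\zeta^0_B = \zeta_B$; for $i \geq 1$ both $\zeta^i_{I_B}$ and $\chi^i$ are isomorphisms, so $\zeta^i_B$ is too.

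Uniqueness of $(\zeta^i_B)_{i\in\Z}$ extending $\zeta_B$ follows from universality of $(H^i_{**,B,Y})_{i\in\Z}$: any morphism of $\delta$-functors out of a universal $\delta$-functor is determined by its degree-$0$ component. No serious obstacle is anticipated, as the genuinely nontrivial work (establishing that $\ke(\etaq_B)$ and $\cok(\etaq_B)$ are $I_B$-torsion, and thereby constructing $\etaq'_B$) has already been carried out in Lemma \ref{3.800} and Proposition \ref{4.540}; the remainder is purely formal manipulation of universal $\delta$-functors.
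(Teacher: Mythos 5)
Your proof is correct and takes essentially the same route as the paper, which simply states that the theorem follows immediately from Propositions \ref{2.250}~a) and \ref{4.540}: you transport the exact sequence and the morphism of $\delta$-functors from \ref{2.250}~a) across the isomorphism $\etaq'_B$ of \ref{4.540}, exactly as intended. The universal-$\delta$-functor argument you give to extend $\etaq'_B$ to an isomorphism $(\chi^i)$ of $\delta$-functors (and then to compose with $(\zeta^i_{I_B})$) is the correct way to make the paper's ``immediately'' explicit.
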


\begin{proof}
Immediately from \ref{2.250}~a) and \ref{4.540}.
\end{proof}

\begin{prop}\label{4.560}
Suppose that $R$ is noetherian and $\sig$ is complete, and let $F$ be a $B$-graded $S_B$-module of finite type. Then, the $R$-modules $H^i_{**,B,Y}(F)_{\alpha}$ and $\grloc{B}{i}{I_B}(F)_{\alpha}$ are of finite type for $i\in\Z$ and $\alpha\in B$.
\end{prop}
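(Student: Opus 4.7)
The plan is to reduce both finiteness statements to Grothendieck's finiteness theorem for higher direct images under proper morphisms between noetherian schemes. First I verify that the hypotheses of Theorem~\ref{4.550} are in force: since $R$ is noetherian, \ref{3.45} gives that $S_B$ is noetherian, and hence has ITI with respect to every graded ideal, in particular $I_B$ (\ref{2.220}). Completeness of $\sig$ implies relative skeletal completeness, so $(S_B)_0=S(R)_0=R$ by \ref{3.40}. Since $F$ is finitely generated over the noetherian $B$-graded ring $S_B$, it is noetherian, and therefore each component $F_\alpha$ is a noetherian (hence finitely generated) $R$-module by \ref{2.120}~b).

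For the first assertion I use that $H^i_{**,B,Y}(F)_\alpha=H^i(Y,\S_B(F(\alpha)))$ (\ref{4.510}). The shift $F(\alpha)$ is again noetherian over $S_B$, so \ref{3.295} yields that $\S_B(F(\alpha))$ is a coherent $\o_Y$-module. Completeness of $\sig$ makes the structural morphism $Y\to\spec(R)$ proper (\ref{3.135}~b)), and $Y$ is itself noetherian (\ref{3.135}~a)); thus Grothendieck's finiteness theorem (cf.\ \cite[III.3.2.1]{ega}) implies that the $R$-module $H^i(Y,\S_B(F(\alpha)))$ is finitely generated for every $i\in\Z$.

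The second assertion follows by transporting the first through the structural information in \ref{4.550}. For $i\geq 2$ the isomorphism $\zeta^{i-1}_B\colon H^{i-1}_{**,B,Y}\xrightarrow{\sim}\grloc{B}{i}{I_B}$ carries finite generation in degree $i-1\geq 1$ over to $\grloc{B}{i}{I_B}(F)_\alpha$. For $i=0$ the four-term exact sequence of \ref{4.550} shows that $\grloc{B}{0}{I_B}(F)_\alpha=\grgam{B}{I_B}(F)_\alpha$ is a sub-$R$-module of the finitely generated $R$-module $F_\alpha$, so it is finitely generated because $R$ is noetherian. For $i=1$ the same sequence realises $\grloc{B}{1}{I_B}(F)_\alpha$ as a quotient of $\Gamma_{**,B,Y}(F)_\alpha=H^0_{**,B,Y}(F)_\alpha$, which is finitely generated by the first assertion, while negative degrees are trivial. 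No substantial obstacle is anticipated beyond locating the appropriate classical finiteness theorem; all its hypotheses are exactly those secured in the first step.
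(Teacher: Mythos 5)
Your proof is correct and follows essentially the same route as the paper's: use \ref{3.45} and \ref{2.220} for ITI, \ref{3.295} and \ref{3.135} together with \ref{4.510} and Grothendieck's finiteness theorem to settle the sheaf-cohomology case, then push through the exact sequence and isomorphisms of \ref{4.550} (plus \ref{2.120}~b) for $i=0$) to obtain the local-cohomology case. The only nominal difference is the EGA citation — the paper cites III.3.2.3, the version for cohomology over an affine noetherian base, which is the exact form needed here — but this is a bibliographic nuance, not a mathematical gap.
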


\begin{proof}
Noetherianness of $R$ implies that $S_B$ is noetherian (\ref{3.45}) and in particular has ITI with respect to $I_B$ (\ref{2.220}). As $F$ is of finite type, $\S_B(F(\alpha))$ is coherent for $\alpha\in B$ (\ref{3.295}). Completeness of $\sig$ implies that $Y$ is proper over $R=S_0$ (\ref{3.40}, \ref{3.135}~b)). Thus, $H^i_{**,B,Y}(F)_{\alpha}=H^i_{\alpha,B,Y}(F)=H^i(Y,\S_B(F(\alpha)))$ is of finite type (\ref{4.510}, \cite[III.3.2.3]{ega}). The claim about local cohomology is obvious for $i<0$ and holds for $i=0$ since noetherianness of $F$ implies that $\grgam{B}{I_B}(F)_{\alpha}$ is noetherian for $\alpha\in B$ (\ref{2.120}~b)). As there are an epimorphism $\Gamma_{**,B,Y}(F)_{\alpha}\rightarrow\grloc{B}{1}{I_B}(F)_{\alpha}$ and for $i>1$ an isomorphism $H^{i-1}_{**,B,Y}(F)_{\alpha}\rightarrow\grloc{B}{i}{I_B}(F)_{\alpha}$ of $R$-modules (\ref{4.550}), the claim is proven.
\end{proof}


{\bf Acknowledgement.} This article grew out of parts of my PhD thesis, written under the guidance of Markus Brodmann; I still am very grateful to him.



\begin{thebibliography}{99}
\bibitem{e} N. Bourbaki, \textit{Th\'eorie des ensembles.} Hermann, 1970.
\bibitem{a} N. Bourbaki, \textit{Alg\`ebre. Chapitres 1 \`a 3.} Masson, 1970; \textit{Chapitres 4 \`a 7.} Masson, 1981; \textit{Chapitre 8.} Springer, 2012; \textit{Chapitre 10.} Masson, 1980.
\bibitem{ac} N. Bourbaki, \textit{Alg\`ebre commutative. Chapitres 1 \`a 4.} Masson, 1985; \textit{Chapitres 5 \`a 7.} Hermann, 1975.
\bibitem{bs} M. P. Brodmann, R. Y. Sharp, \textit{Local cohomology.} Cambridge Stud. Adv. Math. 60, Cambridge Univ. Press, 1998.
\bibitem{bg} W. Bruns, J. Gubeladze, \textit{Polytopes, rings, and K-theory.} Springer Monogr. Math., Springer, 2009.
\bibitem{gil} J. I. Burgos Gil, P. Philippon, M. Sombra, \textit{Arithmetic geometry of toric varieties. Metrics, measures and heights.} Preprint (2011). {\tt arXiv:1105.5584}
\bibitem{cox} D. A. Cox, \textit{The homogeneous coordinate ring of a toric variety.} J. Algebraic Geom. 4 (1995), 17--50.
\bibitem{demazure} M. Demazure, \textit{Sous-groupes alg\'ebriques de rang maximum du groupe Cremona.} Ann. Sci. \'Ec. Norm. Sup\'er. (4) 3 (1970), 507--588.
\bibitem{ewald} G. Ewald, \textit{Combinatorial convexity and algebraic geometry.} Grad. Texts in Math. 168, Springer, 1996.
\bibitem{fulton} W. Fulton, \textit{Introduction to toric varieties.} Ann. of Math. Stud. 131, Princeton Univ. Press, 1993.
\bibitem{glaz} S. Glaz, \textit{Commutative coherent rings.} Lecture Notes in Math. 1371, Springer, 1989.
\bibitem{gotoyamagishi} S. Goto, K. Yamagishi, \textit{Finite generation of noetherian graded rings.}, Proc. Amer Math. Soc 89 (1983), 41--44.
\bibitem{tohoku} A. Grothendieck, \textit{Sur quelques points d'alg\`ebre homologique.} Tohoku Math. J. (2) 9 (1957), 119--221.
\bibitem{ega} A. Grothendieck, J. A. Dieudonn\'e, \textit{\'El\'ements de g\'eom\'etrie alg\'ebrique. I (seconde \'edition).} Grundlehren Math. Wiss. 166, Springer, 1971; \textit{II.} Inst. Hautes \'Etudes Sci. Publ. Math. 8 (1961), 5--222; \textit{III. Premi\`ere partie.} Inst. Hautes \'Etudes Sci. Publ. Math. 11 (1961), 5--167;  \textit{IV. Seconde partie.} Inst. Hautes \'Etudes Sci. Publ. Math. 24 (1965), 5--231.
\bibitem{gubler} W. Gubler, A. Soto, \textit{Classification of normal toric varieties over a valuation ring of rank one.} Preprint (2013). {\tt arXiv:1303.1987}
\bibitem{hukeel} Y. Hu, S. Keel, \textit{Mori dream spaces and GIT.} Michigan Math. J. 48 (2000), 331--348.
\bibitem{kkms} G. Kempf, F. Knudsen, D. Mumford, B. Saint-Donat, \textit{Toroidal embeddings I.} Lecture Notes in Math. 339, Springer, 1973.
\bibitem{kr} M. Kreuzer, L. Robbiano, \textit{Computational commutative algebra 1.} Springer, 2000.
\bibitem{ling} J. Ling, \textit{Arithmetic intersection on toric schemes and resultants.} Preprint (2011).
\bibitem{maillot} V. Maillot, \textit{G\'eom\'etrie d'Arakelov des vari\'et\'es toriques et fibr\'es en droites int\'egrables.} M\'em. Soc. Math. Fr. (N.S.) 80 (2000), 1--129.
\bibitem{mustata} M. Musta\c{t}\v{a}, \textit{Vanishing theorems on toric varieties.} Tohoku Math. J. (2) 54 (2002), 451--470.
\bibitem{no1} C. N\v{a}st\v{a}sescu, F. Van Oystaeyen, \textit{Graded ring theory.} North-Holland Math. Library 28, North-Holland, 1982.
\bibitem{no2} C. N\v{a}st\v{a}sescu, F. Van Oystaeyen, \textit{Methods of graded rings.} Lecture Notes in Math. 1836, Springer, 2004.
\bibitem{qr} P. H. Quy, F. Rohrer, \textit{Bad behaviour of injective modules.} Preprint (2012)
\bibitem{geometry} F. Rohrer, \textit{The geometry of toric schemes.} J. Pure Appl. Algebra 217 (2013), 700--708.
\bibitem{coarsening} F. Rohrer, \textit{Coarsening of graded local cohomology.} To appear in Comm. Algebra. {\tt arXiv:1207.2547}
\bibitem{torsion} F. Rohrer, \textit{Torsion functors with monomial support.} Acta Math. Vietnam. 38 (2013), 293--301.
\bibitem{stacks} The Stacks Project Authors, \textit{Stacks Project}, {\tt http://stacks.math.columbia.edu}.
\end{thebibliography}
\end{document}